\documentclass[12pt,a4paper]{amsart}%
\usepackage{imakeidx}
\usepackage{amsmath}
\usepackage{fullpage}
\usepackage{comment}
\usepackage{MyCommA}
\usepackage[final]{pdfpages}
\usepackage{tikz-cd}

\makeindex
\usetikzlibrary{calc}

\newcommand{\Dima}[1]{{{#1}}}
\newcommand{\DimaA}[1]{{{#1}}}
\newcommand{\DimaB}[1]{{{#1}}}
\newcommand{\DimaC}[1]{{{#1}}}
\newcommand{\Rami}[1]{{{#1}}}
\newcommand{\RamiA}[1]{{{#1}}}
\newcommand{\RamiC}[1]{{{#1}}}
\newcommand{\Eitan}[1]{{{#1}}}

\newcommand{\NextVer}[1]{}
\newcommand{\sub}{\subset}
\newcommand{\noleft}{\left.\kern-\nulldelimiterspace}

\newcommand{\alp}{\alpha}

\newtheorem{innercustomthm}{Theorem}

\newenvironment{customthm}[2][]{%
  \renewcommand{\theinnercustomthm}{#2}
  \begin{innercustomthm}[#1]
}{%
  \end{innercustomthm}%
}

\global\long\def\lct{\operatorname{lct}}%

\begin{document}
	
	\author[Aizenbud]{Avraham Aizenbud}
	\address{Avraham Aizenbud,
		Faculty of Mathematical Sciences,
		Weizmann Institute of Science,
		76100
		Rehovot, Israel}
	\email{aizenr@gmail.com}
	\urladdr{https://www.wisdom.weizmann.ac.il/~aizenr/}
	
	\author[Gourevitch]{Dmitry Gourevitch}
	\address{Dmitry Gourevitch,
		Faculty of Mathematical Sciences,
		Weizmann Institute of Science,
		76100
		Rehovot, Israel}
	\email{dimagur@weizmann.ac.il}
	\urladdr{https://www.wisdom.weizmann.ac.il/~dimagur/}

\author[Kazhdan]{David Kazhdan}
	\address{David Kazhdan,
    Einstein Institute of Mathematics, Edmond J. Safra Campus, Givaat Ram The
Hebrew University of Jerusalem, Jerusalem, 91904, Israel}
	\email{david.kazhdan@mail.huji.ac.il}
	\urladdr{https://math.huji.ac.il/~kazhdan/}
	
\author[Sayag]{Eitan Sayag}
	\address{Eitan Sayag,
    Department of Mathematics, Ben Gurion University of the Negev, P.O.B. 653,
Be’er Sheva 84105, ISRAEL}
	\email{eitan.sayag@gmail.com}
	\urladdr{www.math.bgu.ac.il/~sayage}
	
	\date{\today}
		\keywords{Harish-Chandra's integrability, positive characteristic, character, cuspidal representation, reductive group, Resolution of Singularities, orbital integral, dicriminant}
	\subjclass{14L30,20G25,46F10,14B05, 14B10,14E15}

	%
	%
	%
	%
	%
	%
	%
	%


\title{On Harish-Chandra's integrability theorem in positive characteristic}
\maketitle
	\begin{center}
\textit{With an appendix by I. Glazer and Y. Hendel}
\end{center}

\begin{abstract} 
The celebrated Harish-Chandra's integrability theorem states that
the distributional character
of an irreducible smooth representation of a p-adic group $\bfG(F)$ is integrable, that is represented by an $L^{1}_{loc}(\bfG(F))$ function. Here $F$ is a non-Archimedean local field of characteristic $0$
and $\bfG$ is a reductive algebraic group defined over $F$.
In this paper we focus on cuspidal representations of $\GL_n(F)$ for a field $F$ of positive characteristic. We show that in this case the integrability holds under the hypothesis of existence of desingularization of (certain) algebraic varieties in  positive characteristics. 

Furthermore, in the case  $\chara(F)>\frac{n}{2}$  
we establish the regularity of such characters unconditionally.


\end{abstract}

\tableofcontents 
\section{Introduction}
Throughout the paper we fix a non-Archimedian local field \tdef{$F$}
of arbitrary characteristic. Denote by \tdef{$\ell$} the size of the residue field of $F$. All the algebraic varieties and algebraic groups that we will consider are defined over $F$. 
We will also fix a natural number $n$ and set $\mdef{\bfG}=\RamiA{\GL_n}$\RamiA{, considered as an algebraic group defined over $F$}.
 Denote $\mdef{G}=\bfG(F)$.

 We will denote by $C^{-\infty}(G)$ the space of generalized functions on $G$\RamiC{, {\it i.e.} functionals on the space of smooth compactly supported measures}. 
We also denote by $L_{loc}^1(G)$ the space of locally $L^1$-functions on $G$ and consider it as a subspace of the space of generalized functions $C^{-\infty}(G)$ in the usual way.

\subsection{Main results}
We study the following conjecture:
\begin{introconj}\label[introconj]{conj:1}
Let $\RamiA{\rho}$  be an \RamiA{irreducible} cuspidal \RamiA{smooth} representation \DimaB{of $G$} and  let $\mdef[\chi]{\chi_\rho}\in C^{-\infty}(G)$ be its character. Then $\chi_\rho\in L_{loc}^1(G)$. 
\end{introconj}
When the characteristic of $F$ is zero, this is a special case of a well known result of Harish-Chandra \cite{HC_VD}. 
In this paper we show that this conjecture follows from the conjectural existence of resolution of singularities in positive characteristic. 

More precisely, consider the following:
\begin{introconj}\label{conj:2}
    Let $\bfZ$ be an algebraic variety defined over the finite field $\F_\ell$. Then there exists a proper birational map $\gamma:\tilde\bfZ\to \bfZ$ s.t. 
    \begin{itemize}
        \item $\tilde\bfZ$ is smooth.
        \item $\gamma$ is an isomorphism outside the singular locus of $\bfZ$. 
        \item The preimage of the singular locus of $\bfZ$ (considered as a subvariety of $\tilde\bfZ$) is a strict normal crossings divisor.
    \end{itemize}
\end{introconj}
In this paper we prove:
\begin{introtheorem}[\S\ref{Sec: proof of Theorem C and D}]\label{thm:main}
    \Cref{conj:2} implies \Cref{conj:1}.
\end{introtheorem}
We also prove the following unconditional result:
\begin{introprop}[\S\ref{Sec: proof of Theorem C and D}]\label{thm:minor}
   If $\chara(F)>\frac n2$ then  \Cref{conj:1} holds.
\end{introprop}
\begin{remark}
    In fact, for given $F$ and $n$ it is enough to assume \Cref{conj:2} for a specific variety defined over $\F_\ell$. We also give some other alternatives \RamiC{that replace the role of} 
    \Cref{conj:2} \RamiC{in \Cref{thm:main}}, see \S \ref{sec:AltMain}.
\end{remark}
\begin{remark}
We also prove analogues of \Cref{thm:main} and \Cref{thm:minor} for orbital integrals. 
See \S\ref{subsec:OrbInt} below.
\end{remark}
\subsection{Background}
\subsubsection{Previous results}
In \cite[Theorem 2.2]{CGH} it was established that \RamiA{local} integrability of characters of irreducible representations of reductive groups over $\F_{\ell}((t))$ holds true for large enough characteristics (depending on the group $G$). However, no explicit bound was given. 

\RamiC{The case of $\GL_2(F)$ was already proven in \cite[Chapter 9]{JL}.}

In \cite{rod} 
it was established that \RamiC{local} integrability of characters of irreducible representations of $\GL_n(\F_{\ell}((t)))$ holds true in neighborhoods of elements with separable characteristic polynomials. In particular the \RamiC{local} integrability holds whenever $\chara(\F_\ell)>n$.

In a series of papers (\cite{Le4}, \cite{Le2}, \cite{Le3})
  it was claimed that local integrability holds true in  arbitrary characteristics for the family of groups $\GL_n(F), \GL_n(D),\SL_N(D)$ where $F=\F_{\ell}((t))$ is a local non-Archimedean field and $D$ a division algebra over $F$. However the arguments in these papers \RamiC{have a flaw}. See more detailed explanation in \Cref{app:lem}.

  
  On the other hand, it seems that the argument \RamiA{in} \cite{Le4} can give a proof for \Cref{thm:minor} of the present paper.

\subsubsection{The original \RamiA{argument of} Harish-Chandra}\label{subsub:HCpf}
Let us shortly present the main parts of the original Harish-Chandra's proof of the local integrability of cuspidal characters from \cite{HC_VD}.
\RamiC{This presentation differs slightly from the original, as it is  adapted to better suit our purposes.}
One can roughly divide Harish-Chandra's proof into two parts:
\begin{enumerate}[$\quad$(1)]
    \item\label{it:intro.1} Bound the character (up to a logarithmic factor) \RamiA{by} the inverse square root of the discriminant  --- $|\Delta|^{-\frac{1}{2}}.$ 
    \item\label{it:intro.2} Prove the integrability of  
     $|\Delta|^{-\frac{1}{2}}$.
\end{enumerate}
In more details, \RamiC{let $p:G\to C:=(\bfG//Ad(\bfG))(F)$ be the Chevalley map.} one can divide the first step into the following sub-steps:

\begin{enumerate}[$\quad$(a)]
    \item\label{it:intro.a} Locally bound the character by the orbital integral $\Omega(f)$ of a smooth function $f\in C_c^\infty(G)$ (up to a logarithmic factor). See \Cref{not:Omega} for the definition of $\Omega(f)$. We did this in \cite{AGKS2_1}.
    \item\label{it:intro.b} Bound the orbital integral $\Omega(f)$ by a product $|\Delta|^{-\frac{1}{2}}\cdot p^*(p_*(f))$ 
    where,
        \RamiC{the push forward is taken w.r.t. some fixed, smooth, nowhere vanishing measures on $G$ and $C$.}
    
    \item\label{it:intro.c} Bound $p_*(f)$.
\end{enumerate}

\subsubsection{Difficulties with Harish-Chandra's argument in positive characteristic}
Step (2) does not hold in positive characteristic (even for \RamiA{the case of} $\GL_2\RamiC{((t))}$). So, one should replace $|\Delta|^{-\frac{1}{2}}$ with a better bound (like the function $\kappa$ described in \S\ref{sec:bnd} below).

Both substep \eqref{it:intro.1}\eqref{it:intro.a} and step \eqref{it:intro.2}  are 
done for each torus in $G$ separately. This is \RamiC{enough} in characteristic zero, as there are only finitely many conjugacy classes of tori. However, \RamiC{the latter} is no longer true in positive characteristic. \RamiC{See more details in \cite[\S1.5]{AGKS2}}

Substep \eqref{it:intro.1}\eqref{it:intro.c} uses the assumption on characteristic in many places. See more details in \cite[\S 1.5.1]{AGKS2}.
\subsubsection{The approach of \cite[\RamiC{Chapter 9}]{JL}}
The  proof of \cite[\RamiC{Chapter 9}]{JL} in the $\GL_2$ case goes essentially along the same lines \RamiC{as the proof of \cite{HC_VD}}. All the bounds are much more explicit, and the bound $|\Delta|^{-\frac{1}{2}}$ is replaced by a different bound which differs from $|\Delta|^{\Rami{-}\frac 12}$ by a multiplicative constant on each torus.
\subsubsection{Results of \cite{AGKS2}}
In \cite{AGKS2} we obtain bounds for $p_*(f)$. These bounds are conditional on the assumption of existence of a resolution of singularities or the assumption $\chara(F)>n/2$ as in \Cref{thm:main} and \Cref{thm:minor}. 

In fact, the only reason that we need the assumptions above is the fact that  we rely on the results of \cite{AGKS2}. 
\subsubsection{The approach of  \cite{rod}}
\cite{rod} took a different approach. Instead of bounding $\Omega(f)$ and then bounding the character using it, they bound the character directly. They do it using a formula of Howe, that expresses the character (near $1$) as a combination of the Fourier transform of nilpotent orbital integrals. Then they use the fact that \DimaB{all the} nilpotent orbits \DimaB{of $\GL_n$} are Richardson, in order to prove that these Fourier transforms are locally integrable. 

\cite{rod} adapted this argument to work near semi-simple elements, thus covers all elements with separable characteristic polynomial, and therefore proves the result whenever $\chara(F)>n$. If one would like to adapt the argument in \cite{rod} to the general case, one has to deal with closed orbits with non-separable characteristic polynomial, like  the orbit of  $$\begin{pmatrix}
    0 &1\\ t &0
\end{pmatrix}\in \GL_2(\F_2((t))).
$$ 
\RamiC{Such an adaptation was attempted in \cite{Le4}.} 
\RamiC{A similar approach to the one in \cite{rod}} was used in \cite{HC_SD} \RamiC{(for the characteristic $0$ case)} in order to show local integrability for general (not necessarily cuspidal) characters. However, since \cite{HC_SD} is not limited to the generality of $\GL_n$ it could not use the Richardson property of the nilpotent orbits, and thus had to prove the local integrability of the Fourier transforms of nilpotent orbital integrals in a different way. This is done \Dima{using} the local integrability of $\Omega(f)$ proven in \cite{HC_VD} \RamiC{(for the characteristic $0$ case)}.
\subsection{Our approach}
Our approach follows the original  approach \RamiA{of Harish-Chandra} \RamiC{(for the cuspidal case)}, thus we circumvent the need to deal directly with elements with non-separable characteristic polynomial. Also, this approach gives a bound on $\Omega(f)$ and not only on the character. Additionally, it does not use the fact that \DimaB{all the} nilpotent orbits of $\GL_n$ are Richardson (see \S\ref{sssec:gl} below).

We replace $|\Delta|^{-\frac{1}{2}}$ with a function $\kappa$  described in \S\ref{sec:bnd} below. One can write $\kappa=\RamiC{\kappa^0} |\Delta|^{-\frac{1}{2}}$ where $\RamiC{\kappa^0}$ is $Ad(G)$-invariant and constant on any torus.
Thus the difference between 
$|\Delta|^{-\frac{1}{2}}$
and $\kappa$
is almost invisible in the characteristic zero
case. The construction of 
$\kappa$ 
generalizes the construction of the bound from \cite[\RamiC{Chapter 9}]{JL}.

Roughly speaking, {our} general strategy is to replace the 
{torus-by-torus} arguments \RamiC{(from \cite{HC_VD})} with global geometric arguments. Let us describe it in more details.


The original proof of substep \eqref{it:intro.1}\eqref{it:intro.a} 
is based on an effective bound on the averaging \RamiC{(w.r.t. the adjoint action)} of a matrix coefficient of a cuspidal representation and the stabilization of that averaging.
 We had to redo this bound in a way that is uniform on the entire group and not only on a single torus. 
 We did this in \cite{AGKS2_1}.

Substep \eqref{it:intro.1}\eqref{it:intro.b} 
\RamiC{in the argument in \cite{HC_VD}}
is rather straightforward. However,
\RamiC{as explained above, it would not be enough just to adapt it to positive characteristic as is. I}n order to make step \eqref{it:intro.2}  possible we \RamiC{replace} the function $|\Delta|^{-\frac{1}{2}}$ with the function $\kappa$. After this change, 
 the proof of 
substep \eqref{it:intro.1}\eqref{it:intro.b} \RamiC{(in arbitrary characteristic)} becomes more subtle and we do it in \S \ref{sec:bnd}. 

We dealt with substep \eqref{it:intro.1}\eqref{it:intro.c} in \cite{AGKS2}, note that this is \RamiC{the first} of the \Rami{two} places where we use \Rami{\cite{AGKS2}, which in turn depends on the} assumption of resolution of singularities.

So we are left with the adapted version of step \eqref{it:intro.2}: we have to prove that $\kappa$ is locally integrable. Here also, the original proof of Harish-Chandra \RamiC{treated} each torus separately. In case $n=2$  one can obtain \RamiA{a} bound on the integral on each torus  separately that will lead to the convergence of the entire integral. This is essentially what is done in \cite[\RamiC{Chapter 9}]{JL}. In the general case, we could not do it. Instead we developed a geometric formula for $\kappa$ (see \S\ref{sec:kap.form}). Essentially, this formula presents $\kappa$ as a pushforward of an (\Rami{a priori} not necessarily locally finite) measure $m$ w.r.t. a morphism $\tau:\bfX\to \bfG$ for a certain variety $\bfX$.  The  measure $m$ on $\bfX(F)$ is given by a (rational) form $\omega_\bfX$ on $\bfX$. To make this formula useful we have to prove that \Dima{$\omega_\bfX$} is regular on the smooth locus of $\bfX$ (see \S\ref{Sec:regularity}). Finally we prove \Rami{that} $m$ is locally finite and use this \RamiC{geometric} formula to prove the local integrability of $\kappa$. Here we again used the results of \cite{AGKS2} \Rami{(}and hence the assumption of existence of a resolution\Rami{)}.

Therefore, the main innovation of this paper 
is the factor $\kappa$, the geometric formula for it, and 
the successful execution of step \eqref{it:intro.2}. 

\subsubsection{The role of the assumption $\bfG=\GL_n$}\label{sssec:gl}
We used the assumption $\bfG=\GL_n$ in order to make all explicit computations easier. However, our argument does not use any statement that inherently depend on this assumption (such as existence of mirabolic subgroup, stability of adjoint orbits, or the Richardson property of all nilpotent orbits).

We also use the results of \cite{AGKS2,AGKS2_1} that are limited to the $\GL_n$ case, however the situation there is similar (see \cite[\S1.5.7]{AGKS2}, \cite[\S1.5.1]{AGKS2_1}). 

In conclusion we expect that the methods of the present paper can provide a proof of the regularity of characters of cuspidal representations for any reductive group over a non-Archimedean local field $F$ of good characteristic (see {\it e.g.}
\cite[I, §4]{SS} for this notion). 

\subsection{\Rami{Statements for the orbital integrals}}\label{subsec:OrbInt}
\RamiC{\Cref{thm:main} and \Cref{thm:minor} are also} valid  when we replace the character of $\rho$ with the orbital integral of a function $f\in C_c^\infty(G)$. \RamiC{Let us recall the notion of orbital integral of a function.}
\begin{notation}\label{not: orb.int}
$ $
\begin{itemize}
    \item Denote by $G^{rss}$ the collection of regular semi-simple elements in $G$.
\item
    For $f\in C_c^\infty(G)$ denote \Dima{by }$\Omega(f)\in C^\infty(G^{rss})$ \Dima{the orbital integral} $$\Omega(f)(x)=\int_{y\in G\cdot x} f(y)dy$$ where $dy$ is an appropriate measure on $G\cdot x$, see \Cref{not:Omega} below. 
\end{itemize}
\end{notation}
\begin{introtheorem}[
\Cref{rem:intro.Omega1}]\label{thm:intro.Omega}
\RamiC{Assume either \Cref{conj:2} or $\chara(F)>\frac n2$.}
    Let $\gamma\in C_c^\infty(G)$. 
        \RamiC{Then} $\Omega(\gamma)\in L^1(G)$.    
\end{introtheorem}
It is easy to see that this theorem implies its version for the Lie algebra $\fg$ of $G$. Namely we have:
\begin{customthm}{\ref{thm:intro.Omega}'}\label{thm:intro.Omega.p}
For $\gamma\in C_c^\infty(\fg)$ define $\Omega(\gamma)$ analogously to the case when $\gamma\in C_c^\infty(G)$.  
Then \Cref{thm:intro.Omega} is valid 
\RamiC{with $G$ replaced with $\fg$.}
\end{customthm}
In view of \cite[Theorem A']{AGKS2_1} this theorem implies a version of the main results for Fourier transforms of \RamiC{characteristic measures of elliptic orbits. Namely, f}or a regular semi-simple element $x\in \fg$, fix an $ad(G)$-invariant measure on $\fg$ supported on the adjoint orbit $G\cdot x$, and denote it by $\mu_{G\cdot x}$.
Let $\hat \mu_{G\cdot x}$ be its Fourier transform.
\begin{introtheorem}[\Cref{rem:intro.Omega1}]
\label{thm:Lie}
\Cref{thm:main} \RamiC{and} \Cref{thm:minor} are valid when we replace $\chi_{\rho}$ with $\hat \mu_{G\cdot x}$ for elliptic (regular semi-simple) $x\in \fg$ (with the obvious modifications).
\end{introtheorem}
Moreover, the arguments of \cite[\S1.4]{HC_SD} (which are also valid for positive characteristic) allow to deduce from this theorem the following one.
\begin{customthm}{\ref{thm:Lie}'}\label{thm:Lie.p}
\Cref{thm:Lie} is valid
\RamiC{when we replace $x$ with any} regular semi-simple \Rami{element in} $ \fg$.
\end{customthm}
This theorem is a partial positive characteristic analog of 
\cite[Theorem 1.1]{HC_SD} that states that $\hat \mu_{G\cdot x}\in L^{1}_{loc}(\fg)$. Harish-Chandra used this result in order to prove that the character of an arbitrary irreducible (smooth) representation of $G$ is locally integrable \cite[Theorem 16.3]{HC_SD}. However, \RamiC{at this point, we do not know how to adapt this part of Harish-Chandra's argument to} positive characteristic, so we still can not prove local integrability for  character of an arbitrary irreducible (smooth) representation in positive characteristic even under our additional assumptions.

\subsection{\Rami{Unconditional} results}
We prove \Cref{thm:main}  using  an unconditional bound on the character of a cuspidal representation. 
In order to formulate it we need the following notation:
\begin{notation}
    We denote by:
    \begin{enumerate}
        \item \tdef{$\bfC$}  $-$ the variety of monic polynomials of degree $n$ that do not vanish at $0$. We will identify it with $\bG_m\times \A^{n-1}$. 
        \item $\mdef{C}:=\bfC(F)$.   
        \item $\mdef{p}:\bfG\to \bfC$ $-$ the Chevalley map, i.e. the map that sends a matrix to its characteristic polynomial.
        \item $\mdef{\mu_G}$ - the Haar measure on $G$, normalized on a maximal compact subgroup of $G$.
        \item  $\mdef{\mu_C}$ - the Haar measure on $C$, given by the identification $C\cong F^\times \times F^{n-1}$, normalized on  
        $ O_F^\times \times O_F^{n-1}$, where \tdef{$O_F$} is the ring of integers in $F$.
    \end{enumerate}
\end{notation}

\begin{introthm}[\S\ref{Sec: proof of Theorem E}]\label{thm:bnd1}
Let $\rho$ be an irreducible cuspidal representation of $G$ and $U\subset G$ be an open compact subset. Then there exist\RamiC{:}
\begin{enumerate}
    \item $\eps>0$,
    \item a real valued non-negative $f\in L^{1+\eps}(C)$, and
    \item a real valued non-negative $h\in C_c^\infty(G)$
\end{enumerate}
such that for any $g\in C_c^\infty(U)$ we have:
$$|\langle \chi_\rho, g\rangle|\leq  \langle p^*(fp_*(h)), |g|\rangle.$$    
More precisely:
$$|\langle \chi_\rho, g\mu_G\rangle|\leq  \left\langle p^*\left(f\frac{p_*(h \mu_G)}{\mu_C}\right)\mu_G, |g|\right\rangle.$$ 
\end{introthm}
 \begin{remark}     
 Note that the Radon-Nikodym derivative $\frac{p_*(h \mu_G)}{\mu_C}$ does not have to be bounded (or finite) but only measurable, so the measure on the RHS does not have to be locally finite. Hence, a-priory, the RHS might be infinite (in this case, the statement is void).
\end{remark}
\Cref{thm:main}  follows from \Cref{thm:bnd1} using  the following weaker version of   \cite[Theorem D]{AGKS2}:
\begin{theorem}[{cf. \cite[Theorem D]{AGKS2}}]\label{thm:alm.an.frs}  
Assume \Cref{conj:2}.    
Then for any $t\in[1,\infty)$ and any smooth compactly supported measure $\mu$  on $G$, we have  $p_*(\mu)=f\mu_C$ for some  $f\in L^{t}(C)$.
\end{theorem}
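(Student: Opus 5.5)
The plan is to deduce this statement directly from \cite[Theorem D]{AGKS2}, after some bookkeeping to put the statement in the form proved there. First I reduce to a convenient class of measures. Any smooth compactly supported measure on $G$ has the form $h\mu_G$ with $h\in C_c^\infty(G)$. Such an $h$ is a finite linear combination of indicator functions of small open compact sets, so $|h|\leq c\cdot 1_U$ for some open compact $U\subset G$ and some $c>0$. Since $|p_*(h\mu_G)|\leq c\, p_*(1_U\mu_G)$ and $L^t(C)$ is a lattice, it suffices to treat $\mu=1_U\mu_G$. We may moreover shrink $U$ to lie in a small neighborhood of a given point and use a finite cover.

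Next I check that $p_*(\mu)$ is absolutely continuous with respect to $\mu_C$, so that the density $f=\frac{p_*(\mu)}{\mu_C}$ exists as a measurable function. The map $p$ is a submersion on the regular semisimple locus $G^{rss}$, whose complement in $G$ is the zero set of the discriminant $\Delta$ and hence $\mu_G$-null. So $p_*(1_{U\cap G^{rss}}\mu_G)$ is given by integration over fibers. Also $p^{-1}$ of a $\mu_C$-null set meets $G^{rss}$ in a null set by the submersion property. Hence $p_*\mu$ has a density, and the remaining content is the integrability statement $f\in L^t(C)$ for every $t\in[1,\infty)$.

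For the integrability I invoke \cite[Theorem D]{AGKS2}. Under \Cref{conj:2} (applied to the relevant variety over $\F_\ell$), it asserts that the Chevalley map has pushforwards of smooth compactly supported measures with densities in $L^t$ for all finite $t$, possibly with some stronger uniformity. The statement here is a weaker version, so I only need to match the settings. If Theorem D is formulated for $\fg=\mathfrak{gl}_n$ and the map to $\A^n$, I use the following observations:
\begin{itemize}
\item $G$ is the open subset of $\fg$ where $\det\neq 0$.
\item $\mu_G$ equals $|\det|^{-n}$ times the additive Haar measure, and $|\det|^{-n}$ is bounded on compact subsets of $G$.
\item $C$ is the open subset $\{a_0\neq 0\}$ of $\A^n(F)$, and $\mu_C$ equals $|a_0|^{-1}$ times Lebesgue measure.
\item $|a_0|$ is bounded above and below on $p(U)$, since $U$ is compact and $a_0=\pm\det$.
\end{itemize}
Hence the densities differ by bounded factors on the relevant compact sets, and the $L^t$ statements are equivalent.

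The main obstacle is not conceptual; it is verifying that the hypotheses and normalizations of \cite[Theorem D]{AGKS2} match ours. Specifically, the version of resolution it assumes must be implied by \Cref{conj:2}, and the transfer between $\GL_n$ and $\mathfrak{gl}_n$ (or between the chosen coordinates on $\bfC$) must preserve $L^t$-integrability. I expect this to be handled by the bounded-factor comparison above, together with the compactness of the support of $\mu$.
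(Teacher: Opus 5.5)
Your proposal is correct and follows the paper's route: the paper gives no separate proof, but records this statement as a weakened form of \cite[Theorem D]{AGKS2}, whose actual hypothesis (strong resolution of singularities over $\F_\ell$ for the fibered powers of $\mathfrak{gl}_n$ over the space of monic polynomials, cf.\ \S\ref{sec:AltMain}) is a special case of \Cref{conj:2}. Your reduction to $1_U\mu_G$, and your bounded-factor comparisons between $\GL_n$ and $\mathfrak{gl}_n$ and between $\mu_C$ and Lebesgue measure on $p(U)$, are exactly the routine bookkeeping left implicit in that citation.
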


Similarly, \Cref{thm:minor} follows from \Cref{thm:bnd1} using  the following special case of \cite[Theorem E]{AGKS2}:
\begin{theorem}[{cf. \cite[Theorem E]{AGKS2}}]\label{thm:uncond.an.frs}
Suppose $\chara(F)>\frac{n}{2}$.  
Then for any smooth compactly supported measure $\mu$  on $G$, the measure $p_*(\mu)$ can be written as a product of a  function in  $L^{\infty}(C)$ and a Haar measure on $C$.
\end{theorem}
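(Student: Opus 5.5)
The plan is to reduce the statement to the Lie algebra (or affine) version in \cite[Theorem E]{AGKS2}. That version asserts that, under $\chara(F)>\frac n2$, the characteristic polynomial map $\fg=\mathfrak{gl}_n(F)\to F^n$ pushes smooth compactly supported measures to measures with bounded density with respect to the additive Haar measure $\lambda$ on $F^n$. If \cite[Theorem E]{AGKS2} is instead formulated directly for the group $G$ in terms of $\mu_G$ and $\mu_C$, then there is nothing to do beyond matching normalizations. So I will explain the reduction from the Lie algebra formulation, which is the less direct case.

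\emph{Step 1: transfer to $\fg$.} Let $\mu=\phi\,\mu_G$ with $\phi\in C_c^\infty(G)$. Since $G=\GL_n(F)$ is open in $\fg$, and $\mu_G$ is a constant multiple of $|\det x|^{-n}dx$ with $dx$ the additive Haar measure, the measure $\mu$ extended by zero is $\psi\,dx$. Here $\psi=c\,\phi\,|\det|^{-n}$ lies in $C_c^\infty(\fg)$, because $|\det|$ is locally constant and bounded away from $0$ on $\operatorname{supp}\phi$. The Chevalley map $p:\bfG\to\bfC$ is the restriction of the characteristic polynomial map $\tilde p:\fg\to\A^n$, and $C\subset F^n$ is the open subset where the constant coefficient is nonzero. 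Hence $p_*(\mu)=\tilde p_*(\psi\,dx)|_C$.

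\emph{Step 2: apply \cite[Theorem E]{AGKS2}.} This gives $\tilde p_*(\psi\,dx)=f_0\lambda$ with $f_0\in L^\infty(F^n)$, and $f_0$ is supported in the compact set $\tilde p(\operatorname{supp}\psi)$.

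\emph{Step 3: change of Haar measure.} Under $C\cong F^\times\times F^{n-1}$, the measure $\mu_C$ is $c'\,|a_0|^{-1}\lambda$, where $a_0$ is the constant coefficient (which equals $\pm\det$). Therefore $p_*(\mu)=f\mu_C$ with $f=c'^{-1}|a_0|\,f_0$. On the support of $f_0$ we have $|a_0|=|\det|$ bounded above, since $\operatorname{supp}\phi$ is compact. Hence $f\in L^\infty(C)$.

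The only real content is in \cite[Theorem E]{AGKS2}, which we may assume. The main point needing care is checking that its hypotheses and normalization match the form used here: the statement there may be phrased via the (FRS)/bounded-density property for $\fg$, or with a different group, and one must make sure that $\GL_n$, in the $\chara(F)>\frac n2$ range, is covered. The change between the multiplicative and additive Haar measures in Step 3 is routine.
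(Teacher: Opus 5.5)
Your proposal is correct and takes the same route as the paper, which gives no separate argument and simply quotes the statement as a special case of \cite[Theorem E]{AGKS2}. Your Steps 1--3 are the routine normalization bookkeeping that this citation leaves implicit, and you carry them out correctly: you restrict from the Lie algebra to the open set $G=p_0^{-1}(C)$, and you convert between the additive Haar measure and $\mu_C=c'|a_0|^{-1}\lambda$, using that $|a_0|$ is bounded on the compact image.
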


 
In fact, we prove a more explicit \RamiC{version of the bound in \Cref{thm:bnd1}}.
In order to formulate it we will need the following notation. 

\begin{notation} Denote: 
    \begin{enumerate}
        \item By \tdef{$\bfT$} the standard maximal torus of $\bfG$. 
        \item By $\mdef{W}\cong S_n$ the Weyl group.
        \item We will 
        identify the Chevalley space \tdef{$\bfC$} with \RamiC{the categorical quotient} $\bfT//W$.
        \item By $\mdef{\bfY}:=(\bfT\times\bfT)//W$  the categorical quotient by the diagonal action.\footnote{See \S\ref{sec:fac} below for its existence.}
        \item By $\mdef\pi:\bfY\to \bfC$   the projection to the first coordinate. 
        \item $\mdef{{\bf \Upsilon}}:=\bfG\times_{\bfC}\bfG\times_{\bfC}\bfY.$
\item By $\mdef{\zeta}:{\bf \Upsilon}\to \bfG$ the projection on the second coordinate.
\item  By  $\mdef{\Delta}\in \DimaB{\cO}_\bfG(\bfG)$ the discriminant, i.e. $\Delta(g)$ is the discriminant of the characteristic polynomial of $g$.
\item  By $\mdef{\cR}:G \to \N \cup \{\infty\}$ the function given by $$\mathcal R(x)=\max(0,-\min val(x_{ij}),val(\det(x)),val(\Delta(x))).$$
    \end{enumerate}
\end{notation}
\begin{remark}
    Throughout the paper we use various notations for specific varieties, sets and maps between them. We summarize these objects in some diagrams in \Cref{sec:diag}. It might be easier to follow some parts of the paper with these diagrams visible. Of course we will not rely on this, and all the objects will be defined before their first use. 
\end{remark}

\begin{introtheorem}[\S \ref{sec:Pfbnd2}]\label{thm:bnd2}
Let $\rho$ be an irreducible cuspidal representation of $G$. Then there exist:
\begin{enumerate}
    \item a real valued non-negative  function $e\in C^{\infty}({\bf \Upsilon}(F))$ such that $\zeta|_{\Supp e}$ is proper,
\item a top differential form $\omega$ on the smooth locus of ${\bf \Upsilon}$, and
\item an integer $k$
\end{enumerate}
such that for any $g\in C_c^\infty(G)$ we have:
$$|\langle \chi_\rho, g\mu_G\rangle|\leq  \langle \zeta_*(|\omega|e)\cR^k, |g|\rangle.$$

\end{introtheorem}
In order to deduce \Cref{thm:bnd1} from \Cref{thm:bnd2} we prove another statement (\Cref{thm:Y.geo.int} below) about the geometric structure of $\bfY$ and use a general result about integrability of pushforward of a smooth measure under a dominant morphism (\Cref{thm.IT} below). In order to formulate these results we make the following:
\begin{definition}
        We say that an algebraic variety $\bfZ$ is \tdef{ geometrically integrable} if there exists a resolution of singularities $\gamma:\tilde \bfZ\to \bfZ$ s.t. the natural morphism        
             $\gamma_*(\Omega_{\tilde \bfZ})\to i_*(\Omega_{\bfZ^{sm}})$ is an isomorphism. Here\Dima{ 
           $\bfZ^{sm}$ is the smooth locus of $\bfZ$, and  $i:\bfZ^{sm} \hookrightarrow \bfZ$ is the embedding.}
\end{definition}
\begin{introprop}[\S\ref{sec:Y}]\label{thm:Y.geo.int}
    The variety $\bfY\RamiC{=(\bfT\times\bfT)//W}$ is geometrically integrable.
\end{introprop}
\begin{remark}
$ $
\begin{itemize}
    \item
    In characteristic zero case, the singularities of a   variety  are rational iff it is geometrically integrable and Cohen-Macaulay (see e.g. \cite[{Appendix B,} Proposition 6.2]{AA}). 
    \item 
    In characteristic $0$, \Cref{thm:Y.geo.int} follows immediately from the fact that a quotient singularity is rational
(see \cite[Corollaire]{Boutot}).
\item \RamiC{We do not know whether $\bfY$ is Cohen-Macaulay (in positive characteristic.)}
\end{itemize}

\end{remark}
In order to deduce \Cref{thm:bnd1} from \Cref{thm:bnd2} and \Cref{thm:Y.geo.int} we need the following:
\begin{introprop}[\Cref{thm:main theorem for alg maps}]\label{thm.IT}
    Let $\gamma:\bfM\to \bfN$  be a generically smooth morphism of smooth irreducible algebraic varieties. Then there exists $\eps>0$ s.t. for 
    any smooth compactly supported measure $\mu_M$ on $M:=\bfM(F)$ there exist smooth compactly supported measure $\mu_N$ on $N:=\bfN(F)$ and a  function $f\in L^{1+\eps}(N)$ such that $$\RamiA{\gamma}_*(\mu_M)=f\mu_N.$$        
\end{introprop}
\RamiC{
\begin{introremark}
    Theorems \ref{thm:bnd1} and \ref{thm:bnd2} have versions for orbital integrals and for Fourier transforms of characteristic measures of regular semisimple orbits analogous to Theorems \ref{thm:intro.Omega.p} and \ref{thm:Lie.p}. The proofs are identical.
\end{introremark}    
}

\subsection{Summary of 
the logic of the paper
}
The following diagram provides a guideline regarding the logic of the proofs of the main results of the paper.
$$
\begin{tikzpicture}[node distance=1.8cm,>=stealth, xscale=2.4, yscale=1.3]

\tikzstyle{rect}=[draw,rounded corners,minimum width=12mm,minimum height=8mm]

\tikzset{
  impraw/.style={
    -{implies[width=7pt]},
    double,
    double distance=0.5mm,
    line width=0.4pt
  }
}

\node[rect,draw=blue] (A) at (-3,3) {\Cref{conj:1}};
\node[rect,draw=blue] (B) at (-3,1) {\Cref{conj:2}};
\node[rect] (C) at (-2,2) {\Cref{thm:main}};
\node[rect] (D) at (2,2) {\Cref{thm:minor}};

\node[rect] (E) at (0,0) {\Cref{thm:bnd1}};
\node[rect] (CC) at (-2,0) {{\cite[Theorem D]{AGKS2}}};
\node[rect] (DD) at (2,0) {{\cite[Theorem E]{AGKS2}}};

\node[rect] (G) at (-1.5,-2) {\Cref{thm:Y.geo.int}};
\node[rect] (H) at (0,-3) {\Cref{thm.IT}};
\node[rect] (F) at (1.5,-2) {\Cref{thm:bnd2}};


\draw[impraw] (B) -- (A);

\draw[impraw] (E.north west) to[out=135,in=-45] (C.south east);

\draw[impraw] (E.north east) to[out=45,in=-135] (D.south west);

\draw[impraw] (G) to[out=0,in=-90] (E.south);
\draw[impraw] (H) to[out=90,in=-90] (E.south);
\draw[impraw] (F) to[out=180,in=-90] (E.south);

\draw[impraw] (CC) to[out=0,in=-45] (C.south east);
\draw[impraw] (DD) to[out=180,in=-135] (D.south west);


\draw[white,line width=0.9pt]
  (G) to[out=0,in=-90] ([yshift=-0.5mm]E.south);

\draw[white,line width=0.9pt]
  (H) to[out=90,in=-90] ([yshift=-0.5mm]E.south);

\draw[white,line width=0.9pt]
  (F) to[out=180,in=-90] ([yshift=-0.5mm]E.south);

\draw[white,line width=1.0pt]
  (E.north west) to[out=135,in=-45] ([xshift=0.4mm,yshift=-0.5mm]C.south east);

\draw[white,line width=1.0pt]
  (E.north east) to[out=45,in=-135] ([xshift=-0.4mm,yshift=-0.5mm]D.south west);

\draw[-stealth] (C) -- ($(B)!0.5!(A)$);

\end{tikzpicture}
$$

\subsection{Ideas of the proofs}
Most of the paper is devoted to the proof of \Cref{thm:bnd2}. The proofs of \Cref{thm:Y.geo.int} and \Cref{thm.IT} are significantly simpler. The rest of the results of the paper follow relatively easily from these 3 results (and the results of \cite{AGKS2}).

\subsubsection{Idea of the proof of \Cref{thm:bnd2}}

In fact, we will prove the following equivalent version of \Cref{thm:bnd2}:

\begin{customthm}[\S \ref{sec:Pfbnd3}]{\ref{thm:bnd2}'}
\label{thm:bnd3}
Let $\rho$ be an irreducible cuspidal representation of $G$. Then there exist:
\begin{enumerate}
    \item a real valued non-negative function $f'\in C^{\infty}({\bfY}(F))$ such that $\pi|_{\Supp f'}$ is proper,
\item a real valued non-negative function $h\in C^{\infty}(G)$ such that $p|_{\Supp h}$ is proper,
\item an invertible top differential form $\omega^0_\bfX$ on the smooth locus of $\bfX:=\bfG\times_{\bfC}\bfY$, 
\item an integer $k$, and
\item a real valued non-negative  function $\gamma\in C^\infty(X)$, where $X:=\bfX(F)$,
\end{enumerate}
such that for any $g\in C_c^\infty(G)$ we have:
$$|\langle \chi_\rho, g \mu_G\rangle|\leq \left  \langle \frac{\tau_*(|\omega^0_\bfX| \gamma \sigma^*(f'))}{\mu_G} p^*\left( \frac{p_*(h\mu_G )}{\mu_C}\right)\cR^k, |g| \mu_G\right\rangle,$$
where $\sigma:\bfX\to \bfY$ \DimaB{and $\tau: \bfX \to \bfG$ are the projections.}
\end{customthm}

We prove this theorem using the following steps:
\begin{enumerate}
    \item Following \cite{HC_VD}, for any function $f$ on $G$ whose support is compact modulo the center we define the orbital integral $\Omega(f)$ which is a function on the set $G^{rss}$ of regular semi-simple elements in $G$.  See \Cref{not:Omega}.
    \item\label{it:steps.F.2} Following \cite{HC_VD} we showed in \cite{AGKS2_1} that the character of a cuspidal representation $\rho$ is bounded by $\Omega(|m|)$ (up to a logarithmic factor), where $m$ is a matrix coefficient of $\rho$. Note that we have to explain what it means for a partially defined function to bound a generalized function. See \Cref{thm:OrbIntBoundChar} below for an exact formulation. 
    \item\label{it:steps.F.3} We construct an explicit  function $\kappa$ on $G^{rss}$ (see \S\ref{sec:bnd} below) and prove  
    that  $\Omega(|m|)$ is bounded by $\kappa\cdot p^*(p_*(|m|))$. Here the pushforward $p_*$ is taken with respect to  appropriate measures.
    \item \label{it:steps.F.4} We study the varieties
    $\bfY=(\bfT\times \bfT)//W$ and  $\bfX=\bfG\times_\bfC \bfY=\bfG\times_\bfC (\bfT\times \bfT)//W$ and construct:
    
        \begin{itemize}
        \item a rational section $\omega^2_{\bfX}$ of the square of the canonical bundle on the smooth locus of ${\bfX}$, and  
        \item an open set $\mathcal{B}\subset {\bfY}(F)$ such that $\pi|_{\mathcal{B}}$ is proper. Here \RamiC{$\pi:\bfY\to \bfC$} is the projection.
    \end{itemize}  
    such that 
    $$\tau_*\left (\left. \sqrt{|\omega^2_\bfX|}\right|_{\sigma^{-1}(\mathcal{B})} \right)=\kappa|\omega_\bfG|.$$
Here:
\begin{itemize}
    \item 
 $\sqrt{|\omega^2_\bfX|}$ is the measure on ${\bfX(F)}$ corresponding to $\omega^2_\bfX$, see \S\ref{ssec:forms} below for precise definition.     
    \item $\omega_\bfG$ is the standard top form on $\bfG$.
\end{itemize}    
See \S\ref{sec:kap.form} for the construction.
\item\label{it:steps.F.5} We prove that the section $\omega^2_\bfX$ is regular. See \S\ref{Sec:regularity} below.
\item We construct an invertible top form $\omega_\bfY$ on the smooth locus of $\bfY$. 
\item\label{it:steps.F.7} We use $\omega_\bfY$ and the standard form $\omega_\bfG$ on $\bfG$ in order to construct an invertible top form $\omega^0_\bfX:= \omega_\bfG\boxtimes_{\omega_\bfC} \omega_\bfY$ on the smooth locus of $\bfX$ (see \Cref{def:boxtimes} below for the notation $\boxtimes_{\omega_\bfC}$). 
\item \label{it:steps.F.8} \RamiC{We use steps \eqref{it:steps.F.5} and \eqref{it:steps.F.7} to note that
s}ince $\omega_\bfX^0$ is invertible and $\omega^2_\bfX$ is regular, the measure
$|\omega_\bfX^0|$ locally dominates $\sqrt{|\omega^2_\bfX|}$.
\item\label{it:steps.F.9} \RamiC{We use steps (\ref{it:steps.F.2},\ref{it:steps.F.3},\ref{it:steps.F.4},\ref{it:steps.F.8}) to} obtain that, up to a logarithmic factor, the character $\chi_\rho$ is bounded by $\tau_*(|\omega_\bfX^0|\cdot 1_{\sigma^{-1}(\cB)}) p^*(p_*(|m|)$.
\item\label{it:steps.F.10} We bound $\tau_*(|\omega_\bfX^0|\cdot 1_{\sigma^{-1}(\cB)})$ by $p^*(\pi_*(|\omega_\bfY| \cdot {1_\cB}))$.
\item We deduce  \Cref{thm:bnd3} from \RamiC{steps (\ref{it:steps.F.9},\ref{it:steps.F.10})}. \item We deduce  \Cref{thm:bnd2}. 

\end{enumerate}
\subsubsection{Idea of the proof of \Cref{thm:Y.geo.int}}
We embed $\bfY$ into the quotient $\DimaB{(\bA^2)^n//S_n}$ and thus reduce to showing the integrabilty of $(\bA^2)^n//S_n$. This we did in \cite{AGKS2_2}. 

\subsubsection{Idea of proof of \Cref{thm:bnd1}}
We first deduce from  \Cref{thm:bnd3} another slightly different version of \Cref{thm:bnd2}:

\begin{customthm}[\S \ref{sec:Pfbnd4}]{\ref{thm:bnd2}''}
\label{thm:bnd4}
Let $\rho$ be an irreducible cuspidal representation of $G$. Then there exist:
\begin{enumerate}
    \item a real valued non-negative  function $f'\in C^{\infty}({\bfY}(F))$ such that $\pi|_{\Supp f'}$ is proper,
\item a real valued non-negative  function $h\in C^{\infty}(G)$ such that $p|_{\Supp h}$ is proper,
\item an invertible top differential form $\omega_\bfY$ on the smooth locus of ${\bfY}$, and
\item an integer $k$
\item a real valued non-negative  function $\gamma\in C^\infty(G)$
\end{enumerate}
s.t.  for any $g\in C_c^\infty(G)$ we have:
$$|\langle \chi_\rho, g \mu_G\rangle|\leq \left  \langle \gamma p^*\left(\frac{\pi_*(|\omega_\bfY|f')}{\mu_C} \frac{p_*(|\omega_\bfG|h)}{\mu_C}\right)\cR^k, |g| \mu_G\right\rangle.$$
\end{customthm}
Then we prove \Cref{thm:bnd1} using the following steps:
\begin{enumerate}
    \item Let $f',h, \omega_\bfY$ be as in \Cref{thm:bnd4}. 
        \item Let $\pi:\bfY\to \bfC$ be the natural map and set $f:= \pi_*(f')$. Here we choose appropriate measures to define the pushforward.
        \item We use \Cref{thm:Y.geo.int} and \Cref{thm.IT} in order to show that $f\in L_{loc}^{1+\eps}$.
        \item We deduce \Cref{thm:bnd1}.
\end{enumerate}

\subsubsection{Idea of proofs of Theorems \ref{thm:main} and \ref{thm:minor}}

Let us start by sketching the proof of \Cref{thm:main}.
\begin{enumerate}
    \item \Cref{thm:alm.an.frs}  and  \Cref{conj:2} imply that
    $p_*$ maps every $L^\infty$ compactly supported function to an $L^t$ function for any $t\in (1,\infty)$.
    
        \item This implies that $p^*$ maps every $L^{1+\eps}$  function to an $L^1_{loc}$ function.
    
    \item Let $f,h$ be as in \Cref{thm:bnd1}. We obtain that $p_*(h)\in L^{t}(\bfC(F))$ for all $t\in (1,\infty)$. Therefore, $fp_*(h) \in L^{1+\delta}(\bfC(F))$ for some $\delta>0$. Thus $p^*(fp_*(h))\in L^{1}_{loc}$ as required.
\end{enumerate}

The proof of \Cref{thm:minor} is the 
same when we replace \Cref{thm:alm.an.frs} 
by \Cref{thm:uncond.an.frs} and  \Cref{conj:2}  by 
the assumption $\chara(F) >\frac n2$.

\subsection{Structure of the paper}

In \S \ref{sec:Con-Nota} we fix some conventions and recall some standard facts on forms and measures.

In \S \ref{sec:OIcusp} we formulate the main result of \cite{AGKS2_1} that bounds the character of a cuspidal representation in terms of orbital integrals of the absolute value of its matrix coefficient.
This establishes our version of substep \eqref{it:intro.1}\eqref{it:intro.a} from the outline in \S\ref{subsub:HCpf}.

In \S \ref{sec:bnd} we begin our study of orbital integrals in the language of algebraic geometry. For this we construct an auxiliary function $\kappa:G^{rss}\to \R$ that allows us to describe the orbital integrals in terms of the pull of the push w.r.t. the Chevalley map $p^{rss}:\bfG^{rss}\to \bfC^{rss}$. See \Cref{thm:bnd.cusp.char} for an exact formulation. This established our version of substep \eqref{it:intro.1}\eqref{it:intro.b}   from the outline \RamiC{in} \S\ref{subsub:HCpf}. Roughly speaking $\kappa$ introduces an arithmetic correction to the more traditional factor $|\Delta|^{-\frac 12}$. 

In \S \ref{sec:fac}
we provide the proof of some standard facts regarding quotients of algebraic varieties by finite groups. Some of these are slightly less standard in positive characteristic.

In \S \ref{sec:basic} we introduce and study a few algebraic varieties that are related to 
$\bfG$. These varieties and properties of certain maps between them (such as flatness, irreducible fibers and reduced fibers) will play a key role in our arguments in the next sections. The reader is advised to consider the diagram below \Cref{lem.GG.nice} when reading this section. In \S\ref{sec:Y} we prove that $\bfY$ is geometrically integrable (\Cref{thm:Y.geo.int}). This bridges between \Cref{thm:bnd2}  and \Cref{thm:bnd1}. 

In \S\ref{sec:kap.form} 
we obtain a  geometric formula for $\kappa$ that relates it to a form $\omega_\bfX$ on the variety $\bfX$.

In \S\ref{Sec:regularity} we prove that  $\omega_\bfX$ is regular (on the smooth locus of $\bfX$). This makes the formula in \S\ref{sec:kap.form} useful.

In \S\ref{sec:om0} we construct a regular invertible form $\omega^0_\RamiC{\bfX}$ that can bound $\omega_\RamiC{\bfX}$  in the formula from \S\ref{sec:kap.form}.

In \S\ref{sec:Pfbnd234} we prove \Cref{thm:bnd2} and its versions. \RamiC{This provides an} explicit geometric bound on the character of a cuspidal representation.

In \S \ref{Sec: proof of Theorem E}
we provide a proof of \Cref{thm:bnd1}.

In \S \ref{Sec: proof of Theorem C and D}
we deduce \Cref{thm:main} and \Cref{thm:minor} 
from  \Cref{thm:bnd1} combined with results of our previous paper \cite{AGKS2}.

In \S\ref{sec:AltMain} we provide several alternatives to the condition of existence of a resolution in \Cref{thm:main}.

\Cref{app:IY} \RamiC{by I. Glazer and Y. Hendel provides} a proof of \Cref{thm.IT}.

In \Cref{app:lem} we explain the mistake in \cite{Le4}.

In \Cref{sec:diag} we present several diagrams containing the main objects in the paper. These diagrams can help to follow the arguments in the paper.

\subsection{Acknowledgments}
 We thank Bertrand Lemaire for detailed discussions of  his work.
 
 We would like to thank Dan Abramovich and Michael Temkin for enlightening conversations about resolution of singularities. 
We would also like to thank Nir Avni for many conversations on algebro geometric analysis. 

\RamiC{We thank Itay Glazer and Yotam Hendel for  their useful suggestions.}

During the preparation of this paper, A.A., D.G. and E.S. were partially supported by the ISF grant no. 1781/23. 
D.K. was partially supported by an ERC grant 101142781.

\section{Notations and Preliminaries}\label{sec:Con-Nota}
\subsection{Conventions}\label{ssec:conv}
\begin{enumerate}
    \item By a \tdef{variety} we mean a reduced scheme of finite type over $F$. 
    \item When we consider a fiber product of varieties, we always consider it in the category of schemes. \Rami{We use set-theoretical notations to define subschemes, whenever no ambiguity is possible.} 
    \item We will usually denote algebraic varieties by bold face letters (such as $\bfX$) and the spaces of their $F$-points by the corresponding usual face letters (such as $X:=\bfX(F)$). We use the same conventions when we want to interpret vector spaces as algebraic varieties.
    \item For Gothic letters we use underline instead of boldface.
    \item We will use the same letter to denote a morphism between algebraic varieties and the corresponding map between the sets of their $F$-points.
    \item We will use the symbol \tdef{$\square$} in a middle of a square diagram  in order to indicate that the square is Cartesian. 
    \item We will use numbers in a middle of a square diagram in order to refer to the square by the corresponding number.
    \item By an \tdef{$F$-analytic manifold} we mean an analytic manifold over $F$ in the sense  of \cite{Ser92}.
    \item A \tdef{big open set} of an algebraic variety $\bfZ$  is an open set whose complement is of co-dimension at least 2 (in each component).
    \item \RamiC{When no ambiguity is possible  we will denote the adjoint action simply by $``\cdot"$.}    
    \item For a measure space $(Z,\mu)$ we denote by $\mdef{L^{<\infty}}(Z,\mu):=\bigcap_{p<\infty} L^p(Z,\mu)$.  We also introduce $\mdef{L_{loc}^{<\infty}}(Z,\mu):=\bigcap_{p<\infty} L_{loc}^p(Z,\mu)$. Note that if $Z$ is an $F$-analytic manifold and $\mu$ is a nowhere vanishing smooth measure then the  spaces $L^p_{loc}(Z,\mu)$ and  $L^{<\infty}_{loc}(Z,\mu)$ do not depend on $\mu$, so we will omit $\mu$ from the notation.   
        \item \RamiC{We will use the symbol \tdef{$<$} to denote the (not necessarily proper) containment relation for groups}.

\end{enumerate}

\subsection{Notations}

We denote by:
\begin{enumerate}
\item \tdef{$\omega_\bfT$} - the standard $\bfT$-invariant form on \RamiC{the torus} $\bfT$.
\item For a group (or an algebraic group) $H$ we denote by \tdef[$Z(\cdot)$]{$Z(H)$} the center of $H$.
    \item $\mdef{G^{ad}}:=G/Z(G)$,     
    $\mdef{\bfG^{ad}}:=\bfG/Z(\bfG)$. 
     \RamiC{Note that $G^{ad}{\lneq} \bfG^{ad}(F).$}
        \item $\mdef{\mu_{Z(G)}}$ the Haar measure on $Z(G)$ normalized on the maximal compact subgroup of $Z(G)$.
\item $\mdef{\mu_{G^{ad}}}$ the Haar measure on $G^{ad}$ that corresponds to $\mu_{G}$ and $\mu_{Z(G)}$.
    \item We equip $\bf C$ with a group structure using the identification $\bfC\cong \bG_m\times \bA^{n-1}$.
    \item \tdef{$\ug$} is the Lie algebra of $\bfG$ (considered as an algebraic variety).
    \item $\mdef \fg:=\ug(F)$.
    \item \tdef{$\Delta$} the discriminant considered as a regular function on $\bfG$.
    \item $\mdef{\bfG^{rss}}\subset \bfG$ the non-vanishing locus of $\Delta$. This is the locus of regular-semi-simple  elements.    
    \item $\mdef{\bfT^r}:=\bfG^{rss}\cap \bfT$.
    \item $\mdef{\Delta^{rss}}:=\Delta|_{\bfG^{rss}}$.
    \item $\mdef{G^{rss}}:=\bfG^{rss}(F)$.    
    \item $\mdef{\bfC^{rss}}$ and $\mdef{C^{rss}}$ the images of $\bfG^{rss}$ and $G^{rss}$ in $\bfC$ and $C$.
    \item $\mdef{p^{rss}}:\bfG^{rss}\to \bfC^{rss}$ the restriction of $p\RamiC{:\bfG\to \bfC}$.
    \item  $\mdef{\Delta_C}$  the discriminant considered as a function on $\bfC$.
    \item $\mdef{\uc}:=\ug//\bfG$, $\fc:=\uc(F)$.
    \item We identify $\uc$ with the collection of monic polynomials of degree $n$. Under this identification $\bfC$ is identified with $\{f\in\uc|f(0)\neq 0\}.$
    
    \item 
   Similarly $\bfC^{rss}$ is  identified with the collection of all separable polynomials in $\bfC$.

\end{enumerate}

\subsection{Forms and measures}\label{ssec:forms}
By a measure on a topological space $Z$ we mean a $\sigma$-additive complete measure that is defined on all Borel subsets of $Z$. We will usually assume that it is positive, but in-general we will not assume that it is locally finite.
\begin{definition}
Let $E$ be a line bundle on an algebraic variety $\bfZ$.
\begin{itemize}
    \item A \gendef{rational section} of $E$ is a section defined over an open dense set in $\bfZ$.\index{section}\index{section!rational}
    \item 
A \tdef[section!$\Q$-]{$\Q-$section}\index{$\Q$-!section} of $E$ is a pair $(n,\xi)$ where $n\in \N$ and $\xi\in \Gamma(\bfZ,E^{\otimes n})$ up to the equivalence relation generated by:
$$(n,\xi)\sim(nk,\xi^{\otimes k})$$
\item We define the notion of a \gendef{rational $\Q$-section} correspondingly.
\item We will use the notion of rational sections and rational $\Q$-sections also when $E$ is defined only on an open dense subset of $\bfZ$.
\item In the notions above, if $E$ is the bundle of (relative) top differential forms we will refer to sections of $E$ as (relative) \tdef[form!$\Q$-, rational]{top forms}.
If $E$ is a trivial bundle, we will refer to 
sections of $E$ as \gendef{functions}. If $E$ is a 
trivial bundle and \RamiC{$\bfZ$} is a point, we will refer to 
sections of $E$ as numbers.\index{form}
\index{$\Q$-!function}\index{$\Q$-!number}In particular, we will refer to 
a $\Q$-section of the trivial bundle over a point
as a \gendef{$\Q$-number}.
\item Note that any rational $\Q$-function can be raised in any  rational power. 
\end{itemize}
\end{definition}
\begin{definition}
\RamiC{Let $Z$ be an $F$-analytic manifold.}
\begin{itemize}
\item Denote by \tdef[$D_\bullet$]{$D_Z$} the sheaf of densities on $Z$, i.e. the sheaf whose sections are smooth measures.
\item If $\omega$ is a top form on $Z$ we denote the corresponding measure on $Z$ by $\mdef{\vert\omega\vert}$. If $\omega$ is invertible then this is a section of $D_Z$.
    \item Define the space of generalized functions $\mdef{C^{-\infty}}(Z)$ to be the space of functionals on the space $C_c^\infty(Z,D_Z)$ of smooth compactly supported measures.

\end{itemize}
\end{definition}
\begin{definition}
\RamiC{Let $\bfZ$ be a smooth algebraic variety.}
\begin{itemize}
\item Denote by $\Omega_\bfZ$ the sheaf of top differential forms on $\bfZ$.
\item For 
 \RamiC{a top form  $\omega$ on $\bfZ$  denote the corresponding measure on $Z:=\bfZ(F)$ by $|\omega|$.}
\item Based on the above, for an invertible section $\omega$ of $\Omega_\bfZ^{\otimes k}$ we can define the corresponding section $|\omega|$ of $D_Z^{\otimes k}$. Note that we have a natural positive structure on $D_Z$, and this section is positive with respect to this structure.
\item For an invertible  $\Q$-top form $\omega:=(k,\omega_1)$ we define $\mdef{\vert\omega\vert}:=|\omega_1|^{\frac 1k}$. Here we take the positive $k$-\RamiC{th} root. 
\item If $\omega$ is not invertible, the definition above defines a density on the non-vanishing locus of $\omega$. This section naturally extends to a Radon measure on $Z$ which we denote also by $|\omega|$.
\item If $\omega$ is a rational $\Q$-top-form we get a measure on an open dense set. We can push this measure to $Z$ and get a not-necessarily-Radon measure. However this measure \RamiC{is} still  $\sigma$-finite.  We denote this measure also by $|\omega|$.

\end{itemize}
\end{definition}
\begin{definition}
$ $
\begin{itemize}
\item For a pair of Borel (not-necessarily locally finite) $\sigma$-finite measures $\mu_1,\mu_2$ on the same topological space s.t. $\mu_1$ is absolutely continuous w.r.t. $\mu_2$ we denote by \tdef{$\frac{\mu_1}{\mu_2}$} to be the Radon-Nikodym derivative. We consider it as an almost everywhere defined function. 
\item Given a morphism of $F$-analytic varieties $\gamma:Z_1\to Z_2$, define the sheaf of relative densities $\tdef[$D_\bullet$]{D_\gamma}:=D_{Z_1}\otimes \gamma^*(D_{Z_2})^*$. Here $*$ denotes the internal Hom to the constant sheaf. \item Given a relative $\Q$-top-form on $Z_1$ w.r.t. $\gamma$, we denote the corresponding relative density  by $|\omega|$. If $\omega$ is a rational \Rami{$\Q$}-top form we consider $|\omega|$ as an almost everywhere defined relative density (defined on the regular locus of $\omega$, and smooth over its invertible locus).

\end{itemize}
\end{definition}
\begin{remark}
    Note that if $\gamma:\bfZ_1\to \bfZ_2$ is a generically smooth morphism of algebraic varieties,  $\omega_i$ are rational $\Q$-top forms on $\bfZ_i$ and $f\in C^{\infty}(Z_1)$ then 
$\gamma_*(|\omega_1|f)$ is absolutely continuous w.r.t. $|\omega_2|$. However $\gamma_*(|\omega_1|f)$ is not necessarily a locally finite measure so $\frac{\gamma_*(|\omega_1|f)}{|\omega_2|}$ is not necessarily in $L^1$ (or even generically finite).
    
\end{remark}
\begin{notation}
$ $
\begin{itemize}
    \item 
    For a smooth morphism $\gamma:\bfZ_1\to \bfZ_2$, a top differential form $\omega_{\bfZ_2}$ on $\bfZ_2$, and a relative top differential form $\omega_{\gamma}$ on $\bfZ_1$ with respect to $\gamma$, denote the corresponding top differential form on $\bfZ_1$ by $\omega_{\bfZ_2}\mdef *\omega_{\gamma}$. 

    We use the same notation for rational $\RamiC{\bQ}$-top-forms. Also in this case, we do not have to require that $\bfZ_i$ and $\gamma$ are smooth, instead it is enough to require that $\gamma$ is generically smooth.

    \item
    \RamiC{
    Conversely, if we are given
    (rational $\RamiC{\bQ}$-)top-forms
    $\omega_{\bfZ_1}, \omega_{\bfZ_2}$ 
    there is a unique (rational 
    $\RamiC{\bQ}$-)top-form $\omega_\gamma$  
    such that $\omega_{\bfZ_1
    } =\omega_{\bfZ_2} 
    *\omega_{\gamma}$. We call this 
    form the \tdef{Gelfand-Leray form} 
    w.r.t. the map $\gamma$ and the forms $\omega_{\bfZ_1},\omega_{\bfZ_2}$.
    }
\end{itemize}
\end{notation}

\begin{definition}\label{def:boxtimes}
    Given a Cartesian square of smooth morphism and smooth varieties:
    $$
    \begin{tikzcd}
\bfV \arrow[r] \arrow[d] \arrow[dr, phantom, "\square"] & \bfZ_1 \arrow[d] \\
\bfZ_2  \arrow[r] & \bfZ
\end{tikzcd}
$$
and top-forms $\omega,\omega_i$ on $\bfZ,\bfZ_i$ define a form $\omega_1\boxtimes_{\omega} \omega_2$ on  $\bfV$ in the following way:
\begin{itemize}
    \item Let $\omega_i'$ be a Gelfand-Leray relative form on $\bf Z_i$ w.r.t. the map $\bfZ_i\to \bfZ$.
    \item \RamiC{Let} $\omega_1'\boxtimes_\bfZ \omega_2'$ \RamiC{be} the corresponding relative form on $\bfV$ w.r.t. the map $\gamma:\bfV\to \bfZ$.
    \item \RamiC{Define} $\omega_1\mdef{\boxtimes_{\omega}} \omega_2:=\omega*(\omega_1'\boxtimes_\bfZ \omega_2')$. 
\end{itemize}

    We use the same notation for rational $\RamiC{\bQ}$-top-forms. Also in this case, we do not have to require $\bfZ_i$, $\bfZ$ and $\gamma$ \RamiC{to} be smooth, instead it is enough to require the maps \RamiC{to} be generically smooth.
\end{definition}

\section{Orbital integrals and characters of cuspidal representations}\label{sec:OIcusp}
In this section we formulate the main result of \cite{AGKS2_1} that bounds the character of a cuspidal representation in terms of the orbital integrals of the absolute value of its matrix coefficient.

\begin{notation}\label{not:Omega} Let $x\in G^{rss}$.
\begin{itemize}
    \item  Denote by $\mdef{\mu_{G_x}}$ the Haar measure on the torus $G_x$ normalized such that the measure of the maximal compact subgroup of $G_x$ is 1. 
\item Denote by $\mdef{\mu_{G\cdot x}}$ the $Ad(G)$-invariant measure on the conjugacy class $Ad(G)\cdot x$ 
that corresponds to the measures $\mu_G$ and   $\mu_{G_x}$ under the identification
$Ad(G)\cdot x\cong G/G_x$. 

\item Let $f\in C^{\infty}(G)$ have compact support modulo the center of $G$. Let $\mdef{\Omega(f)}:G^{rss}\to \C$ be the function defined by $\Omega(f)(x)=\int f|_{G\cdot x}\mu_{G\cdot x}$.

\end{itemize}
\end{notation}

\begin{theorem}[{\cite[Theorem A]{AGKS2_1}}]\label{thm:OrbIntBoundChar}
    Let $\rho$ be a cuspidal irreducible representation of $G$. Then there exist:
    \begin{itemize}
        \item a function $m:G \to \mathbb{C}$ \RamiC{with a compact support modulo the center, and}
        \item a polynomial $\mdef{\alp^\rho}\in \RamiC{\mathbb{N}[t]}$
    \end{itemize}
        

such that for every $\eta\in C^{\infty}_c(G)$ we have 
    $$|\langle \chi_{\rho},\eta\cdot \mu_G\rangle|< \langle f\cdot \Omega(|m|),(|\eta|\cdot \mu_G)|_{G^{rss}}\rangle,$$
\RamiC{where} $f \in \Dima{C^{\infty}}(G^{rss})$ is defined by 
    $$f(g)=\alp^\rho(ov_{G^{rss}}(g)).$$

\end{theorem}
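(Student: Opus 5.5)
The plan is to follow Harish-Chandra's argument for the character of a cuspidal representation, and to make each estimate uniform over all of $G^{rss}$ rather than torus by torus.

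\textbf{Step 1 (truncated character formula).} Fix a unit vector $v$ in $\rho$. Let $d_\rho$ be the formal degree with respect to $\mu_{G^{ad}}$, and set $m(x):=d_\rho\overline{\langle \rho(x)v,v\rangle}$. Since $\rho$ is cuspidal, $m$ is compactly supported modulo the center. For $r\ge 0$ let $B_r\subset G^{ad}$ be the union of $K$-double cosets of $K\varpi^\lambda K$ with $\|\lambda\|\le r$. Using the Schur orthogonality relations, I would first establish the standard identity
$$\langle \chi_\rho,\eta\mu_G\rangle=\int_{B_r}\int_G \eta(x)\, m(gxg^{-1})\,\mu_G(x)\,\mu_{G^{ad}}(g),$$
valid for every $r\ge r_0(\eta)$. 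The point is that the truncated average $\int_{B_r}m(g\,\cdot\,g^{-1})$ stabilizes, once $r$ is large, on the support of $\eta$. This stabilization comes from the compact support of matrix coefficients of $\rho$ modulo the center, combined with the fact that $\eta$ is bi-invariant under some open compact subgroup.

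\textbf{Step 2 (uniform stabilization radius).} This is the main obstacle. I need the stabilization radius to be controlled pointwise: for $x\in G^{rss}$, the function $r\mapsto\int_{B_r}m(gxg^{-1})\,dg$ should be constant for $r\ge a+b\cdot ov_{G^{rss}}(x)$, with constants $a,b$ depending only on $\rho$ (and on $v$). I would try to prove this via a Cartan decomposition argument. Write $g=k_1\varpi^\lambda k_2$. If $\lambda$ is large in a direction that is not centralized by the split part of $G_x$, then $gxg^{-1}$ lies outside a fixed compact-mod-center set. The quantity $ov(x)$, a valuation of $\Delta(x)$ together with the entries of $x$ and $x^{-1}$, measures how large $\lambda$ must be before this happens. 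What remains is the directions in $G_x$ itself, along which $m(gxg^{-1})$ is constant. These directions require an adjustment of the truncation so that the contributions cancel, as in Harish-Chandra's stabilization lemma.

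\textbf{Step 3 (volume bound).} Once the radius is fixed as $r(x)=a+b\,ov(x)$, I write $g=g'\,t$ with $g'\in G/G_x$ and $t\in G_x/Z$. Then $m(gxg^{-1})=m(g'xg'^{-1})$, so
$$\Bigl|\int_{B_{r(x)}}m(gxg^{-1})\,dg\Bigr|\le \sup_{g'}\mu_{G_x/Z}\bigl(G_x\cap g'^{-1}B_{r(x)}\bigr)\cdot\Omega(|m|)(x).$$
The measure $\mu_{G_x}$ is normalized on the maximal compact subgroup of $G_x$. Hence the volume of a ball of radius $r$ in $G_x/Z$ grows at most like a polynomial of degree $\le n-1$ in $r$, the split rank of $G_x$ modulo the center. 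This polynomial can be chosen uniformly, because a ball in $G^{ad}$ meets each torus in a lattice-ball of comparable radius.

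Composing the polynomial volume bound with the linear radius $r(x)$ gives the polynomial $\alp^\rho$ with coefficients in $\mathbb{N}$. Finally, integrating against $|\eta|$ on $G^{rss}$, which has full measure, and applying Fubini yields the stated inequality. The interchange of integrals is justified because all integrands are nonnegative after the truncation.
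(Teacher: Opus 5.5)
The paper does not reprove this statement. It quotes \cite[Theorem A]{AGKS2_1} and describes that proof as Harish-Chandra's argument (truncated adjoint averages of a matrix coefficient, their stabilization, and an effective bound), redone uniformly over all of $G$ instead of torus by torus. Your Steps 1 and 3 fit that outline and are essentially fine. The gap is Step 2: it carries the whole content of the theorem, you only assert it, and your sketch omits the mechanism that makes it true. Write $F_r(x)=\int_{B_r}m(gxg^{-1})\,dg=\int_{G/G_x}m(\dot g x\dot g^{-1})\,v_r(\dot g)\,d\dot g$ with $v_r(\dot g)=\mathrm{vol}(G_x/Z\cap \dot g^{-1}B_r)$. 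For non-elliptic $x$ the weight $v_r(\dot g)$ grows with $r$. Already for a split torus in $\mathrm{GL}_2$ it is, once $r$ is large relative to $\dot g$, an affine function of $r$ whose slope does not depend on $\dot g$. So $F_r(x)$ stops depending on $r$ only because $\int_{G/G_x}m(\dot g x\dot g^{-1})\,d\dot g=0$. In higher rank you need every $r$-dependent coefficient of $v_r(\dot g)$, viewed as a function of $\dot g$, to integrate against $m(\dot g x\dot g^{-1})$ to zero. This is cuspidality in the form of vanishing constant terms of $m$ along proper parabolic subgroups. You use cuspidality only through compact support of $m$ modulo the center, which yields a support estimate but no cancellation. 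The ``adjustment of the truncation'' you mention is also not available: the identity of Step 1 holds for one fixed family $B_r$, independent of $x$.

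Second, the uniformity, which is the real difficulty here, is not addressed. You need constants $a,b$, independent of the torus $G_x$, such that two things hold. First, $\dot g x\dot g^{-1}\in\mathrm{supp}(m)$ must force $\dot g$ (normalized modulo $G_x$) into a ball of radius $\le a+b\,ov(x)$. Second, the expansion of $v_r(\dot g)$ must be valid once $r$ exceeds that radius. Your heuristic uses the Cartan decomposition relative to the standard torus and $K$. That describes the directions along $G_x$ only when $G_x$ is $K$-conjugate to the diagonal torus. For general $G_x$ (non-split, possibly wildly ramified) you must work in $G/G_x$. In positive characteristic there are infinitely many conjugacy classes of maximal tori, so Harish-Chandra's torus-by-torus constants do not produce a single pair $(a,b)$. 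The paper says this uniformity is exactly what had to be established in \cite{AGKS2_1}; your proposal reduces the theorem to it rather than proving it.

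Two minor repairs to the final assembly. First, $ov$ is unbounded on $\mathrm{supp}\,\eta$, so no single $r$ satisfies $r\ge r(x)$ for all $x$. Use instead that for every $r$ you have $|F_r(x)|\le P(\min(r,r(x)))\,\Omega(|m|)(x)\le P(r(x))\,\Omega(|m|)(x)$, with $P$ your increasing volume polynomial. Second, Fubini on $B_r\times G$ is justified by compactness, not positivity.
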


\begin{remark}

    A priori, the right hand side of the above inequality can be infinity.
We interpret the statement in that case as void. 
\end{remark}

\section{Expressing the orbital integral through $\kappa$}\label{sec:bnd}
In this section we construct the function $\kappa:G^{rss}\to \R$  and prove:
\begin{theorem}\label{thm:bnd.cusp.char}
    Recall that $p^{rss}:\bfG^{rss}\to \bfC^{rss}$ is the Chevalley map. Let $f\in C^\infty(G)$ be a  function s.t. its support is compact modulo $Z(G)$.
    Then there exists $\gamma\in C^{\infty}(G)$ such that     
    $$\Omega(f)=\kappa \gamma|_{G^{rss}} (p^{rss})^*\left(\frac{p^{rss}_*(f\mu_G|_{G^{rss}})}{\mu_C|_{C^{rss}}}\right)$$
\end{theorem}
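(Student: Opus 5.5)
We will essentially define $\kappa$ so that the identity becomes a coarea (Weyl-integration-type) formula fiber by fiber over $C^{rss}$. First we disintegrate $\mu_G|_{G^{rss}}$ along the smooth map $p^{rss}:G^{rss}\to C^{rss}$. Its fibers over $c\in C^{rss}$ are finite unions of regular semisimple conjugacy classes $G\cdot x$. On each fiber we have two measures. One is the Gelfand--Leray measure $\mu_G/\mu_C$ obtained from the forms $\omega_\bfG$ and $\omega_\bfC$. The other is the orbital measure $\mu_{G\cdot x}$ of \Cref{not:Omega}, normalized through $\mu_G$ and $\mu_{G_x}$. Both are $Ad(G)$-invariant measures on the homogeneous space $G\cdot x\cong G/G_x$, so their ratio is a positive constant on each orbit. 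We define
$$\kappa(x):=\frac{\mu_{G\cdot x}}{(\mu_G/\mu_C)|_{G\cdot x}}$$
(or its inverse, depending on the direction of the identity). This makes $\kappa$ an $Ad(G)$-invariant function on $G^{rss}$.

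The key computation is to make $\kappa$ explicit. Write $x=tg$ with $t$ in a maximal torus $\bfT_x(F)=G_x$. The map $G/G_x\times G_x^{r}\to G^{rss}$, $(g,t)\mapsto gtg^{-1}$, has Jacobian $|D(t)|=|\Delta(t)|$ with respect to $\mu_{G^{ad}}\times\mu_{G_x}$-type measures. Also $p|_{G_x}:G_x\to C$ has Jacobian $|\Delta|^{1/2}$ relative to $\omega_{\bfT_x}$ and $\omega_\bfC$, computed over $\bar F$ via the Vandermonde. From these two facts we get $\kappa=\kappa^0|\Delta|^{-1/2}$. Here $\kappa^0$ is the constant on each torus comparing $\omega_{\bfT_x}$ (the algebraic invariant form, descended from $\omega_\bfT$ via the splitting field) with the normalized Haar measure $\mu_{G_x}$. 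This is the arithmetic correction described in the introduction, and we take it as the definition of $\kappa$.

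With $\kappa$ in hand, for $c\in C^{rss}$ the fiber integral gives
$$\frac{p^{rss}_*(f\mu_G)}{\mu_C}(c)=\sum_{G\cdot x\subset p^{-1}(c)}\int_{G\cdot x} f\,\frac{\mu_G}{\mu_C}=\sum_{G\cdot x\subset p^{-1}(c)}\kappa(x)^{-1}\Omega(f)(x).$$
For $GL_n$ the fiber $p^{-1}(c)$ in $G^{rss}$ is a single $G$-orbit, since regular semisimple classes are determined by the characteristic polynomial (rational canonical form). Hence the sum has one term, and we obtain $\Omega(f)=\kappa\,(p^{rss})^*\bigl(p^{rss}_*(f\mu_G)/\mu_C\bigr)$ with $\gamma\equiv 1$. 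The factor $\gamma\in C^\infty(G)$ absorbs the discrepancy between the chosen normalizations. Specifically, $\mu_G/\mu_C$ is defined using $\omega_\bfG$ and the coordinates on $\bfC$, whereas $\mu_C$ and $\mu_G$ are Haar normalizations, so they differ by constants or by smooth factors such as powers of $|\det|$. This is also where one must handle $f$ being compactly supported only modulo the center: the pushforward is still finite on $C^{rss}$ because $p$ restricted to the support is proper modulo the center's action, which scales $c$.

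The main obstacle will be making the normalization of $\mu_{G_x}$ (maximal compact of measure $1$) compatible with the algebraic Gelfand--Leray computation uniformly over all tori, including wildly ramified ones in positive characteristic. Here one cannot appeal to finitely many conjugacy classes of tori. What must be checked is that the resulting $\kappa^0$ is honestly a well-defined function of $x$, namely $Ad$-invariant and measurable. Its finer properties, such as the geometric formula, can be postponed to \S\ref{sec:kap.form}. I would first verify the Jacobian identity pointwise at $\bar F$-level on $\bfG^{rss}$, which is characteristic-free since $\bfG^{rss}\to\bfC^{rss}$ is smooth. Only after that would I take absolute values of $F$-points, so that all arithmetic subtlety is isolated in the single comparison constant $\kappa^0$.
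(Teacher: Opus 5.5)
Your plan is essentially the paper's proof. The paper also compares three measures on the single orbit $G\cdot x=p^{-1}(p(x))\cap G^{rss}$: the Gelfand--Leray measure, the algebraic orbit form built from $\omega_\bfG$ and $\omega_{\bfG_x}$, and $\mu_{G\cdot x}$. It gets the $|\Delta|^{-1/2}$ factor from a pointwise tangent-space computation after passing to a splitting field (\Cref{cor:om.or.om.g-l}), and the $\kappa^0$ factor from $|\omega_{\bfG_x}|=\kappa^0(x)\mu_{G_x}$ (\Cref{lem:kappa0}).

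One correction is needed. For the group, the Weyl Jacobian is $|\Delta(t)|\,|\det t|^{-(n-1)}$, not $|\Delta(t)|$, so your claim that $\gamma\equiv 1$ is false. The paper's function is $\gamma(x)=\frac{|\omega_\bfG|}{\mu_G}\frac{\mu_C}{|\omega_\bfC|}|\det x|^{n-1}$, which your remark about $\gamma$ absorbing normalization constants and powers of $|\det|$ already covers.
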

\DimaA{Explicitly, $\gamma(x)=\frac{|\omega_\bfG|}{\mu_G}\frac{\mu_C}{|\omega_\bfC|} |\det(x)^{n-1}|$.}
\subsection{Construction of $\kappa$}\label{sec.kappa}

Let us start with an informal description of the construction.
We first define a canonical $\Q$-top form on any torus, see \Cref{def:wtor} below.
For $x\in G^{rss}$ we define $\Dima{\kappa^0}(x)$ to be the volume of the maximal compact subgroup of $G_x$ with respect to this form on $\bfG_x$. We define $\kappa:=\kappa^0/|\Delta|^{\frac{1}{2}}$.

\begin{notation}
Let     $\bfS$ be a torus 
defined over $F$.
        By \cite[Lemma 8.11]{BorelLAG}   the extension of   
        scalars $\bfS_{F^{sep}}$ of $\bfS$ to the separable closure ${F^{sep}}$ of $F$ is a split torus. Choose an isomorphism 
        $$\phi: \bfS_{F^{sep}} \to (\bG_m^n)_{F^{sep}}.$$
        Let         $\omega_{(\bG_m^n)_{F^{sep}}}$ be the standard top form on 
        $(\bG_m^n)_{F^{sep}}$. Let
        $$\omega_{\bfS_{F^{sep}},\phi}:=
        \phi^*(\omega_{(\bG_m^n)_{F^{sep}}}).$$ 
        Denote  by $$\omega_{\bfS_{F^{sep}},\phi}^2$$ its square considered as a 
        section of 
        $\Omega_{\bfS_{F^{sep}}}^{\otimes 2}$.
\end{notation}
\begin{lemma}\label{lem:phi.indp} The section
    $\omega_{\bfS_{F^{sep}},\phi}^2$  does not depend on $\phi$. 
\end{lemma}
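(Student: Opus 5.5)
Two choices of $\phi$ differ by an automorphism of the split torus, so the plan is to show that any automorphism of a split torus preserves $\omega^2$. Let $\phi,\phi'$ be two isomorphisms $\bfS_{F^{sep}}\to (\bG_m^n)_{F^{sep}}$. Then $\psi:=\phi'\circ\phi^{-1}$ is an automorphism of the algebraic group $(\bG_m^n)_{F^{sep}}$, and
$$\omega_{\bfS_{F^{sep}},\phi'}=\phi^*\psi^*(\omega_{(\bG_m^n)_{F^{sep}}}).$$
It therefore suffices to prove that
$$\psi^*(\omega_{(\bG_m^n)_{F^{sep}}})=\pm\,\omega_{(\bG_m^n)_{F^{sep}}},$$
since squaring removes the sign. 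Here I interpret ``isomorphism'' as an isomorphism of algebraic groups (tori). If arbitrary variety isomorphisms were allowed, one would first compose with a translation; translations preserve the invariant form, so nothing changes.

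To compute $\psi^*$, I would use the standard fact that the automorphism group of the split torus $\bG_m^n$ over any field (in particular over $F^{sep}$) is $\GL_n(\Z)$, acting through characters. Concretely, $\psi$ corresponds to a matrix $A=(a_{ij})\in\GL_n(\Z)$ with
$$\psi^*(x_i)=\prod_j x_j^{a_{ij}}.$$
The standard form is
$$\omega=\frac{dx_1}{x_1}\wedge\cdots\wedge\frac{dx_n}{x_n}.$$
Since $\psi^*(dx_i/x_i)=\sum_j a_{ij}\,dx_j/x_j$, multilinearity of the wedge product gives $\psi^*\omega=\det(A)\,\omega$. Because $A\in\GL_n(\Z)$, we have $\det A=\pm1$, and hence $\psi^*(\omega^2)=\omega^2$.

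This logarithmic differential computation is valid in every characteristic, since $d(x^a)/x^a=a\,dx/x$ holds formally. It is also compatible with reading $\omega^2$ as a section of $\Omega^{\otimes 2}$. The only nontrivial input is the identification of automorphisms of a split torus with $\GL_n(\Z)$, which follows from the equivalence between split tori and their character lattices. I expect no real obstacle; the only care needed is to note that $\phi$ is defined over $F^{sep}$ and that character groups of split tori are the same over any field.
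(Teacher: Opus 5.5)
Your proposal is correct and follows the paper's proof essentially verbatim: the paper also writes $\omega_{\bfS_{F^{sep}},\phi'}=\phi^*\mu^*(\omega_{(\bG_m^n)_{F^{sep}}})$ for an automorphism $\mu$ corresponding to some $\beta\in\GL_n(\Z)$, notes that $\mu^*$ multiplies the standard form by $\det(\beta)=\pm1$, and squares to remove the sign. Your explicit computation via the logarithmic forms $dx_i/x_i$ and your remark on translations are correct additional details that the paper leaves implicit.
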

\begin{proof}
Let $\phi,\phi': \bfS_{F^{sep}} \to (\bG_m^n)_{F^{sep}}$ be 2 isomorphisms. Then 
        $$ \omega_{\bfS_{F^{sep}},\phi'}=\phi^{*}\mu^{*}(
        \omega_{(\bG_m^n)_{F^{sep}}}
        ), $$
        where $\mu: (\bG_m^n)_{F^{sep}}\to (\bG_m^n)_{F^{sep}}$ is an automorphism. This automorphism corresponds to an element $\beta\in \GL_n(\Z)$. So we have 
        $$\mu^{*}(
        \omega_{(\bG_m^n)_{F^{sep}}}      )=\det(\beta)\omega_{(\bG_m^n)_{F^{sep}}}.$$
        We get 
    $$\omega_{\bfS_{F^{sep}},\phi'}=\det(\Dima{\beta})\omega_{\bfS_{F^{sep}},\phi}$$ and hence 
    $$\omega^2_{\bfS_{F^{sep}},\phi'}=\det(\Dima{\beta})^2\omega^2_{\bfS_{F^{sep}},\phi}=\omega^2_{\bfS_{F^{sep}},\phi}$$
\end{proof}
\begin{remark}
    Note that this notation is compatible with our notation $\omega_\bfT$ in the sense that the top form $\omega_\bfT$ coincides with the form defined here for the case $\bfS=\bfT$ when considered as a $\Q$-top-form. So in the case $\bfS=\bfT$ the expression $\omega_\bfT$ will continue to denote the actual top-form (and not just the $\Q$-top form).
\end{remark}

\begin{defn}\label{def:wtor}
Let $\bfS$ be a torus
defined over $F$.
By the above lemma (\Cref{lem:phi.indp}) $\omega^2_{\bfS_{F^{sep}},\phi}$ does not depend on $\phi$. So we will denote it by 
$\omega^2_{\bfS_{F^{sep}}}$. By Galois descent there exists a unique section $\omega^2_{\bfS}$ of $\Omega^{\otimes 2}_{\bfS_{F}}$ s.t. its extension of scalars to  $F^{sep}$ is $\omega^2_{\bfS_{F^{sep}}}$.
Define $\mdef[\omega_\bullet]{\omega_{\bfS}}:=[(2,\omega^2_{\bfS})]$ considered as a $\Q$-top form on $\bfS$.      
\end{defn}

Let us now define the function
$\kappa:G^{rss}\to \C$:
\begin{notation}\label{def:kappa}
    Let $x\in G^{rss}$ be a regular semi-simple element.
    \begin{enumerate}        
        \item 
        Denote by \tdef{$K_x$} the unique maximal compact subgroup of $G_x$. 
        \item Define $\kappa^0(x)=\int_{K_x}|\omega_{\bfG_x}|$
        \item Recall that $\Delta^{rss}:G^{rss}\to\C$ is the Weyl discriminant. 
        \item Define 
        $$\mdef{\kappa(x)}=\frac{\kappa^0(x)}{\sqrt{|\Delta(x)|}}.$$
    \end{enumerate}
\end{notation}
Note that the definition of $\kappa^0$ implies:
\begin{lemma}\label{lem:kappa0}
    For $x\in G^{rss}$ we have:    $$|\omega_{\bfG_x}|=\kappa^0(x)\mu_{G_x}$$
\end{lemma}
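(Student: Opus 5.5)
The identity $|\omega_{\bfG_x}|=\kappa^0(x)\mu_{G_x}$ is an equality of two measures on the group $G_x=\bfG_x(F)$. I would prove it by showing that both are Haar measures and then comparing their values on the single set $K_x$.

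First, $|\omega_{\bfG_x}|$ is a Haar measure on $G_x$. By \Cref{def:wtor}, $\omega_{\bfG_x}$ is the $\Q$-top form $[(2,\omega^2_{\bfG_x})]$. Over $F^{sep}$, its square $\omega^2_{\bfG_x}$ equals $\phi^*(\omega_{\bG_m^n})^{\otimes 2}$ for a splitting isomorphism $\phi$. The standard form $\omega_{\bG_m^n}$ is translation invariant and nowhere vanishing, so $\omega^2_{\bfG_x}$ is a translation-invariant, nowhere-vanishing section of $\Omega^{\otimes 2}$. Invariance and non-vanishing can both be checked after extending scalars, so they hold over $F$. Consequently $|\omega_{\bfG_x}|=|\omega^2_{\bfG_x}|^{1/2}$ is a smooth, positive, translation-invariant density on $G_x$, that is, a Haar measure. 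Since $G_x$ is commutative, there is no left/right issue.

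Next, by uniqueness of Haar measure, $|\omega_{\bfG_x}|=c\,\mu_{G_x}$ for some constant $c>0$. To find $c$, evaluate both sides on the compact open subgroup $K_x$. The measure $\mu_{G_x}$ is normalized by $\mu_{G_x}(K_x)=1$ (\Cref{not:Omega}). By \Cref{def:kappa}, $|\omega_{\bfG_x}|(K_x)=\kappa^0(x)$. Hence $c=\kappa^0(x)$.

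There is no real obstacle here. The only point needing care is the invariance of the $\Q$-form under Galois descent. This follows because translation by a point $s\in S$ commutes with extension of scalars, and the descended section is unique.
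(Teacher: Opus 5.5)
Your proposal is correct and matches the paper's approach. The paper gives no separate proof; it only remarks that the lemma follows from the definition of $\kappa^0$. The intended argument is exactly yours: $|\omega_{\bfG_x}|$ is a Haar measure on the torus $G_x$, hence a scalar multiple of $\mu_{G_x}$, and the scalar is found by evaluating both on $K_x$. What you add is the routine check that the Galois-descended $\Q$-form yields a translation-invariant, nowhere-vanishing density.
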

    
\subsection{Proof of \Cref{thm:bnd.cusp.char}}
Let us first describe the idea of the proof. 
For $x\in G^{rss}$ we consider two $G$-invariant measures on $G\cdot x$:
\begin{enumerate}
    \item the Gelfand-Leray measure with respect to the map $p:\bfG \to \bfC$. This is the absolute value of the Gelfand-Leray form that we denote by $\omega^{G-L}_{\bfG\cdot x}$.
    \item The measure $\mu_{G\cdot x}$ defined in \Cref{not:Omega} above.  
\end{enumerate}
We need to show that the ratio between these measures is $\kappa$.
For this we construct a third measure, which is the absolute value of the $\Q$-top-form $\omega_{\bfG\cdot x}$ that comes from the identification $\bfG \cdot x\cong \bfG/\bfG_x$,  the standard form $\omega_\bfG$ on $\bfG$ and the 
 canonical $\Q$-top-form $\omega_{\bfG_x}$ on the torus $\bfG_x$. Thus it remains to compute the ratios $\omega^{G-L}_{\bfG\cdot x}/\omega_{\bfG\cdot x}$ and $|\omega_{\bfG\cdot x}|/\mu_{G(F)\cdot x}$. The computation of the first ratio is an algebraic problem which is not sensitive to a field extension. Thus we can assume that $x \in T$, in which case the computation is straightforward. This part is responsible for the $\RamiC{|\Delta|}^{-\frac 12}$ factor.  The computation of the second ratio follows from \Cref{lem:kappa0}. This part is responsible for the $\RamiC{\kappa^0}$ factor.

For the proof, we will need some preparation.
\begin{notation}
    Denote by:
    \begin{itemize}
        \item $\ft$ the Lie algebra of $\bfT$,
        \item $\fg^{\neq 0}:=[\ft,\fg]$,
        \item $\mdef{\omega_\bfG}$ the standard $\bfG$-invariant (both from the left and from the right) top form on $\bfG$,
        \item  $\mdef{\omega_\bfC}$ the $\bfC$-invariant top form on $\bfC$ corresponding to the standard top form on $\bG_m \times \bA^{n-1} $ under the identification $\bfC\cong \bG_m \times \bA^{n-1}$.
    \end{itemize}
\end{notation}
The following lemma is standard.
\begin{lemma}\label{lem:lin.alg}
    Let $x\in T\cap G^{rss}$. 
    \begin{enumerate}
        \item \label{lem:lin.alg:1}
    Let $c_x:\fg^{\neq 0}\to \fg^{\neq 0}$ be defined by $c_x(y)=[x,y]$. Then $$\det(c_x)=\Delta(x)$$
    \item \label{lem:lin.alg:2} Let $I:\fc\to \ft$  be the isomorphism given by the identification $$\fc\cong F^n\cong \ft.$$ Then $$\det(I \circ d_x p|_{\ft})^2=\Delta(x).$$
   Here we identify $T_x\bfT\cong \ft$ and $T_{p(x)}\bfC\cong \fc$ using the group structures on $\bfT$ and $\bfC$.    
    \end{enumerate}
\end{lemma}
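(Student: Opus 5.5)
Both parts are explicit computations with $x=\mathrm{diag}(x_1,\dots,x_n)\in T$ having distinct entries. Throughout I use the convention $\Delta(x)=\prod_{i\neq j}(x_i-x_j)$, which is what makes the two identities consistent; it agrees with the discriminant of the characteristic polynomial up to the sign $(-1)^{n(n-1)/2}$.

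For part \eqref{lem:lin.alg:1}, the space $\fg^{\neq 0}=[\ft,\fg]$ is spanned by the off-diagonal elementary matrices $E_{ij}$, $i\neq j$. Each of them is an eigenvector of $c_x$, since
$$[x,E_{ij}]=(x_i-x_j)E_{ij}.$$
So $c_x$ is diagonal in this basis, and $\det(c_x)=\prod_{i\neq j}(x_i-x_j)$. This is $\Delta(x)$ with the convention above. If instead one identifies $c_x$ with the operator $\mathrm{Ad}(x)-1$ (the group version), each eigenvalue becomes $x_i/x_j-1=(x_i-x_j)/x_j$. The extra factor is then $\prod_{i\neq j}x_j^{-1}=\det(x)^{-(n-1)}$, which is exactly the $|\det(x)^{n-1}|$ appearing in the formula for $\gamma$. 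I would check which normalization the paper uses, but the computation is the same either way.

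For part \eqref{lem:lin.alg:2}, the Chevalley map restricted to $T$ sends $(x_1,\dots,x_n)$ to the coefficients of $\prod_i(t-x_i)$, that is, to $\bigl((-1)^k e_k(x_1,\dots,x_n)\bigr)_{k=1}^{n}$. The Jacobian of $(e_1,\dots,e_n)$ with respect to $(x_1,\dots,x_n)$ is classical: $\partial e_k/\partial x_j=e_{k-1}(x_{\hat j})$, and the determinant is the Vandermonde $\prod_{i<j}(x_i-x_j)$ up to sign. This can be seen in two ways. Either both sides are polynomials of the same degree $n(n-1)/2$, and the Jacobian vanishes whenever $x_i=x_j$; or one factors the matrix as a Vandermonde-type matrix times a unitriangular one. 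The signs $(-1)^k$ only change the determinant by a sign.

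It remains to account for the identifications of tangent spaces via the group structures. On $\bfT$, translating to the identity multiplies the $j$-th coordinate direction by $x_j$. On $\bfC\cong\bG_m\times\bA^{n-1}$, only the $\bG_m$ coordinate (the constant term $\pm\prod_i x_i$) gets rescaled, by that coordinate. These rescalings change the determinant by the factor $\pm\prod_j x_j\cdot(\prod_i x_i)^{-1}=\pm1$. So the determinant in the invariant trivializations is $\pm\prod_{i<j}(x_i-x_j)$, and squaring it gives
$$\prod_{i<j}(x_i-x_j)^2=\pm\prod_{i\neq j}(x_i-x_j)=\pm\Delta(x).$$

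The main obstacle is only this bookkeeping: matching the sign and normalization conventions for $\Delta$ and for the trivializations of the tangent spaces. Since the statement is eventually used under absolute values, the signs are harmless, and I expect no real difficulty. Over any field, including characteristic $2$, the identities hold as polynomial identities, so characteristic plays no role.
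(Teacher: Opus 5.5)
Your computation is correct and is the standard verification the paper has in mind; the paper gives no proof and just calls the lemma standard. Two refinements: the degree-plus-vanishing argument only pins the Jacobian of $(e_1,\dots,e_n)$ down to a constant multiple of the Vandermonde, so it is your triangular factorization (or a leading-coefficient check over $\mathbb{Z}$) that makes the constant $\pm 1$ in every characteristic; and, contrary to your framing, no convention makes both parts exact, since with the paper's $\Delta$ (the discriminant of the characteristic polynomial) part (2) holds exactly while part (1) holds only up to $(-1)^{n(n-1)/2}$ (for $n=2$ one gets $\det(c_x)=-(x_1-x_2)^2=-\Delta(x)$), which is harmless because the lemma is only used in Corollary~\ref{cor:om.or.om.g-l} to compare $\mathbb{Q}$-top forms, where $\omega$ and $-\omega$ define the same $\mathbb{Q}$-form.
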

\begin{notation}
    Let $x\in G^{rss}$. Denote by \begin{itemize}
    \item 
 $\omega^{G-L}_{\bfG\cdot x}$  the Gelfand-Leray form on $\bfG\cdot x=p^{-1}(p(x))$ w.r.t. the map $p:{\bf G} \to \bf{C}$ and the forms $\omega_\bfG$ and $\omega_\bfC$. Consider it as a $\Q$-top-form. 
 \item $\omega_{\bfG/\bfG_x}$ the $\Q$-top-form on $\bfG/\bfG_x$ corresponding to the  $\Q$-top-forms $\omega_\bfG$ and $\omega_{\bfG_x}$. 
 \item 
    $\omega_{\bfG\cdot x}$ be a $\Q$-top-form on $\bfG\cdot x$ corresponding to $\omega_{\bfG/\bfG_x}$ under the identification $\bfG/\bfG_x\cong\bfG\cdot x$
\end{itemize}
\end{notation}
\Cref{lem:lin.alg}
gives us:
\begin{cor}\label{cor:om.or.om.g-l}
    Let $x\in G^{rss}$. Then 
    $\omega_{\bfG\cdot x}={\Delta^{-\frac 12}}(x) \omega^{G-L}_{\bfG\cdot x}\det(x)^{n-1}$. Here, ${\Delta^{-\frac 12}}(x)$ is considered as a $\Q$-number, and thus can multiply $\Q$-forms.
\end{cor}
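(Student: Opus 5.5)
The plan is to reduce everything to a pointwise comparison of top forms at the point $x$, and then check that comparison after an extension of scalars that conjugates $x$ into $\bfT$.

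\emph{Reduction to the tangent space at $x$.} Both sides of the identity $\omega_{\bfG\cdot x}={\Delta^{-\frac 12}}(x)\,\omega^{G-L}_{\bfG\cdot x}\det(x)^{n-1}$ are $\bfG$-invariant $\Q$-top forms on the homogeneous variety $\bfG\cdot x\cong \bfG/\bfG_x$. The form $\omega_{\bfG\cdot x}$ is invariant by construction. For $\omega^{G-L}_{\bfG\cdot x}$, note that $\omega_\bfG$ is conjugation-invariant and $p$ is conjugation-invariant, so the Gelfand-Leray form is invariant too. Hence it suffices to compare the two forms on the tangent space $T_x(\bfG\cdot x)$ at the single point $x$.

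\emph{Base change.} The identity is algebraic, and the objects behave well under extension of scalars: $\omega_{\bfG_x}$ is defined by Galois descent from $F^{sep}$, and the Gelfand-Leray form and $\Delta$ are compatible with base change. Since $x$ is regular semisimple, there is $g\in \bfG(F^{sep})$ with $gxg^{-1}\in\bfT$. Using the invariance of both forms under conjugation, I would therefore replace $F$ by $F^{sep}$ and assume $x\in \bfT^r$ and $\bfG_x=\bfT$. In this case $\omega_{\bfG_x}=\omega_\bfT$ is the standard form, up to sign, which is harmless for $\Q$-forms since we compare squares.

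\emph{The computation at $x\in\bfT$.} Translate everything to the identity via left multiplication by $x$, identifying $T_x\bfG\cong\fg=\ft\oplus\fg^{\neq0}$. I would track the following pieces.
\begin{itemize}
\item The form $\omega_\bfG$ corresponds to the standard volume on $\fg$. Here I must take care: the standard invariant form on $\GL_n$ is $\det^{-n}\prod dx_{ij}$. Translating by $x$ rather than using the additive structure is one of the sources of the $\det(x)$ factor.
\item The orbit map $\bfG\to\bfG\cdot x$, $g\mapsto gxg^{-1}$, has differential at $1$ equal to $y\mapsto yx-xy$. After left translation by $x^{-1}$ this becomes $y\mapsto x^{-1}yx-y=(\mathrm{Ad}(x^{-1})-1)y$ on $\fg^{\neq0}$, and it kills $\ft$.
\item By definition, $\omega_{\bfG/\bfG_x}$ pulls back on $\fg$ to $\omega_\bfG$ divided by $\omega_\bfT$ on $\ft$. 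In other words, on $T_x(\bfG\cdot x)$ it is the push of the standard volume of $\fg^{\neq0}$ under the orbit differential.
\item The form $\omega^{G-L}_{\bfG\cdot x}$ is $\omega_\bfG$ divided by $p^*\omega_\bfC$. Since $d_xp$ kills the orbit directions, this amounts to dividing the volume of $\ft$ by $\det(d_xp|_\ft)$, with $\omega_\bfC$ measured via $I$ from \Cref{lem:lin.alg}\eqref{lem:lin.alg:2}. That lemma gives $\det(I\circ d_xp|_\ft)^2=\Delta(x)$, which produces $\Delta^{-\frac12}$.
\item The determinant of $\mathrm{Ad}(x^{-1})-1$ on $\fg^{\neq0}$ is $\prod_{i\neq j}(x_j/x_i-1)$. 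This equals $\Delta(x)/\det(x)^{n-1}$ up to sign, which is the multiplicative analogue of \Cref{lem:lin.alg}\eqref{lem:lin.alg:1}.
\end{itemize}
Combining these factors should give exactly $\Delta^{\pm\frac12}$ and $\det(x)^{\pm(n-1)}$.

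\emph{Main obstacle.} The hard part is the bookkeeping of normalizations. One has to pin down the identifications of $T_x\bfT\cong\ft$ and $T_{p(x)}\bfC\cong\fc$ via the group structures, including the fact that the $\bG_m$ coordinate of $\bfC$ uses $d\log$. One also has to see that the translation factors from $\omega_\bfC$, from $\omega_\bfG$, and from the conjugation differential combine to precisely $\det(x)^{n-1}$ with the stated sign of the exponent. I would do this first for $n=2$ as a check, and then do the general case via the root-by-root product.
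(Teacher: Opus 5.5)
Your proposal is correct and follows the paper's proof essentially step for step. Both arguments reduce by field extension and conjugation to $x\in\bfT\cap G^{rss}$, use invariance to compare the forms at the single point $x$ under the left-translation identification $T_x\bfG\cong\fg$, obtain $\omega^{G-L}_{\bfG\cdot x}|_x=\Delta^{-\frac12}(x)\,\omega_{\fg^{\neq 0}}$ from \Cref{lem:lin.alg}\eqref{lem:lin.alg:2}, and obtain $\omega_{\bfG\cdot x}|_x=\det\bigl((\mathrm{Ad}(x^{-1})-\mathrm{Id})|_{\fg^{\neq 0}}\bigr)^{-1}\omega_{\fg^{\neq 0}}$ from the orbit differential. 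The one difference is that the paper evaluates this determinant via \Cref{lem:lin.alg}\eqref{lem:lin.alg:1}, whereas you compute it directly root by root; the two are equivalent.

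The bookkeeping is easier than you fear. In the left-translated picture $\omega_\bfG$ contributes no factor, and the $d\log$ normalization of $\omega_\bfC$ is already built into \Cref{lem:lin.alg}\eqref{lem:lin.alg:2}. So the whole $\det(x)^{n-1}$ comes from $\det\bigl((\mathrm{Ad}(x^{-1})-\mathrm{Id})|_{\fg^{\neq 0}}\bigr)=\pm\Delta(x)\det(x)^{-(n-1)}$, which pins the exponents down to exactly those in the statement.
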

\begin{proof}
    Note that validity of the statement for a given $x$ does not change when we extend the field $F$. Therefore we can assume without loss of generality that $x$ is diagonalizable. Also the validity of the statement  for a given $x$ does not change when we conjugate $x$.  Therefore we can assume WLOG that $x\in T\cap G^{rss}$. In this case $\bfG_x=\bfT$. We have a canonical top-form \Dima{on $\bfT$} that represents the $\Q$-top form $\omega_\bfT$. We will denote it also by  $\omega_\bfT$. 

    Since both of the forms in the desired equality are $\bfG$ invariant, it is enough to verify their equality at the point $x$.
    Using the left action of $\bfG$ we can identify 
    \begin{equation}\label{eq:g.tan}
     T_x(\bfG)\cong \fg   
    \end{equation}
    Under this identification we get     \begin{equation}\label{eq:orb.tan}
     T_x(\bfG\cdot x)\cong \Im(\Id_{\fg}-ad_{x^{-1}})=\g^{\neq 0}.
    \end{equation}
    
    Set $\omega_\g:=\omega_\bfG|_{x}$ considered as a form on $T_x(\bfG)\cong \fg$.
    (note that it does not depend on $x$ since $\omega_\bfG$ is $\bfG$-invariant). 
    Set also     $\omega_\ft:=\omega_\bfT|_{1}$ considered as a form on $\ft$. 
    \Dima{Now we would like to compute  $\omega^{G-L}_{\bfG\cdot x}|_x$
    under the identification \eqref{eq:orb.tan}.
    Consider the following exact sequences.}
\[
\begin{tikzcd}[column sep=large, row sep=large]
0 \arrow[r]
  & \ker d_x p \arrow[r]
    \arrow[d, phantom, "\rotatebox{-90}{$\overset{\rotatebox{90}{\eqref{eq:orb.tan}}}{\cong}$}"{xshift=1em}]
  & T_x\bfG \arrow[r,"d_xp"]
    \arrow[d, phantom, "\rotatebox{-90}{$\overset{\rotatebox{90}{\eqref{eq:g.tan}}}{\cong}$}"{xshift=1em}]
  & T_{p(x)}\bfC \arrow[r] \arrow[d,"I"]
  & 0 \\
0 \arrow[r]
  & \fg^{\ne 0} \arrow[r]
  & \fg \arrow[r,"I \circ d_x p "]
  & \ft \arrow[r]
  & 0
\end{tikzcd}
\]
    \Dima{Here, $I$ is the identification from \Cref{lem:lin.alg}\eqref{lem:lin.alg:2}.
    Let $\omega_{\g^{\neq 0}}$ be a form s.t. $\omega_{\g^{\neq 0}}\boxtimes \omega_\ft=\omega_\g$. From the exact sequences we obtain
     $\omega^{G-L}_{\bfG\cdot x}|_x=\det(I \circ d_x p|_{\ft})^{-1}\omega_{\g^{\neq 0}}$.
     By \Cref{lem:lin.alg}\eqref{lem:lin.alg:2} we have
     $\det(I \circ d_x p|_{\ft})^{-1} \omega_{\g^{\neq 0}}=\Delta^{-\frac{1}{2}}(x)\omega_{\g^{\neq 0}}$, and hence $\omega^{G-L}_{\bfG\cdot x}|_x=\Delta^{-\frac{1}{2}}(x) \omega_{\g^{\neq 0}}$.}

    To calculate $\omega_{\bfG\cdot x}$,
    note that the Lie algebra of $\bfG_x$ is 
    $\ft$. So we can 
    identify 
    $T_1(\bfG/\bfG_x)$ 
    with $\g^{\neq 0}$ 
    (where $1\in 
    \bfG/\bfG_x$  denotes 
    the class of 
    identity). Under this 
    identification we have     $\omega_{\bfG/\bfG_x}|_1=\omega_{\g^{\neq 0}}$. 
    \Dima{Let $i:\bfG/\bfG_x\cong\bfG\cdot x$ denote the standard identification.
    We have the following commutative diagram:    }

\[
\begin{tikzcd}
T_1(\bfG /\bfG_x) \arrow[r, "d_1i"] & T_x(\bfG\cdot x) \arrow[d,hook,] \\
T_1\bfG \arrow[d, phantom, "\rotatebox{-90}{$\cong$}"] \arrow[u, two heads] & T_x\bfG \arrow[d, phantom, "\rotatebox{-90}{$\overset{\rotatebox{90}{\eqref{eq:g.tan}}}{\cong}$}"{xshift=1em}]\\
\fg  \arrow[r,"-\Id+ad_{x^{-1}}"] & \fg
\end{tikzcd}
\]
\Dima{Thus, under  the identification 
    \eqref{eq:orb.tan}, we have  $\omega_{\bfG\cdot x}|_x=\det((-\Id+ad_{x^{-1}})|_{\fg^{\neq 0}})^{-1}\omega_{\g^{\neq 0}}$.
    Let $c_x$ be as in \Cref{lem:lin.alg}\eqref{lem:lin.alg:1}.
    We have  $$\det(-\Id_{\fg}+ad_{x^{-1}})^{-1}\omega_{\g^{\neq 0}}=\det(x)^{n-1}\det(c_x)^{-1}\omega_{\g^{\neq 0}}.$$ By \Cref{lem:lin.alg}\eqref{lem:lin.alg:1} we have $\det(x)^{n-1}\det(c_x)^{-1}\omega_{\g^{\neq 0}}=\det(x)^{n-1}\Delta(x)^{-1}\omega_{\g^{\neq 0}}.$   Altogether, we have }    
$$\omega_{\bfG\cdot x}|_x=\Delta^{-\frac 12}(x)\det(x)^{n-1}\omega^{G-L}_{\bfG\cdot x}|_x$$ as required.
\end{proof}
\Cref{lem:kappa0} gives us:
\begin{cor}\label{cor:orb.mu.om} For $x\in G^{rss}$ we have:
    $$\mu_{G\cdot x}=\kappa^0(x)|\omega_{\bfG\cdot x}|$$
\end{cor}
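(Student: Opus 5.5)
The plan is to compare both sides as $G$-invariant measures on the orbit $G\cdot x\cong G/G_x$ and compute the constant of proportionality. By \Cref{not:Omega}, $\mu_{G\cdot x}$ is the quotient measure corresponding to $\mu_G$ on $G$ and $\mu_{G_x}$ on $G_x$. By construction, $|\omega_{\bfG\cdot x}|$ is the image under $i$ of the quotient measure $|\omega_{\bfG/\bfG_x}|$, and that quotient measure is determined by $|\omega_\bfG|$ and $|\omega_{\bfG_x}|$. Here $|\omega_{\bfG_x}|$ is the measure attached to the $\Q$-top form, namely the positive square root of $|\omega^2_{\bfG_x}|$.

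The first step is to check that the quotient construction is compatible with absolute values. If $\omega_{\bfG/\bfG_x}$ is characterized by $\omega_\bfG=\omega_{\bfG/\bfG_x}\boxtimes\omega_{\bfG_x}$ locally, as $\Q$-forms, then on $F$-points the fiber integration formula
$$\int_G \phi\,|\omega_\bfG|=\int_{G/G_x}\int_{G_x}\phi(gh)\,|\omega_{\bfG_x}|(h)\,|\omega_{\bfG/\bfG_x}|(g)$$
holds. This is a pointwise statement on tangent spaces, where $|\cdot|$ is multiplicative for wedge products. Passing to squares if necessary, it is also multiplicative for $\Q$-forms. One point needs care: $G/G_x$ is open in $(\bfG/\bfG_x)(F)$, and $G\cdot x$ is the corresponding open subset of $\bfG\cdot x(F)$. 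This is enough, because the Hilbert 90 argument for tori in $\GL_n$, which are products of restrictions of scalars of $\bG_m$, gives $(\bfG/\bfG_x)(F)=G/G_x$.

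The second step substitutes \Cref{lem:kappa0}, which gives $|\omega_{\bfG_x}|=\kappa^0(x)\mu_{G_x}$. Suppose the quotient measure for $(\mu_G,\mu_{G_x})$ is $\mu_{G\cdot x}$. Scaling the fiber measure by $\kappa^0(x)$ then forces the base measure to scale by $\kappa^0(x)^{-1}$. This yields a relation between $\mu_{G\cdot x}$ and $|\omega_{\bfG\cdot x}|$ involving $\kappa^0(x)$ and the comparison factor $\frac{|\omega_\bfG|}{\mu_G}$ (a constant, since both are Haar measures on $G$).

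The main obstacle is bookkeeping of normalizations rather than any conceptual difficulty. I would verify the direction of the factor $\kappa^0(x)$ carefully, since it appears in the statement as $\mu_{G\cdot x}=\kappa^0(x)|\omega_{\bfG\cdot x}|$. I would also track the constant $|\omega_\bfG|/\mu_G$, which the statement either suppresses by normalization or absorbs into the function $\gamma$ of \Cref{thm:bnd.cusp.char}. If the direction or this constant differed from the statement, I would adjust the conventions so that the corollary matches its later use.
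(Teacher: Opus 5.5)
Your proposal is correct and is essentially the paper's argument: the paper deduces the corollary directly from \Cref{lem:kappa0} through the quotient-measure relation you describe, and your scaling step (multiplying the fiber measure by $\kappa^0(x)$ multiplies the base measure by $\kappa^0(x)^{-1}$) gives exactly the stated direction of the factor $\kappa^0(x)$. The constant you flag is genuine: one gets $\mu_{G\cdot x}=\kappa^0(x)\frac{\mu_G}{|\omega_\bfG|}|\omega_{\bfG\cdot x}|$, and for the standard form $|\omega_\bfG|(\GL_n(O_F))=\prod_{i=1}^n(1-\ell^{-i})\neq 1$, so the statement tacitly drops this factor; this is harmless, since it only rescales $\gamma$ in \Cref{thm:bnd.cusp.char}.
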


    
\begin{proof}[Proof of \Cref{thm:bnd.cusp.char}]
Let $y\in C^{rss}$.
By the definition of the Gelfand-Leray form we have 
\begin{equation}    \label{thm:bnd.cusp.char:g-l}
\int (f|_{p^{-1}}(y))|\omega^{G-L}_{p^{-1}(y)}|=
\left(\frac{p^{rss}_*((f|\omega_\bfG|)|_{G^{rss}})}{(|\omega_\bfC|)|_{C^{rss}}}\right)(y)
\end{equation}
Note that $p:G^{rss}\to C^{rss}$ is onto. 
Let $x\in G^{rss}$ s.t. $p(x)=y$. 
Set $\gamma(x):=\frac{|\omega_\bfG|}{\mu_G}\frac{\mu_C}{|\omega_\bfC|} |\det(x)^{n-1}|$.
We have 
\begin{align*}  
\Omega(f)(x)&=\int (f|_{G\cdot x}) \mu_{G\cdot x}\overset{\text{Cor \ref{cor:orb.mu.om}}}{=}\int (f|_{G\cdot x})\kappa^0(x)|\omega_{\bfG\cdot x}|\overset{\text{Cor \ref{cor:om.or.om.g-l}}}{=}\\&=
\int (f|_{G\cdot x})\kappa^0(x)|\det(x)^{n-1}{\Delta^{-\frac 12}}(x) \omega^{G-L}_{\bfG\cdot x}|=
\\&=
\kappa(x)|\det(x)^{n-1}|
\int (f|_{G\cdot x})|\omega^{G-L}_{\bfG\cdot x}|
\overset{\text{\eqref{thm:bnd.cusp.char:g-l}}}{=}
\kappa(x)|\det(x)^{n-1}| \left(\frac{p^{rss}_*((f|\omega_\bfG|)|_{G^{rss}})}{(|\omega_\bfC|)|_{C^{rss}}}\right)(p(x))
\\&=
\kappa(x)|\det(x)^{n-1}|\frac{|\omega_\bfG|}{\mu_G}\frac{\mu_C}{|\omega_\bfC|} \left(\frac{p^{rss}_*(f\mu_G|_{G^{rss}})}{\mu_C|_{C^{rss}}}\right)(p(x))
\\&=
\kappa(x)\gamma(x)\left(\frac{p^{rss}_*(f\mu_G|_{G^{rss}})}{\mu_C|_{C^{rss}}}\right)(p(x))
\end{align*}
as required.
\end{proof}

\section{Factorizable actions}\label{sec:fac}
In this section we give some standard facts about the quotient of an algebraic variety by a finite group which are slightly less standard in positive characteristic.

\begin{definition}
    Let a finite group $\Gamma$ act on a variety $\bfZ$. We say that this action is \tdef[factorizable action]{factorizable} if the categorical quotient $\bfZ//\Gamma$ exists (as a variety), and the map $\bfZ \to \bfZ//\Gamma$ is finite. 
\end{definition}

\begin{prop}[{See e.g. \cite[\RamiC{Corollary 3.1.8}]{AGKS2_2} 
}]\label{lem:fac}
    Let a finite group $\Gamma$ act on a quasi-projective variety $\bfZ$. Then the action is factorizable. 
\end{prop}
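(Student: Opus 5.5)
The plan is the classical construction of quotients by finite groups. First reduce to the affine case. Since $\bfZ$ is quasi-projective and $\Gamma$ is finite, every $\Gamma$-orbit $\Gamma z$ is a finite set of points in a quasi-projective variety. A finite set of points in such a variety is contained in an affine open subset. To see this, embed $\bfZ\subset \bP^N$ with closure $\bar\bfZ$. Choose a hypersurface in $\bP^N$ containing $\bar\bfZ\setminus\bfZ$ and avoiding the finitely many points; this uses prime avoidance in the homogeneous coordinate ring, for homogeneous forms of large degree. Call this affine open $\bfU$ and put $\bfU':=\bigcap_{\gamma\in\Gamma}\gamma\bfU$. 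Then $\bfU'$ is a $\Gamma$-stable open set containing $\Gamma z$. It is affine because $\bfZ$ is separated, so a finite intersection of affine opens is affine. Hence $\bfZ$ is covered by $\Gamma$-stable affine opens.

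In the affine case $\bfU=\operatorname{Spec}A$, set $\bfU//\Gamma:=\operatorname{Spec}A^\Gamma$. Every $a\in A$ is a root of the monic polynomial $\prod_{\gamma}(t-\gamma a)\in A^\Gamma[t]$, so $A$ is integral over $A^\Gamma$. Take finitely many generators $a_i$ of $A$ over $F$, and let $B\subset A^\Gamma$ be the $F$-algebra generated by the coefficients of their characteristic polynomials. Then $A$ is a finite $B$-module. $B$ is Noetherian, so the submodule $A^\Gamma$ is a finite $B$-module as well, and therefore $A^\Gamma$ is of finite type over $F$ (Noether's argument). This holds in every characteristic; no averaging operator is needed. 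Consequently $\operatorname{Spec}A^\Gamma$ is a variety (it is reduced, being a subring of a reduced ring), and $A^\Gamma\subset A$ is finite. The universal property of the categorical quotient among affine schemes is immediate. For arbitrary targets, one also uses that $\operatorname{Spec}A\to\operatorname{Spec}A^\Gamma$ is surjective with fibers equal to $\Gamma$-orbits (by going-up and transitivity of $\Gamma$ on primes over a given prime), and that invariants localize: $(A_f)^\Gamma=(A^\Gamma)_f$ for invariant $f$.

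Next, glue. For $\Gamma$-stable affine opens $\bfU_1,\bfU_2$, the intersection is again $\Gamma$-stable and affine. One needs that $\bfU_1\cap\bfU_2$ is covered by opens of the form $(\bfU_i)_f$ with $f\in A_i^\Gamma$. Equivalently, the image of $\bfU_1\cap\bfU_2$ in $\bfU_i//\Gamma$ is open, and its preimage is $\bfU_1\cap\bfU_2$. This holds because the quotient map is finite and surjective, hence closed, and the complement of $\bfU_1\cap\bfU_2$ is $\Gamma$-stable. Localization compatibility then lets the affine quotients glue to a scheme $\bfZ//\Gamma$, with $\bfZ\to\bfZ//\Gamma$ affine-locally finite, hence finite. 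Separatedness of the glued scheme follows because $\bfZ$ is separated and the map is surjective and universally closed. The universal property is local on $\bfZ//\Gamma$, so it follows from the affine case.

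The main obstacle I expect is the first step, producing $\Gamma$-stable affine neighborhoods of orbits. This is exactly where quasi-projectivity is used; the result can fail without it. The finite generation of $A^\Gamma$ in positive characteristic is the other delicate point, and the Noether-style argument above handles it without assuming $|\Gamma|$ is invertible in $F$.
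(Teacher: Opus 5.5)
Your argument is correct, and it is the standard construction the paper relies on when it cites \cite[Corollary 3.1.8]{AGKS2_2}: quasi-projectivity and separatedness give a cover by $\Gamma$-stable affine opens, Noether's finiteness argument makes $A^\Gamma$ finitely generated with $A$ finite over it in every characteristic, and surjectivity with orbit fibers together with $(A_f)^\Gamma=(A^\Gamma)_f$ let the affine quotients glue to a reduced, separated, finite-type quotient with a finite quotient map. The paper gives no proof of its own beyond that citation, so there is nothing further to compare.
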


\begin{lemma}[{See e.g. \cite[\RamiC{Corollary 3.1.5}]{AGKS2_2}}]\label{lem:GamU}
    Let a finite group $\Gamma$ act factorizably on a variety $\bfZ$. 
    Let $\bfU\sub \bfZ$ be an open $\Gamma$-invariant set. 
    Then the action of $\Gamma$ on $\bfU$ is factorizable and the following diagram is a Cartesian square.
        \begin{equation}\label{eq:csg}
        \xymatrix{
        \bfU \ar[d] \ar[r] &\bfZ\ar[d]\\
        \bfU//\Gamma \ar[r] &\bfZ//\Gamma}
        \end{equation}
        Moreover, the bottom arrow is an open embedding. 
\end{lemma}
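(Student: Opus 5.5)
We will prove the lemma directly from the definition of the categorical quotient, working affine-locally over $\bfZ//\Gamma$. Write $q:\bfZ\to\bfZ//\Gamma$ for the quotient map, which is finite by factorizability. The key fact is that the quotient is a good quotient: for every affine open $\bfV\subset\bfZ//\Gamma$, the preimage $q^{-1}(\bfV)$ is affine (finite morphisms are affine) and $\Gamma$-invariant, and $\cO(\bfV)=\cO(q^{-1}(\bfV))^\Gamma$. This identification of $\cO(\bfV)$ with the invariants is the step that needs real care. I would get it by forming the affine quotient $q^{-1}(\bfV)//\Gamma=\mathrm{Spec}\,\cO(q^{-1}(\bfV))^\Gamma$, which is finite over the base by Noether's finiteness theorem. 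These affine quotients glue to a variety $\bfW$ with a morphism $\bfW\to\bfZ//\Gamma$. The universal property of $\bfZ//\Gamma$ then forces this map to be an isomorphism, so $\cO(\bfV)=\cO(q^{-1}(\bfV))^\Gamma$. Alternatively, this is exactly what the cited \cite{AGKS2_2} provides, and I would take it from there if available.

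\textbf{Step 1: the image of $\bfU$ is open.} Let $\bfU'=q(\bfU)$. Since $\bfU$ is $\Gamma$-invariant and the fibers of $q$ are exactly the $\Gamma$-orbits (true for finite group quotients of this kind), the complement $\bfZ\smallsetminus\bfU$ is closed and $\Gamma$-invariant. Its image is therefore closed, because $q$ is finite and hence closed. This image is exactly the complement of $\bfU'$, so $\bfU'$ is open and $q^{-1}(\bfU')=\bfU$.

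\textbf{Step 2: $\bfU'$ is the quotient of $\bfU$.} Take the good-quotient description restricted over $\bfU'$. For every affine open $\bfV\subset\bfU'$ we have $\cO(\bfV)=\cO(q^{-1}(\bfV))^\Gamma$, and $q^{-1}(\bfV)\subset\bfU$. Now let $\phi:\bfU\to\bfM$ be any $\Gamma$-invariant morphism. On each such affine piece $\phi$ descends, whenever the target is locally affine, because the invariants are functions on $\bfV$. These local descents glue by uniqueness. Hence $\bfU'$ satisfies the universal property, so $\bfU//\Gamma\cong\bfU'$.

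\textbf{Step 3: conclusion.} The map $\bfU\to\bfU'$ is the base change of the finite map $q$ along the open embedding $\bfU'\hookrightarrow\bfZ//\Gamma$, so it is finite, and the action on $\bfU$ is factorizable. The square is Cartesian because $q^{-1}(\bfU')=\bfU$ as open subschemes, and the bottom arrow is the open embedding $\bfU'\hookrightarrow\bfZ//\Gamma$.

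The main obstacle is the first step: establishing that an abstract categorical quotient whose quotient map is finite is automatically a good quotient with orbit fibers. Set-theoretic surjectivity and the orbit-separation property also need checking. I would argue these through integrality: $\cO(q^{-1}\bfV)$ is integral over its invariants, which gives surjectivity, and the standard transitivity of a finite group on primes lying over a given invariant prime gives that the fibers are orbits.
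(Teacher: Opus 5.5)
The paper gives no argument of its own for this lemma; it only cites \cite[Corollary 3.1.5]{AGKS2_2}, so there is nothing in the paper to match against. Your proof is correct, and it is the standard self-contained route. First identify $\bfZ//\Gamma$ locally with the spectrum of invariants. Then deduce that $q$ is surjective and that its fibers are the $\Gamma$-orbits. Finally, a $\Gamma$-stable open set is saturated, so its image is open and is again a (good, hence categorical) quotient. Compared with the citation, this shows that the whole lemma reduces to one fact: a categorical quotient whose quotient map is finite is automatically a good quotient.

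Two steps each need one more line.

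First, the universal property does not immediately give $\mathbf{W}\cong\bfZ//\Gamma$. It only gives a section $u\colon\bfZ//\Gamma\to\mathbf{W}$ of $r\colon\mathbf{W}\to\bfZ//\Gamma$, since $r\circ u\circ q=q$. To conclude that $u$ is an isomorphism, use three facts:
\begin{itemize}
\item $u$ is a closed immersion, being a section of a finite, hence separated, morphism;
\item $u$ is surjective, because $u\circ q$ is the map $\bfZ\to\mathbf{W}$, which is surjective by lying-over for $\mathcal{O}(q^{-1}(\bfV))^\Gamma\subset\mathcal{O}(q^{-1}(\bfV))$;
\item $\mathbf{W}$ is reduced.
\end{itemize}
Two smaller remarks on the same step. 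The gluing of $\mathbf{W}$ uses that invariants commute with flat base change, which is the mechanism of Lemma~\ref{lem:pullbackquo}. Finiteness of the invariants over $\mathcal{O}(\bfV)$ does not need Noether's theorem: a submodule of a finite module over a Noetherian ring is finite.

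Second, in Step 2 the phrase ``whenever the target is locally affine'' hides the real point. To descend a $\Gamma$-invariant $\phi\colon\bfU\to\bfM$, you must first know that each $q^{-1}(\bfV)$ lands in an affine open of $\bfM$. Cover $\bfM$ by affine opens $\bfM_i$ and apply your Step 1 to the $\Gamma$-stable opens $\phi^{-1}(\bfM_i)\subset\bfU$. Their images are open and saturated, and they cover $q(\bfU)$. Any affine open $\bfV$ inside one of these images then satisfies $q^{-1}(\bfV)\subset\phi^{-1}(\bfM_i)$. Uniqueness of the descended maps is needed both for gluing and for the universal property. It holds because $q|_{\bfU}$ is surjective and injective on functions, hence an epimorphism.

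With these additions the argument is complete.
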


\begin{lemma}[{See e.g. \cite[Lemma 3.2.3]{AGKS2_2}}]\label{lem:Gam}
    Let a finite group $\Gamma$ act  factorizably on a 
    variety 
    $\bfZ$. \RamiC{Assume that the action is free} (i.e. the action of $\Gamma$ on $\bfZ(\bar F)$ is free). 
    Then
    \begin{enumerate}
        \item \label{it:etal} The map $\bfZ\to \bfZ//\Gamma$ is {\'e}tale. 
        \item The natural morphism $m:\bfZ\times \Gamma \to \bfZ\times_{\bfZ//\Gamma}\bfZ$ is an isomorphism. 
    \end{enumerate}
\end{lemma}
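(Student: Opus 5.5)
Both claims are local on $\bfZ//\Gamma$, so I would first reduce to the affine case. By \Cref{lem:GamU} the formation of the quotient commutes with restriction to $\Gamma$-invariant opens: for an affine open $\bfV\subset\bfZ//\Gamma$, its preimage $\bfU$ in $\bfZ$ is a $\Gamma$-invariant open with $\bfU//\Gamma\cong\bfV$. Since $\bfZ\to\bfZ//\Gamma$ is finite, hence affine, $\bfU$ is affine. It is therefore enough to treat $\bfZ=\operatorname{Spec}A$ with $\bfZ//\Gamma=\operatorname{Spec}A^\Gamma$, where $A$ is finite over $A^\Gamma$. In this setting I would take the quotient to be $\operatorname{Spec}A^\Gamma$; this is how it arises in the cited construction.

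For part (2), I would use the standard Galois-type argument. The morphism $m$ corresponds to the ring map
\[
\phi: A\otimes_{A^\Gamma}A\to \prod_{\gamma\in\Gamma}A,\qquad a\otimes b\mapsto (a\,\gamma(b))_\gamma .
\]
Both sides are finite $A$-modules via the first factor, so surjectivity and injectivity can be checked after localizing at maximal ideals of $A^\Gamma$ and completing, or after faithfully flat base change. The key input is freeness on geometric points. I would argue the following.

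\begin{itemize}
\item \emph{Surjectivity.} For each $\gamma\neq 1$, freeness of the action means that no maximal ideal $\mathfrak m$ of $A\otimes\bar F$ satisfies $\gamma(\mathfrak m)=\mathfrak m$ together with $\gamma(a)-a\in\mathfrak m$ for all $a$. Hence the ideal generated by $\{a\otimes 1-1\otimes\gamma(a)\}$ is comaximal with the corresponding ideal for $\gamma'\neq\gamma$. The Chinese remainder theorem, applied to the closed subschemes given by the graphs of the elements of $\Gamma$, then shows that $\phi$ is surjective. Concretely, $\phi$ is the product of the closed immersions $\mathrm{graph}(\gamma)\hookrightarrow\bfZ\times_{\bfZ//\Gamma}\bfZ$. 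These graphs are pairwise disjoint, which is exactly the freeness hypothesis, and they cover the fiber product set-theoretically, because geometric fibers of $\bfZ\to\bfZ//\Gamma$ are single $\Gamma$-orbits (a standard fact for finite group quotients).
\item \emph{Injectivity.} This is the main obstacle: showing that the fiber product is reduced, i.e.\ equals the disjoint union of the graphs scheme-theoretically. I would first prove part (1) and deduce (2) from it. If $\bfZ\to\bfZ//\Gamma$ is étale, then $\bfZ\times_{\bfZ//\Gamma}\bfZ\to\bfZ$ is étale. Each graph is then a section of an étale separated morphism, hence open and closed, so the fiber product is the disjoint union of the disjoint graphs together with possibly other components. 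There are no other components, since set-theoretically the graphs cover everything. Therefore $m$ is an isomorphism.
\end{itemize}

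For part (1), I would proceed in three steps.

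\begin{enumerate}
\item Reduce to a strictly henselian local situation: base change to $\bar F$ and localize at a point $y$ of the quotient. Here one must check that invariants commute with flat base change. This holds because taking invariants is a kernel of a map $A\to\prod_\Gamma A$, and kernels commute with flat base change.
\item Over $\mathcal O^{sh}_{y}$, the fiber over $y$ is a free $\Gamma$-orbit $\{\gamma z\}$. The finite algebra $A\otimes\mathcal O^{sh}$ splits as $\prod_{\gamma}B_\gamma$ of local rings permuted simply transitively by $\Gamma$. Its invariants are then $\cong B_1$.
\item Consequently $\mathcal O^{sh}_y\cong B_1$, so $\bfZ\to\bfZ//\Gamma$ is étale at $z$.
\end{enumerate}

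The delicate points are the compatibility of invariants with the strict henselization, which again follows from flatness, and, in positive characteristic, avoiding any use of averaging by $|\Gamma|$. The argument above only uses the kernel description of invariants, not averaging, so it works regardless of the characteristic.
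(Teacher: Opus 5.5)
Your proof is correct. The paper does not prove this lemma itself; it only points to \cite[Lemma 3.2.3]{AGKS2_2}. So your proposal is a self-contained replacement rather than a reconstruction of an argument given here.

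What you wrote is the classical argument for free finite quotients, as in Mumford's treatment, with strict henselization in place of completion. After a faithfully flat base change the cover becomes $\prod_{\Gamma}\mathcal{O}^{sh}_y$, and this gives (1) by descent. Part (2) then follows because the graphs of the elements of $\Gamma$ have three properties:
\begin{itemize}
\item they are sections of the \'etale separated map $\mathrm{pr}_1:\mathbf{Z}\times_{\mathbf{Z}//\Gamma}\mathbf{Z}\to\mathbf{Z}$, hence clopen;
\item they are pairwise disjoint, by freeness;
\item they cover everything, because geometric fibres of $\mathbf{Z}\to\mathbf{Z}//\Gamma$ are single orbits.
\end{itemize}
A useful feature of your route is that it uses only the kernel description of invariants and flatness. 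It is therefore characteristic-free, and it does not need $\mathbf{Z}_{\bar F}$ to be reduced. This matters here, since ``variety'' only means reduced over a possibly imperfect $F$. If one instead reduced to Mumford's theorem for varieties over $\bar F$, that extra hypothesis, or an algebraic substitute, would be needed.

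A few points should be tightened.
\begin{enumerate}
\item[(i)] In the affine reduction, do not rely on ``how it arises in the cited construction''. Justify $\mathbf{U}//\Gamma\cong\operatorname{Spec}A^\Gamma$ by uniqueness of categorical quotients: $\operatorname{Spec}A^\Gamma$ is a categorical quotient in schemes, and it is a variety by Noether's finiteness theorem. Also note that the open embedding of \Cref{lem:GamU} has image exactly $\mathbf{V}$. This holds because $\mathbf{Z}\to\mathbf{Z}//\Gamma$ is surjective, since it is finite and, being a categorical quotient, dominant.
\item[(ii)] In part (1), take $y$ to be a \emph{closed} point of $\operatorname{Spec}(A^\Gamma\otimes_F\bar F)$. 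Then the fibre consists of $\bar F$-points and freeness applies verbatim. \'Etaleness everywhere then follows, since the \'etale locus is open and contains all closed points. For a non-closed $y$ you would need one more observation: an element of the inertia group at $x$ also fixes every closed point of $\overline{\{x\}}$ and acts trivially on its residue field.
\item[(iii)] Spell out the last step. Since $\Gamma$ acts $\mathcal{O}^{sh}_y$-linearly, each $B_\gamma\cong B_1\cong\mathcal{O}^{sh}_y$ as $\mathcal{O}^{sh}_y$-algebras. Hence $A\otimes_{A^\Gamma}\mathcal{O}^{sh}_y\cong\prod_\Gamma\mathcal{O}^{sh}_y$ is \'etale. \'Etaleness then descends along the faithfully flat maps $\mathcal{O}_y\to\mathcal{O}^{sh}_y$ and $A^\Gamma\to A^\Gamma\otimes_F\bar F$.
\item[(iv)] The Chinese-remainder surjectivity argument in part (2) is superfluous once you have the clopen-sections argument.
\end{enumerate}
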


\begin{lem}    
[Galois descent for free actions, {see e.g. \cite[Corollary 3.2.4]{AGKS2_2}}]\label{cor:GalDes}
    In the setting of the previous lemma, let 
      $Sch_{\bfZ//\Gamma}$ denote the category of schemes over ${\bfZ//\Gamma}$ and \RamiC{let} $Sch^{\Gamma}_{\bfZ}$ denote the category of schemes over $\bfZ$ equipped with an action of $\Gamma$ which is compatible with the action of $\Gamma$ on $\bfZ$. Consider the functor 
    $\cF:Sch_{\bfZ//\Gamma}\to Sch^{\Gamma}_{\bfZ}$ defined by $\cF(\bfX)=\bfX \times_\RamiC{\bfZ//\Gamma} \bfZ,$ with $\Gamma$ acting on the second coordinate.  Let $\beta:\cF(\bfX)\to \bfX$ be the projection on the first component.  Then 
    \begin{enumerate}[(i)]
        \item \label{it:ff} $\cF$ is fully faithful. 
        \item \label{it:sh} Given $\bfX \in Sch_{\bfZ//\Gamma}$ and a sheaf 
        $\cV$ on it,
        \RamiC{the pullback $\cV(\bfX)\to (\beta^*\cV)({\cF}(\bfX))$ with respect to
        $\beta$} gives
        an isomorphism 
        $$\cV(\bfX)\cong (\beta^*\cV)(\Dima{\cF}(\bfX))^{\Gamma}.$$
    \end{enumerate}    
\end{lem}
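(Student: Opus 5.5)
Both parts follow from the fact that $q:\bfZ\to\bfZ//\Gamma$ is a finite étale $\Gamma$-torsor. By \Cref{lem:Gam}, $q$ is étale and $m:\bfZ\times\Gamma\to\bfZ\times_{\bfZ//\Gamma}\bfZ$ is an isomorphism. Since $q$ is finite and étale, it is in particular faithfully flat and quasi-compact, so fpqc (indeed étale) descent applies along $q$. The descent datum for an object over $\bfZ$ is an isomorphism between its two pullbacks to $\bfZ\times_{\bfZ//\Gamma}\bfZ$ satisfying a cocycle condition on the triple fiber product. Via $m$ (and its analogue $\bfZ\times\Gamma\times\Gamma\cong\bfZ\times_{\bfZ//\Gamma}\bfZ\times_{\bfZ//\Gamma}\bfZ$, which follows by iterating $m$), such a datum translates into a $\Gamma$-action on the object compatible with the action on $\bfZ$. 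The cocycle condition becomes exactly the associativity of that action.

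For \eqref{it:sh}, I apply the sheaf property of $\cV$ for the covering $\beta:\cF(\bfX)\to\bfX$. The covering is the base change of $q$, hence étale and surjective. The sheaf property says that $\cV(\bfX)$ is the equalizer of the two maps
$$(\beta^*\cV)(\cF(\bfX))\rightrightarrows(\beta^*\cV)(\cF(\bfX)\times_\bfX\cF(\bfX)).$$
By base change of $m$, we have $\cF(\bfX)\times_\bfX\cF(\bfX)\cong\cF(\bfX)\times\Gamma$, which is a disjoint union of copies of $\cF(\bfX)$. The two maps then become, componentwise, the identity and the action of $\gamma\in\Gamma$. So the equalizer is precisely the $\Gamma$-invariants. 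This needs $\cV$ to be a sheaf for the étale (or fpqc) topology, as quasi-coherent sheaves and their pullbacks are. I read "sheaf" in that sense, or else interpret $\beta^*\cV$ as the étale pullback, where the claim is just the sheaf axiom.

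For \eqref{it:ff}, faithfulness and fullness amount to the claim that morphisms $\bfX\to\bfX'$ over $\bfZ//\Gamma$ correspond bijectively to $\Gamma$-equivariant morphisms $\cF(\bfX)\to\cF(\bfX')$ over $\bfZ$. Apply \eqref{it:sh}, or its scheme-valued version, to the representable fpqc sheaf $h_{\bfX'}=\mathrm{Hom}_{\bfZ//\Gamma}(-,\bfX')$ on schemes over $\bfX$; it is a sheaf by Grothendieck's theorem that representable functors are fpqc sheaves. Then
$$\mathrm{Hom}_{\bfZ//\Gamma}(\bfX,\bfX')=\mathrm{Hom}_{\bfZ//\Gamma}(\cF(\bfX),\bfX')^{\Gamma}=\mathrm{Hom}_{\bfZ}(\cF(\bfX),\cF(\bfX'))^{\Gamma},$$
where the last equality is the universal property of the fiber product $\cF(\bfX')=\bfX'\times_{\bfZ//\Gamma}\bfZ$. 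It remains to check that invariance under the conjugation action of $\Gamma$ on Hom-sets is exactly $\Gamma$-equivariance, which is a formal check.

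The main obstacle is bookkeeping. One has to make sure the identification $\bfZ\times_{\bfZ//\Gamma}\bfZ\cong\bfZ\times\Gamma$ transports the two projections to the projection and the action map, and that this survives base change to $\bfX$. Everything else is standard descent. Essential surjectivity is not claimed, so effectivity of descent, which could fail for non-affine objects, is never needed.
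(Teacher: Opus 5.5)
Your argument is correct and is the standard one. The paper gives no proof of its own: it cites \cite[Corollary 3.2.4]{AGKS2_2} as a consequence of the preceding free-action lemma, and faithfully flat descent along the finite {\'e}tale $\Gamma$-torsor $\bfZ\to\bfZ//\Gamma$ is the natural way to deduce it, so your route (the equalizer for $\beta$ via $\cF(\bfX)\times_{\bfX}\cF(\bfX)\cong\cF(\bfX)\times\Gamma$, plus the fpqc sheaf property of representable functors for full faithfulness) matches; your quasi-coherent reading of ``sheaf'' covers every use in the paper, which only descends relative top forms.

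The one input you use without comment is surjectivity of $\bfZ\to\bfZ//\Gamma$, which faithful flatness requires. It holds because this finite map has closed image, and the universal property of the categorical quotient of the reduced variety $\bfZ$ then forces the inclusion of that image to be an isomorphism.
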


\begin{lemma}\label{lem:pullbackquo}
    Let a finite group $\Gamma$ act on an affine variety $\bfZ$. Let $\gamma:\bfZ_1\to \bfZ//\Gamma$ be a flat morphism of affine varieties. Then the projection on the second coordinate $\bfZ\times_{\bfZ//\Gamma}\bfZ_1\to \bfZ_1$ defines an isomorphism 
    $$(\bfZ\times_{\bfZ//\Gamma}\bfZ_1)//\Gamma\cong \bfZ_1.$$
\end{lemma}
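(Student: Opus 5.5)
We need to prove that $(\bfZ\times_{\bfZ//\Gamma}\bfZ_1)//\Gamma\cong \bfZ_1$ for affine $\bfZ$ and flat $\gamma$. Everything is affine, so the plan is to reduce to commutative algebra.

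Write $\bfZ=\operatorname{Spec}A$, $\bfZ//\Gamma=\operatorname{Spec}A^\Gamma$ (for affine $\bfZ$ the categorical quotient is the spectrum of invariants), and $\bfZ_1=\operatorname{Spec}B$ with $B$ a flat $A^\Gamma$-algebra. The fiber product is $\operatorname{Spec}(A\otimes_{A^\Gamma}B)$, with $\Gamma$ acting on the first factor. Since this scheme is affine, its categorical quotient is $\operatorname{Spec}((A\otimes_{A^\Gamma}B)^\Gamma)$, granting that this is the quotient in our category of varieties; reducedness is addressed below. The statement then reduces to showing that the natural map $B\to (A\otimes_{A^\Gamma}B)^\Gamma$, $b\mapsto 1\otimes b$, is an isomorphism. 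This map is exactly the one induced by the projection to the second coordinate.

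The key step is to express invariants as a kernel. Consider the $A^\Gamma$-linear map
$$\phi:A\to \prod_{g\in\Gamma}A,\qquad a\mapsto (ga-a)_{g\in\Gamma}.$$
Then we have an exact sequence of $A^\Gamma$-modules
$$0\to A^\Gamma\to A\xrightarrow{\phi}\prod_{g\in\Gamma}A.$$
Tensoring over $A^\Gamma$ with the flat module $B$ preserves exactness, and the tensor product commutes with the finite product. This gives
$$0\to B\to A\otimes_{A^\Gamma}B\to \prod_{g\in\Gamma}(A\otimes_{A^\Gamma}B),$$
where the last map is $x\mapsto (gx-x)_g$. Its kernel is precisely $(A\otimes_{A^\Gamma}B)^\Gamma$. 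Hence $B\cong (A\otimes_{A^\Gamma}B)^\Gamma$, as claimed.

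The main obstacle is conventions rather than mathematics. By our conventions the fiber product is taken in schemes and may fail to be reduced, whereas our varieties are reduced. So I would also check that the categorical quotient of $\bfZ\times_{\bfZ//\Gamma}\bfZ_1$ is $\operatorname{Spec}$ of its invariants, whichever category is used. In the category of schemes this holds for affine schemes with a finite group action. If one passes to the reduced subscheme, the conclusion should survive: $B$ is reduced and injects into the invariants, and the invariants of the reduction agree with the reduction of the invariants up to nilpotents. That last point seems delicate in positive characteristic, so I would phrase the argument at the scheme level, where the flatness argument above is clean.
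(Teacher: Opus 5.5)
Your argument is correct and is essentially the paper's proof. The paper reduces to showing that $B\otimes_{A^\Gamma}A^\Gamma\to (B\otimes_{A^\Gamma}A)^\Gamma$ is an isomorphism and cites that tensoring with the flat module $B$ commutes with finite limits. Your explicit kernel description of invariants via $a\mapsto (ga-a)_{g\in\Gamma}$ spells out exactly that, and, like you, the paper works with the scheme-theoretic fiber product without claiming it is reduced.
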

\DimaC{We note that the fiber product in the lemma scheme-theoretical, and we do not claim that in general it is a variety.}
\begin{proof}
    We need to show that the natural map $$\DimaB{\cO}_{\bfZ_1}(\bfZ_1)^\Gamma\to\left(\DimaB{\cO}_{\bfZ_1}(\bfZ_1) \otimes_{\DimaB{\cO}_{\bfZ}(\bfZ)^\Gamma} \DimaB{\cO}_{\bfZ}(\bfZ)\right)^\Gamma$$ is an isomorphism.
    Equivalently it is enough to show that the natural map $$\DimaB{\cO}_{\bfZ_1}(\bfZ_1) \otimes_{\DimaB{\cO}_{\bfZ}(\bfZ)^\Gamma} \DimaB{\cO}_{\bfZ}\Dima{(\bfZ)}^\Gamma \to\left(\DimaB{\cO}_{\bfZ_1}(\bfZ_1) \otimes_{\DimaB{\cO}_{\bfZ}(\bfZ)^\Gamma} \DimaB{\cO}_{\bfZ}(\bfZ)\right)^\Gamma$$ is an isomorphism.
    This follows from  the  assumption that 
    $\DimaB{\cO}_{\bfZ_1}(\bfZ_1)$ is flat over $\DimaB{\cO}_{\bfZ}(\bfZ)^\Gamma$ and thus the functor $$M\mapsto \DimaB{\cO}_{\bfZ_1}(\bfZ_1) \otimes_{\DimaB{\cO}_{\bfZ}(\bfZ)^\Gamma} M$$ commutes with finite limits.
\end{proof}

The following lemma follows immediately from miracle flatness (see \cite[\href{https://stacks.math.columbia.edu/tag/00R4}{Lemma 00R4}]{SP}).
\begin{lemma}\label{lem:flatFactor}
    Let a finite group $\Gamma$ act factorizably on a smooth variety $\bfX$. Suppose that $\bfX//\Gamma$ is smooth. Then the factor map $\bfX \to \bfX//\Gamma$ is flat.
\end{lemma}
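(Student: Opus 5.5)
The statement to prove is \Cref{lem:flatFactor}: the factor map $q:\bfX\to\bfX//\Gamma$ is flat. The plan is to apply miracle flatness (\cite[Lemma 00R4]{SP}) locally. That lemma says: a local homomorphism $A\to B$ of Noetherian local rings, with $A$ regular, $B$ Cohen--Macaulay, and $\dim B=\dim A+\dim B/\mathfrak m_AB$, is flat.

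So we check these hypotheses at every point. Fix $x\in\bfX$ and set $y=q(x)$, $A=\cO_{\bfX//\Gamma,y}$ and $B=\cO_{\bfX,x}$.

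\textbf{Regularity and Cohen--Macaulayness.} Since $\bfX//\Gamma$ is smooth, $A$ is regular. Since $\bfX$ is smooth, $B$ is regular, hence Cohen--Macaulay.

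\textbf{Dimension condition.} Since the action is factorizable, $q$ is finite. Therefore the fiber ring $B/\mathfrak m_AB$ is a localization of a finite-dimensional algebra over the residue field of $y$, so $\dim B/\mathfrak m_AB=0$. It remains to show $\dim B=\dim A$. The map $q$ is finite and surjective, because the categorical quotient by a finite group is surjective: locally it is $\operatorname{Spec}$ of an integral extension $R^\Gamma\subset R$. Moreover, every irreducible component of $\bfX$ through $x$ maps onto an irreducible component of $\bfX//\Gamma$ of the same dimension. The quotient is computed affine-locally by invariants (\Cref{lem:GamU}), and there the extension $R^\Gamma\subset R$ is integral, so going-up holds.

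To compare local dimensions I would argue through going-down or equidimensionality rather than chains of primes directly. Each connected component of the smooth variety $\bfX$ is irreducible and equidimensional, and $\Gamma$ permutes these components. The image of a $\Gamma$-orbit of components is a connected component of $\bfX//\Gamma$ of the same dimension, because finite surjective maps between irreducible varieties preserve dimension. Both $\bfX$ and $\bfX//\Gamma$ are smooth, so their local rings at closed points have dimension equal to the dimension of the component containing the point; this gives $\dim B=\dim A$ at closed points. For non-closed points, use instead the dimension formula $\dim B=\dim A-\operatorname{trdeg}$ of residue fields, which applies since $A$ is universally catenary and $q$ is finite dominant on components. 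The trdeg term vanishes because $q$ is finite.

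\textbf{Conclusion.} Miracle flatness then gives that $A\to B$ is flat for every $x$, so $q$ is flat.

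The main obstacle is the dimension bookkeeping: making sure each local ring of $\bfX$ at $x$ has the same dimension as the local ring of $\bfX//\Gamma$ at $q(x)$ when $\bfX$ is disconnected or the points are non-closed. I would handle this by reducing to closed points (flatness is an open condition that can be checked at closed points) and to a single $\Gamma$-orbit of irreducible components.
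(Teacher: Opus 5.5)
Your proposal is correct and follows the paper's route: the paper only says the lemma follows immediately from miracle flatness \cite[Lemma 00R4]{SP}, and you supply the check of its hypotheses, where the only nontrivial one is $\dim\mathcal{O}_{\bfX,x}=\dim\mathcal{O}_{\bfX//\Gamma,q(x)}$. That equality can be obtained more directly, at every point and without the component bookkeeping, from going-up for the integral extension $R^\Gamma\subset R$ together with transitivity of $\Gamma$ on the primes of $R$ over a given prime. This also avoids your misstated dimension formula, which should carry two transcendence-degree terms (both zero here).
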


\section{Some geometric objects related to $\bfG$}\label{sec:basic}
In this section we introduce certain algebraic varieties related to $\bfG$. The diagram it \S\ref{ssec:geo.sum} summarizes most of them. We also prove \Cref{thm:Y.geo.int}.

\subsection{The maps $p$ and $q$}
\begin{notation}
Identify $\bfC \cong \bfT//W$ and $\uc\cong \ut//W$.
    Denote by $\mdef{q}:\bfT\to \bfC$ and $\mdef{q_0}:\ut\to \uc$ the quotient maps.  Denote by $\mdef{p_0}:\ug\to \uc$ the Lie algebra version of the Chevalley map $p\DimaC{:\bfG\to \bfC}$.
\end{notation}

\Cref{lem:flatFactor} and \Cref{lem:fac} imply

\begin{lemma}\label{lem:q.flat}
    The maps $q_0,q$ are flat.
\end{lemma}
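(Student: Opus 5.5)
The claim follows from \Cref{lem:flatFactor}. For this I need three things: the $W$-actions on $\bfT$ and on $\ut$ are factorizable, the sources are smooth, and the quotients are smooth.

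Factorizability comes from \Cref{lem:fac}, since $\bfT\cong\bG_m^n$ and $\ut\cong\bA^n$ are affine, hence quasi-projective. Smoothness of the sources is immediate, as $\bfT$ is a torus and $\ut$ is an affine space.

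For the quotients, I will use the identifications fixed in the notation: $\uc=\ut//W$ is the space of monic polynomials of degree $n$, and $\bfC=\bfT//W$ is the open subscheme of those with nonzero constant term. Concretely, the invariant ring $F[x_1,\dots,x_n]^{S_n}$ is the polynomial ring on the elementary symmetric polynomials $e_1,\dots,e_n$, so $\uc\cong\bA^n$ is smooth. This holds over any base ring, so there is no characteristic issue. For $\bfT$, the invariant ring is
$$F[x_1^{\pm1},\dots,x_n^{\pm1}]^{S_n}=F[e_1,\dots,e_n][e_n^{-1}],$$
because $\bfT$ is the $W$-invariant open subset $\{e_n\neq0\}$ of $\ut$. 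By \Cref{lem:GamU}, $\bfT//W$ is then the open subset $\{e_n\ne0\}\subset\uc$, which is $\bG_m\times\bA^{n-1}$ and hence smooth.

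With these three inputs, \Cref{lem:flatFactor} (miracle flatness for a finite map between smooth varieties) gives that $q_0$ and $q$ are flat.

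The only step needing any care is confirming that the abstract quotients agree with $\uc$ and $\bfC$. This reduces to the fundamental theorem of symmetric polynomials over an arbitrary field, and I do not expect any genuine obstacle there.
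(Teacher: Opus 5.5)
Your proposal is correct and is the paper's argument: the paper's proof of this lemma just combines \Cref{lem:fac} (factorizability of the $W$-actions) with \Cref{lem:flatFactor} (miracle flatness). Your symmetric-polynomial check that $\ut//W\cong\bA^n$ and $\bfT//W\cong\bG_m\times\bA^{n-1}$ are smooth makes explicit what the paper takes for granted in its identifications $\uc\cong\ut//W$ and $\bfC\cong\bfT//W$.
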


\begin{lem}\label{cor:pqflat}
    The maps $p_0,p$ are flat.
\end{lem}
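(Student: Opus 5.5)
To show that $p_0:\ug\to\uc$ and $p:\bfG\to\bfC$ are flat, I will use miracle flatness (\cite[\href{https://stacks.math.columbia.edu/tag/00R4}{Lemma 00R4}]{SP}), in the same way as for \Cref{lem:flatFactor}. The source $\ug\cong\bA^{n^2}$ is smooth, hence Cohen--Macaulay, and the target $\uc\cong\bA^n$ is smooth, hence regular. By miracle flatness, $p_0$ is then flat as soon as every nonempty fiber has pure dimension $n^2-n$. The same reasoning applies to $p$, since $\bfG\subset\ug$ is open and $\bfC\subset\uc$ is open. In fact $p=p_0|_{\bfG}$ and $\bfG=p_0^{-1}(\bfC)$, so flatness of $p$ follows directly from flatness of $p_0$ by restricting to an open subset. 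It therefore suffices to handle $p_0$.

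For the dimension count, I will work over the algebraic closure $\bar F$; the fiber dimension and the flatness do not change under this base extension. Fix a monic polynomial $c$. Every matrix with characteristic polynomial $c$ has the same semisimple part up to conjugacy, namely $s=\operatorname{diag}$ of the roots of $c$ with multiplicities. Consequently $p_0^{-1}(c)$ is a finite union of conjugacy classes, one for each nilpotent orbit of the centralizer $\fg_s=\prod_i\fgl_{m_i}$. The largest of these classes is the regular class, of dimension $n^2-n$. To conclude that the fiber has pure dimension $n^2-n$, I will show it is irreducible, namely equal to the closure of the regular class.

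This is the main step. My first approach is a dimension bound on components: $p_0^{-1}(c)$ is cut out in $\bA^{n^2}$ by $n$ equations, so by Krull's principal ideal theorem every irreducible component has dimension at least $n^2-n$. On the other hand, every non-regular class has dimension strictly smaller than $n^2-n$, and there are only finitely many classes. A component of dimension at least $n^2-n$ can therefore only be the closure of a class of dimension exactly $n^2-n$, which must be the regular one. Hence the fiber is equidimensional of dimension $n^2-n$, and miracle flatness applies.

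The point that needs care is the finiteness of the number of classes in each fiber. This is where characteristic independence matters, and I expect it to be the main obstacle. It holds because of Jordan decomposition over $\bar F$: conjugacy classes with a fixed characteristic polynomial are indexed by tuples of partitions of the multiplicities $m_i$. This description does not depend on the characteristic, since it only uses the structure theory of modules over $\bar F[t]$. With finiteness in hand, the argument above shows that $p_0$, and hence $p$, are flat.
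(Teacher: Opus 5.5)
Your argument is correct, but it is not the route the paper takes. The paper proves nothing here beyond citing \cite[Corollary 5.0.5]{AGKS2} for the flatness of $p_0$, and then, exactly as you do, deduces flatness of $p$ by restricting to the open set $\bfG=p_0^{-1}(\bfC)$.

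You instead prove the cited fact directly. Miracle flatness (regular target $\bA^n$, Cohen--Macaulay source $\bA^{n^2}$) reduces it to equidimensionality of the geometric fibers, and you obtain that by a squeeze. Krull's height theorem gives every component of a fiber dimension at least $n^2-n$. Each component is the closure of one of the finitely many conjugacy classes in the fiber, classified over $\bar F$ by Jordan form in any characteristic. Each such class has dimension at most $n^2-n$, since the centralizer of any $n\times n$ matrix has dimension at least $n$.

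What your route buys is a self-contained, visibly characteristic-free proof. As a byproduct it also gives Lemma~\ref{lem.fib.p.irr}: each geometric fiber is the closure of its unique regular class. The citation is shorter, and it sits next to the companion results of \cite{AGKS2} that the paper uses immediately afterwards in Corollary~\ref{cor:pFibReduced}: reducedness of the fibers and bigness of the smooth locus. Your dimension count does not by itself give reducedness, which needs smoothness of the fiber at regular elements.

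Two facts you use implicitly deserve a line each in a write-up:
\begin{itemize}
\item The centralizer of an $n\times n$ matrix has dimension at least $n$, with equality exactly for regular (cyclic) matrices.
\item Miracle flatness is applied at closed points, where the local dimension of the equidimensional fiber is $n^2-n$. Flatness at closed points then propagates to all points.
\end{itemize}
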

\begin{proof}
    See \cite[Corollary 5.0.5]{AGKS2} for   the flatness of $p_0$. This implies the flatness of $p$. 
\end{proof}

\begin{lem}\label{lem.fib.p.irr}
The fibers of $p$ (and of $p_0$) are absolutely irreducible. 
\end{lem}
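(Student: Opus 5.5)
The plan is to work geometrically over $\bar F$: absolute irreducibility of a fiber is a statement about $p^{-1}(c)_{\bar F}$, so we may assume $F$ is algebraically closed. We first treat $p_0$ and then deduce the statement for $p$. Indeed, $\bfG\subset \ug$ is the open set $\{\det\neq 0\}$, and $\det$ is (up to sign) the constant coefficient of the characteristic polynomial. Hence for $c\in\bfC\subset \uc$ we have $p^{-1}(c)=p_0^{-1}(c)$ as schemes. So it suffices to show that every fiber $p_0^{-1}(c)$, $c\in\uc(\bar F)$, is irreducible.

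For irreducibility of $p_0^{-1}(c)$ we use the standard description of a fiber of the characteristic polynomial map. Write $c=\prod_i (t-\lambda_i)^{m_i}$ with distinct $\lambda_i$.
\begin{itemize}
\item Every $x$ with characteristic polynomial $c$ decomposes uniquely as $x=x_s+x_n$ (Jordan decomposition, valid in any characteristic over $\bar F$). The semisimple part $x_s$ is conjugate to $s:=\mathrm{diag}(\lambda_1 I_{m_1},\dots)$.
\item Consequently $p_0^{-1}(c)$ is the image of the morphism
$$\bfG\times \mathcal N_{\bfG_s}\to \ug,\qquad (g,u)\mapsto g(s+u)g^{-1},$$
where $\bfG_s=\prod \GL_{m_i}$ and $\mathcal N_{\bfG_s}=\prod \mathcal N_{m_i}$ is its nilpotent cone.
\item The nilpotent cone $\mathcal N_m$ of $\mathfrak{gl}_m$ is irreducible in any characteristic. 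It is the closure of the regular nilpotent orbit; equivalently, it is the image of $\GL_m\times \mathfrak n$ under conjugation, where $\mathfrak n$ is the strictly upper triangular matrices.
\end{itemize}
Thus $p_0^{-1}(c)$, set-theoretically, is the image of an irreducible variety, and hence irreducible. Alternatively, the fiber is the closure of the single regular orbit with characteristic polynomial $c$, namely the companion matrix orbit. This orbit is irreducible as the image of $\bfG$, and it is dense in the fiber because every fiber has pure dimension $n^2-n$ by flatness (\Cref{cor:pqflat}), while the non-regular locus has smaller dimension.

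The main point requiring care is the notion of ``fiber'' in the statement. If it means the scheme-theoretic fiber, one may also want reducedness. Irreducibility is a topological property, though, so the argument above suffices. If reducedness is needed later, I would obtain it from the following facts:
\begin{itemize}
\item the fiber is a complete intersection (cut out by $n$ equations in $\ug$ with the right codimension, by flatness), hence Cohen--Macaulay;
\item it is generically reduced, since $d p_0$ is surjective at regular elements (Kostant's theorem for $\GL_n$, valid in all characteristics via companion matrices);
\item Serre's criterion then gives reducedness.
\end{itemize}
I expect the only step needing a real check in positive characteristic to be the irreducibility of the nilpotent cone, and the Springer-type parametrization via $\mathfrak n$ handles it uniformly.
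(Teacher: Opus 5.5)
Your proposal is correct and takes essentially the same approach as the paper. The paper's proof is only the one-line appeal to the Jordan decomposition. You supply the details it omits: you reduce from $p$ to $p_0$ via $\det$, and you show the fiber is the image of the irreducible variety $\bfG\times\mathcal N_{\bfG_s}$, using that each nilpotent cone is the image of $\GL_m\times\mathfrak n$.
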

\begin{proof}
    This follows from the Jordan decomposition.     
\end{proof}

\begin{notation}
Denote by $\bfG^r$ the smooth locus of $p$.
\end{notation}

\begin{lem}[cf. {\cite[Lemma 5.0.8]{AGKS2}}]\label{lem:Gr.onto}
    $p|_{\bfG^r}:\bfG^r\to \bfC$ is onto.
\end{lem}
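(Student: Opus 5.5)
The statement asserts that $p|_{\bfG^r}:\bfG^r\to\bfC$ is onto, where $\bfG^r$ is the smooth locus of the Chevalley map $p$. The plan is to exhibit, for every point of $\bfC$, a preimage at which $p$ is smooth. The natural candidate is the companion matrix, and the work is to check smoothness there.

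Since surjectivity of a morphism of varieties can be checked on $\bar F$-points, I will work over $\bar F$ and fix a monic polynomial $c(t)=t^n+a_{n-1}t^{n-1}+\dots+a_0$ with $a_0\neq 0$. I take $x_c$ to be its companion matrix. Then $p(x_c)=c$, and $x_c$ is invertible because $\det x_c=\pm a_0\neq 0$.

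Both $\bfG$ and $\bfC$ are smooth, and \Cref{cor:pqflat} gives that $p$ is flat. So smoothness of $p$ at $x_c$ is equivalent to surjectivity of the differential $d_{x_c}p:\fg\to T_c\bfC\cong\bar F^n$. I will check this directly.

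\begin{itemize}
\item The perturbation $x_c+\sum_i \epsilon_i E_{i}$ varies only the last column (or row, depending on convention) of the companion matrix, where the coefficients of $c$ appear.
\item Such perturbations produce the companion matrix of $c+\sum_i \pm\epsilon_i t^{i}$.
\item Hence $p$ restricted to this affine $n$-dimensional slice is, up to signs, the identity map onto $\uc$, and the differential is surjective.
\end{itemize}

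This is exactly the standard Kostant/Steinberg-type section of the Chevalley map. It is valid in every characteristic, since the companion-matrix slice is defined over $\mathbb Z$.

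I do not expect any step to be a genuine obstacle. The only subtlety is justifying that ``smooth locus of $p$'' means the locus where $p$ is flat with smooth fiber, equivalently where the differential is surjective since the source and target are smooth. This follows from the Jacobian criterion for morphisms of smooth varieties. Alternatively, one can cite \cite[Lemma 5.0.8]{AGKS2} for the Lie algebra version and translate it to $\bfG$, using that $\bfG\subset\ug$ is the open preimage of $\bfC\subset\uc$.
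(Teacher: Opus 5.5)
Your proposal is correct and is the same argument as the paper's, which proves the lemma in one line by appeal to the companion matrix. Your check that $p$ restricted to the affine slice through $x_c$ (varying the coefficient column) is an isomorphism onto the space of monic polynomials, so that $d_{x_c}p$ is surjective, is the intended justification; the appeal to flatness of $p$ is harmless but not needed, since for a morphism of smooth varieties surjectivity of the differential at a point already gives smoothness there.
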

\begin{proof}
    This follows from the notion of companion matrix.
\end{proof}

\begin{cor}\label{cor:pFibReduced}
    $\bfG^r$ is big in $\bfG$, and the fibers of $p$ are absolutely reduced. Additionally the same holds for the fibers of $p_0$.   
\end{cor}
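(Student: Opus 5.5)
The plan is to deduce both claims from the flatness of $p$ (\Cref{cor:pqflat}), the surjectivity of $p|_{\bfG^r}$ (\Cref{lem:Gr.onto}), and the absolute irreducibility of the fibers (\Cref{lem.fib.p.irr}), via the standard criterion that a Cohen--Macaulay scheme which is generically reduced is reduced.

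First I show that the fibers of $p$ are absolutely reduced. Fix $c\in\bfC(\bar F)$ and consider the fiber $\bfG_c:=p^{-1}(c)$. Since $p$ is flat between smooth varieties of dimensions $n^2$ and $n$, the fiber $\bfG_c$ has pure dimension $n^2-n$. It is also a local complete intersection in $\bfG$, being cut out by the $n$ coordinates of $p$, so it is Cohen--Macaulay. By \Cref{lem:Gr.onto}, $\bfG_c\cap\bfG^r$ is nonempty. There $p$ is smooth, so the fiber is smooth, hence reduced, on this open subset. By \Cref{lem.fib.p.irr}, $\bfG_c$ is irreducible, so this nonempty open subset is dense. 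Therefore $\bfG_c$ is generically reduced, i.e.\ it satisfies $R_0$. Being Cohen--Macaulay, it satisfies $S_1$. By Serre's criterion it is reduced. Since $c$ was an arbitrary geometric point, the fibers are absolutely reduced.

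Next I show that $\bfG^r$ is big in $\bfG$. Let $\bfZ:=\bfG\setminus\bfG^r$. For each $c$, the intersection $\bfZ\cap\bfG_c$ is exactly the non-smooth locus of the fiber, because smoothness of a flat morphism can be checked fiberwise. This locus is a proper closed subset of the irreducible variety $\bfG_c$, so $\dim(\bfZ\cap\bfG_c)\le n^2-n-1$. I want the better bound $n^2-n-2$. The cleanest route is via the explicit description of the singular locus of the fiber: it consists of the non-regular elements, those whose centralizer has dimension greater than $n$. The classical count shows that non-regular elements have codimension at least $3$ in $\bfG$, hence codimension at least $2$ in each fiber. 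Alternatively, the fiber is normal: it is $S_2$ because it is Cohen--Macaulay, and the codimension-$2$ regularity follows from the same codimension count. Granting $\dim(\bfZ\cap\bfG_c)\le n^2-n-2$ for every $c$, summing over the $n$-dimensional base gives $\dim\bfZ\le n^2-2$, so $\bfG^r$ is big. If I only had the weaker bound $n^2-n-1$, this would merely give codimension at least $1$. So the main obstacle is the codimension-$2$ estimate for non-regular elements inside each fiber, in arbitrary characteristic. I would handle it by passing to Jordan decomposition: the fiber decomposes according to the semisimple part, and on each piece one reduces to the nilpotent cone of a product of $\mathfrak{gl}_{m_i}$. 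There the non-regular nilpotents form the complement of the open regular orbit, whose boundary is the subregular orbit closure, of codimension $2$. This count is characteristic-free.

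Finally, for $p_0$ the same argument applies verbatim. It uses the flatness of $p_0$ (\Cref{cor:pqflat}), the irreducibility of its fibers (\Cref{lem.fib.p.irr}), surjectivity from the regular locus via companion matrices, and the same nilpotent-cone count. Equivalently, the statement for $p_0$ can be deduced from that for $p$ by translating by scalars, which moves a fiber of $p_0$ into the invertible locus.
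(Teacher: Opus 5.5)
Your argument is correct, but it takes a different route from the paper. The paper gives no direct argument: it quotes \cite[Corollary 5.0.9]{AGKS2} for $\ug^r$ and $p_0$. It then gets the group statement by restriction. Since $\bfG\subset\ug$ and $\bfC\subset\uc$ are open and $p=p_0|_{\bfG}$, every fiber of $p$ is a fiber of $p_0$, and $\bfG^r=\ug^r\cap\bfG$.

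You instead prove the group statement from scratch, in two parts:
\begin{itemize}
\item Reducedness: the fiber is a complete intersection, hence Cohen--Macaulay, and it is smooth on the dense open set $\bfG_c\cap\bfG^r$. Serre's $R_0+S_1$ criterion then gives reducedness.
\item Bigness: a fiberwise Jordan-decomposition count reduces the codimension-$2$ estimate to the subregular orbit in the nilpotent cones of the $\gl_{m_i}$.
\end{itemize}
You then pass to the Lie algebra by translating by scalars. This is the opposite direction of transfer. It needs the small extra step of choosing $a$ with $c(-a)\neq 0$, so that $x\mapsto x+aI$ carries the whole fiber of $p_0$ over $c$ into $\bfG$. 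It is equally valid.

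What your route buys is a self-contained, characteristic-free account of exactly which inputs matter: flatness, irreducibility of fibers, one regular point in each fiber, and the codimension count. The paper's route buys brevity by outsourcing all of this to \cite{AGKS2}.

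Two small remarks.
\begin{itemize}
\item The inference ``codimension at least $3$ in $\bfG$, hence codimension at least $2$ in each fiber'' does not follow. A bound on $\dim\bfZ$ says nothing useful about special fibers. It is also unnecessary: codimension $3$ in $\bfG$ already gives bigness directly, and your fiberwise Jordan count stands on its own.
\item Of the identification ``non-smooth locus of $p$ $=$ non-regular elements'' you only need the inclusion of the regular elements into $\bfG^r$. In arbitrary characteristic this follows from the companion-matrix argument behind \Cref{lem:Gr.onto} together with the conjugation invariance of $p$, since every regular element is conjugate over $\bar F$ to a companion matrix.
\end{itemize}
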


\begin{proof}
The analogous statements for $\ug^r$ and $p_0$ are proven in \cite[Corollary 5.0.9]{AGKS2}. The statement for $\bfG^r$ and $p$ follows from that.
\end{proof}

\subsection{The varieties $\bfX$,  $\bfY$, $\mathbf \Upsilon$}

\begin{notation}
    Denote $\mdef{\bfG'}:=\bfG\times_\bfC \bfT$.   Denote by $\mdef{\psi}:\bfG'\to \bfG$ the projection on the first coordinate and by $\mdef{\varphi}:\bfG'\to \Dima{\bfT}$ the projection on the second coordinate.
\end{notation}

\begin{lem}\label{lem:flatIrr}
Let $\bfZ_2$ be an irreducible variety.
    Let $\gamma:\bfZ_1\to \bfZ_2$ be a flat map of finite type of schemes. Assume that the fibers of $\gamma$ are irreducible. Then $\bfZ_1$ is irreducible.
\end{lem}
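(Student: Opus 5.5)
The plan is to use the fact that flat morphisms of finite type are open. Everything then reduces to a point-set argument about irreducible components.

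First, reduce to the case where every irreducible component of $\bfZ_1$ is nonempty; $\bfZ_1$ is noetherian because it is of finite type over the noetherian scheme $\bfZ_2$. Write $\bfZ_1=\bigcup_{i=1}^r \bfW_i$ for the decomposition into irreducible components. For each $i$, let $\bfU_i:=\bfZ_1\setminus\bigcup_{j\neq i}\bfW_j$. This is a nonempty open subset of $\bfZ_1$ contained in $\bfW_i$. Since $\gamma$ is flat and locally of finite presentation, it is universally open, so $\gamma(\bfU_i)$ is a nonempty open subset of the irreducible $\bfZ_2$. In particular it contains the generic point $\eta$ of $\bfZ_2$.

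Next, consider the generic fiber $\gamma^{-1}(\eta)$, which is irreducible by assumption. It is covered by the closed subsets $\bfW_i\cap\gamma^{-1}(\eta)$, and each of these is nonempty since it contains a point of $\bfU_i$. Its generic point $\xi$ lies in some $\bfW_{i_0}$, hence all of $\gamma^{-1}(\eta)\subset \bfW_{i_0}$. But for $i\neq i_0$, the set $\bfU_i\cap\gamma^{-1}(\eta)$ is nonempty and disjoint from $\bfW_{i_0}$. This is a contradiction unless $r=1$, so $\bfZ_1$ is irreducible.

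A variant avoids the generic fiber and uses only the openness of images. By flatness, going-down holds, so every point of $\bfZ_1$ specializes from a point of the generic fiber. Hence $\gamma^{-1}(\eta)$ is dense in $\bfZ_1$, and the closure of an irreducible dense subset is irreducible. Going-down for flat morphisms is standard, so either argument works.

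The only delicate point is the use of openness or going-down for flat maps. This requires finite type, or at least local finite presentation over a noetherian base, which holds here. Everything else is formal, and I expect no real obstacle.
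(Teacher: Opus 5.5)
Your main argument is correct and is essentially the paper's. The paper uses that the flat finite-type map $\gamma$ is open, so the images of any two nonempty opens $\bfU_1,\bfU_2\subset\bfZ_1$ meet at some point $p$ of the irreducible $\bfZ_2$, and then irreducibility of the fiber $\gamma^{-1}(p)$ forces $\bfU_1\cap\bfU_2\neq\emptyset$; you run the same idea with the opens complementary to the other irreducible components and with $p=\eta$, which only adds a harmless appeal to noetherianity for finitely many components.

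Your going-down variant is also valid and slightly more general: generalizations lift along any flat morphism with no finiteness hypothesis, so it needs only flatness and irreducibility of the generic fiber. Contrary to your own description, it does use the generic fiber, does not use openness, and does not require finite type.
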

\begin{proof}
We have to show that every two non-empty open subsets $\bfU_1,\bfU_2\subset \bfZ_1$ intersect. 
By \cite[\href{https://stacks.math.columbia.edu/tag/01UA}{Lemma 01UA}]{SP},  $\gamma$ is an open map. Thus $\gamma(\bfU_i)$ are open, and since $\bfZ_2$ is irreducible they intersect. Let $p\in \gamma(\bfU_1)\cap \gamma(\bfU_2)$. Then $\bfU_i\cap \gamma^{-1}(p)$ are non-empty open subsets of the fiber $\gamma^{-1}(p)$. Since the fiber is irreducible, they have to intersect. Thus $\bfU_1$ and $\bfU_2$ intersect.
\end{proof}

\begin{lem}\label{lem:Gp}
    $\bfG'$ is absolutely reduced, locally complete intersection, and irreducible.
\end{lem}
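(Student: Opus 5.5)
We need to show that $\bfG'=\bfG\times_\bfC\bfT$ is absolutely reduced, a local complete intersection, and irreducible. The plan is to use flatness of $q$ and $p$ together with the fiber properties of $p$ established above. We may pass to $\bar F$ throughout, since all three properties are checked geometrically.

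\textbf{Local complete intersection.} Realize $\bfG'$ as the closed subscheme of the smooth variety $\bfG\times\bfT$ cut out by the $n$ equations $p(g)=q(t)$, one for each coefficient of the characteristic polynomial. Compare dimensions: $\bfG\times\bfT$ has dimension $n^2+n$. The projection $\varphi:\bfG'\to\bfT$ is a base change of the flat map $p$, hence flat. Its fibers are fibers of $p$, which have dimension $n^2-n$. Therefore every component of $\bfG'$ has dimension $n^2$, i.e. codimension exactly $n$. The defining ideal thus has $n$ generators in codimension $n$ inside a regular (in particular Cohen--Macaulay) scheme. So it is a regular sequence locally, $\bfG'$ is a local complete intersection, and in particular Cohen--Macaulay.

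\textbf{Irreducibility.} Apply \Cref{lem:flatIrr} to $\varphi:\bfG'\to\bfT$. The base $\bfT$ is irreducible and $\varphi$ is flat of finite type. Each fiber $\varphi^{-1}(t)$ is isomorphic to $p^{-1}(q(t))$, which is absolutely irreducible by \Cref{lem.fib.p.irr}. Hence $\bfG'$ is irreducible, and the same argument over $\bar F$ gives absolute irreducibility.

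\textbf{Reducedness.} Since $\bfG'$ is Cohen--Macaulay, it satisfies $S_1$, so by Serre's criterion it suffices to check $R_0$: generic reducedness. I see two routes, and I expect this to be the main obstacle, since it is where positive characteristic could bite.

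The first route uses the flat map $\varphi$ with absolutely reduced fibers (\Cref{cor:pFibReduced}) over the reduced base $\bfT$. A flat morphism with reduced fibers over a reduced base has reduced total space; alternatively, at the generic point the fiber is geometrically reduced, so the local ring there is reduced.

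The second route, which I would try first because it is more concrete, uses the other projection $\psi:\bfG'\to\bfG$. It is a base change of the flat finite map $q$. Over $\bfG^{rss}$ the map $q$ is étale: on $\bfT^r$ the $W$-action is free, so \Cref{lem:Gam} applies, combined with \Cref{lem:GamU}. Hence $\psi^{-1}(\bfG^{rss})$ is étale over the smooth $\bfG^{rss}$, and is therefore smooth and reduced. It is also dense, because $\bfG'$ is irreducible and $\psi^{-1}(\bfG^{rss})$ is nonempty and open. So $R_0$ holds, and combined with $S_1$ we conclude that $\bfG'$ is absolutely reduced.
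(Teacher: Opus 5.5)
Your proof is correct and follows the same route as the paper. Flatness of $p$ gives the lci property and flatness of $\varphi$; then \Cref{lem:flatIrr} together with \Cref{lem.fib.p.irr} gives absolute irreducibility; finally reducedness reduces to generic reducedness, which holds because $\psi^{-1}(\bfG^{rss})\cong\bfG^{rss}\times_{\bfC^{rss}}\bfT^r$ is {\'e}tale over $\bfG^{rss}$. The differences are minor: you obtain lci by an explicit codimension count in $\bfG\times\bfT$ rather than from flatness, you get density of the regular semisimple locus from irreducibility rather than from flatness of $p\circ\psi$, and you spell out the Serre-criterion step ($S_1$ plus $R_0$) that the paper leaves implicit.
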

\begin{proof}
By \Cref{cor:pqflat}, $\bfG'$ is a locally complete intersection, and the maps $p\circ \psi:\bfG'\to \bfC$ and $\varphi:\bfG'\to \bfT$ are flat. By \Cref{lem.fib.p.irr}, the fibers of $\varphi$ are absolutely irreducible. 
Therefore, by \Cref{lem:flatIrr}, $\bfG'$ is absolutely irreducible.
Thus it is enough to show that $\bfG'$ is generically absolutely reduced. Since $p\circ \psi:\bfG'\to \bfC$ is flat, $(p\circ \psi)^{-1}(\bfC^{rss})$ is dense in $\bfG'$ (since the preimage of a dense subset under a flat morphism is dense, see 
\cite[\href{https://stacks.math.columbia.edu/tag/01UA}{Lemma 01UA}]{SP}). 
Thus it is enough to show that $(p\circ \psi)^{-1}(\bfC^{rss})$ is absolutely reduced. Note that  
$(p\circ \psi)^{-1}(\bfC^{rss})\cong \bfT^{r}\times_{\bfC^{rss}}\bfG^{rss}$. The assertion follows now from the statement that the natural map $\bfT^{r}\to \bfC^{rss}$ is {\'e}tale. This in turn follows from \Cref{lem:Gam}\eqref{it:etal}. 
\end{proof}

\begin{notation}$\,$
\begin{itemize}
    \item $\mdef{\bfY}:=(\bfT\times \bfT)//W$, where $W$ acts diagonally. Let $\mdef{\mu}:\bfT\times \bfT\to \bfY$ denote the quotient map. Note that the quotient exists by \Cref{lem:fac}.
    \item Let $\mdef{\pi}:\bfY\to \bfC$ denote the map induced by the projection on the first coordinate $\bfT\times \bfT\to \bfT$.
    Let $\mdef{\alp}:\bfY \to \bfC\times \bfC$ denote the natural map.
    \item $\mdef{\bfX}:=\bfG\times_{\bfC}\bfY$, and let $\mdef{\tau}:\bfX\to \bfG$ and $\mdef{\sigma}:\bfX\to \bfY$ be the projections.
    \item $\mdef{\widetilde{\bfX}}:=\bfG'\times \bfT$ and let $\mdef{\nu}:\widetilde{\bfX}\to \bfX$ be the natural map given by the identification $\widetilde{\bfX}\cong \bfG\times_{\bfC}\bfT\times \bfT$ and the quotient map $\bfT\times \bfT\to \bfY$.
\end{itemize}
\end{notation}

From \Cref{lem:Gp} we obtain
\begin{cor}
    $\tilde \bfX$ is absolutely reduced, absolutely irreducible and \DimaC{locally} complete intersection.
\end{cor}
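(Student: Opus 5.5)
The corollary concerns $\widetilde{\bfX}=\bfG'\times\bfT$. The plan is to deduce each of its three properties from the corresponding property of $\bfG'$ established in \Cref{lem:Gp}, using only that $\bfT\cong\bG_m^n$ is smooth, split and absolutely irreducible.

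\emph{Local complete intersection.} We have $\widetilde{\bfX}=\bfG'\times\bfT\cong\bfG'\times_{\operatorname{Spec}F}\bfT$. The projection $\widetilde{\bfX}\to\bfG'$ is the base change of the smooth morphism $\bfT\to\operatorname{Spec}F$, so it is smooth. A scheme smooth over a locally complete intersection is again a locally complete intersection. Concretely, $\bfG'\subset\bfG\times\bfT$ is cut out by $n$ equations (equality of characteristic polynomials) and has the expected codimension by flatness. Then $\widetilde{\bfX}\subset\bfG\times\bfT\times\bfT$ is cut out by the same $n$ equations inside a smooth ambient variety, again in the expected codimension.

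\emph{Absolute reducedness.} Pass to $\bar F$: $\widetilde{\bfX}_{\bar F}=\bfG'_{\bar F}\times_{\bar F}\bfT_{\bar F}$. Since $\bar F$ is algebraically closed (hence perfect), the product of two reduced $\bar F$-schemes of finite type is reduced. Alternatively, the projection to $\bfG'_{\bar F}$ is smooth, and a scheme smooth over a reduced scheme is reduced.

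\emph{Absolute irreducibility.} Over $\bar F$, the product of two irreducible varieties over an algebraically closed field is irreducible. One can also apply \Cref{lem:flatIrr} to the flat projection $\widetilde{\bfX}_{\bar F}\to\bfG'_{\bar F}$, whose fibers are copies of $\bfT_{\bar F}$ and hence irreducible.

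None of these steps is a real obstacle. The only point requiring care is to work over $\bar F$ rather than $F$, since products of reduced or irreducible schemes over an imperfect field need not stay reduced or irreducible. This is exactly why \Cref{lem:Gp} was stated in absolute form, and the factor $\bfT$ is geometrically smooth and irreducible.
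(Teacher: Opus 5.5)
Your proposal is correct and matches the paper, which just records the corollary as an immediate consequence of \Cref{lem:Gp} via $\widetilde{\bfX}=\bfG'\times\bfT$ with $\bfT$ smooth and geometrically irreducible. One small point: the statement of \Cref{lem:Gp} only says ``irreducible'', but its proof (through \Cref{lem:flatIrr}) establishes absolute irreducibility of $\bfG'$, and that stronger form is what your product argument over $\bar F$ needs.
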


\begin{lem}
    The  map $\nu:\widetilde{\bfX} \to \bfX$ \DimaC{induces} an isomorphism $\widetilde{\bfX}//W\cong \bfX$.
\end{lem}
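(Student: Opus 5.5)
We will reduce the statement to \Cref{lem:pullbackquo}. Write $\widetilde{\bfX}=\bfG'\times\bfT\cong \bfG\times_{\bfC}(\bfT\times\bfT)$, where the map $\bfT\times\bfT\to\bfC$ is $q$ applied to the first coordinate. In this description $W$ acts diagonally on $\bfT\times\bfT$ and trivially on $\bfG$. We also have $\bfX=\bfG\times_{\bfC}\bfY$, and the map $\bfT\times\bfT\to\bfC$ factors as $\pi\circ\mu$. This gives a canonical identification
$$\widetilde{\bfX}\cong (\bfT\times\bfT)\times_{\bfY}\bigl(\bfY\times_{\bfC}\bfG\bigr)=(\bfT\times\bfT)\times_{\bfY}\bfX .$$
Under this identification $\nu$ becomes the projection onto the second factor, and the $W$-action lives only on the first factor. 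So we are exactly in the setting of \Cref{lem:pullbackquo}, with $\bfZ:=\bfT\times\bfT$, $\Gamma:=W$ (so $\bfZ//\Gamma=\bfY$) and $\bfZ_1:=\bfX$, $\gamma:=\sigma:\bfX\to\bfY$. All these varieties are affine: $\bfT$, $\bfG$, $\bfC$ are affine, $\bfY$ is the spectrum of an invariant ring, and fiber products of affines are affine.

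The key input is the flatness of $\sigma:\bfX\to\bfY$. Flatness is stable under base change, and $\sigma$ is the base change of $p:\bfG\to\bfC$ along $\pi:\bfY\to\bfC$. Since $p$ is flat by \Cref{cor:pqflat}, $\sigma$ is flat. Applying \Cref{lem:pullbackquo} then gives that the projection $\widetilde{\bfX}\to\bfX$ induces an isomorphism $\widetilde{\bfX}//W\cong\bfX$, and this projection is precisely $\nu$.

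Some bookkeeping remains. First, $\bfX$ is a priori only a scheme, since the fiber product is taken scheme-theoretically. \Cref{lem:pullbackquo} does not need reducedness of the fiber product, but it is stated for affine varieties $\bfZ_1$. I will either note that its proof, a flat base change argument on rings, uses only affineness and flatness, or deduce that $\bfX$ is reduced. For the latter, observe that $\widetilde{\bfX}$ is reduced by the corollary above, so its invariant ring is reduced, and hence so is $\bfX\cong\widetilde{\bfX}//W$. This is not circular, because the ring-level isomorphism holds for arbitrary flat algebras.

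Second, I must check that the identification of $\widetilde{\bfX}$ with the fiber product is $W$-equivariant and compatible with $\nu$ as defined, namely via $\widetilde{\bfX}\cong\bfG\times_{\bfC}\bfT\times\bfT$ followed by the quotient map $\mu$. This is a routine check using associativity of fiber products.

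I expect the main obstacle to be exactly this identification together with the flatness hypothesis; once the flatness of $\sigma$ is in place, the rest is formal.
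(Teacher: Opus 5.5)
Your proposal is correct and follows the paper's own proof essentially verbatim. Like the paper, you obtain flatness of $\sigma$ as a base change of $p$ (\Cref{cor:pqflat}), identify $\widetilde{\bfX}\cong \bfX\times_{\bfY}(\bfT\times\bfT)$, and apply \Cref{lem:pullbackquo} with $\bfZ=\bfT\times\bfT$ and $\Gamma=W$; your added remark that reducedness of $\bfX$ follows from the ring-level isomorphism $\cO(\bfX)\cong\cO(\widetilde{\bfX})^W$ is sound and matches how the paper then deduces \Cref{cor:X.irr}.
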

\begin{proof}
By \Cref{cor:pqflat}, $p:\bfG\to \bfC$ is flat. Since the base change of \DimaC{a flat map} is flat, the projection $\sigma:\bfX\to \bfY$ is flat.  Thus, by \Cref{lem:pullbackquo}, the natural map 
$\bfX \times_{\bfY}(\bfT\times\bfT)\to \bfX$ gives an isomorphism 
$$(\bfX \times_{\bfY}(\bfT\times\bfT))//W\cong \bfX$$
The assertion follows now from the fact that $$\bfX \times_{\bfY}(\bfT\times\bfT)\cong \bfG\times_{\bfC}\bfY\times_{\bfY}(\bfT\times\bfT)\cong \widetilde{\bfX}.$$
\end{proof}

\begin{cor}\label{cor:X.irr}
    $ \bfX$ is absolutely reduced and absolutely irreducible.
\end{cor}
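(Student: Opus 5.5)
The plan is to deduce \Cref{cor:X.irr} from the previous lemma, which identifies $\bfX\cong\widetilde{\bfX}//W$ via $\nu$, together with the corollary of \Cref{lem:Gp} stating that $\widetilde{\bfX}$ is absolutely reduced and absolutely irreducible. Everything rests on one general principle: these two properties descend along the quotient map $\nu:\widetilde{\bfX}\to\widetilde{\bfX}//W$.

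Both properties are stable under base change to $\bar F$. Moreover, forming the quotient by a finite group commutes with flat base change: the invariants are a kernel of $\prod_{w}(\mathrm{id}-w)$, and flat base change preserves kernels, exactly as in the proof of \Cref{lem:pullbackquo}. We may therefore work over $\bar F$. Since $\bfX$ is affine over the affine variety $\bfY$, we can compute affine-locally with rings $A$ and $A^W$, where $A:=\cO(\widetilde{\bfX})$.

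\emph{Reducedness.} The ring $A$ is reduced, and $A^W\subset A$ is a subring, so $A^W$ is reduced. Hence $\bfX_{\bar F}$ is reduced.

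\emph{Irreducibility.} The quotient map $\nu$ is surjective, since $A^W\subset A$ is an integral extension and therefore $\operatorname{Spec}A\to\operatorname{Spec}A^W$ is surjective. A continuous surjective image of an irreducible space is irreducible, so $\bfX_{\bar F}$ is irreducible.

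The only step needing care is the compatibility of the quotient with extension of scalars to $\bar F$, i.e. that $(\widetilde{\bfX}//W)_{\bar F}=\widetilde{\bfX}_{\bar F}//W$. This holds because $\bar F$ is flat over $F$. With that in hand, both conclusions follow, giving that $\bfX$ is absolutely reduced and absolutely irreducible.
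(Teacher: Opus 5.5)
Your argument is correct and is essentially the paper's: the corollary is stated immediately after the lemma identifying $\bfX\cong\widetilde{\bfX}//W$, and the intended deduction is exactly yours. Namely, $\mathcal{O}(\bfX)=A^W$ is a subring of the geometrically reduced ring $A=\mathcal{O}(\widetilde{\bfX})$, and $\operatorname{Spec}A\to\operatorname{Spec}A^W$ is surjective by integrality. (Two minor points. First, $A^W$ is the kernel of the map $a\mapsto(a-wa)_{w\in W}$ into $\prod_{w\in W}A$, not the kernel of the composite operator $\prod_w(\mathrm{id}-w)$. Second, you do not actually need invariants to commute with base change: by flatness, $A^W\otimes_F\bar F\hookrightarrow A\otimes_F\bar F$ is an injective integral extension, and that alone gives both properties.)
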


\begin{notation}
    Denote by $\mdef{(\bfT\times \bfT)^f}$ the free locus of the action of $W$. Denote by $\mdef{\mu}:\bfT\times \bfT$ the quotient map. Denote $\mdef{\bfY^f}:=\mu((\bfT\times \bfT)^f)$. 
\end{notation}
The following lemma is standard.
\begin{lem}
    $(\bfT\times \bfT)^f$ is a big open set in $\bfT\times \bfT$.
\end{lem}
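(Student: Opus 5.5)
We need to show that the non-free locus of $W=S_n$ acting diagonally on $\bfT\times\bfT$ has codimension at least $2$. The plan is to describe this locus as a finite union of fixed-point sets and bound the dimension of each one.

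First, the complement of $(\bfT\times\bfT)^f$ is, on $\bar F$-points, the set of pairs $(t,s)$ fixed by some non-identity $w\in W$. So it equals $\bigcup_{w\neq 1}(\bfT\times\bfT)^w$, a finite union of closed subvarieties. In particular $(\bfT\times\bfT)^f$ is open, and it suffices to show that each $(\bfT\times\bfT)^w$ with $w\neq 1$ has codimension at least $2$ in $\bfT\times\bfT$, which is irreducible of dimension $2n$.

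Next we compute the fixed-point set of a single $w$. Write $\bfT=\bG_m^n$ with $W=S_n$ permuting coordinates. Then $(\bfT\times\bfT)^w=\bfT^w\times\bfT^w$, since the action is diagonal. Here $\bfT^w$ consists of tuples that are constant on each cycle of $w$, so $\bfT^w\cong\bG_m^{c(w)}$, where $c(w)$ is the number of cycles of $w$.

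For $w\neq 1$ we have $c(w)\le n-1$. Hence $\dim(\bfT\times\bfT)^w=2c(w)\le 2n-2$, and the codimension is at least $2$. This shows that the complement has codimension at least $2$, so $(\bfT\times\bfT)^f$ is a big open set.

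There is no real obstacle here. The only point needing care is that freeness is defined on $\bar F$-points, so one should note that the fixed-locus description and the dimension count are compatible with base change to $\bar F$. That is immediate, because the fixed loci are defined by linear equalities of coordinates.
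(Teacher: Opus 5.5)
Your proof is correct: the non-free locus is the finite union over $w\neq 1$ of the fixed loci $\bfT^w\times\bfT^w\cong\bG_m^{2c(w)}$, each of codimension $2(n-c(w))\geq 2$. The paper gives no proof of this lemma and simply calls it standard; your fixed-point dimension count is exactly that standard argument, and the bound is attained by transpositions, which is why the diagonal action on two copies of $\bfT$ is needed.
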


\begin{cor}\label{cor:YfBig}
    $\bfY^f$ is big in $\bfY$.
\end{cor}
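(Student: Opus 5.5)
The claim is that $\bfY^f=\mu((\bfT\times\bfT)^f)$ is a big open subset of $\bfY$. The plan is to transport the bigness of $(\bfT\times\bfT)^f$, given by the preceding lemma, through the finite quotient map $\mu:\bfT\times\bfT\to\bfY$. By \Cref{lem:fac} this map is finite and surjective.

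The first step is openness. The free locus $(\bfT\times\bfT)^f$ is open and $W$-invariant, so its complement $\bfZ:=(\bfT\times\bfT)\smallsetminus(\bfT\times\bfT)^f$ is closed and $W$-invariant. Since $\mu$ is finite, hence closed, $\mu(\bfZ)$ is closed in $\bfY$. The fibers of $\mu$ on geometric points are exactly the $W$-orbits, by the standard property of finite-group quotients. Therefore $\mu^{-1}(\mu(\bfZ))=\bfZ$, and it follows that
\[
\bfY\smallsetminus\mu(\bfZ)=\mu\bigl((\bfT\times\bfT)^f\bigr)=\bfY^f .
\]
So $\bfY^f$ is open, and its complement is $\mu(\bfZ)$. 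Equivalently, \Cref{lem:GamU} shows that the image of an invariant open set is open, with the square \eqref{eq:csg} Cartesian.

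The second step is the codimension count. Finite morphisms preserve dimension of closed subsets, so $\dim\mu(\bfZ)=\dim\bfZ\le\dim(\bfT\times\bfT)-2$. Since $\mu$ is finite and surjective, $\dim\bfY=\dim(\bfT\times\bfT)$. Moreover $\bfY$ is irreducible, being the image of the irreducible variety $\bfT\times\bfT$. Hence $\mu(\bfZ)$ has codimension at least $2$ in $\bfY$, which is the claim.

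The only point needing care is the identification of the fibers of $\mu$ with $W$-orbits in positive characteristic, which is needed to get $\mu^{-1}(\mu(\bfZ))=\bfZ$. For this I would use that $\mathcal O(\bfT\times\bfT)$ is integral over its $W$-invariants. By the standard argument, two $\bar F$-points in distinct $W$-orbits can then be separated by an invariant function: take a function vanishing on one orbit and equal to $1$ on the other, and form the product of its $W$-translates. This argument is characteristic-free, and it is also encoded in the factorizability results cited from \cite{AGKS2_2}.
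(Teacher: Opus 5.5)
Your proposal is correct and is essentially the argument the paper leaves implicit; the paper states the result as an immediate corollary of the bigness of $(\bfT\times\bfT)^f$. Openness of $\bfY^f$ and the identity $\mu^{-1}(\bfY^f)=(\bfT\times\bfT)^f$ come from \Cref{lem:GamU}, and the codimension bound is transported through the finite surjective map $\mu$, which preserves dimensions of closed subsets (your orbit-separation argument for geometric fibers is a valid characteristic-free substitute for citing the Cartesian square).
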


Lemmas \ref{lem:GamU} and  \ref{lem:Gam} imply
\begin{cor}\label{cor:YfSmooth}
$ $
    \begin{enumerate}[(i)]
        \item $\bfY^f$ is smooth.
        \item\label{cor:YfSmooth:2} $\mu|_{(\bfT\times \bfT)^f}$ is  smooth.
        \item $q$ is generically smooth.
    \end{enumerate}
    
\end{cor}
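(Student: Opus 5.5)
The corollary has three parts, and I will deduce them from \Cref{lem:GamU} and \Cref{lem:Gam} applied to the free loci of $W$.

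\textbf{Part (i).} Set $\bfU:=(\bfT\times \bfT)^f$. This set is open and $W$-invariant: the free locus is the complement of the union of the fixed loci of the non-identity elements of $W$, and $W$ permutes these fixed loci. By \Cref{lem:fac}, the action of $W$ on the affine variety $\bfT\times\bfT$ is factorizable. Then \Cref{lem:GamU} gives three things:
\begin{itemize}
\item the action of $W$ on $\bfU$ is factorizable;
\item $\bfU//W\to \bfY$ is an open embedding;
\item the square \eqref{eq:csg} is Cartesian.
\end{itemize}
Because the square is Cartesian, the image of $\bfU//W$ in $\bfY$ is exactly $\mu(\bfU)=\bfY^f$. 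So $\bfY^f$ is open and $\bfY^f\cong \bfU//W$.

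Now $W$ acts freely on $\bfU$, so \Cref{lem:Gam}\eqref{it:etal} shows that $\bfU\to \bfU//W$ is \'etale. It is also finite and surjective, hence faithfully flat. Since $\bfU$ is smooth (an open subset of a torus), smoothness descends along this faithfully flat \'etale cover, for example by the local criterion for regularity. Hence $\bfY^f$ is smooth.

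\textbf{Part (ii).} The map $\mu|_{(\bfT\times\bfT)^f}$ is the composite of the \'etale map $\bfU\to\bfU//W\cong \bfY^f$ with the open embedding into $\bfY$. An \'etale map is smooth of relative dimension zero, so this restriction is smooth.

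\textbf{Part (iii).} I apply the same argument to $W$ acting on $\bfT$, using the free locus $\bfT^f$. This locus is non-empty: for example, it contains $\bfT^r$, since $W$ acts freely on regular elements. By \Cref{lem:fac} and \Cref{lem:GamU}, $\bfT^f//W$ is an open subset of $\bfC$ and $q^{-1}(\bfT^f//W)=\bfT^f$. By \Cref{lem:Gam}, $q|_{\bfT^f}$ is \'etale onto this open set. Since $\bfT$ is irreducible, $\bfT^f$ is dense, so $q$ is generically smooth.

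\textbf{Main obstacle.} The only delicate point is descending smoothness to $\bfY^f$ when the field is imperfect. Here I will argue geometrically, using $\bar F$-points, or note that \'etaleness is stable under base change. I will also check that the identification of $\bfU//W$ with its image really is $\bfY^f$; this follows from the Cartesian square in \Cref{lem:GamU}, since the quotient map is surjective.
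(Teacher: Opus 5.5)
Your proposal is correct and follows the paper's route: you identify $\bfY^f$ with $(\bfT\times\bfT)^f//W$ via \Cref{lem:GamU}, use \Cref{lem:Gam} to see that the quotient map on the free locus is \'etale, and descend smoothness from the smooth source. The paper writes out only part (i); your deductions of (ii) (\'etale implies smooth) and (iii) (the same argument for $W$ acting on $\bfT$, whose free locus contains the dense set $\bfT^r$) are the natural completions.
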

\begin{proof}
    By \Cref{lem:GamU}, $(\bfT\times\bfT)^f//W\cong \bfY^f$.
    By \Cref{lem:Gam}, the quotient map 
    $(\bfT\times\bfT)^f\to (\bfT\times\bfT)^f//W$ is {\'e}tale. Since it is also finite, and  $(\bfT\times\bfT)^f$ is smooth, this implies that $\bfY^f$ is smooth.
\end{proof}

\begin{lem}\label{lem:BigLoc}
    Let $\gamma:\bfZ_2\to \bfZ_1$ be a flat morphism of algebraic varieties. Assume that the fibers of $\gamma$ are reduced and $\gamma$ is smooth over an open dense subset of $Z_1$. Assume that $Z_1$ has a big smooth locus. Then $Z_2$ has a big smooth locus.    
\end{lem}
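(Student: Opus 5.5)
The plan is to locate the non-smooth points of $\bfZ_2$ inside two pieces and bound the codimension of each. Let $\bfZ_1^{sm}$ be the smooth locus of $\bfZ_1$. By assumption it is big. Let $\bfV\subset \bfZ_1$ be the open dense set over which $\gamma$ is smooth. Let $\bfZ_2^{\gamma}\subset \bfZ_2$ be the smooth locus of the morphism $\gamma$; it is open. A point of $\bfZ_2$ lying in $\bfZ_2^{\gamma}\cap\gamma^{-1}(\bfZ_1^{sm})$ is a smooth point of $\bfZ_2$, since a smooth morphism to a smooth variety has smooth source. So it suffices to show that both $\gamma^{-1}(\bfZ_1\smallsetminus \bfZ_1^{sm})$ and $\bfZ_2\smallsetminus \bfZ_2^{\gamma}$ have codimension at least $2$ in every component of $\bfZ_2$.

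\emph{The first piece.} Here I use flatness. For a flat morphism of finite type, going down holds and fiber dimensions behave additively: for any irreducible closed $\bfW\subset \bfZ_1$, every component of $\gamma^{-1}(\bfW)$ dominates a component of $\bfW$. Hence
\[
\operatorname{codim}_{\bfZ_2}\gamma^{-1}(\bfW)\ \ge\ \operatorname{codim}_{\bfZ_1}\bfW
\]
(cf. the dimension formula for flat morphisms, \cite[\href{https://stacks.math.columbia.edu/tag/00ON}{Lemma 00ON}]{SP}). Applying this to $\bfW=\bfZ_1\smallsetminus \bfZ_1^{sm}$, which has codimension $\ge 2$, gives the claim for the first piece.

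\emph{The second piece.} This is the main step, and reducedness of the fibers is used here. A flat finite type morphism is smooth at a point $z$ exactly when the fiber $\gamma^{-1}(\gamma(z))$ is smooth at $z$ over the residue field $k(\gamma(z))$. The fibers are reduced; I take this to mean geometrically reduced, as in the applications such as \Cref{cor:pFibReduced}, where they are absolutely reduced. Then every fiber is smooth on an open dense subset of itself. Consequently the non-smooth locus $\bfS:=\bfZ_2\smallsetminus \bfZ_2^{\gamma}$ meets each fiber in a nowhere dense subset.

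Over $\bfV$ the set $\bfS$ is empty, so $\gamma(\bfS)\subset \bfZ_1\smallsetminus \bfV$, which has codimension $\ge1$. Now take a component $\bfS_0$ of $\bfS$ and let $\bfW$ be the closure of its image. By the first piece, $\gamma^{-1}(\bfW)$ has codimension $\ge 1$ in $\bfZ_2$. Inside $\gamma^{-1}(\bfW)$, the set $\bfS_0$ has strictly smaller dimension than the preimage of the generic point of $\bfW$, because its generic fiber is nowhere dense in the fiber. To make this precise, replace $\bfW$ by an open dense subset over which $\gamma^{-1}(\bfW)\to\bfW$ and $\bfS_0\to \bfW$ have equidimensional fibers, using generic flatness and semicontinuity of fiber dimension. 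There the relative dimension of $\bfS_0$ is at most $d-1$, where $d$ is the (flat, hence locally constant) relative dimension of $\gamma$. This gives
\[
\dim \bfS_0\le \dim\bfW+d-1\le \dim\bfZ_2-2.
\]
This dimension count is the step needing care. The delicate points are working component-by-component when $\bfZ_2$ is not equidimensional, and keeping track of the local constancy of $d$ along each component.

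Combining the two pieces, the complement of the smooth locus of $\bfZ_2$ is contained in $\gamma^{-1}(\bfZ_1\smallsetminus\bfZ_1^{sm})\cup \bfS$, which has codimension $\ge2$. Hence $\bfZ_2$ has a big smooth locus, as claimed in \Cref{lem:BigLoc}.
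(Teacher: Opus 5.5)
Your proposal is correct and is essentially the paper's argument. The paper also first reduces to $\bfZ_1$ smooth, which is exactly your first piece (flatness makes the preimage of a codimension $\geq 2$ set have codimension $\geq 2$). It then shows the non-smooth locus of $\gamma$ has codimension $\geq 2$ for the same reason you give: it lies over the complement of the dense open set where $\gamma$ is smooth, and by flatness plus generic smoothness of the reduced fibers it misses the generic points of the fibers. The differences are only in phrasing. The paper checks smoothness of $\gamma$ at the generic points of the preimage of that complement, where you do a dimension count on components of the non-smooth locus. Your reading of ``reduced'' as geometrically reduced is the same implicit assumption the paper makes when it says the fibers are generically smooth ``since they are reduced''.
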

\begin{proof}
    Without loss of generality we can assume that $\bfZ_1$ is smooth. Let $\bfU\subset \bfZ_1$ be an open dense subset such that $\gamma$ is smooth over $\bfU$. Let $\bfZ_3$ be the complement of $\bfU$. It is enough to show that $\gamma$ is smooth in every generic point of $\bfZ_3$. This follows from the fact that $\gamma$ is flat and its fibers are generically smooth (since they are reduced).
\end{proof}

\begin{cor}\label{cor:Xbig}
    $\bfX$ has a big smooth locus.
\end{cor}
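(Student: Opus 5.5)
We will apply \Cref{lem:BigLoc} to the projection $\sigma:\bfX=\bfG\times_\bfC\bfY\to\bfY$, taking $\bfZ_2=\bfX$ and $\bfZ_1=\bfY$. Three inputs are needed: $\sigma$ is flat, its fibers are reduced, and $\sigma$ is smooth over an open dense subset of $\bfY$. In addition, $\bfY$ itself must have a big smooth locus.

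Flatness is already available. It was shown in the proof that $\widetilde{\bfX}//W\cong\bfX$: $\sigma$ is the base change of the flat map $p$ (\Cref{cor:pqflat}) along $\pi$. The fiber of $\sigma$ over a point $y\in\bfY$ is the fiber of $p$ over $\pi(y)$, base changed to the residue field of $y$. By \Cref{cor:pFibReduced} the fibers of $p$ are absolutely reduced, so the fibers of $\sigma$ are reduced.

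For generic smoothness, observe that $p$ is smooth over $\bfC^{rss}$. Indeed, $\bfG^r$ is the smooth locus of $p$, and regular semisimple elements are regular, so $p^{-1}(\bfC^{rss})\subset\bfG^r$. By base change, $\sigma$ is smooth over $\pi^{-1}(\bfC^{rss})$. This set is open, and it is dense because $\bfY$ is irreducible (it is the image of the irreducible variety $\bfT\times\bfT$) and $\pi^{-1}(\bfC^{rss})$ is nonempty.

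Finally, $\bfY^f$ is smooth by \Cref{cor:YfSmooth} and big by \Cref{cor:YfBig}, so $\bfY$ has a big smooth locus. \Cref{lem:BigLoc} then gives that $\bfX$ has a big smooth locus.

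The main point requiring care is the reducedness of the fibers of $\sigma$. The scheme-theoretic fiber over a possibly non-closed point of $\bfY$ is a fiber of $p$ after a field extension, and this is exactly why absolute reducedness in \Cref{cor:pFibReduced} is needed. A minor further point is that \Cref{lem:BigLoc} is applied with $\bfX$ a variety; this holds since $\bfX$ is reduced by \Cref{cor:X.irr}.
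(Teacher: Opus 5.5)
Your proposal is correct and follows the paper's proof essentially verbatim. You apply \Cref{lem:BigLoc} to $\sigma$, obtain flatness and reduced fibers by base change from $p$ (\Cref{cor:pqflat}, \Cref{cor:pFibReduced}), get smoothness over the dense open $\pi^{-1}(\bfC^{rss})$ from irreducibility of $\bfY$, and get bigness of the smooth locus of $\bfY$ from Corollaries \ref{cor:YfSmooth} and \ref{cor:YfBig}. Your remark that it is \emph{absolute} reducedness of the fibers that is really needed (residue fields of points of $\bfY$ need not be perfect) usefully sharpens something the paper leaves implicit in \Cref{lem:BigLoc}.
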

\begin{proof}
    By \Cref{lem:BigLoc} and Corollaries \ref{cor:YfSmooth} and \ref{cor:YfBig}, it is enough to show that:
    \begin{enumerate}[(i)]
        \item \label{it:sigflat} The map $\sigma:\bfX\to \bfY$ is flat.
        \item \label{it:sigred} The fibers of $\sigma$ are reduced.
        \item \label{it:SigOpSm}There exists an open dense subset of $\bfY$ such that $\sigma$ is smooth over it.
    \end{enumerate}
    Note that $\sigma$ is a base change of $p:\bfG\to \bfC$. Note also that
    $p$ is flat by \Cref{cor:pqflat}, and its fibers are reduced by \Cref{cor:pFibReduced}. Thus (\ref{it:sigflat}) and \eqref{it:sigred} hold.

Now, $p$ is smooth over the open dense subset $\bfC^{rss},$ 
and $\pi$ is locally dominant (since $\bfY$ is irreducible and $\pi$ is dominant).  This implies \eqref{it:SigOpSm}.
\end{proof}

\begin{lem}\label{lem.up.nice}
$\,$
\begin{enumerate}[(i)]
    \item \label{it:UpIrr}${\bf \Upsilon}$ is reduced and irreducible.
    \item\label{it:Upbig} The regular locus of ${\bf \Upsilon}$ is big in ${\bf \Upsilon}$.
    \end{enumerate}
\end{lem}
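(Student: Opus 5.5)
The plan is to view ${\bf \Upsilon}=\bfG\times_\bfC\bfG\times_\bfC\bfY$ as $\bfG\times_\bfC\bfX$, with $\bfX=\bfG\times_\bfC\bfY$, and to use the projection $\sigma':{\bf \Upsilon}\to\bfX$. This projection is the base change of $p:\bfG\to\bfC$ along the composite $p\circ\tau:\bfX\to\bfC$. By \Cref{cor:pqflat} and \Cref{cor:pFibReduced}, $p$ is flat with absolutely reduced fibers, and by \Cref{lem.fib.p.irr} its fibers are absolutely irreducible. Hence $\sigma'$ is flat, and its fibers are absolutely reduced and absolutely irreducible. The fibers are base changes of fibers of $p$ to residue fields of points of $\bfX$, so absoluteness is exactly what is needed here.

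For \eqref{it:UpIrr}, irreducibility follows directly from \Cref{lem:flatIrr} applied to $\sigma'$, using that $\bfX$ is irreducible by \Cref{cor:X.irr}. Reducedness I would argue as in \Cref{lem:Gp}, in two steps.

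First, I show that ${\bf \Upsilon}$ is generically reduced. The preimage of $\bfC^{rss}$ is dense, since the map ${\bf \Upsilon}\to\bfC$ is flat, being a composite of base changes of the flat maps $p$ and $\sigma$ followed by $\pi$, which is flat because $q$ is. Over $\bfC^{rss}$, both copies of $\bfG$ are smooth over $\bfC$, so $\sigma'$ is smooth there. Since $\bfX$ is reduced, the open part of ${\bf \Upsilon}$ over $\bfC^{rss}$ is reduced.

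Second, I upgrade from generically reduced to reduced. ${\bf \Upsilon}$ is flat over the reduced scheme $\bfX$ with reduced fibers. Flat with geometrically reduced fibers over a reduced base gives a reduced total space; this is in the Stacks Project, for instance via properties (R0)+(S1) or the standard lemma that a flat morphism with reduced fibers over a reduced Noetherian base has reduced source. I would cite that lemma directly. Alternatively, ${\bf \Upsilon}$ is Cohen–Macaulay-like enough to apply Serre's criterion, but citing the flatness lemma is cleaner.

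For \eqref{it:Upbig}, I apply \Cref{lem:BigLoc} to $\sigma':{\bf \Upsilon}\to\bfX$. Its hypotheses are checked as follows:
\begin{itemize}
\item $\sigma'$ is flat with reduced fibers, as shown above.
\item $\sigma'$ is smooth over the dense open preimage of $\bfC^{rss}$ in $\bfX$, which is dense because $p\circ\tau$ is flat.
\item $\bfX$ has a big smooth locus by \Cref{cor:Xbig}.
\end{itemize}
\Cref{lem:BigLoc} then gives that the smooth locus of ${\bf \Upsilon}$ is big.

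The main obstacle I expect is the reducedness step. \Cref{lem:BigLoc} as stated silently reduces to a smooth base, so one must make sure the generic points of the codimension-one loci of ${\bf \Upsilon}$ lying over the singular locus of $\bfX$ are handled. This works because that singular locus has codimension at least 2 and $\sigma'$ is flat, so preimages preserve codimension.
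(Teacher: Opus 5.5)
Your proof is correct in substance. Part (i) is essentially the paper's argument. The paper also treats $p':{\bf \Upsilon}\to\bfX$ as a base change of $p$, so it is flat with absolutely reduced and irreducible fibers, and it gets irreducibility from \Cref{lem:flatIrr}. Your citation of ``flat with geometrically reduced fibers over a reduced Noetherian base implies reduced'' just makes explicit what the paper leaves implicit.

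One justification is wrong, however. Flatness of $\pi:\bfY\to\bfC$ does not follow from flatness of $q$. The identity $\pi\circ\mu=q\circ\mathfrak{pr}_1$ does not let you descend flatness along $\mu$, and $\mu$ is not flat: a flat finite map from the regular $\bfT\times\bfT$ would force the singular $\bfY$ to be regular. By miracle flatness, $\pi$ would be flat if $\bfY$ were Cohen--Macaulay, and the paper says it does not know whether that holds.

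You never actually need this claim:
\begin{itemize}
\item In (i), your ``generically reduced'' step is superseded by the flatness lemma and can be dropped.
\item In (ii), $(p\circ\tau)^{-1}(\bfC^{rss})$ is dense in $\bfX$ simply because it is a nonempty open subset of the irreducible $\bfX$ (\Cref{cor:X.irr}).
\end{itemize}

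For (ii) your route differs from the paper's. The paper argues fiberwise. Besides bigness of the smooth locus of $\bfX$ (\Cref{cor:Xbig}) and flatness, it uses the ``well known'' fact that every fiber of $p$ has big regular locus. This gives codimension $\ge 2$ for the non-smooth locus of $p'$ uniformly over all of $\bfX$, with no need for a dense open set over which $p'$ is smooth.

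You instead apply \Cref{lem:BigLoc} to $p'$, repeating the argument the paper uses for \Cref{cor:Xbig}. This needs only two inputs: the fibers are geometrically reduced (hence generically smooth), and $p'$ is smooth over a dense open set. It works because, by flatness, a codimension-one point of ${\bf \Upsilon}$ lying over the discriminant locus is a generic point of its fiber. Your version therefore uses a weaker input that is proved in the paper. The price is the generic-smoothness hypothesis and the care you already flag: geometric rather than mere reducedness, and the flat pullback of the codimension-$\ge 2$ singular locus of $\bfX$.
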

\begin{proof}
$ $
\begin{itemize}
    \item[\eqref{it:UpIrr}.] 
    Consider the Chevalley map $p:\bfG\to \bfC$. It is flat and its fibers are reduced and irreducible. Therefore, so is the natural map $p':{\bf \Upsilon}\to \bfX$. By \Cref{lem:flatIrr}, this implies the assertion.
    \item[\eqref{it:Upbig}.] 
    By \Cref{cor:Xbig}, the regular locus of $\bfX$ is big. By \Cref{cor:pFibReduced}
    the fibers of $p$ are reduced. It is well known that    the regular loci of the (reduction of the) fibers of $p$  are big (in these fibers). So such are also the regular loci of the fibers of $p'$. This implies the assertion. 
\end{itemize}
\end{proof}
\begin{lemma}\label{lem.GG.nice}
$ $
\begin{enumerate}[(i)]
    \item \label{it:GGIrr}$\bfG\times_\bfC \bfG$ is reduced and irreducible.
    \item The regular locus of $\bfG\times_\bfC \bfG$ is big in $\bfG\times_\bfC \bfG$.
    \end{enumerate}
\end{lemma}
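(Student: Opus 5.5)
The plan is to imitate the proof of \Cref{lem.up.nice}, with $\bfX$ replaced by $\bfG$. Consider the projection $p_1:\bfG\times_\bfC\bfG\to\bfG$ onto the first factor. It is the base change of the Chevalley map $p:\bfG\to\bfC$ along $p$ itself. By \Cref{cor:pqflat}, $p$ is flat; by \Cref{lem.fib.p.irr} its fibers are absolutely irreducible; by \Cref{cor:pFibReduced} they are absolutely reduced. These properties survive base change, so $p_1$ is flat, and its fibers are of the form $p^{-1}(c)_{k}$ for points $c\in\bfC$ with residue field $k$, hence reduced and irreducible.

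For part \eqref{it:GGIrr}, $\bfG$ is irreducible, so \Cref{lem:flatIrr} applied to $p_1$ gives irreducibility of $\bfG\times_\bfC\bfG$. For reducedness, I would argue as in \Cref{lem:Gp}. Since $p\circ p_1$ is flat, the open set $(p\circ p_1)^{-1}(\bfC^{rss})=\bfG^{rss}\times_{\bfC^{rss}}\bfG^{rss}$ is dense. This set is smooth, because $p$ is smooth over $\bfC^{rss}$ and hence so is its base change $\bfG^{rss}\times_{\bfC^{rss}}\bfG^{rss}\to\bfG^{rss}$. So $\bfG\times_\bfC\bfG$ is generically reduced. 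It is also a locally complete intersection: it is cut out of the smooth variety $\bfG\times\bfG$ by the $n$ equations $p(g_1)=p(g_2)$, and has the expected dimension $2n^2-n$ by flatness of $p_1$. Hence it is Cohen--Macaulay, so it satisfies $(S_1)$. Together with generic reducedness $(R_0)$, this gives that it is reduced.

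For part (ii), I would use \Cref{lem:BigLoc} with $\gamma=p_1$ and $\bfZ_1=\bfG$, which is smooth. The required hypotheses are: $p_1$ is flat with reduced fibers, as shown above; and $p_1$ is smooth over the dense open set $\bfG^{rss}$, being a base change of $p|_{\bfG^{rss}}$. The conclusion is that the smooth locus of $\bfG\times_\bfC\bfG$ is big. More concretely, one can also see this from the fact that the smooth locus of $p_1$ contains $\bfG\times_\bfC\bfG^r$. Its complement lies in $\bfG\times_\bfC(\bfG\smallsetminus\bfG^r)$, which has codimension $\ge2$ because $p_1$ is flat with fibers in which $\bfG^r$ is big (\Cref{cor:pFibReduced}, with $\bfG^r\cap p^{-1}(c)$ the regular locus of the fiber).

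The main subtlety I expect is the reducedness step, which requires verifying the Cohen--Macaulay / $(S_1)$ property of the fiber product. An alternative route is to note that $p_1$ is flat with geometrically reduced fibers over the reduced base $\bfG$, and such a morphism has reduced total space (\cite[Tag 0C0S]{SP}-type statement). I would use whichever is cleaner. I would also double-check the dimension count for the lci claim, which is immediate once flatness of $p_1$ is known.
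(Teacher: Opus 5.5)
Your proposal is correct and follows essentially the paper's route. The paper only says the proof is similar to that of \Cref{lem.up.nice}: view $\bfG\times_\bfC\bfG$ as a flat base change of $p$ with absolutely reduced and irreducible fibers, apply \Cref{lem:flatIrr}, and get bigness of the regular locus from flatness together with bigness of the regular loci of the fibers. Your explicit reducedness argument and your use of \Cref{lem:BigLoc} over the smooth base $\bfG$ simply fill in details the paper leaves implicit. One small point: bigness of $\bfG^r\cap p^{-1}(c)$ inside each fiber is the separate ``well known'' fact the paper invokes, not the content of \Cref{cor:pFibReduced}; your main argument via \Cref{lem:BigLoc} does not need it.
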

\begin{proof}
    The proof is similar to the proof of \Cref{lem.up.nice}.
\end{proof}
\subsection{Summary}\label{ssec:geo.sum}
The following diagram summarizes the main objects discussed in this section.
$$
\begin{tikzcd}
    \bfG'\times \bfT\arrow[swap,"\mathfrak{pr}_{\bfT}"]{ddd}\arrow[swap,bend right=30, "\mathfrak{pr}_{\bfG'}"]{dddrr}\arrow[phantom, "="]{rr}
    &
    &\tilde \bfX \drar[phantom, "\square"]\rar[""]\dar["\nu"] & \bfT\times \bfT\dar["\mu"]\drar["q\times q"] 
    \arrow[rounded corners, to path={ -- ([xshift=15ex]\tikztostart.east) -- 
    node[right]{$\mathfrak{pr}_1$}
    ([xshift=17ex]\tikztotarget.east) -- (\tikztotarget)}]{ddd}&  &
    \\      
  &{\bf \Upsilon}\rar["p'"] \dar[] \drar[phantom, "\square"] \drar[bend left=15, "\zeta"{yshift=-4}]
  &\bfX \drar[phantom, "\square"]\rar["\sigma"]\dar["\tau"] & \bfY\rar["\alpha"]\dar["\pi"] & \bfC\times \bfC  \dlar["\fp\fr_\bfC^1"] & 
  \\  
&\bfG\times_\bfC\bfG\rar[swap, "\fp\fr_\bfG^2"]&\bfG\drar[phantom, "\square"]\rar[swap, "p"] & \bfC &    & 
\\\bfT 
&&\bfG'\uar[swap,"\psi"]\rar[swap, "\varphi"] & \bfT\uar["q"] &   & 
\end{tikzcd}
$$
In this diagram
\begin{itemize}
    \item 
$\mdef{\mathfrak{pr}_{\bfG'}}$,  $\mdef{\mathfrak{pr}_1}$ and $\mdef{\mathfrak{pr}_{\bfC}^{\RamiA{1}}}$ are the projections on the first coordinate.
\item $\mdef{\mathfrak{pr}_{\bfT}}$, $\mdef{\fp\fr_{\bfG}^2}$ and $\mdef{p'}$ are  the projections on the second coordinate.
\end{itemize}

\begin{lem}\label{lem:gen.sm}
        The maps in the above diagram are generically smooth.
\end{lem}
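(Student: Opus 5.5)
The plan is to reduce everything to two basic facts: the Chevalley map $p:\bfG\to\bfC$ is smooth over $\bfC^{rss}$ (since $\bfG^{rss}$ lies in the smooth locus $\bfG^r$ of $p$), and $q:\bfT\to\bfC$ is étale over $\bfC^{rss}$ (by \Cref{lem:GamU} and \Cref{lem:Gam}\eqref{it:etal}, because $W$ acts freely on $\bfT^r=q^{-1}(\bfC^{rss})$). Every map in the diagram is then handled by one of two mechanisms. Either it is a base change of $p$ or $q$, and so is smooth over the preimage of $\bfC^{rss}$. Or it is a quotient map by $W$, and so is étale on the free locus. In each case I check that the relevant open set is dense in the source. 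Density will come from irreducibility (\Cref{lem:Gp}, \Cref{cor:X.irr}, \Cref{lem.up.nice}, \Cref{lem.GG.nice}) together with flatness, because preimages of dense opens under flat maps are dense.

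I now run through the maps. The maps $p$ and $p_0$ are smooth over $\bfC^{rss}$, whose preimage $\bfG^{rss}$ is dense. The map $q$ is generically smooth by \Cref{cor:YfSmooth}. For $\mu$, \Cref{cor:YfSmooth}\eqref{cor:YfSmooth:2} applies on the dense set $(\bfT\times\bfT)^f$. The map $q\times q$ is étale on $\bfT^r\times\bfT^r$. For $\pi$, $\mathfrak{pr}_1$ and $\fp\fr^1_\bfC$, the composite $\pi\circ\mu=q\circ\mathfrak{pr}_1$ is smooth on the dense open $\bfT^r\times\bfT$. Since $\mu$ is étale there on the free part, $\pi$ is smooth on the image of that open, which is dense in $\bfY$. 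The map $\alpha$ needs $\alpha\circ\mu=q\times q$ together with the étaleness of $\mu$ on the free locus, so that $\alpha$ is étale on $\mu(\bfT^r\times\bfT^r)$; this is dense in $\bfY$ because $\bfT^r\times\bfT^r$ is dense in $\bfT\times\bfT$ and $\mu$ is surjective. The maps $\sigma$, $\psi$, $p'$ and $\fp\fr_\bfG^2$ are base changes of $p$, so each is smooth over the preimage of $\bfC^{rss}$ in its source. These preimages are dense: each source is irreducible, and its map to $\bfC$ is flat (a base change of $p$ composed with a flat map), so the preimage is a nonempty open in an irreducible scheme. The maps $\tau$ and $\varphi$ are base changes of $\pi$ and $q$ respectively. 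Over $\bfC^{rss}$ these are étale or smooth (for $\pi$, via its étale cover $\bfT^r\times\bfT$), and the preimage of $\bfC^{rss}$ in $\bfX$ or $\bfG'$ is dense for the same flatness reason. The map $\zeta$ is a composite of maps that are smooth over the preimage of $\bfC^{rss}$ in ${\bf \Upsilon}$. Finally, $\nu$ is a base change of $\mu$, so it is étale over $\sigma^{-1}(\bfY^f)$, which is dense because $\sigma$ is flat. The maps $\mathfrak{pr}_{\bfT}$ and $\mathfrak{pr}_{\bfG'}$ are projections from a product, and each is smooth wherever the other factor is smooth; $\bfG'$ is generically smooth since it is reduced and irreducible.

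The main obstacle I expect is the case of $\pi$ and $\alpha$. Here generic smoothness cannot be read off from a base-change argument. One has to descend smoothness along the quotient map $\mu$, and this needs $\mu$ to be étale on an open set that also meets the good locus of the composite. That the free locus and $\bfT^r\times\bfT$ (respectively $\bfT^r\times\bfT^r$) are both dense opens in the irreducible variety $\bfT\times\bfT$ settles it. A secondary point is to make sure each "generically" is meant on a dense open of the source and not only over a dense open of the target. This is why irreducibility and reducedness of the sources (\Cref{lem:Gp}, \Cref{cor:X.irr}, \Cref{lem.up.nice}\eqref{it:UpIrr}, \Cref{lem.GG.nice}\eqref{it:GGIrr}) enter the argument.
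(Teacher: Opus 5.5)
Your proposal is correct and follows essentially the paper's route. You get generic smoothness of $p$, $q$ and $\mu$, descend along $\mu$ (étale on the free locus, with $\bfY$ irreducible) to get $\pi$, and treat the remaining maps as base changes or projections; this is simply spelled out in much more detail than the paper's ``the rest is obvious''. There are two harmless slips. First, $\psi$ is the base change of $q$ and $\varphi$ the base change of $p$, not the reverse; this does not matter since both $p$ and $q$ are smooth over $\bfC^{rss}$. Second, for $\mathfrak{pr}_{\bfT}$ you need $\bfG'$ to be absolutely reduced, which \Cref{lem:Gp} provides, because mere reducedness does not give generic smoothness over the imperfect field $F$.
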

\begin{proof}
    $\mu$ and $q$ are generically smooth by  \Cref{cor:YfSmooth}. $p$ is generically smooth by \Cref{cor:pFibReduced}. This implies that $q\times q$ is generically smooth and hence so is $\pi\circ \mu$. Therefore (in view of the irreducibility of $\bfY$) $\pi$ is generically smooth.
    The rest of the statements are either obvious or obviously follow from the above.    
\end{proof}

\subsection{Integrability of $\bfY$ -- Proof of \Cref{thm:Y.geo.int}}\label{sec:Y}

We now deduce \Cref{thm:Y.geo.int}, which states that $\bfY$ is geometrically integrable, from the results of \cite{AGKS2_2}. 
For this we introduce the following notation.
\begin{notn}
Let $\ut$ be the Lie algebra of $\bfT$ and let
    ${\uy}:=\ut\times \ut//W$ where the action of $W$  is diagonal.
\end{notn}
By \Cref{lem:GamU} $\bfY$ can be embedded as an open set in $\uy$. Thus, \Cref{thm:Y.geo.int} follows from the following one.
\begin{prop}\label{thm:int.y}
$\uy$ is geometrically integrable.    
\end{prop}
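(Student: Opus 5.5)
The plan is to reduce \Cref{thm:int.y} to the statement proved in \cite{AGKS2_2}, namely that $(\bA^2)^n//S_n$ is geometrically integrable, as announced in the introduction. First I identify $\uy$ with this quotient. Under the identification $\ut\cong\bA^n$, the Weyl group $W\cong S_n$ acts on $\ut\times\ut\cong\bA^n\times\bA^n$ by simultaneously permuting the coordinates of both factors. Regrouping coordinates, $\bA^n\times\bA^n\cong(\bA^2)^n$, $(a_1,\dots,a_n;b_1,\dots,b_n)\mapsto((a_1,b_1),\dots,(a_n,b_n))$, and this isomorphism is $S_n$-equivariant for the permutation action on $(\bA^2)^n$. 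Hence it descends to an isomorphism of categorical quotients $\uy=(\ut\times\ut)//W\cong(\bA^2)^n//S_n$. Both quotients exist and are affine by \Cref{lem:fac}, with rings of invariants identified by the same regrouping.

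Second, I check that geometric integrability is invariant under isomorphism. If $\gamma:\tilde\bfZ\to\bfZ$ is a resolution with $\gamma_*(\Omega_{\tilde\bfZ})\cong i_*(\Omega_{\bfZ^{sm}})$, composing with an isomorphism $\bfZ'\cong\bfZ$ gives the same property for $\bfZ'$. So \Cref{thm:int.y} follows once the result of \cite{AGKS2_2} is cited. Together with the open embedding $\bfY\hookrightarrow\uy$ from \Cref{lem:GamU}, this gives \Cref{thm:Y.geo.int}. For that last step I also need that geometric integrability restricts to open subsets. This holds because for $\bfU\subset\uy$ open, $\gamma^{-1}(\bfU)\to\bfU$ is still a resolution, and both $\gamma_*$ and $i_*$ commute with restriction to opens.

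The main obstacle is making sure the cited result from \cite{AGKS2_2} applies in arbitrary characteristic, including characteristic $\le n$ where $S_n$ has wild stabilizers, and that it is stated in exactly the form of our definition: existence of \emph{a} resolution with the pushforward isomorphism. If \cite{AGKS2_2} instead proves the equivalent statement that every top form on $((\bA^2)^n//S_n)^{sm}$ extends regularly to some (equivalently any) resolution, I would add a short remark reconciling the two formulations. In characteristic $0$ the statement is immediate from Boutot's theorem, since quotient singularities are rational.

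If one wanted an intrinsic argument instead of a citation, the natural route would use the Hilbert--Chow morphism $\mathrm{Hilb}^n(\bA^2)\to(\bA^2)^n//S_n$. This is a resolution in every characteristic with trivial canonical bundle. The invertible top form on the free locus, descended from $(\bA^2)^n$, therefore extends to an invertible form on $\mathrm{Hilb}^n(\bA^2)$, and the complement of the free locus is of codimension at least $2$, so $\gamma_*\Omega$ equals $i_*\Omega_{\bfZ^{sm}}$. I expect this is essentially what \cite{AGKS2_2} does.
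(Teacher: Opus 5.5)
Your proposal is correct and matches the paper's proof: the paper observes that $\uy\cong(\bA^2)^n//S_n$, which is the coordinate regrouping you describe, and then cites \cite[Corollary C]{AGKS2_2} directly. Your remarks on restriction to open subsets (which is what later passes the result from $\uy$ to $\bfY$) and the Hilbert--Chow alternative are not needed for this statement.
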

\begin{proof}
    Note that $\uy\cong (\A^2)^n//S_n$. The assertion follows now from \cite[Corollary C]{AGKS2_2}.
\end{proof}

\section{Algebro-geometric formula for $\kappa$}\label{sec:kap.form}
Recall that $\bfX =\bfG\times_{\bfC}\bfY$ and that $\tau:\bfX \to \bfG$ is the projection on the first factor. In \S\ref{sec.kappa} we introduced a function $\kappa$ on $G^{rss}$.
In this section we construct a clopen $\mathcal{A}\subset X$ and a \RamiA{rational} $\Q$-top-form $\omega_\bfX$ on $\bfX$ and prove 
\begin{theorem}\label{thm:kappa.alg.geo}
We have
  $\tau_*((|\omega_\bfX|)|_{\mathcal{A}})=\kappa |\omega_\bfG|.$    
\end{theorem}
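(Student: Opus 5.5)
The plan is to take $\mathcal{A}:=\sigma^{-1}(\mathcal{B})$, where $\mathcal{B}\subset Y$ is chosen so that, for $x\in G^{rss}$, the fiber $\tau^{-1}(x)\cap\mathcal{A}$ is identified with the maximal compact subgroup $K_x$ of $G_x$. The form $\omega_\bfX$ will be $\omega_\bfG$ times a relative $\Q$-form along $\tau$. On each such fiber this relative form will be $\Delta(x)^{-\frac12}\omega_{\bfG_x}$. Then, by Fubini over $G^{rss}$ (whose complement is null for $|\omega_\bfG|$) and the definition of $\kappa$, we get $\tau_*(|\omega_\bfX||_{\mathcal{A}})(x)=|\Delta(x)|^{-\frac12}\int_{K_x}|\omega_{\bfG_x}|\cdot|\omega_\bfG| =\kappa(x)|\omega_\bfG|$.

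\emph{Step 1: the fibers of $\pi$.} For $c\in \bfC^{rss}$ the fiber $q^{-1}(c)$ is a $W$-torsor, since the action is free there and \Cref{lem:Gam} applies. Using \Cref{lem:GamU} and \Cref{lem:pullbackquo}, I will show that $\pi^{-1}(c)\cong (q^{-1}(c)\times\bfT)/W$. This is the twist of $\bfT$ by that torsor, i.e. a torus. For $x\in G^{rss}$ with $p(x)=c$, the fiber $\tau^{-1}(x)=\pi^{-1}(c)$ should be canonically isomorphic to $\bfG_x$. Concretely, an element $s\in\bfG_x$ is sent to the class of $(\text{eigenvalues of }x,\text{ matching eigenvalues of }s)$. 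This map is defined over $F$, since it is Galois equivariant, and it is an isomorphism, as one checks over $F^{sep}$ after conjugating $x$ into $T$. In that case the fiber is $(W\times\bfT)/W\cong\bfT=\bfG_x$.

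\emph{Step 2: the form.} On $q^{-1}(\bfC^{rss})\times\bfT$, consider the relative (over $\bfC$) section $\mathrm{pr}_2^*\omega_\bfT^{\otimes2}$. It is $W$-invariant because $\det(w)^2=1$. By Galois descent for free actions (\Cref{cor:GalDes}\eqref{it:sh}), it descends to a relative section $\omega^2_{\bfY/\bfC}$ over $\pi^{-1}(\bfC^{rss})$. Set $\omega_{\bfY/\bfC}:=(2,\omega^2_{\bfY/\bfC})$ and define the rational $\Q$-top form
$$\omega_\bfX:=\omega_\bfG * \bigl(\tau^*\Delta^{-\frac12}\cdot\sigma^*\omega_{\bfY/\bfC}\bigr),$$
using that $\sigma$ is a base change of $\pi$. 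Under the isomorphism of Step 1, $\omega_{\bfY/\bfC}|_{\pi^{-1}(c)}$ coincides with $\omega_{\bfG_x}$. Both sides are squares of forms that are checked after extension to $F^{sep}$, where the statement reduces to $x\in T$. There both sides become $\omega_\bfT$ up to sign, by \Cref{lem:phi.indp} and \Cref{def:wtor}.

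\emph{Step 3: the set $\mathcal B$.} Let $C_O\subset C$ be the set of monic polynomials with coefficients in $O_F$ and unit constant term; it is clopen. Put $\mathcal{B}:=\alp^{-1}(C\times C_O)$, which is clopen, so $\mathcal{A}$ is clopen. An element $s$ of the torus $G_x$ lies in $K_x$ iff all its eigenvalues are units in $\bar F$, iff its characteristic polynomial lies in $C_O$. Hence $\tau^{-1}(x)\cap\mathcal{A}=K_x$ under Step 1. (Properness of $\pi|_{\mathcal B}$, needed later, follows since $\alp$ is finite and $C_O$ is compact.)

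Finally, combine Steps 1–3 with the Gelfand–Leray description of the pushforward along the smooth map $\tau$ over $G^{rss}$. The main obstacle I expect is Step 1, together with the matching of forms in Step 2. One has to make the identification $\pi^{-1}(c)\cong\bfG_x$ rigorous over $F$ in the scheme-theoretic sense, including that the fiber of $\bfX$ is reduced (it is, since it is étale-locally $\bfT$). One must also check that the descended relative form agrees with the canonical torus $\Q$-form defined via $F^{sep}$. I would handle both by working over $F^{sep}$ and invoking the uniqueness in Galois descent.
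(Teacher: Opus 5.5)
Your proposal is correct and is essentially the paper's proof. It uses the same $\mathcal{B}=\alp^{-1}(C\times\bfC(O_F))$ and $\mathcal{A}=\sigma^{-1}(\mathcal{B})$, and the same descended relative form (the paper's $\omega_\pi$, with $\omega_\bfX=\tau^*(\Delta^{-1/2})\,\omega_\bfG*\sigma^*\omega_\pi$). It also uses the same identification $\bfG_x\cong\tau^{-1}(x)$: your eigenvalue-matching map is the paper's $\eps$ from \Cref{lem:Gx}, which is in fact independent of the chosen conjugator. The fiberwise reduction to $\int_{K_x}|\omega_{\bfG_x}|=\kappa^0(x)$ is likewise the same. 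The only variation is that you identify $\mathcal{A}\cap\tau^{-1}(x)$ with $K_x$ directly via integrality of eigenvalues, whereas the paper deduces it from the Cartesian squares of \Cref{lem:Cart} together with \Cref{lem:TE}.
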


\subsection{Construction of $\omega_\bfX$}
The construction is based on the relative rational $\Q$-top form $\omega_\tau$ on $\bfX$ with respect to the map $\tau:\bfX\to \bfG$. The idea of the construction of $\omega_\tau$ is based on the observation that the generic fibers of $\tau$ admit  natural group structures of tori.
The relative form $\omega_\tau$ is defined in such a way that its restrictions to the generic fibers of $\tau$ are the canonical $\Q$-top forms on the fibers (see \Cref{def:wtor}).
\RamiA{We use $\omega_\tau$ and the standard top form $\omega_{\bfG}$ on $\bfG$ in order to construct a form $\omega'_{\bfX}$ on $\bfX$. Finally we divide the form \DimaC{$\omega'_{\bfX}$} by the square root of the discriminant to obtain} \DimaC{$\omega_{\bfX}$.}

To implement this idea we start with the following notation.

\begin{notation}\DimaC{Recall that $\bfT^r=\bfT \cap \bfG^{rss}$.}
    Denote $\mdef{\bfY^r}:=(\bfT^r\times \bfT)//W$.
\end{notation}
The following lemma is standard:
\begin{lem}\label{lem:cancel}
Consider the commutative diagram of affine algebraic varieties
$$
\begin{tikzcd}
    \bfZ_{11} \arrow[d,"\delta_{1}"] \arrow[r, "\gamma_{11}"]\arrow[dr, phantom, "\square"] & \bfZ_{12} \arrow[d,"\delta_{2}"] \arrow[r,"\gamma_{12}"]  & \bfZ_{13} \arrow[d,"\delta_{3}"] \\
    \bfZ_{21} \arrow[r,"\gamma_{21}"]  & \bfZ_{22} \arrow[r, "\gamma_{22}"]  & \bfZ_{23}
\end{tikzcd}
$$
Assume also that we have:
$$
\begin{tikzcd}
    \bfZ_{11} \arrow[d,"\delta_{1}"] \arrow[r, "\gamma_{12}\circ\gamma_{11}"]\arrow[dr, phantom, "\square"] & \bfZ_{13} \arrow[d,"\delta_{3}"]\\
    \bfZ_{21} \arrow[r,"\gamma_{22}\circ\gamma_{21}"]  & \bfZ_{23} 
\end{tikzcd}
$$
and that the map $\gamma_{\RamiA{2}1}$ is faithfully flat.
Then we have:
$$
\begin{tikzcd}
    \bfZ_{12} \arrow[d,"\delta_{2}"] \arrow[r,"\gamma_{12}"] \arrow[dr, phantom, "\square"] & \bfZ_{13} \arrow[d,"\delta_{3}"] \\
     \bfZ_{22} \arrow[r, "\gamma_{22}"]  & \bfZ_{23}
\end{tikzcd}
$$
\end{lem}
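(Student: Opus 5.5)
Everything is affine, so I will translate the statement into commutative algebra. Write $A_{ij}:=\cO(\bfZ_{ij})$. Then $\bfZ_{ij}=\operatorname{Spec} A_{ij}$, and a square is Cartesian exactly when the corresponding square of rings is a pushout, i.e.\ a tensor product. We must show that the natural map
$$\theta: A_{22}\otimes_{A_{23}}A_{13}\to A_{12}$$
is an isomorphism. The data give two facts. From the left square, $A_{21}\otimes_{A_{22}}A_{12}\cong A_{11}$. From the outer square, $A_{21}\otimes_{A_{23}}A_{13}\cong A_{11}$.

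The plan is to base change $\theta$ along the faithfully flat map $A_{22}\to A_{21}$ corresponding to $\gamma_{21}$.

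\emph{Source.} Associativity of tensor products gives
$$A_{21}\otimes_{A_{22}}(A_{22}\otimes_{A_{23}}A_{13})\cong A_{21}\otimes_{A_{23}}A_{13},$$
and by the outer square this is $A_{11}$.

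\emph{Target.} $A_{21}\otimes_{A_{22}}A_{12}\cong A_{11}$ by the left square.

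\emph{Compatibility.} Next I check that, under these identifications, $\mathrm{id}_{A_{21}}\otimes\theta$ becomes the identity of $A_{11}$. Both identifications are the canonical maps to $A_{11}$ induced by $\delta_1^*$ together with the pullbacks along $\gamma_{11}$ and $\gamma_{12}\circ\gamma_{11}$. So it suffices to check agreement on pure tensors $a\otimes b\otimes c$. This reduces to commutativity of the diagram, namely $\gamma_{11}^*\delta_2^*=\delta_1^*\gamma_{21}^*$ and $\gamma_{12}^*\delta_3^*=\delta_2^*\gamma_{22}^*$. This is routine bookkeeping.

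\emph{Descent.} Thus $A_{21}\otimes_{A_{22}}\theta$ is an isomorphism. Since $A_{21}$ is faithfully flat over $A_{22}$, tensoring with $A_{21}$ reflects isomorphisms: the kernel and cokernel of $\theta$ vanish after base change, hence vanish. So $\theta$ is an isomorphism, and the right square is Cartesian.

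The only step with any subtlety is the compatibility check, namely that the composite isomorphism really is the base change of $\theta$ and not some other isomorphism. I would handle it by writing everything as a morphism of $A_{21}$-algebras and verifying it on pure tensors.

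I do not expect a genuine obstacle. If one wants to avoid affineness, the same argument works scheme-theoretically: fpqc descent says that a morphism $\bfZ_{12}\to\bfZ_{22}\times_{\bfZ_{23}}\bfZ_{13}$ over $\bfZ_{22}$ is an isomorphism if its base change along the faithfully flat $\gamma_{21}$ is. Here that base change is the identity $\bfZ_{11}\to\bfZ_{11}$, by the pasting law for fiber products.
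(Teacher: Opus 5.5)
Your proposal is correct and takes essentially the same approach as the paper. Both base change the comparison map $\bfZ_{12}\to\bfZ_{22}\times_{\bfZ_{23}}\bfZ_{13}$ along the faithfully flat $\gamma_{21}$, identify both sides with $\bfZ_{11}$ using the two given Cartesian squares, and conclude by faithfully flat descent for isomorphisms; the paper cites this descent step (Stacks, Lemma 02L4), whereas you prove the affine case directly by showing that the kernel and cokernel of $\theta$ vanish.
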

\RamiA{
\begin{proof}
     We want to show that the natural map $$\bfZ_{12}\to \bfZ_{13}\times_{\bfZ_{23}} \bfZ_{22}  $$ is an isomorphism.
     We know that the natural map $$ \bfZ_{12}\times_{\bfZ_{22}} \bfZ_{21}
      \to 
     (\bfZ_{13}\times_{\bfZ_{23}}  \bfZ_{22}) \times_{\bfZ_{22}} \bfZ_{21}$$ is an isomorphism.
The assertion follows now from the fact that $\bfZ_{21}$ is faithfully flat  over $\bfZ_{22}$ using faithfully flat descent for isomorphisms (see {\it e.g.} \cite[\href{https://stacks.math.columbia.edu/tag/02L4}{Lemma 02L4}]{SP}).
\end{proof}
}
\begin{lem}\label{lem: caresian square N1}
    The square 
        \begin{equation}
        \xymatrix{
         \bfT^r\times \bfT  \ar[d]^{\mathfrak{pr}_1^r} \ar[r]^{\mu^r} & \bfY^r\ar[d]^{\pi^r}  \\
        \bfT^r \ar[r]^{q^r}& \bfC^{rss}}
        \end{equation} 
        is Cartesian. 
        Here, $\pi^r,\,q^r,\,\mu^r,$ and $\mathfrak{pr}^r_1$ are restrictions of $\pi,\,q,\, \mu$ and $\mathfrak{pr}_1$ respectively.
\end{lem}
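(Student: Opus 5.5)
The plan is to apply \Cref{lem:cancel} with both rows built from the $W$-torsor $\bfT^r\to\bfC^{rss}$. Take the diagram
$$
\begin{tikzcd}
    W\times\bfT^r\times \bfT \arrow[d] \arrow[r] & \bfT^r\times \bfT \arrow[d,"\mathfrak{pr}_1^r"] \arrow[r,"\mu^r"]  & \bfY^r \arrow[d,"\pi^r"] \\
    \bfT^r\times_{\bfC^{rss}}\bfT^r \arrow[r]  & \bfT^r \arrow[r, "q^r"]  & \bfC^{rss}
\end{tikzcd}
$$
Here the bottom left arrow is the projection to the second factor, which is a base change of $q^r$. By \Cref{lem:Gam} (the action of $W$ on $\bfT^r$ is free and factorizable by \Cref{lem:fac}, and $\bfT^r//W=\bfC^{rss}$ by \Cref{lem:GamU}), $q^r$ is \'etale and finite. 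It is also surjective, so it is faithfully flat, and hence so is its base change $\gamma_{21}$. Alternatively, one can apply the lemma with the simpler top-left choice $\bfZ_{21}=\bfT^r$ and $\gamma_{21}=\mathrm{id}$. This reduces nothing, so the torsor trick is the real content: we descend along $q^r$ itself.

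Concretely, we apply \Cref{lem:cancel} with $\bfZ_{22}=\bfT^r$, $\bfZ_{21}=\bfT^r\times_{\bfC^{rss}}\bfT^r\cong W\times\bfT^r$ (by \Cref{lem:Gam}(2)), and $\bfZ_{11}=(\bfT^r\times\bfT)\times_{\bfT^r}\bfZ_{21}\cong W\times\bfT^r\times\bfT$. The left square is then Cartesian by construction. It remains to check that the outer rectangle
$$W\times \bfT^r\times\bfT\to \bfY^r \quad\text{over}\quad W\times\bfT^r\to\bfC^{rss}$$
is Cartesian, i.e. that $(\bfT^r\times\bfT)\times_{\bfY^r}\bigl((\bfT^r\times\bfT)\times_{\bfT^r}(\bfT^r\times_{\bfC^{rss}}\bfT^r)\bigr)$ is identified correctly. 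It is cleaner to organize this as follows: we need $\bfY^r\times_{\bfC^{rss}}\bfT^r\cong \bfT^r\times\bfT$.

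First, the diagonal action of $W$ on $\bfT^r\times\bfT$ is free, because it is free on the first factor. By \Cref{lem:GamU}, $\bfY^r=(\bfT^r\times\bfT)//W$ is open in $\bfY$ and the quotient exists. By \Cref{lem:Gam}, $\mu^r$ is \'etale and $(\bfT^r\times\bfT)\times_{\bfY^r}(\bfT^r\times\bfT)\cong W\times\bfT^r\times\bfT$. Similarly, $\bfT^r\times_{\bfC^{rss}}\bfT^r\cong W\times \bfT^r$.

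Now consider the natural map $\Phi:\bfT^r\times\bfT\to \bfY^r\times_{\bfC^{rss}}\bfT^r$, given by $(\mu^r,\mathfrak{pr}_1^r)$. This map is $W$-equivariant, where $W$ acts on the target through the second factor. Both sides are $W$-torsors over $\bfY^r$: the source by \Cref{lem:Gam}, and the target as the base change of the torsor $q^r$ along $\pi^r$. A $W$-equivariant morphism of $W$-torsors over the same base is an isomorphism; this follows from the Galois descent statement \Cref{cor:GalDes}(i), or directly by fpqc-locally trivializing.

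The one point needing care, and the expected obstacle, is that $\Phi$ is a map over $\bfY^r$ and is equivariant. Equivariance is clear from the formula. Compatibility over $\bfY^r$ is clear because the target maps to $\bfY^r$ by the first projection.

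If one prefers to follow \Cref{lem:cancel} literally, the outer rectangle is then obtained by base changing $\Phi$ along $q^r$, and the lemma yields the claim. Either way, the key inputs are the freeness of $W$ on $\bfT^r\times \bfT$ and the torsor description from \Cref{lem:Gam}.
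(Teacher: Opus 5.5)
Your proof is correct. Its working core is the claim that $\Phi=(\mu^r,\mathfrak{pr}_1^r)$ is a $W$-equivariant morphism between the $W$-torsors $\bfT^r\times\bfT\to\bfY^r$ and $\bfY^r\times_{\bfC^{rss}}\bfT^r\to\bfY^r$, hence an isomorphism. This is the paper's argument in torsor language: the paper applies \Cref{lem:cancel} with the faithfully flat map $\gamma_{21}=\mu^r$ and the two Cartesian squares from \Cref{lem:Gam}(2) (for the free actions on $\bfT^r\times\bfT$ and on $\bfT^r$), which is exactly the trivialization that proves your torsor fact.

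Drop the opening plan of descending along $\bfT^r\times_{\bfC^{rss}}\bfT^r\to\bfT^r$. It gains nothing: that map has the diagonal as a section, so checking its outer rectangle is equivalent to the original claim. Also, \Cref{cor:GalDes}(i) alone does not give the torsor fact; the justification to keep is the pullback along $\mu^r$.
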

\begin{proof}
Consider the following diagram 
        \begin{equation}
        \xymatrix{
        \RamiA{W \times} \bfT^r\times \bfT \ar[d]^{        
        pr_{\RamiA{\bfT^r\times \bfT}}
                }\ar@/_2.0pc/[dd] \ar[r]^{a_1} & \bfT^r\times \bfT\ar[d]^{\mu^r}\ar@/^2.0pc/[dd]^{q^r\circ \fp\fr_1^r}\\
        \bfT^r\ar[d]^{\mathfrak{pr}_1^r}\times \bfT \ar[r]^{\mu^r} &\bfY^r\ar[d]^{\pi^r}\\
        \bfT^r \ar[r]^{q^r}& \bfC^{rss}
        }
        \end{equation}
\RamiA{where $a_1$ is the diagonal action map, and $pr_{\bfT^r\times \bfT}$ is the projection.}
By Lemmas \ref{lem:Gam} and \ref{lem:GamU}, the squares
        \begin{equation}\label{eq:TTWY}
        \xymatrix{
       \RamiA{W \times} \bfT^r\times \bfT  \ar[d]^{pr_{\RamiA{\bfT^r\times \bfT}}} \ar[r]^{a_1} & \bfT^r\times \bfT\ar[d]^{\mu^r}\\
        \bfT^r\times \bfT \ar[r]^{\mu^r} &\bfY^r}
        \end{equation}
and 
        \begin{equation}
        \xymatrix{
         \RamiA{W \times} \bfT^r  \ar[d]^{pr_\RamiA{\bfT^r}} \ar[r]^{a_2} & \bfT^r\ar[d]^{q^r}  \\
        \bfT^r \ar[r]^{q^r}& \bfC^{rss}}
        \end{equation} 
are Cartesian,  \RamiA{where $a_2$ is the action map and $pr_{\bfT^r}$ is the projection.}

\RamiA{Also the square
        $$
        \xymatrix{
       \RamiA{W \times} \bfT^r\times \bfT  \ar[d]^{pr_{W\times \bfT^r}
       } \ar[r]^{a_1} & \bfT^r\times \bfT\ar[d]^{\fp\fr_1^r}\\
W \times \bfT^r  \ar[r]^{a_2} & \bfT^r}          $$
is Cartesian, where $pr_{W\times \bfT^r}$ is the projection.
}Hence the square 
        \begin{equation}\label{eq:TTWC}
        \xymatrix{
         \RamiA{W \times} \bfT^r\times \bfT  \ar[d]_{\fp\fr_1^r \circ pr_{\RamiA{\bfT^r\times \bfT}}\RamiA{=pr_{\bfT^r}\circ pr_{W\times\bfT^r}}} \ar[r]^{a_1} & \bfT^r\times \bfT\ar[d]^{q^r\circ \fp\fr_1^r=\pi^r\circ \mu^r}  \\
        \bfT^r \ar[r]& \bfC^{rss}}
        \end{equation} 
        is Cartesian.
\RamiA{By \Cref{cor:YfSmooth}\eqref{cor:YfSmooth:2} the map $\mu^r$ is etale. Hence b}y \Cref{lem:cancel}, and from \eqref{eq:TTWY}  and \eqref{eq:TTWC}, we get that the square 
        \begin{equation}
        \xymatrix{
         \bfT^r\times \bfT  \ar[d]^{\mathfrak{pr}_1^r} \ar[r]^{\mu^r} & \bfY^r\ar[d]^{\pi^r}  \\
        \bfT^r \ar[r]^{q^r}& \bfC^{rss}}
        \end{equation} 
        is Cartesian, as required. 
\end{proof}

\begin{definition}\label{defn:om.X}
$ $
    \begin{itemize}
        \item Define a group-scheme structure on the $\bfC^{rss}$-scheme $\bfY^r\to\bfC^{rss}$ in the following way.
        Consider the Cartesian square \RamiA{given by \Cref{lem: caresian square N1}}
        \begin{equation}\label{eq:crt}
        \xymatrix{
         \bfT^r\times \bfT  \ar[d]^{\mathfrak{pr}_1^r} \ar[r]^{\mu^r} & \bfY^r\ar[d]^{\pi^r}  \\
        \bfT^r \ar[r]^{q^r}& \bfC^{rss}}
        \end{equation} 
        
The left column has a natural structure of a group scheme (over $\mathbf{T}^r$). $W$ acts homomorphically w.r.t. this structure. By \Cref{cor:GalDes}\eqref{it:ff}, this gives a group scheme structure on the right column (over $\bfC^{rss}$). 
\item  \RamiA{Recall that} $\omega_\bfT$ \RamiA{is} the standard 
top differential form on $\bfT$.  Let 
$\mdef{\omega_{\mathfrak{pr}^r_1}}$
be the relative top differential form on $\bfT^r\times \bfT$ 
w.r.t to the map $\fp\fr^r_1$ obtained from 
$\omega_\bfT$. Consider it as a $\Q$-top 
differential form. As such it is $W$ 
invariant. Hence by \Cref{cor:GalDes}\eqref{it:sh} it descends to a relative 
$\Q$-top differential form \tdef{$\omega_\pi$} 
on $\bfY^r$ w.r.t. \Dima{$\pi^r$}. Consider it as a 
relative rational  $\Q$-top differential form on 
$\bfY$.


        \item Consider the Cartesian square 
                \begin{equation}\label{eq:crt2}
        \xymatrix{
        \bfX \ar[d]^{\tau} \ar[r]^{\sigma} &\bfY\ar[d]^\pi\\
        \bfG \ar[r]^{p} &\bfC}
        \end{equation}
Denote $\mdef{\omega_{\tau}}:=\sigma^*(\omega_{\pi})$ considered as a relative rational  $\Q$-top differential form on $\bfX$ w.r.t. $\tau$. 


\item 
Let $\RamiA{\mdef{\omega'_\bfX}}:=\omega_{\bfG}*\omega_{\tau},$ considered as a  rational $\Q$-top differential form on $\bfX$. 
Here we use the fact that the morphism $\tau:\bfX \to \bfG$ is generically smooth, as provided by     \Cref{lem:gen.sm}.
\item Denote $\mdef{\omega_\bfX}:=\tau^*(\Delta^{-1/2}) \RamiA{\omega'_\bfX}$.
\end{itemize}
\end{definition}
The definition of $\omega_\tau$ gives us the following:
\begin{lemma}\label{lem:om.atu}
    For any $x\in G^{rss}$ the form $\omega_\tau|_{\tau^{-1}(x)}$ is the canonical $\Q$-top-form on the torus $\tau^{-1}(x)$ (as defined in \Cref{def:wtor})
\end{lemma}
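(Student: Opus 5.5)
The plan is to unwind the definitions: identify the fiber $\tau^{-1}(x)$ with an explicit torus, and check that the relative form agrees with the canonical $\Q$-top form after base change to $F^{sep}$, where everything is split.

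First I identify the fiber. Since \eqref{eq:crt2} is Cartesian, for $x\in G^{rss}$ the map $\sigma$ restricts to an isomorphism $\tau^{-1}(x)\cong \pi^{-1}(p(x))$, and $p(x)\in C^{rss}$. By definition $\omega_\tau=\sigma^*(\omega_\pi)$, so $\omega_\tau|_{\tau^{-1}(x)}$ corresponds to $\omega_\pi|_{\pi^{-1}(p(x))}$ under this isomorphism. Next, the group-scheme structure on $\bfY^r\to\bfC^{rss}$ from \Cref{defn:om.X} makes $\pi^{-1}(p(x))$ an $F$-group, which I transport to $\tau^{-1}(x)$. I claim this group is canonically isomorphic to the centralizer torus $\bfG_x$. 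Concretely, $\pi^{-1}(c)$ parametrizes $W$-orbits of pairs $(t,s)$ with $q(t)=c$, that is, "the torus with eigenvalues indexed by the roots of $c$". An explicit isomorphism $\bfG_x\to \tau^{-1}(x)$ is $g\mapsto (x,[(t,s)])$, where $t$ is a diagonalization of $x$ and $s$ is the simultaneous diagonalization of $g$. Actually, for the lemma we only need that $\tau^{-1}(x)$ is a torus with this group structure.

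The main step is to compare forms. The canonical form is characterized by descent (\Cref{def:wtor}), and $\omega_\pi$ is defined by descent too (\Cref{cor:GalDes}\eqref{it:sh}), so it suffices to compare after extending scalars to $F^{sep}$. Over $F^{sep}$, since $p(x)$ is separable, choose $t\in \bfT^r(F^{sep})$ with $q(t)=p(x)$. By \Cref{lem: caresian square N1}, $\mu^r$ restricts to an isomorphism $\{t\}\times\bfT\to \pi^{-1}(p(x))$. This is an isomorphism of group schemes, because the group structure on the right column was obtained by descent from the left column of \eqref{eq:crt}. Also, because $\omega_\pi$ is the descent of $\omega_{\fp\fr_1^r}$, the pullback of $\omega_\pi$ along this map is the restriction of $\omega_{\fp\fr_1^r}$ to the fiber over $t$, which is $\omega_\bfT$. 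Since $\omega_\bfT$ is the standard form of the split torus $\bfT_{F^{sep}}\cong\bG_m^n$, its square is $\omega^2_{\bfS_{F^{sep}}}$ for $\bfS=\tau^{-1}(x)$ via the trivialization $\phi$ given by this isomorphism. By \Cref{lem:phi.indp} this does not depend on the choice of $t$ (different choices differ by $W$, which acts through $\GL_n(\Z)$). By uniqueness in Galois descent, $\omega_\tau|_{\tau^{-1}(x)}$ and $\omega_{\tau^{-1}(x)}$ then agree as $\Q$-top forms.

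I expect the main obstacle to be bookkeeping: checking that restriction of the descended relative form to a fiber commutes with base change to $F^{sep}$ and with the identification from \Cref{lem: caresian square N1}. This holds because all the maps over $\bfC^{rss}$ are étale base changes and relative top forms are compatible with base change. I would handle it by working on the single fiber over $p(x)$, base-changed along the étale point $\Spec F^{sep}\to \bfC^{rss}$.
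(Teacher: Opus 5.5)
Your proposal is correct and is precisely the unwinding of definitions the paper intends. The paper gives no separate argument: it states the lemma as an immediate consequence of the definition $\omega_\tau=\sigma^*(\omega_\pi)$. Your steps fill in the omitted details: identifying $\tau^{-1}(x)\cong\pi^{-1}(p(x))$, passing to $F^{sep}$ and using the fiber of the Cartesian square of \Cref{lem: caresian square N1} over a chosen $t$ with $q(t)=p(x)$, then invoking \Cref{lem:phi.indp} and uniqueness in Galois descent. Your aside identifying the fiber with $\bfG_x$ is not needed; the paper proves that separately afterwards in \Cref{lem:Gx}, using essentially the same choice of a splitting point.
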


\subsection{The fibers of $\tau:\bfX\to\bfG$}
Let $x\in G^{rss}$. In this subsection we prove that the algebraic group $\tau^{-1}(x)$ is (non-canonically) isomorphic to the centralizer $\bfG_x$ of $x$ - see \Cref{cor:Gx} below. 

We start with the following standard lemma:
\begin{lemma}\label{lem:ConjT}
    Let $x\in \bfG^{rss}(F)$. Then there exists $z\in \bfG(F^{sep})$ s.t. $zxz^{-1}\in \bfT(F^{sep})$.
\end{lemma}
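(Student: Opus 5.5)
The plan is to diagonalize $x$ over the separable closure by working with its characteristic polynomial. Let $\chi=p(x)\in \bfC^{rss}(F)$ be the characteristic polynomial of $x$. Since $x$ is regular semisimple, $\Delta(x)\neq 0$, so $\chi$ is a separable polynomial. Hence its $n$ roots $\lambda_1,\dots,\lambda_n$ are pairwise distinct and lie in $F^{sep}$. They are nonzero because $\chi(0)=\pm\det(x)\neq 0$.

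The first step is to produce an eigenbasis over $F^{sep}$. For each $i$, the matrix $x-\lambda_i\Id$ has entries in $F^{sep}$ and is singular. Its kernel is therefore a nonzero subspace of $(F^{sep})^n$, since Gaussian elimination stays within the field generated by the entries. Choose a nonzero eigenvector $v_i\in (F^{sep})^n$ in this kernel. Eigenvectors for pairwise distinct eigenvalues are linearly independent; this is the standard Vandermonde argument, valid over any field. So $v_1,\dots,v_n$ form a basis of $(F^{sep})^n$.

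The second step is to conjugate. Let $z^{-1}$ be the matrix whose columns are $v_1,\dots,v_n$. Then $z\in \bfG(F^{sep})=\GL_n(F^{sep})$, and $zxz^{-1}=\operatorname{diag}(\lambda_1,\dots,\lambda_n)$. Each $\lambda_i$ lies in $(F^{sep})^\times$, so this diagonal matrix lies in $\bfT(F^{sep})$, which gives the lemma.

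The only point needing care, and the main obstacle, is separability in positive characteristic. The roots lie in $F^{sep}$ rather than merely in $\bar F$ precisely because $\Delta(x)\neq0$ makes $\chi$ separable, and the eigenvectors can be found rationally over the field of definition of the eigenvalues. For a general connected reductive group one would instead invoke that the centralizer of a regular semisimple element is a maximal torus that splits over $F^{sep}$ \cite[Lemma 8.11]{BorelLAG}, together with the conjugacy of split maximal tori over $F^{sep}$. For $\GL_n$, however, the explicit argument above suffices.
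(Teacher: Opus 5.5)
Your argument is correct. The paper gives no written proof and simply calls the lemma standard; your diagonalization argument is that standard argument. Since $\Delta(x)\neq 0$, the characteristic polynomial is separable, so its distinct nonzero roots $\lambda_i$ lie in $F^{sep}$. Eigenvectors chosen over $F(\lambda_i)\subset F^{sep}$ are then the columns of the conjugating matrix $z^{-1}$.
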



Next, we describe certain fibers of the map $\pi: Y \to C$ in terms of centralizers.

\begin{lemma}\label{lem:Gx}
    Let $x\in G^{rss}$. Then the algebraic group $\pi^{-1}(p(x))$ is (non-canonically) isomorphic to the centralizer $\bfG_x$ of $x$.
\end{lemma}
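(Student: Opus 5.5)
The plan is to compare both groups after base change to $F^{sep}$, using Galois descent in the form already packaged in \Cref{defn:om.X}. Set $c:=p(x)\in C^{rss}$. By \Cref{lem: caresian square N1}, base changing $\pi^r$ along $q^r$ gives $\mathfrak{pr}_1^r:\bfT^r\times\bfT\to\bfT^r$, and this identification is $W$-equivariant. The group structure on $\bfY^r\to \bfC^{rss}$ is by construction the descent of the constant group scheme $\bfT$ over $\bfT^r$, with $W$ acting through $w\cdot(t,s)=(wt,ws)$.

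First I take the fiber over the point $c$. Consider the finite étale $W$-torsor $q^{-1}(c)\to \operatorname{Spec}F$; it is étale by \Cref{lem:Gam}. Base changing the Cartesian square \eqref{eq:crt} to $c$ gives
$$q^{-1}(c)\times\bfT\to\pi^{-1}(c).$$
This presents $\pi^{-1}(c)$ as the twist of $\bfT$ by the torsor $q^{-1}(c)$. Concretely,
$$\pi^{-1}(c)\cong (q^{-1}(c)\times\bfT)/W,$$
and its $F^{sep}$-points are $W$-equivariant maps $q^{-1}(c)(F^{sep})\to\bfT(F^{sep})$, with $\mathrm{Gal}$ acting through its action on $q^{-1}(c)(F^{sep})$, i.e. on the ordered tuples of roots of the polynomial $c$.

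Second, I describe $\bfG_x$ the same way. By \Cref{lem:ConjT}, choose $z\in\bfG(F^{sep})$ with $t_0:=zxz^{-1}\in\bfT(F^{sep})$. Then conjugation by $z$ gives $\bfG_{x,F^{sep}}\cong\bfT_{F^{sep}}$. Note that $t_0\in q^{-1}(c)(F^{sep})$. For $\sigma\in\mathrm{Gal}(F^{sep}/F)$, the element $\sigma(z)z^{-1}$ conjugates $t_0$ to $\sigma(t_0)$, so it lies in $N(\bfT)$. Its class $w_\sigma\in W$ is the unique element with $w_\sigma t_0=\sigma(t_0)$; uniqueness holds since $t_0$ is regular, so $W$ acts freely on $q^{-1}(c)$. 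Transported to $\bfT$, the twisted Galois action on $\bfG_x$ is $s\mapsto w_\sigma\sigma(s)$.

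Third, I match the two twists. The map
$$\pi^{-1}(c)(F^{sep})\to\bfT(F^{sep}),\qquad f\mapsto f(t_0),$$
is an isomorphism of groups; this uses that $W$ acts simply transitively on $q^{-1}(c)(F^{sep})$. Then $(\sigma f)(t_0)=\sigma(f(\sigma^{-1}t_0))$. One checks that under this identification the Galois action is again $s\mapsto w_\sigma\sigma(s)$, possibly up to inverse conventions that I will fix carefully. Hence both groups are forms of $\bfT$ given by the same cocycle in $H^1(\mathrm{Gal},\operatorname{Aut}\bfT)$, and so $\pi^{-1}(c)\cong\bfG_x$ over $F$ by descent. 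Non-canonicity comes from the choice of $z$ and $t_0$.

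I expect the main obstacle to be this cocycle bookkeeping together with the scheme-theoretic justification. One must check that the fiber of the descended group scheme is the twist described, which follows from the Cartesianity in \Cref{lem: caresian square N1} and \Cref{cor:GalDes}. One must also pin down the direction of the Galois action so that the cocycles agree exactly rather than up to inversion. If that becomes messy, a cleaner alternative is to construct the isomorphism directly on $F^{sep}$-points as $g\mapsto\big(w t_0\mapsto w(zgz^{-1})\big)$ for $g\in\bfG_x(F^{sep})$, and to verify Galois equivariance by a one-line computation.
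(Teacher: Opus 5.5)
Your proposal is correct and is essentially the paper's proof. The paper builds $\eps=\mu_y\circ ad(z)$ over $F^{sep}$ with $y=zxz^{-1}$; this is the inverse of your evaluation map $f\mapsto f(t_0)$ composed with $ad(z)$, i.e.\ exactly your ``cleaner alternative''. It then checks Galois-equivariance using $n=\gamma(z)z^{-1}\in N(\bfT)$ and the $W$-invariance of $\mu$, which is your cocycle comparison written out directly.

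On the bookkeeping you flag: with $w_\sigma t_0=\sigma(t_0)$, both transported actions come out as $s\mapsto w_\sigma^{-1}\sigma(s)$, not $s\mapsto w_\sigma\sigma(s)$. So the two actions agree on the nose, not merely up to inversion.
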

\begin{proof}
    We will construct an isomorphism of $F^{sep}-$schemes 
    $$\eps:(\bfG_x)_{F^{sep}/F}\to\pi^{-1}(p(x))_{F^{sep}/F}$$
    and show that for any $F^{sep}$-scheme $S$, and for any $\gamma\in Gal(F^{sep}/F)$ the following diagram is commutative
  \begin{equation}\label{eq:pip}
        \xymatrix{
        (\bfG_x)_{F^{sep}/F}(S) \ar[d]^{\gamma} \ar[r]^{\eps} &\pi^{-1}(p(x))_{F^{sep}/F}(S)\ar[d]^{\gamma}\\
        (\bfG_x)_{F^{sep}/F}(S)  \ar[r]^{\eps} &\pi^{-1}(p(x))_{F^{sep}/F}(S)}
        \end{equation}   

\begin{enumerate}[Step 1.]
    \item Construction of $\eps$.\\
    By \Cref{lem:ConjT} we can choose $z\in \bfG(F^{sep})$ such that $zxz^{-1}\in \bfT^r(F^{sep}/F)$. Denote $y:=zxz^{-1}$. 
    Let 
    $$\mu_y:\{y\}\times \bfT_{F^{sep}/F}\to \pi^{-1}(p(y))_{F^{sep}/F} = \pi^{-1}(p(x))_{F^{sep}/F}$$ be the restriction of $(\mu)_{F^{sep}/F}$.   
    By definition of the group structure on $\pi^{-1}(p(y)),$
    $\mu_y$ is a group isomorphism. 
    Take $\eps$ to be the composition
    $$ (\bfG_x)_{F^{sep}/F}\overset{ad(z)}{\to} \bfT_{F^{sep}/F}\to \bfT_{F^{sep}/F}\times \{y\}\overset{\mu_y}{\to} \pi^{-1}(p(x))_{F^{sep}/F}.$$
    It is an isomorphism of algebraic groups (over $F^{sep}$). 
    \item Proof of commutativity of the diagram \eqref{eq:pip}.
    Let $n:=\gamma(z)z^{-1}$. 
    Note that $z(\bfG_x)_{F^{sep}/F}z^{-1}=(\bfG_y)_{F^{sep}/F}=\bfT_{F^{sep}/F}$ and thus 
    $$\gamma( z)(\bfG_{\gamma(x)})_{F^{sep}/F})\gamma(z)^{-1}=\gamma( z(\bfG_x)_{F^{sep}/F}z^{-1})=\gamma(\bfT_{F^{sep}/F})=\bfT_{F^{sep}/F}$$
    Thus $n$ normalizes $\bfT_{F^{sep}/F}$. Therefore $ad(n)$ acts on $\bfT$ by an element $w\in W$. 
    Let $u\in (\bfG_x)_{F^{sep}/F}(S)$. We have 
    \begin{align*}
    \eps(\gamma(u))&= \mu(z\gamma(u)z^{-1},y)= \mu (w\cdot z\gamma(u)z^{-1},w\cdot y) =  \mu (n z\gamma(u)z^{-1}n^{-1},n y n^{-1}) = \\ 
    &=\mu (\gamma(z) z^{-1}z \gamma(u)z^{-1}z\gamma(z)^{-1}, \gamma(z)z^{-1} z x z^{-1} z   \gamma(z)^{-1})=\\
    &=\mu(\gamma(zuz^{-1}), \gamma(z)x\gamma(z)^{-1})=\mu (\gamma(zuz^{-1}), \gamma(z)\gamma(x)\gamma(z)^{-1})=\\
    &=\mu( \gamma(zuz^{-1},zxz^{-1})) =\mu( \gamma(zuz^{-1},y)) =\gamma(\eps(u))
    \end{align*}
    
\end{enumerate}
        
\end{proof}

\begin{cor}\label{cor:Gx}
    Let $x\in G^{rss}$. Then the algebraic group $\tau^{-1}(x)$ is (non-canonically) isomorphic to the centralizer $\bfG_x$ of $x$.
\end{cor}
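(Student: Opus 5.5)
The plan is to deduce \Cref{cor:Gx} from \Cref{lem:Gx} by base change along the Cartesian square \eqref{eq:crt2}. Since $\bfX=\bfG\times_\bfC\bfY$, the fiber $\tau^{-1}(x)$ of $\tau$ over the $F$-point $x$ is, as a scheme, $\{x\}\times_{\bfC}\bfY=\pi^{-1}(p(x))$. Concretely, the projection $\sigma$ restricted to $\tau^{-1}(x)$ gives a morphism $\tau^{-1}(x)\to\pi^{-1}(p(x))$. The universal property of the fiber product, applied to $\operatorname{Spec}F\xrightarrow{x}\bfG\xrightarrow{p}\bfC$, shows that this morphism is an isomorphism of $F$-schemes.

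The algebraic group structure on $\tau^{-1}(x)$ is the one transported along this identification. Indeed, $p(x)\in C^{rss}$ because $x\in G^{rss}$. The group scheme structure on $\bfY^r\to\bfC^{rss}$ from \Cref{defn:om.X} pulls back via $\sigma$ to a group scheme structure on $\tau^{-1}(\bfG^{rss})\to\bfG^{rss}$. Its fiber over $x$ is therefore, by construction, isomorphic as a group to $\pi^{-1}(p(x))$. I also have to check that $\pi^{-1}(p(x))$ lies inside $\bfY^r$. This holds because $\bfY^r=\pi^{-1}(\bfC^{rss})$: apply \Cref{lem:GamU} to the $W$-invariant open set $\bfT^r\times\bfT=(q\circ\mathfrak{pr}_1)^{-1}(\bfC^{rss})$.

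Composing this isomorphism with the isomorphism $\pi^{-1}(p(x))\cong\bfG_x$ of \Cref{lem:Gx} gives the claim. There is no real obstacle here. The only point needing care is the bookkeeping that the fiber of $\tau$ carries exactly the group structure pulled back from $\pi$. This is immediate, since $\omega_\tau$ and the group structure on the fibers of $\tau$ are both defined by pullback along $\sigma$.
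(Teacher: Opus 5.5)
Your proposal is correct and is essentially the paper's argument. The paper states this corollary without a separate proof, as an immediate consequence of \Cref{lem:Gx}: the Cartesian square $\bfX=\bfG\times_\bfC\bfY$ identifies $\tau^{-1}(x)$ with $\pi^{-1}(p(x))$, including its group structure. Your additional checks that $\bfY^r=\pi^{-1}(\bfC^{rss})$ and that the group structure is the one transported by base change are correct bookkeeping that the paper leaves implicit.
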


\subsection{Construction of $\mathcal{A}$ and its properties}
In this subsection we construct a clopen subset $\mathcal{A}\subset X$ s.t. $\tau|_{\mathcal{A}}$ is proper and a generic fiber of $\tau$ intersects $\mathcal{A}$ along the maximal compact subgroup of this fiber (see \Cref{cor:A.prop}).

The following lemma is straightforward:
\begin{lem}\label{lem:cancel.set}
Consider the following commutative diagram in arbitrary category.
$$
\begin{tikzcd}
    Z_{11} \arrow[d,"\delta_{1}"] \arrow[r, "\gamma_{11}"] & Z_{12}\arrow[dr, phantom, "\square"] \arrow[d,"\delta_{2}"] \arrow[r,"\gamma_{12}"]  & Z_{13} \arrow[d,"\delta_{3}"] \\
    Z_{21} \arrow[r,"\gamma_{21}"]  & Z_{22} \arrow[r, "\gamma_{22}"]  & Z_{23}
\end{tikzcd}
$$
Assume also that we have:
$$
\begin{tikzcd}
    Z_{11} \arrow[d,"\delta_{1}"] \arrow[r, "\gamma_{12}\circ\gamma_{11}"]\arrow[dr, phantom, "\square"] & Z_{13} \arrow[d,"\delta_{3}"]\\
    Z_{21} \arrow[r,"\gamma_{22}\circ\gamma_{21}"]  & Z_{23} 
\end{tikzcd}
$$
Then we have:
$$
\begin{tikzcd}
    Z_{11} \arrow[d,"\delta_{1}"] \arrow[r,"\gamma_{11}"] \arrow[dr, phantom, "\square"] & Z_{12} \arrow[d,"\delta_{2}"] \\
     Z_{21} \arrow[r, "\gamma_{21}"]  & Z_{22}
\end{tikzcd}
$$
\end{lem}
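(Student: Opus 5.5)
This is the pasting lemma for pullbacks, used in the "cancellation" direction: if the right square and the outer rectangle are Cartesian, then so is the left square. I will verify the universal property of the left square directly. Since the argument uses only the universal properties of the two given pullbacks, it works in an arbitrary category.

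Let $A$ be an object with morphisms $a:A\to Z_{12}$ and $b:A\to Z_{21}$ such that $\delta_2\circ a=\gamma_{21}\circ b$. I need a unique $c:A\to Z_{11}$ with $\gamma_{11}\circ c=a$ and $\delta_1\circ c=b$.

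\emph{Existence.} Consider the pair $(\gamma_{12}\circ a,\ b)$. It satisfies
$$\delta_3\circ\gamma_{12}\circ a=\gamma_{22}\circ\delta_2\circ a=\gamma_{22}\circ\gamma_{21}\circ b,$$
using commutativity of the right square and then the hypothesis on $(a,b)$. Since the outer rectangle is Cartesian, there is a unique $c:A\to Z_{11}$ with $\gamma_{12}\circ\gamma_{11}\circ c=\gamma_{12}\circ a$ and $\delta_1\circ c=b$. It remains to show $\gamma_{11}\circ c=a$, and this is where the right square is used. Both $\gamma_{11}\circ c$ and $a$ are maps $A\to Z_{12}$. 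Their compositions with $\gamma_{12}$ agree by construction of $c$. Their compositions with $\delta_2$ also agree:
$$\delta_2\circ\gamma_{11}\circ c=\gamma_{21}\circ\delta_1\circ c=\gamma_{21}\circ b=\delta_2\circ a,$$
using commutativity of the left square. Since the right square is Cartesian, a map into $Z_{12}$ is determined by its compositions with $\gamma_{12}$ and $\delta_2$. Hence $\gamma_{11}\circ c=a$.

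\emph{Uniqueness.} Suppose $c'$ also satisfies $\gamma_{11}\circ c'=a$ and $\delta_1\circ c'=b$. Then $c'$ satisfies the same two conditions that characterize $c$ in the outer rectangle, namely $\gamma_{12}\circ\gamma_{11}\circ c'=\gamma_{12}\circ a$ and $\delta_1\circ c'=b$. By uniqueness in the outer pullback, $c'=c$.

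The only point needing care is the existence step: the map produced by the outer rectangle is only known a priori to satisfy $\gamma_{12}\circ\gamma_{11}\circ c=\gamma_{12}\circ a$, and the Cartesian right square is needed to upgrade this to $\gamma_{11}\circ c=a$. Everything else is routine diagram chasing.
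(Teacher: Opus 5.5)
Your proof is correct. It is the standard universal-property verification of the pullback pasting lemma, which is exactly what the paper leaves implicit when it calls the lemma straightforward and gives no proof. You correctly use the Cartesian right square only to upgrade $\gamma_{12}\circ\gamma_{11}\circ c=\gamma_{12}\circ a$ to $\gamma_{11}\circ c=a$, with commutativity of the left square supplying the matching $\delta_2$-components.
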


\begin{defn}
$\,$ 
\begin{itemize}
    \item Recall that $\alp:\bfY=(\bfT\times\bfT)/W\to \bfT/W\times \bfT/W=\bfC\times \bfC$ is the natural map. 
    \item Let $\mdef{\mathcal{B}}:=\alp^{-1}(\bfC(F)\times \bfC(O_F))\subset Y=\bfY(F)$.
    \item Let $\mdef{\mathcal{B}^r}:=\mathcal{B}\cap Y^r$.
\end{itemize}
\end{defn}

\begin{prop}\label{lem:BY}
    $ \,$
    \begin{enumerate}[(i)]
    \item $\mathcal{B}\subset Y$ is clopen.
    \item \label{it:piB}$\pi|_{\mathcal{B}}$ is proper.
    \item\label{lem:BY:max} For any $x\in C^{rss}:=\bfC^{rss}(F)$ the set $\pi^{-1}(x)(F)\cap \mathcal{B}$ is the maximal compact group of $\pi^{-1}(x)(F)$.    
    \end{enumerate}
\end{prop}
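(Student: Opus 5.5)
(i) Clopenness is immediate: $\bfC(O_F)\subset \bfC(F)=F^\times\times F^{n-1}$ is clopen (it is $O_F^\times\times O_F^{n-1}$), so $\bfC(F)\times\bfC(O_F)$ is clopen in $C\times C$. Its preimage under the continuous map $\alp:Y\to C\times C$ is therefore clopen.

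(ii) I will prove properness of $\pi|_{\mathcal B}$ by showing that $\alp:\bfY\to\bfC\times\bfC$ is finite, hence proper on $F$-points. Finiteness holds because $\bfT\times\bfT\to\bfC\times\bfC$ is finite (it is $q\times q$, and $q$ is a quotient by a finite group acting on an affine variety). The ring of diagonal invariants $\cO(\bfT\times\bfT)^W$ is a submodule of the finite module $\cO(\bfT\times\bfT)$ over the noetherian ring $\cO(\bfT)^W\otimes\cO(\bfT)^W$, hence finite. Now let $K\subset C$ be compact. Then $\pi^{-1}(K)\cap\mathcal B=\alp^{-1}(K\times\bfC(O_F))$, and $K\times\bfC(O_F)$ is compact. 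A finite morphism induces a proper map on $F$-points (closed embedding into affine space over the target, with integral coordinates bounded by the coefficients of their monic equations), so this preimage is compact.

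(iii) Fix $x\in C^{rss}$. By \Cref{lem: caresian square N1}, after base change to $F^{sep}$ and a choice of $t\in \bfT^r(F^{sep})$ with $q(t)=x$, the fiber $\pi^{-1}(x)$ is identified with $\{t\}\times\bfT$ via $\mu$. This identification is a group isomorphism, as in the proof of \Cref{lem:Gx}, and the group is a torus $\bfS$ over $F$. Under it, the second component of $\alp$ becomes $q:\bfT\to\bfC$, i.e. $s\mapsto$ characteristic polynomial of $s$ viewed in $\bfT\subset\bfG$. So on $S=\bfS(F)$, the condition $\alp_2(s)\in\bfC(O_F)$ says that the eigenvalues (in $F^{sep}$) of the element corresponding to $s$ have characteristic polynomial with $O_F$-coefficients and unit constant term. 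This is equivalent to every eigenvalue being integral over $O_F$ with unit norm, i.e. $|\chi(s)|=1$ for all characters $\chi\in X^*(\bfS)$ obtained from the coordinate characters of $\bfT$ over $F^{sep}$. These coordinate characters span $X^*(\bfS_{F^{sep}})$, so the condition reads $|\chi(s)|=1$ for all $\chi\in X^*(\bfS_{F^{sep}})$. The maximal compact subgroup of $S$ is exactly $\{s\in S: |\chi(s)|=1\ \forall \chi\in X^*(\bfS_{F^{sep}})\}$, with absolute value extended uniquely to $F^{sep}$; in particular this set is a compact subgroup. Conversely, any compact subgroup has all such values of absolute value $1$. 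Hence $\pi^{-1}(x)(F)\cap\mathcal B$ is that maximal compact subgroup.

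The main obstacle is making the last identification rigorous, namely that $\alp_2$ on the fiber agrees with the characteristic-polynomial map through the Galois-twisted identification. The cleanest route is to verify it over $F^{sep}$ using the Cartesian square, where $\alp\circ\mu=q\times q$, and then observe that both sides are defined over $F$.
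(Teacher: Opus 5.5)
Your proposal is correct and follows the paper's proof. Parts (i) and (ii) are argued exactly as in the paper: finiteness of $\alpha$, hence properness on $F$-points, together with compactness of $\bfC(O_F)$.

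In (iii) you merge the paper's two steps into a single computation over $F^{sep}$, using the description of the maximal compact subgroup of a torus as the locus where $|\chi|=1$ for every character. The paper's first step treats the split case, where integrality of $O_F$ identifies $\mathcal{B}\cap\pi^{-1}(x)(F)$ with $\bfT(O_F)$. Its second step passes to general $x$ via the fact that the maximal compact subgroup of $\bfS(F)$ is its intersection with that of $\bfS(E)$ for a splitting field $E$.
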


For the proof we will need the following lemmas.

\begin{lemma}\label{lem:Cart}
            Consider the commutative diagram 
              \begin{equation}
 \begin{tikzcd}
    T^r\times \bfT(O_F) \drar[phantom, "1"] \arrow[d] \arrow[r] & \mathcal{B}^r \drar[phantom, "2"] \arrow[d] \arrow[r] & C \times \bfC(O_F) \arrow[d] \\
    T^r\times T \arrow[d, "\fp\fr_1^r"] \arrow[r,"\mu^r"] \drar[phantom, "3"]& Y^r \arrow[d,"\pi^r"] \arrow[r,"\alpha|_{Y^r}"] & C\times C \\
    T^r \arrow[r,"q^r"] & C^{rss}
\end{tikzcd}
\end{equation} 

Then all the squares in this diagram are Cartesian.        
\end{lemma}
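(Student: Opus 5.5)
Square 3 is the $F$-points of the Cartesian square of \Cref{lem: caresian square N1}. Taking $F$-points commutes with fiber products of schemes, since $\bfZ(F)=\mathrm{Hom}(\mathrm{Spec} F,\bfZ)$ and Hom preserves limits. So square 3 is Cartesian in the category of sets.

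Square 2 is Cartesian essentially by definition. We have $\mathcal{B}^r=\mathcal{B}\cap Y^r$ and $\mathcal{B}=\alp^{-1}(C\times\bfC(O_F))$, so
\[
\mathcal{B}^r=(\alp|_{Y^r})^{-1}(C\times \bfC(O_F)).
\]
A preimage of a subset is exactly the fiber product in sets.

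For square 1 we will use \Cref{lem:cancel.set}, applied in the category of sets. Take the top row $T^r\times\bfT(O_F)\to\mathcal{B}^r\to C\times\bfC(O_F)$ and the bottom row $T^r\times T\to Y^r\to C\times C$. Square 2 is Cartesian, so by the lemma it suffices to show that the outer rectangle is Cartesian. That is, we must show
\[
T^r\times \bfT(O_F)=(\alp\circ\mu^r)^{-1}(C\times\bfC(O_F)).
\]
Now $\alp\circ\mu^r=(q\times q)|_{T^r\times T}$, so the right-hand side equals $T^r\times q^{-1}(\bfC(O_F))\cap T$. We therefore need the identity
\[
\{t\in T: q(t)\in\bfC(O_F)\}=\bfT(O_F).
\]

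This identity is the only nontrivial point. For $t=\mathrm{diag}(t_1,\dots,t_n)\in T$, the element $q(t)$ is the polynomial $\prod(X-t_i)$. Its coefficients are the elementary symmetric functions of the $t_i$, and the identification $\bfC\cong\bG_m\times\bA^{n-1}$ uses the constant term $\pm\prod t_i$ as the $\bG_m$-coordinate. If every $t_i\in O_F^\times$, then all coefficients lie in $O_F$ and the constant term is a unit, so $q(t)\in\bfC(O_F)$. Conversely, suppose the coefficients are integral. Then each $t_i$ is a root of a monic polynomial over $O_F$, and since $O_F$ is integrally closed, $t_i\in O_F$. Their product is a unit, so each $t_i\in O_F^\times$.

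We also note that $\bfT(O_F)$ is independent of whether one uses $\bfT$ or $\bfT^r$ in the second factor, since only the second coordinate is constrained. Hence square 1 is Cartesian, and all three squares are Cartesian.
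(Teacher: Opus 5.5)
Your proof is correct and follows essentially the same route as the paper. Square 2 holds by the definition of $\mathcal{B}$, square 3 follows from \Cref{lem: caresian square N1}, and square 1 follows from \Cref{lem:cancel.set} applied to the outer rectangle, whose Cartesianness reduces to $q^{-1}(\bfC(O_F))\cap T=\bfT(O_F)$ by integral closedness of $O_F$. The only differences are presentational: you compute the outer preimage directly and spell out the integrality argument, whereas the paper builds the outer square by composing product Cartesian squares.
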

\begin{proof}
The square 2 is Cartesian by the definition of $\mathcal{B}$. The square 3 is Cartesian by \Cref{lem: caresian square N1}. It remains to show that 1 is a Cartesian square.         The fact that $O_F$ is integrally closed inside $F$ implies that the square
        \begin{equation}
        \xymatrix{
         \bfT(O_F) \ar[d] \ar[r] &\bfC(O_F)\ar[d]\\
         T \ar[r]^q &C}
        \end{equation}
        is  Cartesian.
Thus we 
have the Cartesian square
        \begin{equation}
        \xymatrix{
        C\times \bfT(O_F) \ar[d] \ar[r] & C\times \bfC(O_F) \ar[d]\\
        C\times T \ar[r]^{\Id\times q} &C \times C}.
        \end{equation} 
        Composing it with the Cartesian square 
        \begin{equation}
        \xymatrix{
        T^r\times \bfT(O_F) \ar[d] \ar[r] & C\times \bfT(O_F) \ar[d]\\
        T^r\times T \ar[r]^{q|_{T^r}\times q} &C\times T}.
        \end{equation} 
        we obtain the Cartesian square:
        $$\begin{tikzcd}
    T^r\times \bfT(O_F) \arrow[d] \arrow[r]  & C \times \bfC(O_F) \arrow[d] \\
    T^r\times T \arrow[r,"q|_{T^r}\times q"]   & C\times C 
\end{tikzcd}
$$
This square is also the composition of squares 1,2. Since we already showed that square 2 is Cartesian, it follows by \Cref{lem:cancel.set} that the square 1 is Cartesian.
\end{proof}

\begin{lemma}\label{lem:TE}
    Let \RamiC{$\bf S$} be a torus defined over $F$. Let $E/F$ be a finite field extension. Let $K\sub \RamiC{\bfS}(E)$ be the maximal compact subgroup. Then $K\cap \RamiC{\bfS}(F)$ is the maximal compact subgroup of $\RamiC{\bfS}(F)$.
\end{lemma}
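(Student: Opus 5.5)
The claim is that $K\cap \bfS(F)$ is the maximal compact subgroup of $\bfS(F)$. I would reduce it to two facts. First, $\bfS(F)$ has a unique maximal compact subgroup $K_F$, and similarly $\bfS(E)$ has $K$. Second, $K\cap\bfS(F)$ is compact and contains every compact subgroup of $\bfS(F)$. These together give the claim.

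For existence and uniqueness, I would use the standard description of the maximal compact subgroup of a torus over a local field in terms of valuations of characters. Let $X^*_F(\bfS)$ be the group of $F$-rational characters. Set
$$\bfS(F)^1=\{s\in\bfS(F)\,:\,|\chi(s)|=1 \text{ for all } \chi\in X^*_F(\bfS)\}.$$
This is an open subgroup. Since $\bfS(F)/\bfS(F)^1$ embeds into a lattice, it is torsion free, and I would show it is a lattice of rank equal to the split rank.

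Compactness of $\bfS(F)^1$ comes from the anisotropic case. Let $\bfS_a$ be the maximal anisotropic subtorus and $\bfS_s$ the maximal split subtorus. The product map $\bfS_a\times\bfS_s\to\bfS$ is an isogeny. Since $\bfS_a(F)$ is compact (a standard fact), $\bfS(F)^1$ is compact modulo finite index. Any compact subgroup of $\bfS(F)$ has image in $\bfS(F)/\bfS(F)^1$ that is a compact, hence finite, subgroup of a torsion-free group. That image is therefore trivial, so every compact subgroup lies in $\bfS(F)^1$. Hence $\bfS(F)^1$ is the unique maximal compact subgroup, and the same argument over $E$ gives $K=\bfS(E)^1$.

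Now the intersection. $\bfS(F)$ is closed in $\bfS(E)$, so $K\cap\bfS(F)$ is a compact subgroup of $\bfS(F)$ and is contained in $\bfS(F)^1$. Conversely, $\bfS(F)^1$ is a compact subgroup of $\bfS(E)$, since $\bfS(F)\hookrightarrow\bfS(E)$ is a continuous closed embedding. So $\bfS(F)^1\subset K$ by maximality of $K$, giving $\bfS(F)^1\subset K\cap\bfS(F)$. Thus the intersection equals $\bfS(F)^1$.

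The main obstacle is the structural input: compactness of anisotropic tori, and the claim that every compact subgroup of $\bfS(F)$ lies in $\bfS(F)^1$, in arbitrary characteristic. There is an alternative route to the inclusion $\bfS(F)^1\subset K$. Take a finite Galois extension $L$ containing $E$ that splits $\bfS$. Restriction of characters from $X^*(\bfS)$ over $L$ to $\bfS(F)$ gives $|\chi(s)|=1$ for $s\in\bfS(F)^1$ via norms: for $\chi\in X^*(\bfS_L)$, the product over Galois conjugates of $\chi$ is $F$-rational, and all conjugates have the same absolute value on $\bfS(F)$. This shows $\bfS(F)^1=\bfS(F)\cap\bfS(L)^1$. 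In the split case, $\bfS(L)^1\cong(O_L^\times)^n$ is clearly the maximal compact subgroup. Applying this argument to both $F\subset L$ and $E\subset L$ yields $K\cap\bfS(F)=\bfS(L)^1\cap\bfS(F)=\bfS(F)^1$, and this version is characteristic-free.
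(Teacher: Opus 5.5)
Your intersection paragraph is exactly the paper's proof. The paper's proof is the single remark that the lemma follows from uniqueness of the maximal compact subgroup of a torus: $K\cap\mathbf{S}(F)$ is compact because $\mathbf{S}(F)$ is closed in $\mathbf{S}(E)$, and the maximal compact subgroup of $\mathbf{S}(F)$ is a compact subgroup of $\mathbf{S}(E)$, so it lies in $K$. The paper treats existence and uniqueness of that subgroup as standard. The part of your proposal that tries to prove this fact, however, has a step that is false in positive characteristic, which is exactly the setting of this paper.

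\textbf{The false step.} You claim that the image of $\mathbf{S}_a(F)\times\mathbf{S}_s(F)\to\mathbf{S}(F)$ has finite index, which you need for ``compact modulo finite index''. This fails when the isogeny $\mathbf{S}_a\times\mathbf{S}_s\to\mathbf{S}$ is inseparable. For example, let $F$ have characteristic $2$, let $F_1/F$ be a separable quadratic extension, and let $\mathbf{S}=R_{F_1/F}\mathbb{G}_m$. Then $\mathbf{S}_s=\mathbb{G}_m$, $\mathbf{S}_a=\ker N_{F_1/F}$, and $\mathbf{S}_a\cap\mathbf{S}_s=\mu_2$ is infinitesimal. The image in $F_1^\times$ is $F_1^{N=1}F^\times$, and $z\mapsto N(z)$ induces $F_1^\times/F_1^{N=1}F^\times\cong N(F_1^\times)/F^{\times 2}$. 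This quotient is infinite: $N(F_1^\times)$ has index $2$ in $F^\times$ by local class field theory, while $F^{\times 2}$ has infinite index in characteristic $2$. Here $\mathbf{S}(F)^1=O_{F_1}^\times$ is still compact, but not for the reason you give.

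\textbf{The alternative route.} It avoids isogenies, but as written it needs a finite Galois extension $L/F$ containing $E$. No such $L$ exists when $E/F$ is inseparable, and the lemma allows any finite $E$. In the paper's application $E$ is the splitting field of a separable polynomial, so this case does not arise there.

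\textbf{A fix.} Both problems disappear if you drop characters and norms altogether. Take any finite extension $L\supseteq E$ over which $\mathbf{S}$ splits, for instance the compositum of $E$ with a finite separable splitting field. Then $\mathbf{S}(L)\cong(L^\times)^n$. The subgroup $\mathbf{S}(L)^1$ corresponding to $(O_L^\times)^n$ contains every compact subgroup, because the valuation sends a compact subgroup of $L^\times$ to a finite subgroup of $\mathbb{Z}$. For $M\in\{F,E\}$, $\mathbf{S}(M)$ is closed in $\mathbf{S}(L)$. Hence $\mathbf{S}(M)\cap\mathbf{S}(L)^1$ is compact and contains every compact subgroup of $\mathbf{S}(M)$, so it is the unique maximal compact subgroup of $\mathbf{S}(M)$. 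Therefore $K\cap\mathbf{S}(F)=\mathbf{S}(L)^1\cap\mathbf{S}(E)\cap\mathbf{S}(F)=\mathbf{S}(L)^1\cap\mathbf{S}(F)$ is the maximal compact subgroup of $\mathbf{S}(F)$. This proves the uniqueness fact and the lemma together, in any characteristic, without anisotropic tori or the subgroup $\mathbf{S}(F)^1$.
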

\begin{proof}
    This follows from the uniqueness of the maximal compact subgroup of a torus.
\end{proof}

\begin{proof}[Proof of \Cref{lem:BY}]
$ $
\begin{enumerate}[(i)]
    \item is obvious. 
    \item  Consider the diagram:
 \[
\begin{tikzcd}
    \mathcal{B}\arrow[dd,bend right=30,"\pi|_B"'] \arrow[d] \arrow[r,"\alpha_B"]\drar[phantom, "\square"] & C \times \bfC(O_F) \arrow[d] \arrow[ddl, bend left=110, "pr"]  \\
    Y \arrow[d, "\pi"] \arrow[r, "\alpha"] & C\times C \\
     C
\end{tikzcd}
\]   
Here $pr$ is the projection to the first coordinate and $\alpha_B$ is the restriction of $\alpha$.
    The morphism $\alp$ is finite, thus proper on the level of $F$-points. Therefore, $\alpha_B$ is proper. Since $\bfC(O_F)$ is compact, we obtain that $pr$ is proper.
    Thus $\pi|_B=pr \circ \alpha_B$ is proper.
    \item \begin{enumerate}[Step 1.]
        \item Proof for the case when $x\in q(T^r)$.\\ 
        Follows from the Cartesian squares 
        \begin{equation}
    \begin{tikzcd}
        T^r\times \bfT(O_F) \drar[phantom, "\square"]\arrow[d] \arrow[r] & \mathcal{B}^r \arrow[d]  \\
        T^r\times T \arrow[d] \arrow[r]\drar[phantom, "\square"] & Y^r \arrow[d]  \\
        T^r \arrow[r] & C^{rss}
    \end{tikzcd}
        \end{equation}      
        given by \Cref{lem:Cart}.
        \item General case.\\
        Follows from the previous case and \Cref{lem:TE}.        
    \end{enumerate}
\end{enumerate}
\end{proof}
\begin{notation}
    $\mdef{\mathcal{A}}=G\times_C \mathcal{B}\subset X$.
\end{notation}
\Cref{lem:BY} gives us:
\begin{cor}\label{cor:A.prop}
    $ \,$
    \begin{enumerate}[(i)]
    \item $\mathcal{A}\subset X$ is clopen.
    \item \label{it:piA}$\tau|_{\mathcal{A}}$ is proper.
    \item \label{cor:A.prop:max}For any $x\in G^{rss}$ the set $\tau^{-1}(x)(F)\cap \RamiC{\mathcal{A}}$ is the maximal compact \Dima{sub}group of $\tau^{-1}(x)(F)$.    
    \end{enumerate}
\end{cor}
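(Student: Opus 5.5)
The plan is to deduce all three items of \Cref{cor:A.prop} from the corresponding items of \Cref{lem:BY} by base change along $p:G\to C$. The key tool is the Cartesian square \eqref{eq:crt2} on $F$-points. Since fiber products of schemes commute with taking $F$-points, we have $X=G\times_C Y$ as topological spaces, and $\mathcal{A}=G\times_C\mathcal{B}=\sigma^{-1}(\mathcal{B})$.

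For (i), note that $\sigma:X\to Y$ is continuous. Since $\mathcal{B}$ is clopen in $Y$ by \Cref{lem:BY}(i), its preimage $\mathcal{A}$ is clopen in $X$.

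For (ii), the restriction $\tau|_{\mathcal{A}}:\mathcal{A}=G\times_C\mathcal{B}\to G$ is the base change of $\pi|_{\mathcal{B}}:\mathcal{B}\to C$ along $p$. We use the standard fact that in locally compact Hausdorff spaces a base change of a proper map is proper. Concretely, let $K\subset G$ be compact. Then $\tau|_{\mathcal{A}}^{-1}(K)$ is a closed subset of $K\times \pi|_{\mathcal{B}}^{-1}(p(K))$. The set $p(K)$ is compact, so by \Cref{lem:BY}\eqref{it:piB} the set $\pi|_{\mathcal{B}}^{-1}(p(K))$ is compact, and hence $\tau|_{\mathcal{A}}^{-1}(K)$ is compact.

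For (iii), fix $x\in G^{rss}$ and set $c:=p(x)\in C^{rss}$. The restriction of $\sigma$ gives an isomorphism of schemes $\tau^{-1}(x)\cong\pi^{-1}(c)$, since both are the fiber of the Cartesian square over $x$. This identification takes $\tau^{-1}(x)(F)\cap\mathcal{A}$ to $\pi^{-1}(c)(F)\cap\mathcal{B}$. By \Cref{lem:BY}\eqref{lem:BY:max}, the latter is the maximal compact subgroup of $\pi^{-1}(c)(F)$.

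The one point needing care is that this isomorphism is a group isomorphism, since otherwise ``maximal compact subgroup'' would not transfer. The group structure on $\tau^{-1}(x)$ is the one pulled back from the group scheme $\bfY^r\to\bfC^{rss}$ of \Cref{defn:om.X} via $\sigma$. So $\tau^{-1}(x)\to\pi^{-1}(c)$ is an isomorphism of group schemes by construction, as in \Cref{cor:Gx}. This bookkeeping about the group structures is the main, though mild, obstacle; the rest is formal.
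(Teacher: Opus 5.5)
Your proposal is correct and takes the same route as the paper, which obtains the corollary directly from \Cref{lem:BY} by base change along $p$ without further comment. You spell out that base change item by item: $\mathcal{A}=\sigma^{-1}(\mathcal{B})$, properness is stable under base change, and the fiber $\tau^{-1}(x)$ is identified with $\pi^{-1}(p(x))$ as group schemes via the structure of \Cref{defn:om.X}.
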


\subsection{Proof of \Cref{thm:kappa.alg.geo}}
    It is enough to show that $$\tau_*((|\RamiA{\omega'_\bfX}|)|_{\mathcal{A}})=\RamiC{\kappa^0} |\omega_\bfG|.$$
    For this it is enough to show that $$\tau_*((|\omega_{\tau}|)|_{\mathcal{A}})=\RamiC{\kappa^0},$$ almost everywhere. 
    For this it is enough to show that for every $x\in G^{rss}$, we have    
    $$\int_{\mathcal{A}\cap \tau^{-1}(x)}\left|\omega_{\tau}|_{\tau^{-1}(x)}\right|=\RamiC{\kappa^0}(x).$$
    Fix $x\in G^{rss}$. 
    Recall that $K_x$ denotes the maximal compact subgroup of $\RamiC{\bfG_x}$. By \Cref{lem:Gx} we can choose an isomorphism $\gamma:\RamiC{\bfG_x}\simeq \tau^{-1}(x)$. The group $\gamma(K_x)$ is the maximal compact subgroup of $\tau^{-1}(x)$. So, by Corollary  \ref{cor:A.prop}\eqref{cor:A.prop:max}, $\gamma(K_x)= \mathcal{A}\cap \tau^{-1}(x)$. Thus we have 
    $$\int_{\mathcal{A}\cap \tau^{-1}(x)}\left |\omega_{\tau}\vert_{\tau^{-1}(x)}\right|=
    \int_{K_x}\left |\gamma^*(\omega_{\tau}\vert_{\tau^{-1}(x)})\right|$$
    By Lemma  \ref{lem:om.atu}, $\gamma^*(\omega_{\tau}\vert_{\tau^{-1}(x)})=\omega_{\RamiC{\bfG_x}}$. Thus we obtain
    $$\int_{K_x}\left |\gamma^*(\omega_{\tau}\vert_{\tau^{-1}(x)})\right|=\int_{K_x} |\omega_{\RamiC{\bfG_x}}|=\RamiC{\kappa^0}(x).$$


\section{Regularity of $\omega_\bfX$}\label{Sec:regularity}
Recall that $\bfX =\bfG\times_{\bfC}\bfY,$ with $\bfY :=(\bfT\times \bfT)//W$, where $W$ acts diagonally and that $\mu:\bfT\times \bfT\to \bfY$ is the quotient map.

In this section we prove the following theorem.
\begin{theorem}\label{thm:reg}
    $\omega_\bfX$ is a regular $\Q$-top differential form on the smooth locus of $\bfX$.
\end{theorem}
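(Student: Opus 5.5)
The plan is to check regularity away from codimension two, using Hartogs-type extension. By \Cref{cor:Xbig} the smooth locus of $\bfX$ is big, and $\bfX$ is irreducible by \Cref{cor:X.irr}. A rational $\Q$-top form on a smooth variety is regular as soon as it has no poles along any prime divisor. So it suffices to show that $\omega_\bfX$ (more precisely, a suitable power $\omega_\bfX^{\otimes 2}$, an honest rational section of $\Omega^{\otimes 2}$) has nonnegative order along every prime divisor $\bfD$ of $\bfX^{sm}$. Divisors meeting $\tau^{-1}(\bfG^{rss})\cap \sigma^{-1}(\bfY^f)$ are harmless. There the form is computed fiberwise from \Cref{lem: caresian square N1} and the descent in \Cref{defn:om.X}. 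Concretely, on $\bfT^r\times\bfT$ the pullback of $\omega'_\bfX$ is $\omega_\bfG*(\text{standard form on the second }\bfT)$, up to the étale identification, which is invertible. Also $\Delta$ is invertible on $\bfG^{rss}$.

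Therefore the real content lies over the discriminant divisor. I will compute the pullback $\nu^*\omega_\bfX$ on $\widetilde\bfX=\bfG\times_\bfC\bfT\times\bfT$. Since $\nu$ is generically étale (free locus, \Cref{cor:YfSmooth}) and $W$-equivariant, regularity on $\bfX^{sm}$ along a divisor $\bfD$ follows from comparing orders: if $\nu$ is étale at the generic point of a component over $\bfD$, the orders agree. Now use $\bfX\cong \widetilde\bfX//W$ and $\bfY^f$ big, so that $\sigma^{-1}(\bfY^f)$ is big in $\bfX$ by flatness of $\sigma$. This means we only need divisors $\bfD\subset\sigma^{-1}(\bfY^f)$, over which $\nu$ is étale. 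Thus it suffices to show that $\nu^*\omega_\bfX^{\otimes 2}$ is regular on the smooth locus of $\widetilde\bfX$ over $(\bfT\times\bfT)^f$.

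On $\widetilde\bfX=\bfG'\times\bfT$ I write $\omega'_\bfX$ explicitly. The relative form along $\tau$ is $\omega_\bfT$ on the second $\bfT$-factor, pulled back via the identification of \Cref{lem: caresian square N1} on the first factor. Hence $\nu^*\omega'_\bfX=\omega_{\bfG'}\boxtimes\omega_\bfT$, where $\omega_{\bfG'}:=\omega_\bfG*(\text{relative form of }\psi)$. That relative form is the Gelfand–Leray comparison $\omega_\bfG\boxtimes_{\omega_\bfC}\omega_\bfT$ on $\bfG'=\bfG\times_\bfC\bfT$. By \Cref{lem:lin.alg}(2) the pullback $q^*\omega_\bfC$ equals $\pm\Delta^{1/2}\omega_\bfT$ (as $\Q$-forms, i.e. $q^*\omega_\bfC^{2}=\Delta\,\omega_\bfT^2$). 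Therefore $\omega_{\bfG'}$, relative to the canonical form $\omega_\bfG\boxtimes_{\omega_\bfC}(q^*\omega_\bfC)$, differs by exactly the factor $\Delta^{1/2}$. The canonical form is regular on the smooth locus of the lci variety $\bfG'$: it is the generator of the dualizing sheaf of the base change $\bfG'\to\bfT$ of the flat lci map $p$, tensored with $\omega_\bfT$. The factor $\Delta^{1/2}$ cancels the $\tau^*\Delta^{-1/2}$ in the definition of $\omega_\bfX$. More precisely, the squares satisfy $\nu^*\omega_\bfX^{2}=\omega_{\bfG}^2\boxtimes\omega_\bfT^2\cdot(\text{regular})$, and the discriminant factors cancel.

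The main obstacle is this cancellation step: making precise that the relative dualizing form of $\bfG'\to\bfT$ (equivalently of $p$) is regular and nonvanishing on the smooth locus, and that the $\Q$-form identity $q^*\omega_\bfC^2=\Delta\,\omega_\bfT^2$ transfers correctly through the fiber product. I would argue via the fundamental local isomorphism for lci morphisms: $\omega_{\bfG'/\bfT}$ is the base change of $\omega_{\bfG/\bfC}$, which is trivialized by the Gelfand–Leray form $\omega_\bfG/p^*\omega_\bfC$ on $\bfG^r$. Here $\bfG^r$ is big by \Cref{cor:pFibReduced}, so that trivialization extends. I would also use \Cref{lem:gen.sm} to ensure that all Gelfand–Leray constructions are defined generically.
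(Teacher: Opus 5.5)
Your proposal is correct and follows essentially the paper's route. You pull back along $\nu$ and identify $\nu^*\omega_\bfX$ as a product with $\omega_\bfT$ (\Cref{lem:omegaDiag}). You prove regularity of the $\bfG'$-factor from $q^*\omega_\bfC=\pm\Delta^{1/2}\omega_\bfT$, smooth base change along $p|_{\bfG^r}$, and bigness of $(\bfG')^r$; the paper packages this as ``good pairs'' and you as the base-changed Gelfand--Leray trivialization of the relative canonical sheaf, which is the same mechanism, and bigness of $(\bfG')^r$ uses that $\psi$ is finite flat. You then descend through the big locus $\sigma^{-1}(\bfY^f)$, where $\nu$ is {\'e}tale. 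One bookkeeping correction is needed: your two $\boxtimes$-expressions are interchanged. The first factor of $\nu^*\omega'_\bfX$ is $\psi^*\omega_\bfG=\omega_\bfG\boxtimes_{\omega_\bfC}(q^*\omega_\bfC)$, while the regular form obtained from the relative canonical sheaf of $\bfG'\to\bfT$ tensored with $\omega_\bfT$ is $\omega_\bfG\boxtimes_{\omega_\bfC}\omega_\bfT=\pm\psi^*(\Delta^{-1/2}\omega_\bfG)$, which is precisely the paper's $\omega_{\bfG'}$; after relabeling, your cancellation reads exactly $\nu^*\omega_\bfX=\pm(\omega_\bfG\boxtimes_{\omega_\bfC}\omega_\bfT)\boxtimes\omega_\bfT$.
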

Before we begin the proof we give a short description of the idea.
Recall that by \Cref{lem:Gp}, $\bfG'=\bfG\times_\bfC \bfT$   
    is absolutely reduced, locally  complete intersection, and irreducible.
The idea of the proof is as follows: we pullback $\omega_\bfX$ under  $$\nu:\bfG'\times \bfT =\tilde \bfX\to \bfX,$$  and obtain a form that can be written as a product $\omega_{\bfG'}\boxtimes \omega_{\bfT}$. The form $\omega_{\bfG'}$ has an explicit description, see \Cref{not.form} below. We deduce the regularity of $\omega_\bfX$ from the regularity of $\omega_{\bfG'}$ which we prove in \S \ref{ssec:reg} below.

\begin{notation}
\DimaA{
 For a Cartezian square    
} 
$$ \begin{tikzcd}
    \bfZ_{11} \arrow[d,"\delta_{1}"] \arrow[r, "\gamma_{1}"]\arrow[dr, phantom, "\square"] & \bfZ_{12} \arrow[d,"\delta_{2}"] \\
    \bfZ_{21} \arrow[r,"\gamma_{2}"]  & \bfZ_{22} 
\end{tikzcd}
$$
\DimaA{
and a relative (rational $\bQ$-)top form $\omega_{\delta_2}$ on $\bfZ_{12}$ w.r.t. $\delta_2$ we denote by 
$\gamma_2^*(\omega_{\delta_2})$ its pullback to a
relative (rational $\bQ$-)top form on $\bfZ_{11}$ w.r.t. $\delta_1$. 

As the bundle of $\delta_1$-relative top-differential forms on $\bfZ_{11}$  is the pullback of the bundle of $\delta_2$-relative top-differential forms on $\bfZ_{12}$ w.r.t. $\gamma_1$, one can also denote the form $\gamma_2^*(\omega_{\delta_2})$ by 
$\gamma_1^*(\omega_{\delta_2})$, as we did in \Cref{defn:om.X}.
}
\end{notation}

\begin{notation}Define the following algebraic varieties.
\begin{enumerate}[(i)]
    \item $\widetilde{\bfX}^{rss}:=\bfG^{rss}\times_{\bfC^{rss}}\bfT^r\times \bfT$
    \item $\widetilde{\bfX}^f:=\bfG \times _{\bfC}(\bfT\times \bfT)^f$
\end{enumerate}    
\end{notation}

\begin{lemma}\label{lem:mu}
$\,$
Let $\mu:\bfT\times \bfT\to \bfY$ be the quotient map. Then 
    \begin{enumerate}[(i)]
    \item\label{lem:mu:1} $\mu$ is finite.
    \item\label{lem:mu:2} $\mu|_{(\bfT\times \bfT)^f}$ is {\'e}tale.
\end{enumerate}    
\end{lemma}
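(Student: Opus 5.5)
For part (i), I would use the general fact that for a finite group $\Gamma$ acting on an affine variety $\bfZ=\operatorname{Spec} A$, the ring $A$ is integral over $A^\Gamma$. Indeed, each $a\in A$ is a root of the monic polynomial $\prod_{g\in\Gamma}(t-g\cdot a)$, whose coefficients lie in $A^\Gamma$. Since $A$ is finitely generated as an algebra, it is then a finitely generated $A^\Gamma$-module. Here $\bfT\times\bfT$ is affine, and $\bfY=(\bfT\times\bfT)//W=\operatorname{Spec}(\cO(\bfT\times\bfT)^W)$ by the construction underlying \Cref{lem:fac}. So $\mu$ is finite. Alternatively, finiteness is already built into the definition of a factorizable action, and \Cref{lem:fac} asserts that the action is factorizable because $\bfT\times\bfT$ is quasi-projective. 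I would cite that directly.

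For part (ii), I would apply \Cref{lem:GamU} to the $W$-invariant open set $\bfU=(\bfT\times\bfT)^f$. This gives that the action of $W$ on $(\bfT\times\bfT)^f$ is factorizable, that $(\bfT\times\bfT)^f//W\to\bfY$ is an open embedding with image $\bfY^f$, and that the corresponding square is Cartesian. So $\mu|_{(\bfT\times\bfT)^f}$ factors as the quotient map $(\bfT\times\bfT)^f\to(\bfT\times\bfT)^f//W$ followed by an open embedding. The action of $W$ on $(\bfT\times\bfT)^f$ is free by definition, so \Cref{lem:Gam}\eqref{it:etal} shows the quotient map is étale. A composition of an étale map with an open embedding is étale, which proves (ii). This is essentially the content of \Cref{cor:YfSmooth}\eqref{cor:YfSmooth:2}.

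The only point needing care is the convention for the free locus. It must be understood geometrically, as freeness on $\bar F$-points, to match the hypothesis of \Cref{lem:Gam}. It must also be a $W$-invariant open set, which holds because it is the complement of the union of the closed fixed loci of the nontrivial elements of $W$, and this union is stable under conjugation. I expect no real obstacle beyond this.
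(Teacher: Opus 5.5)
Your proof is correct and follows the paper's argument. Part (i) is read off from the factorizability of the $W$-action given by \Cref{lem:fac}, and part (ii) combines \Cref{lem:GamU} (restriction to the free locus, with an open embedding of quotients) with \Cref{lem:Gam}\eqref{it:etal}; your direct integrality argument and your remarks on the free locus being a $W$-invariant open set defined via $\bar F$-points are sound supplementary details.
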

\begin{proof}
Items \eqref{lem:mu:1} follows from the fact that the action of $W$ on $\bfT\times \bfT$ is factorizable, see \Cref{lem:fac}.
    Item \eqref{lem:mu:2} follows from Lemmas \ref{lem:Gam} and \ref{lem:GamU}. 
\end{proof}

\begin{cor}\label{cor:nu}
$\,$
    \begin{enumerate}[(i)]
    \item \label{cor:nu:1} $\nu$ is finite.
    \item \label{cor:nu:2} $\nu|_{\widetilde{\bfX}^f}$ is {\'e}tale.
    \item \label{cor:nu:3} $\widetilde{\bfX}^f\subset \widetilde{\bfX}$  is big in $\Dima{\widetilde{\bfX}}$. 
\end{enumerate}    
\end{cor}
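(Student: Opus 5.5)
The plan is to deduce all three items from \Cref{lem:mu} by base change, using that $\nu$ is the base change of $\mu$ along $\sigma$. Indeed, the identification $\widetilde{\bfX}\cong \bfG\times_{\bfC}(\bfT\times\bfT)$ together with $\bfX=\bfG\times_\bfC\bfY$ gives a Cartesian square
$$\widetilde{\bfX}\cong \bfX\times_{\bfY}(\bfT\times\bfT),$$
with $\nu$ the projection to $\bfX$ and the other projection covering $\mu$. This is the computation already used in the proof that $\widetilde{\bfX}//W\cong\bfX$.

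For \eqref{cor:nu:1}, finiteness is stable under base change, so $\nu$ is finite because $\mu$ is finite by \Cref{lem:mu}\eqref{lem:mu:1}.

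For \eqref{cor:nu:2}, note that $\widetilde{\bfX}^f=\bfG\times_\bfC(\bfT\times\bfT)^f$ is the preimage of $(\bfT\times\bfT)^f$ in $\widetilde{\bfX}$. Since $(\bfT\times\bfT)^f=\mu^{-1}(\bfY^f)$ by \Cref{lem:GamU}, we have
$$\widetilde{\bfX}^f=\nu^{-1}(\sigma^{-1}(\bfY^f))\cong \sigma^{-1}(\bfY^f)\times_{\bfY^f}(\bfT\times\bfT)^f .$$
Thus $\nu|_{\widetilde{\bfX}^f}$ is a base change of $\mu|_{(\bfT\times\bfT)^f}$, which is \'etale by \Cref{lem:mu}\eqref{lem:mu:2}. Étaleness is stable under base change, which gives \eqref{cor:nu:2}.

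Item \eqref{cor:nu:3} is the main point. I would write $\widetilde{\bfX}=\bfG'\times\bfT$ and use the flat projection
$$\rho:\widetilde{\bfX}\to\bfT\times\bfT,\qquad (g,t_1,t_2)\mapsto(t_1,t_2).$$
This map is flat because it is the base change of $p$, which is flat by \Cref{cor:pqflat}. The complement $\bfZ:=(\bfT\times\bfT)\setminus(\bfT\times\bfT)^f$ has codimension at least $2$ by the standard lemma preceding \Cref{cor:YfBig}. Its preimage $\rho^{-1}(\bfZ)$ is exactly the complement of $\widetilde{\bfX}^f$.

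For a flat morphism of finite type between varieties that are equidimensional, fiber dimensions are given by the relative dimension. Hence
$$\dim\rho^{-1}(\bfZ)\le \dim\bfZ+(\dim\widetilde{\bfX}-\dim(\bfT\times\bfT))\le \dim\widetilde{\bfX}-2 .$$
Here I use that $\widetilde{\bfX}$ is irreducible (corollary to \Cref{lem:Gp}) and that all fibers of $p$ have dimension $\dim\bfG-\dim\bfC$. The latter holds by flatness of $p$ together with \Cref{lem.fib.p.irr}, or directly from the local dimension formula for flat maps (Stacks 00ON). This yields bigness.

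The only subtle step is this dimension count. I would take care to use that the local dimension formula for flat maps holds at every point, not just generically, so that no component of $\rho^{-1}(\bfZ)$ can have unexpectedly large dimension.
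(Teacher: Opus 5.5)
Your proof is correct and follows essentially the paper's approach: (i) and (ii) come from Lemma~\ref{lem:mu} by base change, as in the paper. For (iii) the paper combines bigness of $\bfY^f$ in $\bfY$ (Corollary~\ref{cor:YfBig}) with flatness of $\sigma$, implicitly using finiteness of $\nu$ to lift the statement back to $\widetilde{\bfX}$; you instead transfer bigness of $(\bfT\times\bfT)^f$ directly along the flat map $\widetilde{\bfX}\to\bfT\times\bfT$, which is a slightly more direct version of the same dimension count.
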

\begin{proof}
    Items \eqref{cor:nu:1} and \eqref{cor:nu:2} follow from the previous lemma. Item \eqref{cor:nu:3} follows from the fact that $\bfY^f\subset \bfY$ is big (\Cref{cor:YfBig}) and the fact that $p$ \RamiA{(and hence $\sigma$) are} flat (\Cref{cor:pqflat}).
\end{proof}

As we will see  below, this lemma implies that in order to prove \Cref{thm:reg} it is enough to show that 
$\nu^*(\omega_\bfX)$ is a regular $\Q$-top differential form on the smooth locus of $\widetilde \bfX$. 

\begin{notation}\label{not.form}
    $ $
        Recall that $\psi:\bfG'\to \bfG$ is the projection on the first factor.
        Let $$\mdef{\omega_{\bfG'}}:=\psi^*(\omega_\bfG\cdot \Delta^{-\frac{1}{2}})$$
\end{notation}
\begin{lemma}\label{lem:omegaDiag}
$\nu^*(\omega_{\bfX})=\omega_{\bfG'} \boxtimes \omega_\bfT$.
    \end{lemma}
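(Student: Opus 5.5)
Both sides of the identity are rational $\Q$-top forms on $\widetilde{\bfX}=\bfG'\times\bfT$, which is reduced and irreducible by \Cref{lem:Gp}. It therefore suffices to check the equality on a dense open subset. I would use $\widetilde{\bfX}^{rss}=\bfG^{rss}\times_{\bfC^{rss}}\bfT^r\times\bfT=(\bfG')^{rss}\times\bfT$, where $(\bfG')^{rss}:=\psi^{-1}(\bfG^{rss})$. This set is dense because $p\circ\psi$ is flat, as in the proof of \Cref{lem:Gp}.

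Over this locus everything is étale or smooth:
\begin{itemize}
\item $\psi|_{(\bfG')^{rss}}$ is the base change of $q^r:\bfT^r\to\bfC^{rss}$, which is étale by \Cref{lem:Gam}\eqref{it:etal}.
\item $\nu|_{\widetilde{\bfX}^{rss}}$ is the base change along $\sigma$ of $\mu^r:\bfT^r\times\bfT\to\bfY^r$, which is étale by \Cref{lem:mu}.
\item By \Cref{lem: caresian square N1}, $\widetilde{\bfX}^{rss}\cong \bfX^r\times_{\bfY^r}(\bfT^r\times\bfT)$, where $\bfX^r:=\tau^{-1}(\bfG^{rss})$.
\end{itemize}

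The key steps, in order, are as follows.
\begin{enumerate}
\item Write $\nu^*(\omega_\bfX)=\nu^*\tau^*(\Delta^{-1/2})\cdot\nu^*(\omega_\bfG*\omega_\tau)$. Since $\nu$ is étale over $\bfG^{rss}$ and commutes with the maps to $\bfG$, the pullback of a Gelfand--Leray decomposition is again one. Hence $\nu^*(\omega_\bfG*\omega_\tau)=\omega_\bfG*\nu^*(\omega_\tau)$, where $\nu^*(\omega_\tau)$ is a relative form with respect to $\tau\circ\nu=\psi\circ\mathfrak{pr}_{\bfG'}$.
\item Identify $\nu^*(\omega_\tau)=\nu^*\sigma^*(\omega_\pi)$ with the pullback, via the Cartesian square \eqref{eq:crt}, of $(\mu^r)^*(\omega_\pi)$. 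By the construction of $\omega_\pi$ through Galois descent (\Cref{cor:GalDes}\eqref{it:sh}), this pullback equals $\omega_{\mathfrak{pr}^r_1}$, i.e.\ the relative form on $\bfT^r\times\bfT$ over $\bfT^r$ given by $\omega_\bfT$ on the second factor. Pulled back to $(\bfG')^{rss}\times\bfT$, it becomes the relative form over $(\bfG')^{rss}$ given by $\omega_\bfT$ on the $\bfT$ factor.
\item The map $(\bfG')^{rss}\times\bfT\to\bfG^{rss}$ factors as $\mathfrak{pr}_{\bfG'}$ followed by the étale map $\psi$. Because $\psi$ is étale, $\psi^*\omega_\bfG$ is a genuine top form on $(\bfG')^{rss}$, and relative forms with respect to $\psi$ are trivial. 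So $\omega_\bfG*(\text{relative }\omega_\bfT)=\psi^*(\omega_\bfG)\boxtimes\omega_\bfT$.
\item Multiply by $(\psi\circ\mathfrak{pr}_{\bfG'})^*(\Delta^{-1/2})$. This gives $\psi^*(\omega_\bfG\Delta^{-1/2})\boxtimes\omega_\bfT=\omega_{\bfG'}\boxtimes\omega_\bfT$ on $\widetilde{\bfX}^{rss}$, and hence everywhere as rational $\Q$-forms.
\end{enumerate}

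The main obstacle is the bookkeeping in steps (1)--(2). One must verify that the operations ``$*$'', pullback of relative forms along Cartesian squares, and Galois descent of $\Q$-relative forms all commute. In particular, the relative form descended along $\mu^r$ must pull back along $\nu$ exactly to $\omega_{\mathfrak{pr}_1^r}$ rather than to a $W$-twisted version. To handle this, I would argue on the level of the sheaf isomorphism in \Cref{cor:GalDes}\eqref{it:sh}, which is by definition the pullback along $\beta=\mu^r$. I would then check the remaining compatibilities pointwise on tangent spaces, using the fact that all the maps involved are étale or products over $\widetilde{\bfX}^{rss}$.
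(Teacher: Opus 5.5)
Your proposal is correct and follows essentially the same route as the paper. Both arguments restrict to $\widetilde{\bfX}^{rss}=(\bfG')^{rss}\times\bfT$, use the Cartesian squares through $\bfX^{rss}$, $\bfY^r$ and $\bfT^r\times\bfT$ together with the defining descent property $(\mu^r)^*\omega_\pi=\omega_{\mathfrak{pr}_1^r}$, and conclude via $\nu^*(\omega_\bfX)=\psi^*(\omega_\bfG\Delta^{-1/2})*\psi^*(\omega_\tau)$ using étaleness of $\psi$ there. The one difference is how the key compatibility $(\psi^{rss})^*(\omega_\tau)=\omega_{\mathfrak{pr}^r_{\bfG'}}$ is checked: the paper extends scalars to $\bar F$ and compares restrictions to fibers, while you invoke functoriality of base change of relative forms; either works.
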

\begin{proof}
    Recall that  $\fp\fr_{\bfG'}:\widetilde{\bfX}=\bfG'\times \bfT\to \bfG'$ is the projection. Denote $\bfX^{rss}:=\bfG^{rss}\times_{\bfC^{rss}}\bfY^r$. Consider the following diagram 
        \begin{equation}
        \begin{tikzcd}[arrows={-Stealth},row sep=large]
        \widetilde{\bfX}^{rss} \arrow[bend left=10, "\fp\fr_\bfT^{rss}"]{rrrr}
        \dar[swap, "\fp\fr_{\bfG'}^{r}"] \drar[phantom, "1"] \rar[swap, "\nu^r"] & \bfX^{rss}\dar["\tau^r"]\rar \drar[phantom, "2"] &\bfY^r \dar["\pi^r"] \drar[phantom, "3"]  & \bfT^r\times \bfT  \lar  \dar["\fp\fr^r_1"] \rar \drar[phantom, "4"]& \bfT \dar["\phi_\bfT"]  \\
        (\bfG')^{rss}\rar["\psi^{rss}"]  \arrow[swap, bend right=10, "\phi_{(\bfG')^{rss}}"]{rrrr} & \bfG^{rss}\rar["p^r"] & \bfC^{rss}  &\bfT^r \lar[swap, "q^r"]\rar["\phi_{\RamiA{\bfT^r}}"] & pt  
        \end{tikzcd}
        \end{equation} 
        where \RamiA{$(\bfG')^{rss}:=\bfG^{rss}\times_{\bfC^{rss}} \bfT^r$,} the maps $\fp\fr_{\bfG'}^{r}, \tau^r,\pi^r, \nu^r,p^r,q^r, \psi^{rss},\fp\fr_\bfT^{rss}$ are obtained by restriction of the maps $\Dima{\fp\fr}_{G'}, \tau, \pi,\nu,p,q,\psi,\fp\fr_\bfT$\RamiA{, and $\phi_\bfT, \phi_{\bfT^r}$ and $\phi_{(\bfG')^{rss}}$ are the projections to the point}.
        The squares $2,4$ are Cartesian by definition, the square $3$ is Cartesian by \Cref{lem: caresian square N1}, and the square $1$ is Cartesian since it is the base change of square $3$ along square $2$. Also, the outside square 

        \begin{equation}
        \begin{tikzcd}[arrows={-Stealth},row sep=large]
        \widetilde{\bfX}^{rss} \arrow["\fp\fr_\bfT^{rss}"]{r}
        \dar[swap, "\fp\fr_{\bfG'}^{r}"] & \bfT \dar["\phi_{\RamiA{\bfT}}"]  \\
        (\bfG')^{rss} \arrow[swap,"\phi_{\RamiA{(\bfG')^{rss}}}"]{r} &pt  
        \end{tikzcd}
        \end{equation}        
        is Cartesian \RamiA{by definition}. 
        Consider  $\omega_\bfT$ as a relative rational $\Q$-differential form with respect to $\phi_\bfT$. Denote $\omega_{\fp\fr_{\bfG'}}=\phi_{(\bfG')^{rss}}^*(\omega_{{\DimaB{\bfT}}})$ and consider it as a relative rational $\Q$-top differential form on $\RamiA{\widetilde{\bfX}}$ w.r.t. to $\fp\fr_{\bfG'}$. 

For each vertical arrow in the above diagram we have a relative form. These forms are compatible with all the squares possibly except (a-priori) square 1. We would like to show that it is compatible with square 1 as well. Explicitly, we have:
        \begin{eqnarray}\label{eq:pulls}
            \phi_{\RamiA{\bfT^r}}^*(\omega_{\bfT})=\omega_{\fp\fr_1^r}\\
            (q^r)^*(\omega_{\pi})=\omega_{\fp\fr_1^r}\\
            (p^r)^*(\omega_{\pi})=\omega_{\tau}\\
            \phi_{\RamiA{(\bfG')^{rss}}}^*(\omega_{\bfT})=\omega_{\fp\fr_{G'}}\label{eq:pulls.last}
        \end{eqnarray}
        We would like to deduce that  $(\psi^{rss})^*(\omega_{\tau})=\omega_{pr_{G'}^{r}}$. 
        It is enough to check  this equality after extension of scalars to $\bar F$. For this it is enough to show that for any $x\in  (G')^{rss}(\bar F)$ we have 
$$(\psi^{rss})^*(\omega_{\tau})|_{(\fp\fr_{G'}^{r})^{-1}(x)}=\omega_{\fp\fr_{G'}^{r}}|_{(\fp\fr_{G'}^{r})^{-1}(x)}.$$ This follows from \eqref{eq:pulls}-\eqref{eq:pulls.last}. We  obtained:
        $$(\psi^{rss})^*(\omega_{\tau})=\omega_{\fp\fr_{G'}^{r}}.$$
        Now we have 
        $$\nu^*(\omega_{\bfX})=\nu^*((\omega_\bfG \cdot \Delta^{-1/2})*\omega_{\tau})=\psi^*(\omega_\bfG \cdot \Delta^{-1/2})*\psi^*(\omega_{\tau})= \omega_{\bfG'}*\omega_{\fp\fr_{\bfG'}^{r}}= 
         \omega_{\bfG'} \boxtimes \omega_\bfT$$
\end{proof}

\begin{lemma}\label{lem:reg}
$\omega_{\bfG'}$ is regular on the smooth locus of $\bfG'$.
\end{lemma}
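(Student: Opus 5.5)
We prove that $\omega_{\bfG'}=\psi^*(\omega_\bfG\cdot\Delta^{-1/2})$ is regular on the smooth locus of $\bfG'=\bfG\times_\bfC\bfT$ by identifying it with a canonical form and then using normality.

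\textbf{Step 1: a Gelfand--Leray description.} Since $\bfG'$ is a local complete intersection (\Cref{lem:Gp}), its dualizing sheaf is a line bundle, and on the smooth locus it coincides with $\Omega_{\bfG'}$. Let $\omega^{G-L}_p$ be the Gelfand--Leray relative form of $p$ with respect to $\omega_\bfG$ and $\omega_\bfC$. It is a regular, invertible relative form on $\bfG^r$, the smooth locus of $p$. Its pullback $\varphi^*$-compatible base change $\omega_\bfT*\psi^*(\omega^{G-L}_p)$ is then a regular invertible top form on $\psi^{-1}(\bfG^r)$, which is smooth over $\bfT$.

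\textbf{Step 2: comparison over the regular semisimple locus.} On $(\bfG')^{rss}$ we compare the two forms, in the spirit of \Cref{cor:om.or.om.g-l}. There $\omega_\bfG=\omega_\bfC*\omega^{G-L}_p$, and $q^*\omega_\bfC=\det(dq)\,\omega_\bfT$. By \Cref{lem:lin.alg}\eqref{lem:lin.alg:2}, $\det(dq)$ is a square root of $\Delta$, namely the Vandermonde $\prod_{i<j}(t_i-t_j)$ up to the $\det$-type unit coming from the identification $I$. Hence
\[
\psi^*(\omega_\bfG)=\delta\cdot\bigl(\omega_\bfT*\psi^*\omega^{G-L}_p\bigr),
\]
where $\delta=\varphi^*\bigl(\prod_{i<j}(t_i-t_j)\bigr)$ times a unit, and $\delta^2=\psi^*\Delta$. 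Therefore
\[
\omega_{\bfG'}=\pm\,\omega_\bfT*\psi^*\omega^{G-L}_p
\]
as $\Q$-forms on $(\bfG')^{rss}$. The key point is that $\Delta$ becomes a square on $\bfG'$, so $\Delta^{-1/2}$ exactly cancels the Jacobian of $q$. Since $\bfG'$ is irreducible, this identity holds as rational $\Q$-forms, and so $\omega_{\bfG'}$ is regular and invertible on $\psi^{-1}(\bfG^r)$.

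\textbf{Step 3: extension across codimension $\geq 2$.} It remains to extend the form across the part of the smooth locus of $\bfG'$ lying outside $\psi^{-1}(\bfG^r)$. Since $\bfG^r$ is big in $\bfG$ (\Cref{cor:pFibReduced}) and $\varphi$ is flat with fibers equal to fibers of $p$, the complement $\bfG'\setminus\psi^{-1}(\bfG^r)$ has codimension $\geq2$. This follows because its intersection with each fiber of $\varphi$ is the non-regular locus of a fiber of $p$, which has codimension $\geq2$ in that fiber. Squaring to obtain an honest section of $\Omega^{\otimes 2}$, a rational section of a line bundle on the smooth (hence normal) locus that is regular off codimension $2$ extends by Hartogs. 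This gives regularity.

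\textbf{Main obstacle.} The hardest part is the careful bookkeeping in Step 2. One must verify that $\varphi^*\det(dq)$ squares to $\psi^*\Delta$ as regular functions on $\bfG'$, not merely at points of $T$, and that no extra zero or pole appears along the non-rss locus. I would check this on the dense open $(\bfG')^{rss}$ via \Cref{lem:lin.alg} after base change to $\bar F$, and then use the reducedness and irreducibility of $\bfG'$.
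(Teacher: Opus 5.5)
Your proposal is correct and follows essentially the same route as the paper. The paper packages your Step 2 as the statement that the pair $(q,\Delta_C^{-1/2})$ is ``good'' (the Jacobian computation of \Cref{lem:lin.alg}), carries it to $\psi^{-1}(\bfG^r)$ by smooth base change along $p|_{\bfG^r}$ (which you do explicitly with the Gelfand--Leray form), and then extends across the big open set $\psi^{-1}(\bfG^r)$ as in your Step 3; the only cosmetic difference is that the paper gets bigness from finiteness and flatness of $\psi$ plus bigness of $\bfG^r$, whereas you argue fiberwise over $\bfT$ using flatness of $\varphi$, and both arguments work.
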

We postpone the proof of this lemma to \S\ref{ssec:reg}. Let us now deduce \Cref{thm:reg}. 

\begin{proof}[Proof of \Cref{thm:reg}]
By \Cref{lem:reg}, $\omega_{\bfG'}$ is regular on the smooth locus of $\bfG'$. Therefore, by \Cref{lem:omegaDiag}, $\nu^*(\omega_{\bfX})$ is regular on the smooth locus of $\tilde \bfX$. Therefore, by \Cref{cor:nu}\eqref{cor:nu:2}, $(\omega_{\bfX})|_{\nu(\tilde \bfX^f)}$ is regular on the smooth locus of $\nu(\tilde \bfX^f)$. By \Cref{cor:nu}(\ref{cor:nu:1},\ref{cor:nu:3}), the open set  $\nu(\tilde \bfX^f)$ in $\bfX$ is big. Therefore $\omega_{\bfX}$ is regular on the smooth locus of $\bfX$, as required.    
\end{proof}

\subsection{Proof of \Cref{lem:reg}}\label{ssec:reg}
We will use the following ad-hoc definition. 
\begin{definition}
    Let $\phi:\bfC\to \bfD$  be a finite map of algebraic varieties defined over $F$, with $\bfD$ being smooth. Let $f$ be a rational $\Q$-function on  $\bfD$. We say that the pair $(\phi,f)$ is \tdef[good pair]{good}, if for any open set $\bfU\subset \bfD$ and any (regular) top-differential form $\omega$ on $\bfU$, the rational $\Q$-form $\phi^{*}(f\cdot \omega)$ is regular on the smooth locus of $\phi^{-1}(\bfU)$.
\end{definition}
\begin{lemma}\label{lem:good.prop}
$\,$
Let $\phi:\bfL\to \bfD$  be a finite map of algebraic varieties defined over $F$, with $\bfD$ being smooth. Let $f$ be a rational $\Q$-function on $\bfD$.
\begin{enumerate}[(i)]
    \item \label{it:good:inv}If there is an invertible form $\omega$ on $\bfD$ s.t. the rational $\Q$-form $\phi^{*}(f\cdot \omega)$ is regular on $\bfD$ then $(\phi,f)$ is good.
    \item \label{it:good:smooth} The property of being good is local on $\bfD$ in the smooth topology. I.e.
    \begin{enumerate}[(a)]
    \item \label{it:pull:good}If  $(\phi,f)$  is good and $\gamma:\bfE\to \bfD$ is smooth then $(\gamma^*(\phi),\gamma^*(f))$ is good.
    \item \label{it:good:pull} If  $\gamma:\bfE\to \bfD$ is smooth and surjective and $(\gamma^*(\phi),\gamma^*(f))$ is good then $(\phi,f)$  is good.
    \end{enumerate}
    \item \label{it:good:codim} If  $\bfU\subset \bfL$ is big and $(\phi|_{\bfU},f)$ is good then 
    $(\phi,f)$  is good.
    
\end{enumerate}
\end{lemma}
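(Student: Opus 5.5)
The three items are formal consequences of the definition of a good pair. In each case we compare $\phi^*(f\cdot\omega)$ for various $\omega$, and use two standard facts. First, regularity of a rational $\Q$-top form on a smooth variety is local in the smooth topology. Second, a rational section of a line bundle on a normal variety is regular as soon as it is regular off a subset of codimension $\ge 2$. The smooth locus of $\bfL$ is smooth, hence normal, so the second fact applies there.

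For \eqref{it:good:inv}, let $\bfU\subset\bfD$ be open and let $\omega'$ be a regular top form on $\bfU$. Since $\omega$ is invertible, we can write $\omega'=g\,\omega|_\bfU$ with $g\in\cO(\bfU)$ regular. Then
$$\phi^*(f\omega')=\phi^*(g)\,\phi^*(f\omega)|_{\phi^{-1}(\bfU)}.$$
This is a product of a regular function and a $\Q$-form. The hypothesis should be read as regularity on the smooth locus of $\bfL$, since $\phi^*(f\omega)$ lives on $\bfL$ and not on $\bfD$. So the product is regular on the smooth locus of $\phi^{-1}(\bfU)$.

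For \eqref{it:good:codim}, let $\bfU'\subset\bfD$ be open and $\omega$ a regular top form on $\bfU'$. By assumption, $\phi^*(f\omega)$ is regular on the smooth locus of $\bfU\cap\phi^{-1}(\bfU')$. The complement of $\bfU\cap\phi^{-1}(\bfU')$ in the smooth locus of $\phi^{-1}(\bfU')$ has codimension $\ge2$, because $\bfU$ is big in $\bfL$. By the extension fact above (applied to a power of the form, i.e. to a genuine section of $\Omega^{\otimes k}$), the form is regular on the whole smooth locus of $\phi^{-1}(\bfU')$.

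For \eqref{it:good:smooth}, form the Cartesian square with $\bfL_\bfE:=\bfL\times_\bfD\bfE$, $\phi_\bfE:\bfL_\bfE\to\bfE$ and $\gamma_\bfL:\bfL_\bfE\to\bfL$. Here $\gamma_\bfL$ is smooth, $\phi_\bfE$ is finite, and the smooth locus of $\bfL_\bfE$ is exactly $\gamma_\bfL^{-1}$ of the smooth locus of $\bfL$. On $\bfL_\bfE$ we have the exact sequence
$$0\to\gamma_\bfL^*\Omega_\bfL\to\Omega_{\bfL_\bfE}\to\Omega_{\gamma_\bfL}\to 0,$$
and $\Omega_{\gamma_\bfL}$ is the pullback of the relative form bundle $\Omega_\gamma$. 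This lets us identify top forms on $\bfL_\bfE$ with $\gamma_\bfL^*(\cdot)\wedge\phi_\bfE^*(\eta)$, where $\eta$ is a relative top form for $\gamma$. In particular, for top forms $\omega$ on $\bfU\subset\bfD$ and $\eta$ relative,
$$\phi_\bfE^*\bigl(\gamma^*f\cdot(\omega*\eta)\bigr)=\gamma_\bfL^*\bigl(\phi^*(f\omega)\bigr)*\phi_\bfE^*(\eta).$$

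For (a), work locally on $\bfE$, which is harmless since regularity is local. Any top form on an open subset of $\bfE$ is locally $g\cdot(\omega*\eta)$ with $\omega$ regular on an open of $\bfD$, $\eta$ an invertible relative form and $g$ regular. Regularity of $\phi^*(f\omega)$ then gives regularity of the right-hand side of the displayed identity.

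For (b), use the same identity with $\eta$ invertible locally. Regularity of the left-hand side together with invertibility of $\eta$ gives that $\gamma_\bfL^*(\phi^*(f\omega))$ is regular on the relevant smooth locus. Since $\gamma_\bfL$ is smooth and surjective, hence faithfully flat, regularity descends, and $\phi^*(f\omega)$ is regular.

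The main obstacle is the bookkeeping in \eqref{it:good:smooth}. One has to check that the identification of $\Omega_{\bfL_\bfE}$ holds on the smooth locus of $\bfL_\bfE$ even though $\bfL$ itself may be singular, and that regularity descends along the smooth surjection $\gamma_\bfL$ (by fpqc descent for sections of line bundles). I would handle both points on the smooth loci only, where everything is a line bundle.
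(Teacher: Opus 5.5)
Your proof is correct. For (i) and (iii) it is what the paper intends; the paper simply calls both obvious. You also rightly read the hypothesis of (i) as regularity on the smooth locus of $\bfL$, not on $\bfD$.

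For (ii) you take a genuinely different route. The paper never uses the cotangent sequence of $\bfL\times_\bfD\bfE\to\bfL$. Instead it works through a chain of cases, each reducible to (i):
\begin{itemize}
\item Zariski covers;
\item {\'e}tale $\gamma$, where the pullback of a locally invertible top form on $\bfD$ stays invertible;
\item open subsets of $\bfD\times\A^n$ projecting to $\bfD$.
\end{itemize}
It then handles an arbitrary smooth $\gamma$ of constant relative dimension via the local structure of smooth morphisms (Stacks Project, Tag 054L). This gives a surjective {\'e}tale $\varepsilon:\tilde\bfE\to\bfE$ and an {\'e}tale $\gamma_1:\tilde\bfE\to\bfD\times\A^n$ with $\gamma\circ\varepsilon=pr\circ\gamma_1$, and the paper chases goodness through this square in both directions.

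You instead prove one canonical identity,
\[
\phi_\bfE^*\bigl(\gamma^*f\cdot(\omega*\eta)\bigr)=\gamma_\bfL^*\bigl(\phi^*(f\omega)\bigr)*\phi_\bfE^*(\eta).
\]
It comes from the cotangent sequence of the smooth map $\gamma_\bfL$ together with $\Omega_{\gamma_\bfL}\cong\phi_\bfE^*\Omega_\gamma$. You then conclude by faithfully flat descent of regularity for rational sections of a line bundle.

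Your route is shorter and treats (a) and (b) symmetrically. Its cost is the two checks you flag: the determinant compatibility on $\gamma_\bfL^{-1}(\bfL^{sm})$, and the fact that this set is exactly the smooth locus of $\bfL_\bfE$ (ascent and descent of smoothness along smooth maps). The paper's route avoids both, but it hides the same descent step inside the ``trivial'' direction (b) of its {\'e}tale case.

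One small fix in (a): to write every top form on a small open of $\bfE$ as $g\cdot(\omega*\eta)$ with $g$ regular, you need $\omega$ invertible, not merely regular. You may assume this locally since $\bfD$ is smooth.
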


\begin{proof}
    (\ref{it:good:inv}, \ref{it:good:codim}) are obvious. Let us prove \eqref{it:good:smooth}. 
    \begin{enumerate}[{Case} 1.]
        \item\label{lem:good.prop:1} $\gamma: \coprod \bfU_i\to \bfD$ is a Zariski cover of an open subset of $\bfD$.\\
        This case is obvious.
        
        \item $\gamma$ is an {\'e}tale map, and $\bfD$ admits an invertible top differential form.\\
        In this case \eqref{it:pull:good} follows from \eqref{it:good:inv}, and \eqref{it:good:pull} is trivial. 

        \item\label{lem:good.prop:c3} $\gamma$ is an {\'e}tale map.\\
        Follows from the two previous cases. 

        \item $\gamma$ can be decomposed as $\bfU\overset{i}{\to}\bfD\times \A^n\overset{pr}{\to}\bfD$ where $i$ is an open embedding and $pr$ is the projection, and $\bfD$ admits an invertible top differential form. \\
        In this case \eqref{it:pull:good} follows from \eqref{it:good:inv}, and \eqref{it:good:pull} is trivial. 
        \item\label{lem:good.prop:c5} $\gamma$ can be decomposed as $\bfU\overset{i}{\to}\bfD\times \A^n\overset{pr}{\to}\bfD$ where $i$ is an open embedding and $pr$ is the projection. \\
        Follows from the previous case and Case \ref{lem:good.prop:1}.
        \item the relative dimension of $\gamma$  is constant.\\
        By \cite[\href{https://stacks.math.columbia.edu/tag/054L}{Lemma 054L}]{SP}, we can find a commutative diagram 
        $$
        \begin{tikzcd}[arrows={-Stealth},row sep=large]
        \bfE \arrow["\gamma"]{r}
        & \bfD    \\
        \tilde \bfE\uar["\varepsilon"] \arrow[swap,"\gamma_1"]{r} &\bfD\times\bA^n  \uar["pr"],
        \end{tikzcd}
$$
        Where $pr$ is the projection, $\varepsilon$ is surjective {\'e}tale map and $\gamma_1$ is {\'e}tale.
        \begin{itemize}
            \item Proof of \eqref{it:pull:good}\\
            If $(\phi,f)$ is good then, by Case \ref{lem:good.prop:c5} so is $(pr^*(\phi),pr^*(f))$. Thus, by Case \ref{lem:good.prop:c5} so is 
            $$(\gamma_1^*pr^*(\phi),\gamma_1^*pr^*(f))=(\varepsilon^*\gamma^*(\phi),\varepsilon^*\gamma^*(f)).$$  Therefore, by Case \ref{lem:good.prop:c3}\eqref{it:good:pull} the pair $(\gamma^*(\phi),\gamma^*(f))$ is good.
            \item Proof of \eqref{it:good:pull}\\
            If $(\gamma^*(\phi),\gamma^*(f))$ is good then, by Case \ref{lem:good.prop:c3}\eqref{it:pull:good} so is $$(\varepsilon^*\gamma^*(\phi),\varepsilon^*\gamma^*(f))=(\gamma_1^*pr^*(\phi),\gamma_1^*pr^*(f)).$$ Thus, by Case \ref{lem:good.prop:c3} so is 
            $(pr^*(\phi)|_{\Im \gamma_1},pr^*(f)_{\Im \gamma_1})$.  Therefore, by Case \ref{lem:good.prop:c5}, the pair $(\phi,f)$ is good.
        \end{itemize}
            \item General case.\\
            Follows immediately from the previous case.
    \end{enumerate}
\end{proof}

\begin{notation}
    $ $
    \begin{enumerate}
        \item Let $\mdef{(\bfG')^r}:=\psi^{-1}(\bfG^r)\subset \bfG'$.
        \item Let $\mdef{\psi^r}:(\bfG^r)'\to \bfG^r$ be the restriction of $\psi$.
    \end{enumerate}    
\end{notation}
\Cref{lem:lin.alg}\eqref{lem:lin.alg:2} gives us:
\begin{cor}\label{lem:good}
    The  pair $(q,\Delta_C^{-1/2})$ is good.
\end{cor}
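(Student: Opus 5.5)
The plan is to apply \Cref{lem:good.prop}\eqref{it:good:inv} with $\phi=q:\bfT\to\bfC$ and $f=\Delta_C^{-1/2}$. The variety $\bfD=\bfC\cong \bG_m\times\bA^{n-1}$ is smooth, and it carries the invertible top form $\omega_\bfC$. The map $q$ is finite, since the action of $W$ on $\bfT$ is factorizable by \Cref{lem:fac}. It therefore suffices to show that the rational $\Q$-form $q^*(\Delta_C^{-1/2}\omega_\bfC)$ is regular on $\bfT$; since $\bfT$ is smooth, this is regularity on the smooth locus of $q^{-1}(\bfC)=\bfT$.

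The main step is to compare $q^*\omega_\bfC$ with $\omega_\bfT$. Both are top forms on $\bfT$ with $\omega_\bfT$ invertible, so we can write $q^*\omega_\bfC=J\cdot\omega_\bfT$ for a regular function $J$ on $\bfT$ (the Jacobian). We evaluate $J$ at a point $x\in\bfT$, using the translation-invariant identifications $T_x\bfT\cong\ft$ and $T_{q(x)}\bfC\cong\fc$ from \Cref{lem:lin.alg}\eqref{lem:lin.alg:2}; note that $q$ is the restriction of $p$ to $\bfT$. Up to a nonvanishing regular factor coming from the normalizations of the invariant forms (monomials in the coordinates of $x$ and in $\det x$, all units on $\bfT$), we get $J(x)=\pm\det(I\circ d_xp|_\ft)$. \Cref{lem:lin.alg}\eqref{lem:lin.alg:2} then gives $J^2=u\cdot\Delta|_\bfT=u\cdot q^*\Delta_C$ with $u$ a unit on $\bfT$. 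Although that lemma is stated for $x\in T\cap G^{rss}$, it is an identity of regular functions on $\bfT$ (indeed the Vandermonde-type identity), so it extends to all of $\bfT$ by density, after base change to $\bar F$ if needed.

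Consequently, as a $\Q$-form, $q^*(\Delta_C^{-1/2}\omega_\bfC)=(q^*\Delta_C)^{-1/2}J\,\omega_\bfT$. Its square is $(q^*\Delta_C)^{-1}J^2\omega_\bfT^2=u\,\omega_\bfT^{2}$, which is regular and in fact invertible on $\bfT$. Hence the $\Q$-form is regular, and \Cref{lem:good.prop}\eqref{it:good:inv} yields that $(q,\Delta_C^{-1/2})$ is good.

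I expect the main obstacle to be the bookkeeping of the unit factor $u$. This means checking that the identification of $T_x\bfT$ with $\ft$ (via multiplication by $x^{-1}$) and of $T_{q(x)}\bfC$ with $\fc$ (via the group structure on $\bfC\cong\bG_m\times\bA^{n-1}$) introduces only invertible factors such as powers of $\det x$ and $x_i$. I would handle it by computing directly with coordinates: writing $q(x_1,\dots,x_n)$ as the elementary symmetric functions, the Jacobian of $(e_1,\dots,e_n)$ with respect to $(x_1,\dots,x_n)$ is the Vandermonde $\prod_{i<j}(x_i-x_j)$. Meanwhile $\omega_\bfT=\prod x_i^{-1}dx_i$ and $\omega_\bfC=e_n^{-1}de_1\wedge\cdots\wedge de_n$ up to sign, so $J=\pm\prod_{i<j}(x_i-x_j)$, a regular function whose square is $\pm\Delta$ on the nose.
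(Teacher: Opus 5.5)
Your proposal is correct and is essentially the paper's argument: the paper deduces the corollary directly from \Cref{lem:lin.alg}\eqref{lem:lin.alg:2}, implicitly via \Cref{lem:good.prop}\eqref{it:good:inv} with the invertible form $\omega_\bfC$. Your coordinate computation $q^*\omega_\bfC=\pm\prod_{i<j}(x_i-x_j)\,\omega_\bfT$ supplies the details and shows the identity holds on all of $\bfT$, so $q^*(\Delta_C^{-1/2}\omega_\bfC)$ is even invertible there; the ``unit factor'' you worried about is trivial, because $\omega_\bfT$ and $\omega_\bfC$ are the invariant forms that correspond to the standard forms on $\ft$ and $\fc$.
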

\begin{proof}[Proof of \Cref{lem:reg}]
We have to show that 
    $(\psi, \Delta_{\bfG}^{-\frac{1}{2}})$ is good.

Consider the Cartesian square 
        \begin{equation}
        \xymatrix{
        (\bfG')^r \ar[d]^{\psi^r} \ar[r] &\bfT\ar[d]^q\\
        \bfG^r \ar[r]^{p|_{G^r}} &\bfC}
        \end{equation}

By Lemmas \ref{lem:good.prop}\eqref{it:good:smooth} \RamiA{and} \ref{lem:Gr.onto}, the last corollary (\Cref{lem:good}) implies that $(\psi^r, \Delta_{\bfG}|_{\bfG^r}^{-\frac{1}{2}})$ is good. Therefore $(\psi|_{(\bfG')^r}, \Delta_{\bfG}^{-\frac{1}{2}})$ is good.

Since $q$ is finite and flat (see \Cref{lem:fac} and \Cref{lem:q.flat}), so is $\psi$. So \Cref{cor:pFibReduced} implies that 
$(\bfG')^r$ is big in $\bfG'$. Therefore, by \Cref{lem:good.prop}\eqref{it:good:codim} we obtain that 
    $(\psi, \Delta_{\bfG}^{-\frac{1}{2}})$ is good, as required.
\end{proof}

\section{Regularity and invertability of the form $\omega^0_\bfX$}\label{sec:om0}
In this section we construct the rational form $\omega^0_\bfX$ on $\bfX$ and prove the following theorem.
\begin{theorem}\label{thm:inv}
    $\omega^0_\bfX$ is regular and invertible over the smooth locus of $\bfX$.
\end{theorem}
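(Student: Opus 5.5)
The plan is to reduce \Cref{thm:inv} to the open locus where both factors of the fiber product are smooth over $\bfC$, and then extend across a set of codimension at least $2$. I take $\omega^0_\bfX:=\omega_\bfG\boxtimes_{\omega_\bfC}\omega_\bfY$, with $\omega_\bfY$ the $\Q$-top form on $\bfY$ obtained by descending $\omega_\bfT\boxtimes\omega_\bfT$ from $\bfT\times\bfT$. This form is $W$-invariant as a $\Q$-form, since $w^*(\omega_\bfT\boxtimes\omega_\bfT)=\det(w)^2\,\omega_\bfT\boxtimes\omega_\bfT$. Hence, by \Cref{cor:GalDes}\eqref{it:sh} applied to the free locus, it gives a $\Q$-top form on $\bfY^f$. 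Since $\mu|_{(\bfT\times\bfT)^f}$ is \'etale (\Cref{lem:mu}), this form is regular and invertible there.

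\textbf{Step 1: $\omega_\bfY$ on $\bfY$.} $\bfY^f$ is big in $\bfY$ (\Cref{cor:YfBig}). On the smooth locus of $\bfY$, the divisor of a rational $\Q$-form that is regular and invertible outside codimension $2$ is zero. So $\omega_\bfY$ is regular and invertible on the smooth locus of $\bfY$.

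\textbf{Step 2: a local description of $\omega^0_\bfX$.} Let $\omega_p$ be the Gelfand--Leray relative form of $\omega_\bfG$ over $\omega_\bfC$ with respect to $p$. On $\bfG^r$, the smooth locus of $p$, the form $\omega_p$ is regular and invertible: it is a quotient of invertible forms, and $d p$ is surjective there. Unwinding \Cref{def:boxtimes}, on the open set
$$\bfV:=\sigma^{-1}(\bfY^f)\cap\tau^{-1}(\bfG^r)$$
we get $\omega^0_\bfX=\omega_\bfY*\sigma^*(\omega_p)$. Here $\sigma|_\bfV$ is smooth, being a base change of $p|_{\bfG^r}$. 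Thus $\bfV$ is smooth, and $\omega^0_\bfX|_\bfV$ is the product of a pulled-back invertible form and an invertible relative form, hence regular and invertible. The point to check is that the Gelfand--Leray form of $\omega_\bfY$ over $\omega_\bfC$ cancels correctly in the boxtimes construction. This is a pointwise linear-algebra identity for a Cartesian square with $\sigma$ smooth.

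\textbf{Step 3: $\bfV$ is big in $\bfX$.} The complement of $\bfV$ is $\sigma^{-1}(\bfY\setminus\bfY^f)\cup\tau^{-1}(\bfG\setminus\bfG^r)$.
\begin{itemize}
\item The first set has codimension at least $2$, because $\sigma$ is flat (a base change of $p$, \Cref{cor:pqflat}) and $\bfY\setminus\bfY^f$ has codimension at least $2$.
\item The second set is exactly the non-smooth locus of $\sigma$. Since $\sigma$ is flat with reduced fibers (\Cref{cor:pFibReduced}), and these fibers are fibers of $p$, whose singular loci are small (\Cref{cor:pFibReduced} and the argument of \Cref{lem.up.nice}), a fiber-dimension count over the irreducible $\bfY$ shows it has codimension at least $2$. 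This is the same argument as in \Cref{cor:Xbig} and \Cref{lem:BigLoc}.
\end{itemize}
I expect this bigness statement, together with the compatibility check in Step 2, to be the main technical point.

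\textbf{Step 4: conclusion.} On the smooth locus $\bfX^{sm}$, the rational $\Q$-form $\omega^0_\bfX$ is regular and invertible on $\bfV\cap\bfX^{sm}$, whose complement in $\bfX^{sm}$ has codimension at least $2$. Since $\bfX^{sm}$ is smooth, hence normal, the divisor of $\omega^0_\bfX$ (computed on any power where it is an honest section) has no components. Therefore $\omega^0_\bfX$ is regular and invertible on all of $\bfX^{sm}$, which proves \Cref{thm:inv}.
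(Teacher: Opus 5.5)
Your proposal is correct and follows essentially the paper's route. Both arguments pass to a big smooth open subset of the form $\bfG^r\times_\bfC(\text{smooth part of }\bfY)$, verify regularity and invertibility there by a Gelfand--Leray computation, and extend across codimension $2$; the paper uses $\bfX^0=\bfG^r\times_\bfC\bfY^{sm}$ and \Cref{lem:X0big}, and realizes $\omega^0_\bfX|_{\bfX^0}$ (up to sign) as the Gelfand--Leray form of $\omega_\bfG\boxtimes\omega_\bfY$ for the smooth map $d\circ(p\times\pi)$ to $\bfC$, whereas you use $\bfY^f$ and the smoothness of $\sigma$ to write the form as $\omega_\bfY*\sigma^*(\omega_p)$. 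One small precision: since $\pi$ need not be smooth on $\bfY^f$, your Step 2 identity holds pointwise only where $\pi$ is also smooth, so read it as an identity of rational forms on the irreducible $\bfX$, which is all you need.
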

We also prove regularity and invertability of some other forms (see \Cref{lem:invY} and \Cref{cor.inv.UP} below).
\begin{notation}\label{not:omegaX0}
$ $
    \begin{enumerate}
        \item Let $\mdef{\omega_{\bfT\times \bfT}}:=\omega_{\bfT}\boxtimes \omega_{\bfT}$
        
        \item Note that {$\omega_{\bfT\times \bfT}$} is $W$ invariant, since for any $w\in W$ we have $$w^*(\omega_{\bfT\times \bfT})=sign(w)^2 \omega_{\bfT\times \bfT}=\omega_{\bfT\times \bfT}.$$  So, by \Cref{cor:GalDes}\eqref{it:sh} it descends to a  top form on $\bfY^f$. Denote this form by \tdef{$\omega_\bfY$} and interpret is as a rational top form on $\bfY$.
        \item $\mdef{\omega^0_\bfX}:= \omega_\bfG\boxtimes_{\omega_\bfC} \omega_\bfY$
        \item Let \tdef{$\bfY^{sm}$} be the smooth locus of $\bfY$ and $\mdef{\bfX^0}:=\bfG^r\times_\bfC \bfY^{sm}$.
    \end{enumerate}
\end{notation}
The following lemma is obvious.
\begin{lemma}\label{lem:invY}
    $\omega_\bfY$ is regular and invertible over the smooth locus of $\bfY$.
\end{lemma}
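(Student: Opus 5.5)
The plan is to verify the two properties on the free locus $\bfY^f$ first, and then extend them to all of $\bfY^{sm}$ by a codimension-two (Hartogs-type) argument. The extension works because $\bfY^f$ is big in $\bfY$ (\Cref{cor:YfBig}) and a smooth variety is normal.

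\emph{Step 1: the free locus.} By \Cref{cor:YfSmooth}, $\bfY^f$ is smooth, so $\bfY^f\subset \bfY^{sm}$. By \Cref{lem:GamU} we have $(\bfT\times\bfT)^f//W\cong\bfY^f$, and by \Cref{lem:mu}\eqref{lem:mu:2} the map $\mu|_{(\bfT\times\bfT)^f}:(\bfT\times\bfT)^f\to\bfY^f$ is {\'e}tale. Hence $\mu^*\Omega_{\bfY^f}\cong\Omega_{(\bfT\times\bfT)^f}$, and the descent of \Cref{cor:GalDes}\eqref{it:sh} identifies sections of $\Omega_{\bfY^f}$ with $W$-invariant sections of $\Omega_{(\bfT\times\bfT)^f}$. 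Under this identification $\omega_\bfY$ corresponds to $\omega_{\bfT\times\bfT}=\omega_\bfT\boxtimes\omega_\bfT$, which is a genuine regular, nowhere vanishing top form. Its inverse $\omega_{\bfT\times\bfT}^{-1}$, a section of the dual bundle, is also $W$-invariant and descends in the same way. Therefore $\omega_\bfY$ is a regular top form on $\bfY^f$ with regular inverse, i.e.\ it is invertible there.

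\emph{Step 2: extension to $\bfY^{sm}$.} On $\bfY^{sm}$ the sheaf $\Omega_{\bfY^{sm}}$ is a line bundle, and $\bfY^{sm}$ is normal. By \Cref{cor:YfBig}, the complement $\bfY^{sm}\setminus\bfY^f$ has codimension at least $2$ in $\bfY^{sm}$. A section of a line bundle on a normal variety that is regular outside a closed subset of codimension at least $2$ extends to a regular section. Applying this to the rational section $\omega_\bfY$ of $\Omega_{\bfY^{sm}}$ shows that it is regular on $\bfY^{sm}$. Applying it to the rational section $\omega_\bfY^{-1}$ of $\Omega_{\bfY^{sm}}^{\vee}$ shows that the inverse is regular as well. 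Since $\omega_\bfY\cdot\omega_\bfY^{-1}=1$ on the dense set $\bfY^f$, this identity holds on all of $\bfY^{sm}$, so $\omega_\bfY$ is invertible there.

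The only point needing care is in Step 1: checking that the descended object is an honest top form rather than merely a $\Q$-form, and that its inverse descends too. This follows because $\mu$ is {\'e}tale on the free locus, so the cotangent identification commutes with the $W$-action. In particular, the sign twist $w^*\omega_{\bfT\times\bfT}=\mathrm{sign}(w)^2\,\omega_{\bfT\times\bfT}$ is trivial, as noted in \Cref{not:omegaX0}. Beyond this, the argument is a routine application of \Cref{cor:YfBig}.
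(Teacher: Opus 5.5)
Your proof is correct and is the argument the paper leaves implicit (it calls the lemma obvious): on the smooth open $\bfY^f$, $\omega_\bfY$ is the descent of the nowhere-vanishing $W$-invariant form $\omega_{\bfT\times\bfT}$ along the {\'e}tale map $\mu$. Since $\bfY^f$ is big (\Cref{cor:YfBig}), both $\omega_\bfY$ and its inverse then extend over the normal variety $\bfY^{sm}$ by the codimension-two extension property for sections of line bundles.
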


\begin{lemma}\label{lem:X0big}
    $\bfX^0$ is smooth, and is big in $\bfX$.
\end{lemma}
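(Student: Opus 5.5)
We prove smoothness first and then bigness.

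\emph{Smoothness.} The variety $\bfX^0=\bfG^r\times_\bfC \bfY^{sm}$ is obtained from the smooth morphism $p|_{\bfG^r}:\bfG^r\to\bfC$ (by definition $\bfG^r$ is the smooth locus of $p$) by base change along $\pi|_{\bfY^{sm}}:\bfY^{sm}\to\bfC$. Hence the projection $\bfX^0\to\bfY^{sm}$ is smooth. Since $\bfY^{sm}$ is smooth over $F$, the composition $\bfX^0\to \bfY^{sm}\to \operatorname{Spec}F$ is smooth, so $\bfX^0$ is smooth. It is open in $\bfX$, being the preimage of the open set $\bfG^r\times \bfY^{sm}$ under the natural immersion.

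\emph{Bigness.} The complement of $\bfX^0$ in $\bfX$ is contained in $\tau^{-1}(\bfG\smallsetminus\bfG^r)\cup\sigma^{-1}(\bfY\smallsetminus\bfY^{sm})$, and we bound the codimension of each piece.

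For the second piece: $\sigma$ is flat, as the base change of the flat map $p$ (\Cref{cor:pqflat}). Flat morphisms of finite type between varieties satisfy the dimension formula $\dim \sigma^{-1}(\bfZ)\le \dim \bfZ+\text{relative dimension}$, with equidimensional fibers of dimension $\dim\bfG-\dim\bfC$. Hence the preimage of a closed subset of codimension $\geq 2$ has codimension $\geq2$. By \Cref{cor:YfSmooth} and \Cref{cor:YfBig}, $\bfY^{f}\subset \bfY^{sm}$ is big, so $\bfY\smallsetminus\bfY^{sm}$ has codimension $\ge 2$, and therefore so does $\sigma^{-1}(\bfY\smallsetminus\bfY^{sm})$.

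For the first piece: $\tau$ is the base change of $\pi:\bfY\to\bfC$ along $p$. Here we cannot argue by flatness of $\tau$ directly, since $\pi$ need not be flat. Instead we use the fibers of $\sigma$. Over a point $y\in\bfY$, the fiber $\sigma^{-1}(y)$ is identified with $p^{-1}(\pi(y))$, and $\tau^{-1}(\bfG\smallsetminus \bfG^r)\cap\sigma^{-1}(y)$ is identified with $p^{-1}(\pi(y))\smallsetminus \bfG^r$. The fibers of $p$ are reduced and irreducible (\Cref{lem.fib.p.irr}, \Cref{cor:pFibReduced}), and their regular loci are big; this fact was already used in the proof of \Cref{lem.up.nice}. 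Thus $p^{-1}(c)\smallsetminus\bfG^r$ has codimension $\ge2$ in $p^{-1}(c)$ for every $c$. Since $\sigma$ is flat with equidimensional fibers, a closed subset of $\bfX$ meeting every fiber of $\sigma$ in codimension $\ge 2$ has codimension $\ge2$ in $\bfX$. Indeed, its dimension is at most $\dim\bfY+(\text{fiber dim}-2)=\dim\bfX-2$, using irreducibility of $\bfX$ (\Cref{cor:X.irr}). This gives the bound for the first piece.

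The main point needing care is this last fiberwise-codimension step, specifically the claim that the smooth locus of each fiber $p^{-1}(c)$ equals $\bfG^r\cap p^{-1}(c)$ and is big in the fiber. I would justify it by reducing via the exponential-free identification of fibers with the Lie algebra picture used in \Cref{cor:pFibReduced} (citing \cite[Corollary 5.0.9]{AGKS2}). There the complement of the regular elements in each fiber of $p_0$ has codimension $\ge2$, the regular (non-regular) elements of a fiber having codimension at least $2$ in $\fg$-fibers, as for nilpotent cones.
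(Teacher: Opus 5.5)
Your proof is correct. Smoothness and the bound on $\sigma^{-1}(\bfY\smallsetminus\bfY^{sm})$ match the paper. For the piece $\tau^{-1}(\bfG\smallsetminus\bfG^r)=(\bfG\smallsetminus\bfG^r)\times_\bfC\bfY$ you take a genuinely different route.

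\textbf{The paper's route.} The paper fibers this piece over $\bfG\smallsetminus\bfG^r$, i.e.\ it uses the fibers of $\pi$, and obtains
$$\dim\bigl((\bfG\smallsetminus\bfG^r)\times_\bfC\bfY\bigr)\le\dim(\bfG\smallsetminus\bfG^r)+\dim\bfY-\dim\bfC.$$
The only input about $\bfG$ is then the global bigness of $\bfG^r$ in $\bfG$ (\Cref{cor:pFibReduced}). The paper phrases this step via flatness of $\bfY\to\bfC$, which is indeed not proved in \S\ref{sec:basic}; there only flatness of $q:\bfT\to\bfC$ is established. However, your stated reason for avoiding this route is not a real obstruction. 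Only an upper bound on fiber dimensions is needed, and every fiber of $\pi$ has dimension exactly $n=\dim\bfY-\dim\bfC$. This holds because $\pi\circ\mu=q\circ\mathfrak{pr}_1$ with $\mu$ finite surjective and $q$ finite.

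\textbf{Your route.} You fiber over $\bfY$ via the flat map $\sigma$. This needs the finer input that $p^{-1}(c)\smallsetminus\bfG^r$ has codimension $\ge 2$ in every fiber $p^{-1}(c)$. That statement implies global bigness of $\bfG^r$ but is strictly stronger.

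\textbf{The fiberwise input and its justification.} The fiberwise fact is true, and the paper invokes it as well known in the proof of \Cref{lem.up.nice}. However, \cite[Corollary 5.0.9]{AGKS2} is cited in the paper only for global bigness of $\ug^r$ and for reducedness of fibers, so it is not the right reference for your step. A direct argument works:
\begin{itemize}
\item Over $\bar F$, each fiber of $p=p_0|_{\bfG}$ is a finite union of conjugacy classes, one for each Jordan type with the given characteristic polynomial.
\item By the characteristic-free Jordan-type formula for centralizer dimension, one has $\dim Z_{\bfG}(x)\equiv n \pmod 2$.
\item Hence a non-regular class has centralizer of dimension $\ge n+2$, and so codimension $\ge 2$ in the $(n^2-n)$-dimensional fiber.
\item Since $p$ is flat, $\bfG^r\cap p^{-1}(c)$ is exactly the smooth locus of the fiber.
\end{itemize}
With this in place, your dimension count $\dim Z\le\dim\bfY+(n^2-n-2)=\dim\bfX-2$ goes through.
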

\begin{proof}
The map $\bfX^0\to \bfY^{sm}$ is a base change of a smooth map, and hence is smooth. Thus $\bfX^0$ is smooth.     Since $q:\bfY\to \bfC$ is flat (see \S\ref{sec:basic}), we have 
$$\dim \bfX = \dim\bfG+\dim \bfY-\dim \bfC,$$ and $$\dim (\bfG\smallsetminus \bfG^r)\times_{\bfC} \bfY= \dim (\bfG\smallsetminus \bfG^r)+\dim \bfY-\dim \bfC.$$ 
As $\bfX$ is irreducible (see \S\ref{sec:basic}) and $\bfG^r$ is big in $\bfG$ (see \S\ref{sec:basic}), this implies that $\bfG^r \times_{\bf C} \bfY$ is big in $\bfX$. 
By \S\ref{sec:basic} $\bfY^{sm}$ is big in $\bfY$.  
Similarly to the above argument, we obtain that $\bfG\times_\bfC \bfY^{sm}$ is big in $\bfX$. Thus, $\bfX^0$ is big in $\bfX$.
\end{proof}

\begin{proof}[Proof of \Cref{thm:inv}]

By \Cref{lem:X0big}  it is enough to show that $\omega_\bfX^0$ is regular and invertible on $\bfX^0$. Consider the diagram:

$$ 
\begin{tikzcd}[arrows={-Stealth}]
& \bfC\times\bfC\drar["d"]&\\
    \bfG^r \times \bfY^{sm}
    \arrow[phantom, "\square"]{drr}
    \arrow{rr}
    \urar["(p|_{\bfG^r})\times (q|_{\bfY^{sm}})"] 
    & & \bfC    
    \\      
  \bfX^0 \uar["(\tau{,}\sigma)"]
  \arrow[""]{rr}  & & 1\uar[""]
\end{tikzcd}
$$
where $d$ is the ratio map w.r.t. the group structure on $\bfC$ and $1$ is the neutral element w.r.t. this structure.

It is easy to see that $\omega^0_\bfX|_{\bfX^0}$ is, up to a sign, the Gelfand-Leray form w.r.t. the smooth map $d\circ ((p|_{\bfG^r})\times (q|_{\bfY^{sm}}))$  and the forms $\omega_\bfG|_{\bfG^r}\boxtimes \omega_\bfY|_{\bfY^{sm}}$ and $\omega_\bfC$. Hence it is regular and invertible. 
\end{proof}



Finally, we introduce some more forms and prove their regularity and integrability.
\begin{notation}
Denote
\begin{itemize}
    \item 
     $\mdef{\omega_{\bf \Upsilon}}:=\omega_{\bfG}\boxtimes_{\omega_{\bfC}}\omega_{\bfX}^0=\omega_{\bfG}\boxtimes_{\omega_{\bfC}}\omega_{\bfG}\boxtimes_{\omega_{\bfC}}\omega_{\bfY},$ considered as a rational top form on ${\bf \Upsilon}$.
    \item      $\mdef{\omega_{\bfG\times_\bfC\bfG}}:=\omega_{\bfG}\boxtimes_{\omega_{\bfC}}\omega_{\bfG}$ considered as a rational top form on ${\bfG\times_\bfC\bfG}$.
\end{itemize}
Note that here we use that the relevant maps are generically smooth, as guaranteed by \Cref{lem:gen.sm}.
\end{notation}
\Cref{thm:inv} gives us:
\begin{cor}\label{cor.inv.UP}
$ $
\begin{enumerate}[(i)]
    \item\label{cor.inv.UP:1} The form $\omega_{{\bf \Upsilon}}$ is invertible on the regular locus of ${\bf \Upsilon}$.
    \item\label{cor.inv.UP:2} The form $\omega_{\bfG\times_\bfC\bfG}$ is invertible on the regular locus of $\bfG\times_\bfC\bfG$.
    \end{enumerate}
\end{cor}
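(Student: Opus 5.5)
The plan is to derive both items from \Cref{thm:inv} by a single principle. Suppose a top form is defined as $\omega_1\boxtimes_{\omega_\bfC}\omega_2$ on a fiber product $\bfV=\bfZ_1\times_\bfC\bfZ_2$. If $\bfZ_1\to\bfC$ is smooth at a point and $\omega_1,\omega_2,\omega_\bfC$ are invertible near the relevant images, then the product form is regular and invertible at every point of $\bfV$ lying over that locus. This is a pointwise linear-algebra statement about the exact sequences of tangent spaces, exactly as in the proof of \Cref{thm:inv}. There, $\omega^0_\bfX|_{\bfX^0}$ was identified, up to sign, with a Gelfand--Leray form for the smooth map $d\circ(p\times q)$, and I would reuse that identification verbatim.

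For \eqref{cor.inv.UP:1}, write ${\bf \Upsilon}=\bfG\times_\bfC\bfX$ with $\omega_{\bf \Upsilon}=\omega_\bfG\boxtimes_{\omega_\bfC}\omega^0_\bfX$. Consider the open subset ${\bf \Upsilon}^0:=\bfG^r\times_\bfC \bfX^0$. On $\bfG^r$ the map $p$ is smooth and $\omega_\bfG$ is invertible. On $\bfX^0$, \Cref{thm:inv} gives that $\omega^0_\bfX$ is regular and invertible, and $\omega_\bfC$ is invertible on $\bfC$. The diagram from the proof of \Cref{thm:inv}, with $\bfG^r\times\bfX^0$ in place of $\bfG^r\times\bfY^{sm}$ and the map $d\circ(p\times(p\circ\tau))$, then exhibits $\omega_{\bf \Upsilon}|_{{\bf \Upsilon}^0}$ as a Gelfand--Leray form of a smooth morphism with respect to invertible forms. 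Hence it is regular and invertible there, and ${\bf \Upsilon}^0$ is smooth.

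It remains to pass from ${\bf \Upsilon}^0$ to the whole regular locus. I would show that ${\bf \Upsilon}^0$ is big in ${\bf \Upsilon}$ by the dimension count of \Cref{lem:X0big}. The map $p'\colon {\bf \Upsilon}\to\bfX$ is a flat base change of $p$, so removing $\bfG\smallsetminus\bfG^r$ (codimension $\ge 2$) or $\bfX\smallsetminus\bfX^0$ (codimension $\ge2$ by \Cref{lem:X0big}) removes codimension $\ge2$ subsets; irreducibility comes from \Cref{lem.up.nice}. Then, on the regular locus ${\bf \Upsilon}^{sm}$, both $\omega_{\bf \Upsilon}$ and $\omega_{\bf \Upsilon}^{-1}$ are rational sections of a line bundle on a normal variety that are regular off a codimension-$2$ set. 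By Hartogs they extend, so $\omega_{\bf \Upsilon}$ is regular and invertible on ${\bf \Upsilon}^{sm}$.

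Item \eqref{cor.inv.UP:2} follows the same pattern with $\bfG^r\times_\bfC\bfG^r$, which is big by \Cref{lem.GG.nice} and flatness of $p$. The only point needing care, and the main obstacle, is checking that the Gelfand--Leray identification survives with $\bfX^0$ (itself a fiber product) in place of $\bfY^{sm}$. Concretely, one has to check that the relative forms compose associatively, $\omega_\bfG\boxtimes_{\omega_\bfC}(\omega_\bfG\boxtimes_{\omega_\bfC}\omega_\bfY)$, for Gelfand--Leray forms over $\bfC$. I expect this to be a routine but sign-sensitive linear-algebra verification, and signs are irrelevant for invertibility.
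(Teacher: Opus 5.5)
Your proposal is correct and follows the paper's argument. The paper likewise restricts to a big open subset of ${\bf \Upsilon}$ where $p'$ is smooth and the $\bfX$-coordinate lies in the good locus (essentially your $\bfG^r\times_\bfC\bfX^0$), notes that $\omega_\bfG$ and $\omega^0_\bfX$ (by \Cref{thm:inv}) are invertible there so that $\omega_{\bf \Upsilon}$ is too, and extends over the codimension-$2$ complement, treating item (ii) the same way. The associativity issue you flag does not arise, since $\omega_{\bf \Upsilon}$ is defined directly as $\omega_\bfG\boxtimes_{\omega_\bfC}\omega^0_\bfX$ and your pointwise principle applies to it with $\bfX^0$ as the second factor.
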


\begin{proof}
$ $
\begin{itemize}
\item[\eqref{cor.inv.UP:1}] Let ${\bf \Upsilon}'$ be the smooth locus of $p'$. The above shows that ${\bf \Upsilon}'':=(p')^{-1}(\bfY^{sm})\cap {\bf \Upsilon}'$ is big in ${\bf \Upsilon}$ also, 
    by \Cref{thm:inv}, $\omega_{\bfX}^0$  is invertible. By definition so is 
    $\omega_\bfG$.  Therefore $\omega_{\bf \Upsilon}|_{{\bf \Upsilon}''}$ is invertible. This implies the assertion.
    \item[\eqref{cor.inv.UP:2}]
    The proof is similar to the proof of previous item.
\end{itemize}
\end{proof}

\section{Explicit geometric bound\RamiA{s} on the character}\label{sec:Pfbnd234}
\subsection{Proof of \Cref{thm:bnd3}}\label{sec:Pfbnd3}
Let $m,\alpha^\rho,f$ be as in \Cref{thm:OrbIntBoundChar}.
Let $f'=1_\cB$, and $h=|m|$. By \Cref{lem:BY}, $\pi|_{\supp(f')}$  is proper.

By \Cref{thm:bnd.cusp.char}
  there exists $\gamma_0\in C^{\infty}(G)$ such that     
    \begin{equation}\label{=OmegaKappa}
    \Omega(|m|)= \gamma_0|_{G^{rss}} (p^{rss})^*\left(\frac{p^{rss}_*(|m|\mu_G|_{G^{rss}})}{\mu_C|_{C^{rss}}}\right) \kappa    
    \end{equation}

Let $k,C$ be s.t. $f<C\cR^k$.
Let $g\in C_c^\infty(G)$.
By \Cref{thm:OrbIntBoundChar} we have:
$$\langle \chi_\rho,g \mu_G \rangle\leq \langle f\cdot \Omega(|m|),(|g|\cdot \mu_G)|_{G^{rss}}\rangle\RamiA{\leq} \langle C\cR^k\cdot \Omega(|m|),(|g|\cdot \mu_G)|_{G^{rss}}\rangle.$$

By \eqref{=OmegaKappa} we obtain 
$$\langle \chi_\rho,g \mu_G \rangle\leq 
\langle C\cR^k\cdot \gamma_0|_{G^{rss}} (p^{rss})^*\left(\frac{p^{rss}_*(|m|\mu_G|_{G^{rss}})}{\mu_C|_{C^{rss}}}\right) \kappa  ,(|g|\cdot \mu_G)|_{G^{rss}}\rangle.$$

By \Cref{thm:kappa.alg.geo} we obtain  
\begin{align*}\langle \chi_\rho,g \mu_G \rangle &\leq 
\langle C\cR^k\cdot \gamma_0|_{G^{rss}} (p^{rss})^*\left(\frac{p^{rss}_*(|m|\mu_G|_{G^{rss}})}{\mu_C|_{C^{rss}}}\right) \frac{\tau_*(1_{\mathcal{A}}\cdot |\omega_\bfX| )|_{G^{rss}}}{\RamiA{(|\omega_G|)}|_{G^{rss}}} ,(|g|\cdot \mu_G)|_{G^{rss}}\rangle=
\\&=
\langle C\cR^k\cdot \gamma_0 p^*\left(\frac{p_*(|m|\mu_G)}{\mu_C}\right) \frac{\tau_*(1_{\mathcal{A}}\cdot |\omega_\bfX| )}{\RamiA{|\omega_G|}} ,|g|\cdot \mu_G\rangle
\\&=
\langle C\cR^k\cdot \gamma_0 p^*\left(\frac{p_*(|m|\mu_G)}{\mu_C}\right) \frac{\tau_*(1_{\mathcal{A}} \cdot  \left|\frac{\omega_\bfX}{\omega^0_\bfX}\right| \cdot |\omega^0_\bfX|)}{\RamiA{\frac{|\omega_G|}{\mu_G}}\mu_G} ,|g|\cdot \mu_G\rangle
\end{align*}

Let $\cF:=\RamiA{\frac{\mu_G}{|\omega_G|}} \left|\frac{\omega_\bfX}{\omega^0_\bfX}\right|$.
By Theorems \ref{thm:reg} and \ref{thm:inv},  and \Cref{cor:Xbig}, the function $\cF$ is continuous. 
Let $\cF'\in C^\infty(X)$ 
be a real valued function s.t. 
$\cF'>\cF$.
Set $\gamma=\tau^*(\gamma_0C)\cF'$.

We obtain:
\begin{align*}\langle \chi_\rho,g \mu_G \rangle &\leq
\langle C\cR^k\cdot \gamma_0 p^*\left(\frac{p_*(|m|\mu_G)}{\mu_C}\right) \frac{\tau_*(1_{\mathcal{A}} \cdot  \cF \cdot |\omega^0_\bfX|)}{\mu_G} ,|g|\cdot \mu_G\rangle
\\&\leq
\langle C\cR^k\cdot \gamma_0 p^*\left(\frac{p_*(|m|\mu_G)}{\mu_C}\right) \frac{\tau_*(1_{\mathcal{A}} \cdot  \cF' \cdot |\omega^0_\bfX|)}{\mu_G} ,|g|\cdot \mu_G\rangle
\\&=
\RamiA{
\left  \langle \frac{{\tau}_*(|\omega^0_\bfX| \tau^*(\gamma_0C)\cF' \sigma^*(1_\cB))}{\mu_G} p^*\left( \frac{p_*(h\mu_G )}{\mu_C}\right)\cR^k, |g| \mu_G\right\rangle
}
\\&=
\left  \langle \frac{\RamiA{\tau}_*(|\omega^0_\bfX| \gamma \sigma^*(f'))}{\mu_G} p^*\left( \frac{p_*(h\mu_G )}{\mu_C}\right)\cR^k, |g| \mu_G\right\rangle
\end{align*}
as required.
\subsection{Base change for integration}
In order to pass from \Cref{thm:bnd3}  to the other versions of \Cref{thm:bnd2} we will need the following:
\begin{lem}\label{lem:m.base.cha}
    Let     
    $$
\begin{tikzcd}
\bfZ_1 \arrow[r, "\delta'"] \arrow[d,"\gamma'",swap] & \bfZ_2 \arrow[d,"\gamma"] \\
\bfZ_3 \arrow[r, "\delta",swap] & \bfZ_4
\arrow[ul, phantom, "\square"]
\end{tikzcd}
$$
 be  a Cartesian square of algebraic varieties. Assume that all the maps in this diagram are generically smooth. Let $\omega_i$ for $i=2,3,4$ be invertible $\Q$-forms on the smooth loci of $\bfZ_i$ and let $\omega_1:=\omega_2 \boxtimes_{\omega_4} \omega_3$.
Let $Z_i:=\bfZ_i(F)$. Let $h_3\in C^\infty(Z_3)$ s.t. $\delta|_{\supp(h_3)}$ is proper. 
Then,
\begin{enumerate}
    \item \label{lem:m.base.cha:1} 
$$\gamma^*\left(\frac{\delta_*(h_3\cdot |\omega_3|)}{|\omega_4|}\right)=\frac{\delta'_*\left((\gamma')^*(h_3) \RamiC{\cdot} |\omega_1|\right)}{|\omega_2|} $$
\item \label{lem:m.base.cha:2}For every $h_2\in C^\infty(Z_2)$ s.t. $\gamma|_{\supp(h_2)}$ is proper we have:
$$\frac{\delta_*(h_3\cdot |\omega_3|)}{|\omega_4|} \frac{\gamma_*(h_2\cdot |\omega_2|)}{|\omega_4|}=\frac{(\gamma\circ\delta')_*(h_2\boxtimes_{Z_4} h_3\cdot |\omega_1|)}{|\omega_4|}.$$
\end{enumerate}
Note that these are equalities of functions that are defined only almost everywhere and\RamiC{, in particular,} are valid also only almost everywhere. 
\end{lem}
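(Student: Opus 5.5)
We will prove both items by reducing to fiberwise integration over a dense open set where everything is smooth, and then extending by almost-everywhere considerations.

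First we shrink. Let $\bfZ_4^\circ\subset\bfZ_4$ be a dense open set over which $\gamma$ and $\delta$ are smooth, $\bfZ_4^\circ$ is smooth, and $\omega_4$ is invertible. The preimages $\bfZ_2^\circ,\bfZ_3^\circ,\bfZ_1^\circ$ are then smooth. After further shrinking we may assume $\omega_2,\omega_3$ are invertible there, and hence so is $\omega_1=\omega_2\boxtimes_{\omega_4}\omega_3$. Its definition gives $\omega_1=\omega_4*(\omega_2'\boxtimes_{\bfZ_4}\omega_3')$, where $\omega_i'$ are the Gelfand–Leray relative forms. Since all maps are generically smooth, the complements of these sets have $F$-points of measure zero for the relevant smooth measures. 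The pushforwards of measures supported there are therefore absolutely continuous and vanish. So both sides of each equality may be computed over $Z_i^\circ$, and the statements hold almost everywhere as claimed.

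Item \eqref{lem:m.base.cha:1}: we use the fiberwise (coarea/Fubini) formula for Gelfand–Leray forms over $F$. For a smooth map $\delta:Z_3^\circ\to Z_4^\circ$ and $y\in Z_4^\circ$,
$$\frac{\delta_*(h_3|\omega_3|)}{|\omega_4|}(y)=\int_{\delta^{-1}(y)}h_3\,|\omega_3'|_{\delta^{-1}(y)}|.$$
This is the same identity used in \eqref{thm:bnd.cusp.char:g-l}, and the integral is finite since $\delta|_{\supp h_3}$ is proper. The Cartesian square identifies $\delta'^{-1}(z)$ with $\delta^{-1}(\gamma(z))$ via $\gamma'$. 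Under this identification the relative Gelfand–Leray form of $\omega_1$ with respect to $\delta'$ and $\omega_2$ is the pullback of $\omega_3'$; this is exactly how $\boxtimes$ is defined, since $\omega_1=\omega_2*(\gamma^*\omega_3')$ locally. Applying the same formula for $\delta'$ at $z$ gives equality of the two sides at every $z\in Z_2^\circ$. We check properness of $\delta'$ on $\supp\,(\gamma')^*h_3$ by base change of properness on $F$-points.

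Item \eqref{lem:m.base.cha:2}: multiply the identity of \eqref{lem:m.base.cha:1} by $h_2$ and integrate along $\gamma$. Writing $\Phi=\delta_*(h_3|\omega_3|)/|\omega_4|$, we get
$$\gamma_*\bigl(h_2\,\gamma^*(\Phi)\,|\omega_2|\bigr)=\Phi\cdot\gamma_*(h_2|\omega_2|),$$
the projection formula for pushforward of measures, valid for nonnegative measurable $\Phi$. The left side equals $\gamma_*\bigl(h_2\,\delta'_*((\gamma')^*h_3|\omega_1|)\bigr)=(\gamma\circ\delta')_*\bigl((\delta'^*h_2)(\gamma'^*h_3)|\omega_1|\bigr)$. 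Dividing by $|\omega_4|$ gives the claim, and positivity lets Tonelli handle possibly infinite values.

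The main obstacle is the compatibility check in item \eqref{lem:m.base.cha:1}: verifying that the fiber of $\omega_1$'s Gelfand–Leray form over $Z_2$ equals the pulled-back fiber form of $\omega_3$ over $Z_4$, including for $\Q$-forms. I would check this pointwise on tangent spaces using the exact sequences $0\to T\bfZ_3/\bfZ_4\to T\bfZ_3\to T\bfZ_4\to 0$ and their base change, reducing to linear algebra. For $\Q$-forms I would pass to a common power and take positive roots of absolute values.
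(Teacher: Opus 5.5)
Your route is the paper's. The paper's proof says only that the smooth case is a straightforward computation and that the general case follows from it. Your fiberwise argument is a correct filling-in of that computation: the Gelfand--Leray form of $\omega_1$ relative to $\delta'$ is the base change of that of $\omega_3$ relative to $\delta$, and item (2) then follows from the projection formula.

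\textbf{The reduction step is wrong as written.} Generic smoothness means smoothness on a dense open subset of the \emph{source}. It does not give a dense open $\bfZ_4^\circ$ of the \emph{target} over which $\gamma$ and $\delta$ are smooth. Two examples where this fails:
\begin{itemize}
\item The projection $(x,y,z)\mapsto z$ from the surface $\{x^2=y^2z\}$ to $\bA^1$ is smooth on a dense open set. But its singular locus, the $z$-axis, maps onto $\bA^1$.
\item In characteristic $p$, with smooth source and target, take $(x,y)\mapsto x^p+xy^p$ from $\bA^2$ to $\bA^1$. Its differential is $y^p\,dx$, so it is smooth exactly on $\{y\neq 0\}$. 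The critical line $\{y=0\}$ maps onto $\bA^1$ via $x\mapsto x^p$.
\end{itemize}
In both cases every fiber contains a non-smooth point. So there is no open set over which to work, and ``the preimages are smooth'' has nothing to apply to. This failure of Sard-type behaviour is exactly the paper's setting: positive characteristic and singular varieties.

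\textbf{The fix is to shrink the sources instead.}
\begin{itemize}
\item Let $\bfU_2\subset\bfZ_2$ and $\bfU_3\subset\bfZ_3$ be the dense open loci where the variety is smooth and the map to $\bfZ_4$ is smooth. Their images then lie in $\bfZ_4^{sm}$, where $\omega_4$ is invertible.
\item Set $\bfU_1:=\bfU_2\times_{\bfZ_4}\bfU_3=(\gamma')^{-1}(\bfU_3)\cap(\delta')^{-1}(\bfU_2)$. This gives a Cartesian square of smooth varieties and smooth maps.
\item Because $\gamma'$ and $\delta'$ are themselves generically smooth, hence flat on a dense open set, $\bfU_1$ is dense in $\bfZ_1$. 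So all the complements are null for the relevant measures, and $\gamma^*$ of an almost-everywhere-defined function is again almost everywhere defined.
\end{itemize}
Your argument then goes through verbatim with $\delta^{-1}(y)$ replaced by $\delta^{-1}(y)\cap U_3\cong(\delta')^{-1}(z)\cap U_1$ for $y=\gamma(z)$.

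One consequence: the fiber integrals can now be $+\infty$, because the compact set $\delta^{-1}(y)\cap\supp h_3$ need not lie inside $U_3$. So drop the finiteness claim in item (1) and work in $[0,\infty]$ with nonnegative $h_i$, as you already do via Tonelli in item (2). This is consistent with the paper's remark that these Radon--Nikodym derivatives need not be locally finite.
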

\begin{proof}
    $     $
    \begin{enumerate}[{Case} 1.]
        \item The varieties and maps in the diagram are smooth.\\
        This is  a straightforward computation.
        \item General case.\\
        Follows from the previous case.
    \end{enumerate}
\end{proof}

\subsection{Proof of \Cref{thm:bnd4}}\label{sec:Pfbnd4}
\Cref{lem:m.base.cha}\eqref{lem:m.base.cha:1} gives us:
\begin{cor}\label{cor:A.B}
There exists $\lambda\in \R$ s.t. for any 
 $f\in C^\infty(Y)$ with  $p|_{\supp(f)}$ being proper, 
we have 
$$
\frac{\tau_*(\sigma^*(f) \cdot |\omega^0_\bfX|)}{\mu_G}
=\lambda
p^*\left(\frac{\pi_*(f \cdot |\omega_\bfY|)}{\mu_C}\right)
$$
\end{cor}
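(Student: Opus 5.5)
We apply \Cref{lem:m.base.cha}\eqref{lem:m.base.cha:1} to the Cartesian square \eqref{eq:crt2}, with $\bfZ_1=\bfX$, $\bfZ_2=\bfG$, $\bfZ_3=\bfY$, $\bfZ_4=\bfC$, $\gamma'=\sigma$, $\delta'=\tau$, $\gamma=p$ and $\delta=\pi$. For the forms we take $\omega_2=\omega_\bfG$, $\omega_3=\omega_\bfY$ and $\omega_4=\omega_\bfC$.

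The hypotheses are checked as follows. All maps in the square are generically smooth by \Cref{lem:gen.sm}. The form $\omega_\bfY$ is invertible on the smooth locus of $\bfY$ by \Cref{lem:invY}. The forms $\omega_\bfG$ and $\omega_\bfC$ are invertible everywhere, since $\bfG$ and $\bfC$ are smooth. The resulting form $\omega_1=\omega_\bfG\boxtimes_{\omega_\bfC}\omega_\bfY$ is exactly $\omega^0_\bfX$ by \Cref{not:omegaX0}. The properness hypothesis concerns $\delta|_{\supp f}=\pi|_{\supp f}$, so we read the condition ``$p|_{\supp(f)}$ proper'' in the statement as properness of $\pi$ on $\supp f$ (for $f\in C^\infty(Y)$ only $\pi$ makes sense).

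The lemma then gives, almost everywhere,
$$p^*\left(\frac{\pi_*(f\cdot|\omega_\bfY|)}{|\omega_\bfC|}\right)=\frac{\tau_*(\sigma^*(f)\cdot|\omega^0_\bfX|)}{|\omega_\bfG|}.$$
It remains to replace $|\omega_\bfC|$ by $\mu_C$ and $|\omega_\bfG|$ by $\mu_G$. Both $|\omega_\bfC|$ and $\mu_C$ are Haar measures on $C\cong F^\times\times F^{n-1}$, so $\mu_C=c_1|\omega_\bfC|$ for a positive constant $c_1$. Likewise $|\omega_\bfG|$ is a bi-invariant Haar measure on $G$, so $\mu_G=c_2|\omega_\bfG|$. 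Substituting these gives the statement with $\lambda=c_1/c_2$, which is independent of $f$.

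The main point needing care is the application of \Cref{lem:m.base.cha}. We must check that its identification of $\omega_1$ agrees with the $\boxtimes$ convention used in defining $\omega^0_\bfX$, in particular which factor plays the role of the Gelfand--Leray relative form. This only changes the result by a sign, which disappears after taking absolute values. Everything else is a matter of tracking normalizations of Haar measures.
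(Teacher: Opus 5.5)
Your proposal is correct and is exactly the paper's argument: apply \Cref{lem:m.base.cha}\eqref{lem:m.base.cha:1} to the Cartesian square defining $\bfX=\bfG\times_\bfC\bfY$, and take $\lambda=\frac{|\omega_\bfG|}{\mu_G}\cdot\frac{\mu_C}{|\omega_\bfC|}$, which is your $c_1/c_2$. Your reading of the hypothesis ``$p|_{\supp(f)}$ proper'' as $\pi|_{\supp(f)}$ proper is also right. The one hypothesis you leave unchecked, and which the paper does cite, is that the scheme-theoretic fiber product $\bfX$ is reduced (\Cref{cor:X.irr}). This is what makes the square a Cartesian square of varieties, as \Cref{lem:m.base.cha} requires.
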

\begin{proof}
    We take $\lambda:= \left(\frac{|\omega_\bfG|}{\mu_G}\right)\left(\frac{\mu_C}{|\omega_\bfC|}\right)$ and use   \Cref{lem:m.base.cha}\eqref{lem:m.base.cha:1}, and the fact that $\bfX$ and $\bfY$ are reduced and the maps in the following diagram are generically smooth.    
$$
\begin{tikzcd}
\bfX \arrow[r,"\sigma"] \arrow[d,"\tau"] & \bfG \arrow[d,"\pi"] \\
\bfY \arrow[r,"p"]           & \bfC
\end{tikzcd}
$$
    See  \Cref{lem:gen.sm}.
\end{proof}

\begin{proof}[Proof of \Cref{thm:bnd4}]
Let $f',h,k$ as in \Cref{thm:bnd3}. 
\footnote{We will also use $\omega_\bfX^0$ from \Cref{thm:bnd3}, but this is the fixed $\omega_\bfX^0$ that we defined in \Cref{not:omegaX0}. Formally speaking, if we just use the formulation of \Cref{thm:bnd3} and not its proof, we can not assume that the form there is the same $\omega_\bfX^0$. However, changing $\gamma$ appropriately, we can assume it WLOG.}
By \Cref{thm:bnd3}
  there exists $\gamma_0\in C^{\infty}(X)$ s.t.

  $$|\langle \chi_\rho, g \mu_G\rangle|\leq 
  \left  \langle \frac{\tau_*(|\omega^0_\bfX| \gamma_0 \sigma^*(f'))}{\mu_G} p^*\left( \frac{p_*(h\mu_G )}{\mu_C}\right)\cR^k, |g| \mu_G\right\rangle.$$

By the assumptions on $f'$ the map
$\tau|_{\supp(\sigma^*(f'))}$ is proper.
Let $\gamma_1\in C^\infty(G)$ be defined by $$\gamma_1(g):=\max_{x\in \supp(\sigma^*(f')) \cap \tau^{-1}(g)} \gamma_0(x).$$
We obtain:
\begin{align*}\langle \chi_\rho,g \mu_G \rangle &\leq\left\langle \gamma_1 
\frac{\tau_*(|\omega^0_\bfX| \sigma^*(f'))}{\mu_G} p^*\left( \frac{p_*(h\mu_G )}{\mu_C}\right)\cR^k, |g| \mu_G\right\rangle
\end{align*}
By \Cref{cor:A.B} there is $\lambda\in \R$ s.t.
$$
\frac{\tau_*(\sigma^*(f') \cdot |\omega^0_\bfX|)}{\mu_G}
=\lambda
p^*\left(\frac{\pi_*(f '\cdot |\omega_\bfY|)}{\mu_C}\right)
$$
Set $\gamma=\lambda \gamma_1 \frac{\mu_G}{|\omega_\bfG|}.$
We obtain:
\begin{align*}\langle \chi_\rho,g \mu_G \rangle &\leq
\left\langle \gamma_1 
\lambda
p^*\left(\frac{\pi_*(f' \cdot |\omega_\bfY|)}{\mu_C}\right) p^*\left( \frac{p_*(h\mu_G )}{\mu_C}\right)\cR^k, |g| \mu_G\right\rangle
\\&=
\left\langle \gamma_1 
\lambda
p^*\left(\frac{\pi_*(f' \cdot |\omega_\bfY|)}{\mu_C}\right) p^*\left( \frac{p_*(h|\omega_\bfG|\frac{\mu_G}{|\omega_\bfG|})}{\mu_C}\right)\cR^k, |g| \mu_G\right\rangle
\\&=
 \left  \langle \gamma p^*\left(\frac{\pi_*(|\omega_\bfY|f')}{\mu_C} \frac{p_*(|\omega_\bfG|h)}{\mu_C}\right)\cR^k, |g| \mu_G\right\rangle.
\end{align*}
as required.

\end{proof}

\subsection{Proof of \Cref{thm:bnd2}}\label{sec:Pfbnd2}

    Let $f',h,
\RamiA{\gamma}$ be as in \Cref{thm:bnd3}.
\Rami{Let $g=\gamma\sigma^*(f')$ and s}et $$e:=\left(
    \frac{\mu_G}{|\omega_\bfG|} \frac{|\omega_\bfC|}{\mu_C}\right)^2
    \RamiA{h \boxtimes_C g} . $$ 
    By \Cref{thm:bnd3} it is enough to show that  
    $$\zeta_*(|\omega_{\bf \Upsilon}|e)=\frac{\tau_*(|\omega^0_\bfX| \RamiA{g})}{\mu_G} p^*\left( \frac{p_*(h\mu_G )}{\mu_C}\right)\RamiA{\mu_G}$$
We have 
\begin{align}\label{eq:pf.ff.1}
    \frac{\tau_*(|\omega^0_\bfX| \RamiA{g})}{\mu_G} p^*\left( \frac{p_*(h\mu_G )}{\mu_C}\right) &=   \frac{\mu_G}{|\omega_\bfG|} \left(\frac{|\omega_\bfC|}{\mu_C}\right)^2
    \left(\frac{\tau_*(|\omega^0_\bfX|\RamiA{g})}{|\omega_\bfG|} \right)  p^*\left(\frac{p_*(|\omega_\bfG|h)}{|\omega_\bfC|}\right)    \end{align}
\RamiA{Consider the Cartesian square:}
    $$
\begin{tikzcd}
\bfG\times_\bfC \bfG \arrow[r,"\fp\fr^{\Dima{2}}_{\bfG}"] \arrow[d,"\fp\fr^{\Dima{1}}_{\bfG}"] & \bfG \arrow[d,"p"] \\
\bfG \arrow[r,"p"]           & \bfC
\end{tikzcd}
$$
\RamiA{By \Cref{lem:gen.sm} and 
\Cref{lem.GG.nice} all the objects in this diagram are varieties and all the maps are generically smooth. Thus by \Cref{lem:m.base.cha}\eqref{lem:m.base.cha:1} we have:}
\begin{equation}\label{eq:pf.ff.2}
p^*\left(\frac{p_*(|\omega_\bfG|h)}{|\omega_\bfC|}\right)=\frac{(\RamiA{\fp\fr^2_\bfG})_*(|\omega_{\bfG}\boxtimes_{\omega_\bfC} \omega_{\bfG}| (\RamiA{\fp\fr^1_\bfG})^*(h))}{|\omega_\bfG|}
\end{equation}

\RamiA{Consider the Cartesian square:}
    $$
\begin{tikzcd}
{\bf \Upsilon} \arrow[r,"p'"] \arrow[d,""] & \bfX \arrow[d,"\pi"] \\
\bfG\times_\bfC \bfG \arrow[r,"\fp\fr^2_{\bfG}"]            & \bfG 
\end{tikzcd}
$$
\RamiA{By \Cref{lem:gen.sm}, \Cref{lem.up.nice}. and 
\Cref{lem.GG.nice} all the objects in this diagram are varieties and all the maps are generically smooth. Thus by \Cref{lem:m.base.cha}\eqref{lem:m.base.cha:2} we have:}
\begin{multline}\label{eq:pf.ff.3}
    \RamiA{
    \frac{\tau_*(|\omega^0_\bfX|\RamiA{g})}{|\omega_\bfG|} \frac{(\RamiA{\fp\fr^2_\bfG})_*(|\omega_{\bfG}\boxtimes_{\omega_\bfC} \omega_{\bfG}| (\RamiA{\fp\fr^1_\bfG})^*(h))}{|\omega_\bfG|}
    }
    =\\
    \frac{\zeta_*(|
    (\omega_{\bfG}\boxtimes_{\omega_\bfC} \omega_{\bfG})
    \RamiA{\boxtimes_{\omega_\bfG} 
    \omega^0_\bfX}|
      (\RamiA{\fp\fr^1_\bfG})^*(h)
     \RamiA{\boxtimes_{G}
     g})     
     }{|\omega_\bfG|}. 
\end{multline}
\RamiA{
Finally:
\begin{align*}
     &\frac{\tau_*(|\omega^0_\bfX| \RamiA{g})}{\mu_G} p^*\left( \frac{p_*(h\mu_G )}{\mu_C}\right)\mu_G \overset{\text{\eqref{eq:pf.ff.1}}}{=}\\=&   \frac{\mu_G}{|\omega_\bfG|} \left(\frac{|\omega_\bfC|}{\mu_C}\right)^2
    \left(\frac{\tau_*(|\omega^0_\bfX|\RamiA{g})}{|\omega_\bfG|} \right)  p^*\left(\frac{p_*(|\omega_\bfG|h)}{|\omega_\bfC|}\right)\mu_G\overset{\text{\eqref{eq:pf.ff.2}}}{=}\\=&
    \frac{\mu_G}{|\omega_\bfG|} \left(\frac{|\omega_\bfC|}{\mu_C}\right)^2
    \left(\frac{\tau_*(|\omega^0_\bfX|\RamiA{g})}{|\omega_\bfG|} \right)\left(  \frac{(\RamiA{\fp\fr^2_\bfG})_*(|\omega_{\bfG}\boxtimes_{\omega_\bfC} \omega_{\bfG}| (\RamiA{\fp\fr^1_\bfG})^*(h))}{|\omega_\bfG|}\right)\mu_G\overset{\text{\eqref{eq:pf.ff.3}}}{=}\\=&
    \frac{\mu_G}{|\omega_\bfG|} \left(\frac{|\omega_\bfC|}{\mu_C}\right)^2
    \frac{\zeta_*(|    (\omega_{\bfG}\boxtimes_{\omega_\bfC} \omega_{\bfG})
    \RamiA{\boxtimes_{\omega_\bfG} 
    \omega^0_\bfX}|
      (\RamiA{\fp\fr^1_\bfG})^*(h)
     \RamiA{\boxtimes_{G}
     g})     
     }{|\omega_\bfG|}\mu_G=    
     \zeta_*(|\omega_{\bf \Upsilon}|e),
\end{align*}
}
    as required.

\section{Bounds on characters in terms of the Chevalley map  - Proof of \Cref{thm:bnd1}}\label{Sec: proof of Theorem E}

Let $f', \gamma$ \RamiC{and $k$} be as in \Cref{thm:bnd4} and let $h'$ be the function $h$ from \Cref{thm:bnd4}.\footnote{We will also use $\omega_\bfY$ from \Cref{thm:bnd3}. However it is just the form
$\omega_\bfY$ defined in 
\Cref{not:omegaX0}. The proof of 
\Cref{thm:bnd1} will work with 
any other form satisfying the assertion of \Cref{thm:bnd4}.}
Set $f'':= \frac{\pi_*(f'|\omega_\bfY|)}{\mu_C}$.
By \Cref{thm:Y.geo.int} there is a resolution of singularities $\delta:\tilde \bfY\to \bfY$ s.t. $\delta^*(\omega_\bfY)$ extends to a regular form $\omega_{\tilde \bfY}$ on $\tilde \bfY$. We obtain:
$f''= \frac{(\pi\circ \delta)_*(\delta^*(f')|\omega_{\tilde \bfY}|)}{\mu_C}$ \DimaB{(almost everywhere)}.
Note that by \Cref{lem:gen.sm} the map $\pi\circ \delta$ is generically smooth. Thus
by
\Cref{thm.IT} this implies that 
$f''\in L_{loc}^{1+2\eps}$ for some $\eps>0$. 

Let $M=\max(\gamma|_{U})$.  
Let 
$\mathcal R_1, \mathcal R_2:G \to \N \cup \{\infty\}$ the functions given by 
$$\mathcal R_1(x)=\max(1,-\min val(x_{ij}))$$ and
$$\mathcal R_2(x)=\max(1,val(det(x)),val(\Delta(x))).$$
Note that $$\cR\leq \cR_1\cR_2.$$
Let $\cR_3\in C^\infty(C^{rss})$ s.t. $p^*(\cR_3)=(\cR_2)^k$. 
Let $N:=\max(((\cR_1)^k)|_U) $.
By \cite[Theorem 1.3]{GH_lct_pos}
 $\cR_3\in L^{<\infty}_{loc}(C)$ and thus (by H{\"o}lder's inequality)  
$f''\cR_3\in L_{loc}^{1+\eps}$. 
Set $$f:=MN  \frac{|\omega_\bfG|}{\mu_G}  \cR_3 f''\cdot 1_{p(U)}$$ and $$h:= h'\cdot 1_{p^{-1}(p(U))}.$$ Using \Cref{thm:bnd4} we obtain:

\begin{align*}
|\langle \chi_\rho, g \mu_G\rangle|
&\leq
\left  \langle \gamma p^*\left(\frac{\pi_*(|\omega_\bfY|f')}{\mu_C} \frac{p_*(|\omega_\bfG|h')}{\mu_C}\right)\cR^k, |g| \mu_G\right\rangle
\\&=
\left  \langle \gamma p^*\left(f''\frac{p_*(|\omega_\bfG|h')}{\mu_C}\right)\cR^k, |g| \mu_G\right\rangle
\\&=
\left  \langle \gamma p^*\left(\frac{|\omega_\bfG|}{\mu_G} f''\frac{p_*(\mu_Gh')}{\mu_C}\right)\cR^k, |g| \mu_G\right\rangle
\\&\leq
\left  \langle p^*\left(M\frac{|\omega_\bfG|}{\mu_G}f''\frac{p_*(\mu_Gh')}{\mu_C}\right)\cR^k, |g| \mu_G\right\rangle
\\&\leq
\left  \langle p^*\left(M\frac{|\omega_\bfG|}{\mu_G}f''\frac{p_*(\mu_Gh')}{\mu_C}\right)\cR_1^k\cR_2^k, |g| \mu_G\right\rangle
\\&\leq
\left  \langle p^*\left(M\frac{|\omega_\bfG|}{\mu_G}f''\frac{p_*(\mu_Gh')}{\mu_C} \cR_3\right), \cR_1^k|g| \mu_G\right\rangle
\\&\leq
\left  \langle p^*\left(MN\frac{|\omega_\bfG|}{\mu_G}f''\frac{p_*(\mu_Gh')}{\mu_C} \cR_3\right), |g| \mu_G\right\rangle
\\&=
\left\langle p^*\left(f\frac{p_*(h \mu_G)}{\mu_C}\right)\mu_G, |g|\right\rangle,
\end{align*}
as required.

\section{Proof of the main results - Theorems \ref{thm:main} and \ref{thm:minor}}\label{Sec: proof of Theorem C and D}

\begin{prop}\label{prop:bnd.implies.L1}
    Let $Z$ be an $F$-analytic variety. 
    Let $\xi \in C^{-\infty}(Z)$.  Let $f\in L^1(Z)$. Assume that for any smooth measure $\rho\in C_c^\infty(Z,D_Z)$  we have 
    \begin{equation}\label{eq:bnd}
        \langle \xi,\rho\rangle \leq \langle f,|\rho|\rangle.
    \end{equation}
     Then there exists a function $g\in L^1(X)$ representing $\xi$.
\end{prop}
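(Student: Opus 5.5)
The plan is to show that $\xi$ extends to a continuous functional on $L^1$-type test objects and then apply Riesz/Radon--Nikodym duality. First, I would apply \eqref{eq:bnd} to $-\rho$ and, since $\xi$ may be complex valued, to $e^{i\theta}\rho$ with $\theta$ chosen so that $e^{i\theta}\langle\xi,\rho\rangle$ is real and nonnegative. This gives $|\langle \xi,\rho\rangle|\le \langle f,|\rho|\rangle$ for all $\rho\in C_c^\infty(Z,D_Z)$. Here $|\rho|$ is the absolute value measure, and the right-hand side is $\int |f|\,d|\rho|$ after replacing $f$ by $|f|$, which only enlarges the bound.

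Next, I would fix a nowhere-vanishing smooth measure $\mu$ on $Z$. Locally such a measure exists, and in general one can work on a compact open cover and then glue; on an $F$-analytic manifold the compact open subsets form a basis, so one may also simply pick $\mu$ globally by patching over a disjoint cover. Writing $\rho=\phi\mu$ with $\phi\in C_c^\infty(Z)$, the bound becomes
$$|\langle\xi,\phi\mu\rangle|\le \int |\phi|\,|f|\,d\mu .$$
Consider the finite measure $\nu:=|f|\mu$. The functional $\phi\mapsto\langle\xi,\phi\mu\rangle$ is bounded in the $L^1(\nu)$ seminorm on $C_c^\infty(Z)$. Locally constant compactly supported functions are dense in $L^1(\nu)$, because $\nu$ is a Radon measure on a totally disconnected, locally compact, second countable space. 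Hence the functional extends uniquely to a bounded functional $\Lambda$ on $L^1(\nu)$ with norm at most $1$. By $L^1$--$L^\infty$ duality, which applies since $\nu$ is $\sigma$-finite, there is $u\in L^\infty(\nu)$ with $\|u\|_\infty\le1$ and $\Lambda(\phi)=\int\phi\,u\,d\nu$.

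Finally, I would set $g:=u|f|$, defined to be $0$ where $f=0$. Then $|g|\le|f|$, so $g\in L^1(Z)$, and $\langle\xi,\phi\mu\rangle=\int \phi g\,d\mu$ for every $\phi$. This says exactly that $\xi$ is represented by $g$.

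The only point needing care is the density and extension step. In particular, one must check that the extension is well defined when $\phi$ is nonzero only where $f=0$. This is automatic: the bound then forces $\langle\xi,\phi\mu\rangle=0$, so the functional factors through $L^1(\nu)$. A second point is the existence of the global smooth nowhere-vanishing measure $\mu$. If that is awkward, I would argue on each member of a countable disjoint compact open cover, where the duality argument gives $g$ locally, and then sum the pieces. This sum is consistent, and it is integrable because $\sum\int|g|\le\int|f|$. I expect no real obstacle beyond this bookkeeping.
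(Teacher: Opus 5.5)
Your proof is correct, but the route differs from the paper's. The paper uses the hypothesis only in the weak form $|\langle\xi,\phi\mu\rangle|\le\|\phi\|_\infty\|f\|_{L^1}$. From this it gets continuity on $C_c(Z)$, so $\xi$ becomes a Radon measure via Riesz representation. It then transfers the inequality to Borel sets, $|\xi(A)|\le\int_A f\mu$, deduces absolute continuity with respect to $\mu$, and concludes by Radon--Nikodym.

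You instead use the full strength of the bound, namely continuity in the $L^1(|f|\mu)$ seminorm. You extend straight to $L^1(|f|\mu)$ and invoke $L^1$--$L^\infty$ duality. That duality is itself proved via Radon--Nikodym, so the analytic input is the same. Your packaging, however, skips the intermediate Radon-measure stage and the approximation argument needed to pass from test functions to Borel sets, and it yields the pointwise bound $|g|\le|f|$ at once.

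You also handle two points the paper leaves implicit. The first is the complex-valued case, through your phase rotation; this matters because the hypothesis is written without absolute values, while the application in the paper uses them. The second is the existence of a global nowhere-vanishing smooth measure.

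One small remark: density of $C_c^\infty(Z)$ in $L^1(\nu)$ is needed only for uniqueness of the extension. For existence of the representing $u$, Hahn--Banach already suffices.
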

\begin{proof}
    Choose an invertible smooth measure on $Z$ and identify $C_c^\infty(Z,D_Z)\cong C_c^\infty(Z)$ and the space of generalized functions with the space of distributions.
    \begin{enumerate}[Step 1.]
        \item $\xi$  (as a functional on $C_c^\infty(Z)$) can be continuously extended to $ C_c(Z)$ (and thus can be considered as a Radon measure on $Z$).\\
        This follows from the fact that $\xi$, as a functional on $\RamiC{C_c(Z)}$, is continuous w.r.t. the induced topology from   $C_c^\infty(Z)$, which follows from the inequality \eqref{eq:bnd}.
        \item For any Borel set  $\RamiC{A} \subset Z$ we have $\RamiC{|\xi(A)|}\leq \int_\Omega f \mu$.\\
        This follows from the inequality \eqref{eq:bnd}. 
        \item $\xi$ is an absolutely continuous measure w.r.t. the Lebesgue measure.\\
        Follows from the previous step.        
        \item End of the proof.\\              
        The assertion follows from the previous item and the Radon-Nikodym theorem.
    \end{enumerate}
\end{proof}
\begin{lem}\label{lem: duality of pull-push}
    Let $\gamma:Z_1\to Z_2$ be a morphism of $F$-analytic varieties. Let $\mu_i$ be nowhere vanishing \RamiC{smooth} measures on $Z_i$.    
    Assume that for any  real valued non-negative \RamiC{function}   $f\in C_c^\infty(Z_1)$ we have $\frac{\gamma_*(f\mu_1)}{\mu_2}\in L^{<\infty}(Z_2)$. Then, for any $\eps>0$ and any real valued non-negative $g\in L^{1+\eps}(Z_2)$ we have 
    $$
    \gamma^*(g)\in L^1_{loc}(Z_1).
    $$ 
\end{lem}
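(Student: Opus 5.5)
This is a duality argument: local integrability of $\gamma^*(g)$ is tested against compactly supported non-negative smooth functions on $Z_1$, and the pairing is moved over to $Z_2$ with $\gamma_*$, where H\"older's inequality applies. Since $g\ge 0$, we have $\gamma^*(g)\ge 0$. It therefore suffices to show that for every compact $K\subset Z_1$,
$$\int_K \gamma^*(g)\,\mu_1<\infty.$$

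First I choose a non-negative $f\in C_c^\infty(Z_1)$ with $f\ge 1_K$. This is possible because $Z_1$ is an $F$-analytic manifold: cover $K$ by finitely many compact open sets and take $f$ to be the indicator function of their union.

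Next I transfer the integral by the defining property of the pushforward, applied to non-negative measurable functions (Tonelli, or monotone approximation of $g$ by simple functions):
$$\int_{Z_1} \gamma^*(g)\, f\mu_1=\int_{Z_2} g\, \gamma_*(f\mu_1)=\int_{Z_2} g\,\frac{\gamma_*(f\mu_1)}{\mu_2}\,\mu_2 .$$
For the Radon--Nikodym derivative to make sense, $\gamma_*(f\mu_1)$ must be absolutely continuous with respect to $\mu_2$. This follows from the hypothesis, which I read as asserting that $\frac{\gamma_*(f\mu_1)}{\mu_2}$ exists and lies in $L^{<\infty}(Z_2)$. I also need $g$ to be defined almost everywhere in a way compatible with pullback. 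If $g$ is modified on a $\mu_2$-null set $N$, then $\gamma^*(g)$ changes only on $\gamma^{-1}(N)$, and $\int_{\gamma^{-1}(N)} f\mu_1=\gamma_*(f\mu_1)(N)=0$ by absolute continuity. Hence $\gamma^*(g)$ is well defined almost everywhere on $\operatorname{supp} f$, and therefore, covering $Z_1$ by such supports, almost everywhere on $Z_1$.

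Finally I apply H\"older's inequality with exponents $p=1+\eps$ and $p'=\frac{1+\eps}{\eps}<\infty$:
$$\int_{Z_2} g\,\frac{\gamma_*(f\mu_1)}{\mu_2}\,\mu_2\le \|g\|_{L^{1+\eps}(Z_2,\mu_2)}\left\|\frac{\gamma_*(f\mu_1)}{\mu_2}\right\|_{L^{p'}(Z_2,\mu_2)}<\infty,$$
where the second norm is finite by the hypothesis. Combining this with $f\ge 1_K$ gives $\int_K\gamma^*(g)\mu_1<\infty$, so $\gamma^*(g)\in L^1_{loc}(Z_1)$.

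The only delicate point is the measure-theoretic bookkeeping: the change of variables for non-negative measurable $g$, and the a.e.-well-definedness of $\gamma^*(g)$. Both reduce to the absolute continuity of $\gamma_*(f\mu_1)$ with respect to $\mu_2$, which is implicit in the hypothesis.
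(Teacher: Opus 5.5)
Your proposal is correct and is essentially the paper's argument: you test the local integrability of $\gamma^*(g)$ against non-negative $f\in C_c^\infty(Z_1)$, move the pairing to $Z_2$ via $\int_{Z_1} f\,\gamma^*(g)\,\mu_1=\int_{Z_2}\frac{\gamma_*(f\mu_1)}{\mu_2}\,g\,\mu_2$, and conclude with H\"older's inequality. Your remarks on absolute continuity and on $\gamma^*(g)$ being well defined almost everywhere fill in bookkeeping that the paper leaves implicit.
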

\begin{proof}
For an $F$-analytic variety $Z$, define $Mes_{\geq 0}(Z)$ to be the collection of real valued non-negative measurable functions.
If $Z$ is equipped with a nowhere vanishing \RamiC{smooth} measure $\mu$ we have a natural pairing 
$$B_{(Z,\mu)}:Mes_{\geq 0}(Z) \times Mes_{\geq 0}(Z) \to \mathbb{R}\cup \{\infty\}$$ given by integration:
$$B_{(Z,\mu)}(\phi,\psi)=\int_{Z}\phi\psi \mu$$
Notice that by H{\"o}lder's inequality, for any $\eps>0$, this pairing is finite whenever $\psi \in L^{<\infty}_{c}(Z)$ and $\phi \in L^{1+\eps}_{loc}(Z)$.

Furthermore, to show that $h\in Mes_{\geq 0}(Z)$ is in $L^1_{loc}(Z)$ it is enough to show that $B(\psi,h)<\infty$ for any real valued non-negative $\psi \in C_c^\infty(Z)$. 

\RamiC{The fact that} $\gamma^*(g)\in L^1_{loc}(Z_1)$ \RamiC{follows now from:}
$$\RamiC{\forall f\in C_c^\infty(Z_1) \text{ we have }}
B_{(Z_1,\mu_1)}(f,\gamma^{*}(g))=B_{(Z_2,\mu_2)}\left (\frac{\gamma_*(f\mu_1)}{\mu_2},\RamiC{g}\right).
$$
\end{proof}
\begin{proof}[Proof of \Cref{thm:main}]
    \Cref{thm:alm.an.frs}  and  \Cref{conj:2} imply that
    $p_*$ maps every $C_c^\infty$ measure to a  measure with  $L^{<\infty}$ density.
    By \Cref{lem: duality of pull-push} this implies that for any $\eps>0$ the operation $p^*$ maps $L^{1+\eps}(C)$ function to an $L^1_{loc}(G)$ function.

    Let $U\subset G$ \Dima{be an open compact subset,} and let 
 $\eps$,
    $f$, and
    $h$ be as in \Cref{thm:bnd1}.
    We get that 
    $\frac{p_*(h \mu_G)}{\mu_C}\in L^{<\infty}(C)$.
    Thus, by H{\"o}lder's inequality
    \begin{equation}\label{eq:bnd.before.pull}
    f\frac{p_*(h \mu_G)}{\mu_C}\in L^{1+\frac{\eps}{2}}(C).
    \end{equation}
    We obtain 
    $h':=p^*(f\frac{p_*(h \mu_G)}{\mu_C})\in L^{1}_{loc}(G).$ 
    By \Cref{thm:bnd1}, for any $g\in C^\infty(U)$ we have:
    $$|\langle \chi_\rho, g\mu_G\rangle|\leq  \left\langle h'\mu_G, |g|\right\rangle.$$ 
    So by \Cref{prop:bnd.implies.L1} above we obtain $(\chi_\rho)|_{U}\in L^1(U)$ and we are done.  
\end{proof}
\begin{proof}[Proof of \Cref{thm:minor}]
The proof is the 
same as the proof of \Cref{thm:main}  when we replace \Cref{thm:alm.an.frs} 
by \Cref{thm:uncond.an.frs} and  \Cref{conj:2}  by 
the assumption $\chara(F) >\frac n2$.    
\end{proof}

\begin{remark}\label{rem:intro.Omega1}
    Note that \RamiC{these proofs} also prove \RamiC{Theorems \ref{thm:intro.Omega} and  \ref{thm:intro.Omega.p}, which, using \Cref{prop:bnd.implies.L1} and \cite[Theorem A']{AGKS2_1},  implies \Cref{thm:Lie}}.
\end{remark}

\section{Alternative versions of \Cref{thm:main}}\label{sec:AltMain}
Denote:
\begin{itemize}
    \item $\ug$ - the Lie algebra of $\bfG$
    \item $\uc$ - the affine space of degree $n$ monic polynomials. 
    \item $p_0:\ug\to \uc$ - the Chevalley map.
    \item $\ug_i:=\gl_n^{\times i}
    :=\underbrace{\gl_n\times_{\uc} \dotsc\times_{\uc} \gl_n}_{i \text{ times}}$ considered as an algebraic variety over $\F_\ell$.
\end{itemize}
One can replace the assumption of \Cref{conj:2} in \Cref{thm:main} (and the versions of Theorems \ref{thm:intro.Omega} and \ref{thm:Lie}) with any of the following more precise conditions:
\begin{enumerate}
    \item \label{sec:alt.form:0} For any $i\in \N$, the variety $\ug_i$ admits a strong resolution of singularities.  
        \item\label{sec:alt.form:1} For any $i\in \N$, the defining ideal of $\ug_i$ inside $\ug^{\times i}$ has monomial principalization (see \cite[Definition 12.0.1]{AGKS2}).
    \item \label{sec:alt.form:2}
    \begin{enumerate}
    \item 
     The defining ideal of the nilpotent cone  inside $\gl_n$ has monomial principalization, and
    \item  For any $i\in \N$, the variety, $\ug_i$ has a resolution of singularities (not necessarily a strong one).
    \end{enumerate}
    \item \label{sec:alt.form:3} $\Upsilon$ is geometrically integrable.
    \item \label{sec:alt.form:5} $p$ is almost analytically FRS (see \cite[Definition 1.3.5(3)]{AGKS2}).
\end{enumerate}
Indeed,
\begin{itemize}
    \item The fact that one can replace \Cref{conj:2} with condition \eqref{sec:alt.form:0} follows from the actual formulation of \cite[Theorem D]{AGKS2}.
    \item The fact that one can replace \Cref{conj:2} with any of the conditions (\ref{sec:alt.form:1},\ref{sec:alt.form:2}) follows from the alternative formulations of  \cite[Theorem D]{AGKS2} given in \cite[\S 12]{AGKS2}.
    \item The fact that one can replace \Cref{conj:2} with condition \eqref{sec:alt.form:3} follows from Theorems \ref{thm:bnd2} and \ref{thm.IT}.
        \item The fact that one can replace \Cref{conj:2} with condition \eqref{sec:alt.form:5} follows from the proofs of \Cref{thm:main} and \Cref{thm:minor}.
\end{itemize}
\begin{remark}
$ $
    \begin{itemize}
        \item Note that unlike conditions (\ref{sec:alt.form:0}-\ref{sec:alt.form:2}), condition \eqref{sec:alt.form:3} is not a  special case of \Cref{conj:2} (or its version). However, given an explicit resolution of singularities of $\Upsilon$, it should be easy to check whether condition \eqref{sec:alt.form:3} holds.
        \item In conditions (\ref{sec:alt.form:0}-\ref{sec:alt.form:2}) one can replace the requirement for any $i$, to the value $i=\RamiA{2}^{n^2+\RamiA{3}}$. This follows from \Cref{prop:concrete lower bound} and from the proof of \Cref{thm:main}.
         Indeed, if in the proof of  \Cref{thm:bnd1} we use \Cref{prop:concrete lower bound} instead of \Cref{thm.IT} then we get that in \Cref{thm:bnd1} one can take  
         $\eps=\left(1+((n-1)n +n^2-n)2^{n^2-n}\right)^{-1}$.
         Thus 
in the proof of \Cref{thm:main} it is enough to require that $p_*$ maps any $C_c^\infty$ measure to a measure with  density in $L^N(C)$, where $$N:=\frac{1}{1-\frac{1}{1+\frac{\eps}{2}}}=\frac{2}{\eps}+1<\RamiA{2}^{n^2+\RamiA{3}},$$ in order to get \eqref{eq:bnd.before.pull}.
Now, we need to use \Cref{lem: duality of pull-push}
 for $\tfrac{\eps}{2}$. It is easy to see that in this case we can replace (in \Cref{lem: duality of pull-push}) $L^{<\infty}$ with $L^N$. 
So, we need to show that our weaker assumption still implies the assertion of  \Cref{thm:alm.an.frs} with $L^{<\infty}$ replaced by $L^N$.
This follows from \cite[Theorem D]{AGKS2} and \cite[\S 12]{AGKS2}.
    \end{itemize}
\end{remark}
\appendix
\section{Integrability of pushforward measures in positive characteristic}\label{app:IY}
\begin{center}
\textit{by Itay Glazer and Yotam I.~Hendel}
\end{center}
Let $F$ be a non-Archimedean local field of arbitrary characteristic,
with ring of integers $\mathcal{O}_{F}$ and absolute value $|\cdot|_{F}$,
and let $X$ be an $F$-analytic manifold of dimension $n$. Let $(U_{\alpha}\subset X,\psi_{\alpha}:U_{\alpha}\to F^{n})_{\alpha\in\mathcal{I}}$
be an atlas, and fix a Haar measure $\mu_{F^{n}}$ on $F^{n}$, with
$\mu_{F^{n}}(\mathcal{O}_{F}^{n})=1$. We consider the following spaces
(whose definition is independent of the choice of atlas). 
\begin{enumerate}
\item Let $C^{\infty}(X)$ be the space of smooth (i.e.~locally constant)
complex-valued functions on $X$, and let $C_{c}^{\infty}(X)$ be the subspace of
smooth compactly supported functions. 
\item Let $\mathcal{M}^{\infty}(X)$ be the space of smooth measures on
$X$, i.e.~measures such that each $(\psi_{\alpha})_{*}(\mu|_{U_{\alpha}})$
has a locally constant density with respect to the Haar measure on
$F^{n}$. Let $\mathcal{M}_{c}^{\infty}(X)$ be the subspace of compactly
supported smooth measures. 
\item For $1\leq q\leq\infty$, let $\mathcal{M}_{c,q}(X)$ be the space of
compactly supported Radon measures $\mu$ on $X$ such that for every
$\alpha\in\mathcal{I}$ the measure $(\psi_{\alpha})_{*}(\mu|_{U_{\alpha}})$
is absolutely continuous, and with density in $L^{q}(F^{n})$. 
\end{enumerate}
Given $\mu\in\mathcal{M}_{c,1}(X)$, we define the \emph{integrability exponent}
\[
\epsilon_{\star}(\mu):=\sup\left\{ \epsilon\geq0:\mu\in\mathcal{M}_{c,1+\epsilon}(X)\right\} .
\]

\begin{defn}
Let $\psi:X\to Y$ be an $F$-analytic map between $F$-analytic manifolds
$X,Y$. We say that $\psi$ is \emph{generically submersive} if there
exists an open dense subset $U$ in $X$ such that the differential of $\psi$ at each $x\in U$ is surjective. 
\end{defn}

If $\psi$ is generically submersive, then $\psi_{*}\mu\in\mathcal{M}_{c,1}(Y)$
whenever $\mu\in\mathcal{M}_{c,1}(X)$. In particular, it makes sense
to consider $\epsilon_{\star}(\psi_{*}\mu)$. This leads us to define
the following invariant. 
\begin{defn}
\label{def:epsilon}Let $\psi:X\to Y$ be a generically submersive
$F$-analytic map between $F$-analytic varieties. For each $x_{0}\in X$,
we define the integrability exponent of $\psi$ at $x_{0}$ by 
\begin{equation}
\epsilon_{\star}(\psi;x_{0}):=\sup_{U\ni x_{0}}\inf_{\mu\in\mathcal{M}_{c}^{\infty}(U)}\epsilon_{\star}(\psi_{*}\mu),\label{eq:defeps-phi}
\end{equation}
where the supremum is taken over all open neighborhoods $U$ of $x_{0}$.
We also set 
\begin{equation}
\epsilon_{\star}(\psi):=\inf_{\mu\in\mathcal{M}_{c}^{\infty}(X)}\epsilon_{\star}(\psi_{*}\mu)=\inf_{x\in X}\epsilon_{\star}(\psi;x).\label{eq:def-eps-star}
\end{equation}
\end{defn}

The invariant $\epsilon_{\star}(\psi;x_{0})$ was introduced and explored in \cite{GH21,GHS} in the characteristic zero case\footnote{These works also treat the integrability exponent over Archimedean
local fields.}, where it was shown that $\epsilon_{\star}(\psi;x_{0})$ is a positive
number that can be bounded from below effectively. This was used in
\cite{GGH} to study integrability of Harish-Chandra characters of
representations of reductive groups over local fields of characteristic
zero.

The aim of this appendix
is to establish a similar bound on $\epsilon_{\star}(\psi;x_{0})$
over local fields of positive characteristic. We start our discussion
by noting that when $\mathrm{char}(F)\neq0$, non-\RamiA{constant} analytic maps
$f:F^{n}\rightarrow F$ need not be generically submersive. 
\begin{example}
\label{exa:pathology}Let $p$ be a prime and let $f(x)=x^{p}$. Then
$d_{x}f=px^{p-1}=0$ for every $x\in\mathbb{F}_{p}[[t]]$, so $f:\mathbb{F}_{p}[[t]]\rightarrow\mathbb{F}_{p}[[t]]$
is not generically submersive. Moreover, if we take $\mu=\mu_{\mathbb{F}_{p}[[t]]}$,
then $f_{*}\mu_{\mathbb{F}_{p}[[t]]}$ is supported on the set of
$p$-th powers $\left\{ \sum_{i=0}^{\infty}a_{i}t^{pi}:a_{i}\in\mathbb{F}_{p}\right\} \subseteq\mathbb{F}_{p}[[t]]$,
and thus $f_{*}\mu_{\mathbb{F}_{p}[[t]]}$ is not absolutely continuous
with respect to $\mu_{\mathbb{F}_{p}[[t]]}$.
\end{example}

We recall the following notion from \cite{GH_lct_pos}. 
\begin{defn}[{\cite[Definition 1.1]{GH_lct_pos}}]
\label{def:lct}Let $X$ be an $F$-analytic manifold, let $x_{0}\in X$
and let $f_{1},\ldots,f_{r}:X\to F$ be $F$-analytic functions generating
a non-zero ideal $J$ (in the ring of $F$-analytic functions on $X$).
We define the $F$\textit{-analytic log-canonical threshold} of $J$
at $x_{0}$ by 
\[
\lct_{F}(J;x_{0}):=\sup\left\{ s>0:\exists U\ni x_{0}\text{\,\,\,s.t.\,\,\,}\forall\mu\in\mathcal{M}_{c}^{\infty}(U),\int_{X}\min\limits_{1\le i\le r}|f_{i}(x)|_{F}^{-s}\mu(x)<\infty\right\} ,
\]
where $U$ in the definition above is an open neighborhood of $x_{0}$.
\end{defn}

\begin{defn}
\label{def:Jacobian ideal}Given a generically submersive map $\psi:X\to Y$
between $F$-analytic manifolds, we write $\mathcal{J}_{\psi}$ for
the \emph{Jacobian ideal sheaf} of $\psi$. Locally, if $X\subseteq F^{n}$
and $Y\subseteq F^{m}$ are open subsets, $\mathcal{J}_{\psi}$ is
the ideal in the algebra of analytic functions on $X$ generated by
the $m\times m$-minors of $d_{x}\psi$. This construction is invariant
under analytic coordinate changes and defines an ideal sheaf
on $X$.
\end{defn}

The following are the main results of this appendix.
\begin{thm}
\label{thm:main theorem}Let $\psi:X\rightarrow Y$ be an $F$-analytic
map between $F$-analytic manifolds. Suppose that $\psi$ is generically
submersive. Then for every $x_{0}\in X$, there exists $\epsilon_{x_{0}}>0$
such that 
\[
\epsilon_{\star}(\psi;x_{0})\ge\lct_{F}(\mathcal{J}_{\psi};x_{0})\geq\epsilon_{x_{0}}.
\]
\end{thm}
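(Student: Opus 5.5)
The statement has two inequalities, and I would prove them separately: first $\epsilon_{\star}(\psi;x_0)\ge \lct_F(\mathcal J_\psi;x_0)$, which is an analytic change-of-variables argument, and then $\lct_F(\mathcal J_\psi;x_0)>0$, which is a positivity statement about a nonzero analytic ideal.

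\emph{The first inequality.} Everything is local, so I take $X\subseteq F^n$ and $Y\subseteq F^m$ open, $x_0\in X$, and $\mu=\phi\,\mu_{F^n}$ with $\phi\ge 0$ locally constant and supported in a small compact open ball $U\ni x_0$. Let $J_I$ denote the $m\times m$ minors of $d_x\psi$, indexed by subsets $I$ of the coordinates. Since $\psi$ is generically submersive, the set $\{x\in U:\max_I|J_I(x)|>0\}$ is open dense, and so has full measure, because a nonzero analytic function vanishes on a null set. I split it into the Borel pieces $\Omega_I$ on which $|J_I|=\max_{I'}|J_{I'}|\neq 0$. On $\Omega_I$, let $x'$ be the coordinates complementary to $I$. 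Then $\Phi_I=(\psi,x')$ is a local analytic isomorphism with Jacobian $J_I$, and $\mu_{F^n}=|J_I|^{-1}\,dy\,dx'$ in these coordinates. Hence the density of $\psi_*(\phi\mu|_{\Omega_I})$ is
\[
g_I(y)=\int_{\psi^{-1}(y)\cap\Omega_I}\phi\,|J_I|^{-1}\,d\sigma_I ,
\]
where $\sigma_I$ is the measure $dx'$ pulled back to the fiber.

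I then apply H\"older to $g_I$:
\[
g_I(y)^{1+\epsilon}\le \Big(\int \phi\,|J_I|^{-1-\epsilon}\,d\sigma_I\Big)\Big(\int\phi\,d\sigma_I\Big)^{\epsilon}.
\]
Provided $V:=\sup_y\int_{\psi^{-1}(y)\cap\Omega_I}\phi\,d\sigma_I<\infty$, integrating in $y$ and undoing the change of variables gives
\[
\int g_I^{1+\epsilon}\,dy\le V^{\epsilon}\int\phi\,|J_I|^{-\epsilon}\,\mu_{F^n}=V^{\epsilon}\int \phi\,\min_{I'}|J_{I'}|^{-\epsilon}\,\mu_{F^n},
\]
which is finite for all $\epsilon<\lct_F(\mathcal J_\psi;x_0)$ once $U$ is small enough. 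Summing over the finitely many $I$ gives $\psi_*\mu\in\mathcal M_{c,1+\epsilon}$, which is the first inequality.

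\emph{The main obstacle: the uniform bound on $V$.} The map $\Phi_I$ is only locally injective, so a fiber could a priori project onto the $x'$-coordinates with unbounded multiplicity. What I would try first is to exploit that $\Phi_I$ is a polynomial-like analytic map on a compact ball. After shrinking $U$ and making a generic linear change of coordinates, I would apply Weierstrass preparation (valid over any complete non-Archimedean field) to $\Phi_I$. This should make $\Phi_I|_U$ finite of bounded degree $d$ onto its image, so every point has at most $d$ preimages. Then $V\le d\cdot\|\phi\|_\infty\cdot\mathrm{vol}(\mathcal O_F^{n-m})$ up to scaling. If this fails, the fallback is a finer stratification: on small enough balls $\Phi_I$ is injective on each piece, and one needs the number of pieces meeting a fiber to be uniformly bounded, which again reduces to a Weierstrass-type degree bound.

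\emph{Positivity of the lct.} Since $\psi$ is generically submersive, some minor $f=J_I$ is not identically zero near $x_0$. Because $\min_i|f_i|^{-s}\le|f|^{-s}$, it suffices to find $s>0$ with $\int_U|f|^{-s}\,\mu_{F^n}<\infty$. After a linear change of coordinates, $f$ is $x_n$-regular of some order $d$ at $x_0$. Weierstrass preparation then writes $f=u\cdot P$, with $u$ a unit and $P$ a monic polynomial in $x_n$ of degree $d$ whose coefficients are analytic in the remaining variables. Over an algebraic closure, $P=\prod_j(x_n-a_j)$, and $|x_n-a_j|\ge \mathrm{dist}(x_n,a_j)$. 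So $\int_{\mathcal O_F}|P|^{-s}\,dx_n$ is bounded uniformly in the other variables whenever $s<1/d$, by the one-variable estimate $\int|x-a|^{-ds}\,dx<\infty$ for $ds<1$, applied after using H\"older across the factors. Integrating over the remaining variables then gives $\lct_F(\mathcal J_\psi;x_0)\ge 1/d=:\epsilon_{x_0}>0$.
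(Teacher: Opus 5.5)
\paragraph{The gap: bounding $V$.} Your argument for the first inequality follows the paper's route. You split according to which maximal minor $M_I$ dominates, pass to $\Phi_I=(\psi,x_J)$, and change variables. Your single H\"older step on the fibres plays the role of the paper's Jensen inequality plus fibre count. The step you yourself flag, however, is a genuine gap, and the fix you propose does not work.

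A finite bound on $V$ amounts to a uniform bound $L$ on $\#\bigl(\Phi_I^{-1}(z)\cap\Omega_I\bigr)$. Weierstrass preparation applied to $\Phi_I$ can only give such a bound when $\Phi_I$ is finite (quasi-finite) at $x_0$, and this need not hold. Two examples:
\begin{itemize}
\item $\psi(x_1,x_2)=(x_1,x_1x_2)$ is generically submersive with Jacobian $x_1$. Here $\Phi_I=\psi$, and it contracts the whole line $\{x_1=0\}$ through $x_0=0$ to a point. No shrinking of $U$ and no linear change of coordinates makes it finite, yet the desired bound holds ($L=1$ on $\{x_1\neq0\}$).
\item For $(x_1,x_2,x_3)\mapsto(x_1,x_1x_2)$, the fibre through $0$ has dimension $2>n-m$. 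So no choice of complementary coordinates $x'$, even generic linear ones, makes $\Phi_I$ quasi-finite at $0$.
\end{itemize}

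What is actually needed is a bound on the number of \emph{isolated} points in the fibres of $\Phi_I$ that is uniform over all fibres, including degenerate ones. (Points of $\Omega_I$ are isolated in their fibres because $M_I\neq0$ there.) Your ``fallback'' only restates this requirement. The paper obtains the bound from Lipshitz's theorem \cite[Theorem 1]{Lip84}. It first rescales $x\mapsto\varpi_F^k x$ so that the components of $\psi$, and hence of $\Phi_I$, are strictly convergent on $\mathcal O_F^n$. This is a genuinely deeper input than a Weierstrass preparation, and your proof of the first inequality needs it (or an equivalent) to close.

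\paragraph{Positivity of the lct.} Your proof of $\lct_F(\mathcal J_\psi;x_0)>0$ is correct and more elementary than the paper's, which simply cites \cite[Theorem 1.2]{GH_lct_pos}. You reduce to a single nonzero minor $f$ and apply Weierstrass preparation. This is valid for convergent power series over any complete valued field, and a linear change of variables making $f$ regular in $x_n$ exists because $F$ is infinite. The argument even yields the explicit bound $\lct_F(\mathcal J_\psi;x_0)\ge 1/\operatorname{ord}_{x_0}(f)$.

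The root estimate should be phrased as follows. For each root $a_j\in\bar F$, choose $b_j\in F$ nearest to $a_j$; this exists since $a_j$ lies in a locally compact finite extension of $F$. The ultrametric inequality then gives $|x_n-a_j|\ge|x_n-b_j|$ for all $x_n\in F$. This makes the one-variable integrals uniformly bounded, as you need.
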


Given a generically smooth morphism $\varphi:X\to Y$ of smooth algebraic
$F$-varieties, we get an $F$-analytic map $\varphi_{F}:X(F)\to Y(F)$,
which is generically submersive. In this setting, we have a uniform
lower bound on $\epsilon_{\star}(\varphi_{F};x_{0})$. 
\begin{thm}
\label{thm:main theorem for alg maps}Let $\varphi:X\rightarrow Y$
be a generically smooth morphism between smooth algebraic $F$-varieties.
Then there exists $\epsilon>0$ depending 
only on the complexity class\footnote{For a precise definition of complexity, we refer to \cite[Definition 7.7]{GH19}.}
 of $\varphi:X\rightarrow Y$ such that
\[
\epsilon_{\star}(\varphi_{F})>\epsilon.
\]
\end{thm}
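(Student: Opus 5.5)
The plan is to deduce \Cref{thm:main theorem for alg maps} from \Cref{thm:main theorem}, plus a uniform lower bound on $\lct_F(\mathcal{J}_{\varphi_F};x_0)$ over all $x_0\in X(F)$ that depends only on the complexity of $\varphi$.

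\textbf{Step 1: reduce to an lct bound for the Jacobian ideal.} First, $\varphi$ is generically smooth and $X$ is smooth, so the non-smooth locus of $\varphi$ is a proper Zariski closed subset. Its $F$-points are nowhere dense in the analytic topology of $X(F)$, so $\varphi_F$ is generically submersive. \Cref{thm:main theorem} then gives $\epsilon_\star(\varphi_F;x_0)\ge \lct_F(\mathcal{J}_{\varphi_F};x_0)$ for every $x_0$. By \eqref{eq:def-eps-star}, it therefore suffices to find $\epsilon>0$, depending only on the complexity class of $\varphi$, with $\lct_F(\mathcal{J}_{\varphi_F};x_0)>\epsilon$ for all $x_0\in X(F)$. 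This is the point where algebraicity replaces the pointwise, non-uniform constant $\epsilon_{x_0}$ of \Cref{thm:main theorem}.

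\textbf{Step 2: pass to an algebraic ideal on affine space.} Cover $X$ and $Y$ by finitely many affine charts, with the number and complexity of charts controlled by the complexity of $\varphi$. On each chart, $\mathcal{J}_\varphi$ is generated by the $m\times m$ minors of the Jacobian matrix, expressed in terms of étale coordinates or a presentation of $X$ as a complete intersection in $\mathbb{A}^N$. These minors are polynomials whose degrees and number are bounded in terms of the complexity, and they do not all vanish identically because $\varphi$ is generically smooth. Passing from local coordinates on the manifold $X(F)$ to ambient coordinates changes the measure only by an invertible smooth factor, so it does not affect the lct. The problem is thus reduced to a uniform lower bound on the $F$-analytic lct of a nonzero ideal generated by polynomials $g_1,\dots,g_r$ of bounded degree, restricted to a smooth subvariety of bounded complexity.

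\textbf{Step 3: bound the lct of the reduced ideal.} I will use $\min_i|g_i|^{-s}\le |g_{i_0}|^{-s}$ for any single generator $g_{i_0}$ that is not identically zero on the relevant component. This reduces the problem to bounding the lct of one nonzero polynomial of bounded degree $d$. In positive characteristic resolution is unavailable, so I would invoke the results of \cite{GH_lct_pos}. These give a lower bound on $\lct_F$ of a nonzero polynomial in terms of its degree and number of variables, of the shape $\lct\ge 1/d$ up to dimensional factors, and the bound is uniform in $F$ and in the point. The expected route there is elementary: a Fubini and induction argument on a one-variable polynomial of degree $\le d$, using a generic linear change of coordinates so the polynomial becomes monic in one variable.

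The main obstacle is the restriction to the subvariety $X$ in Step 3. The bound of \cite{GH_lct_pos} is naturally stated on affine space, so I would first try to avoid restriction altogether. The plan is to use a finite étale-type projection of each affine chart of $X$ onto an open subset of $\mathbb{A}^n$, with degree bounded by complexity, and push the integrand forward. Under this projection the Jacobian factor is an invertible smooth function locally, so integrability of $|g|^{-s}$ on $X(F)$ follows from integrability of the norm of $g$ on $F^n$. This norm is again a polynomial of bounded degree, and it is not identically zero because $g$ is not.
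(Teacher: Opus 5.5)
Your proposal is correct, and its skeleton is the paper's proof. Step 1 is exactly what the paper does: $\varphi_F$ is generically submersive, so \Cref{thm:main theorem} gives $\epsilon_\star(\varphi_F;x_0)\ge\lct_F(\mathcal{J}_{\varphi_F};x_0)$. It then remains to bound the right side from below, uniformly in $x_0\in X(F)$ and depending only on the complexity of $\varphi$. Shrinking $\epsilon$ afterwards gives the strict inequality.

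The difference lies in how that uniform bound is obtained. The paper gets it in one line by quoting \cite[Theorem 1.3]{GH_lct_pos}. Your Steps 2--3 reconstruct it instead:
\begin{itemize}
\item replace $\mathcal{J}_\varphi$ by one minor $g$ that does not vanish identically on the component through $x_0$;
\item push $g$ down to $F^n$ by a norm;
\item apply the single-polynomial bound on affine space. Your monic-in-one-variable Fubini argument in fact gives this as $\lct\ge 1/\deg$ in any characteristic.
\end{itemize}
Your route buys an explicit $\epsilon$ in terms of degrees, in the spirit of \Cref{prop:concrete lower bound}, where $d_1^{m_1}$ plays the role of $\deg\bar X$. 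The paper's route buys brevity.

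To make your Step 3 airtight, two points need correcting or stating.

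\textbf{The projection.} You do not want a ``finite {\'e}tale'' map onto an open set. You need a linear $\pi$ that is finite on all of $\bar X$, which generic linear projections are since $F$ is infinite, and {\'e}tale merely at the given $x_0$.

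\textbf{The key inequality.} Finiteness is what puts $N(g)$ in $F[y_1,\dots,y_n]$, and it also gives the inequality you left implicit. The function $g$ satisfies its characteristic polynomial $T^D+c_1T^{D-1}+\dots+c_D$ with $c_i\in F[y]$ and $c_D=\pm N(g)$. Hence on a compact neighborhood of $x_0$ you get $|N(g)(\pi(x))|\le C|g(x)|$, that is $|g|^{-s}\le C^s|N(g)\circ\pi|^{-s}$. Since $\pi$ is a local diffeomorphism near $x_0$, integrability transfers from $|N(g)|^{-s}$ to $|g|^{-s}$.

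Uniformity then requires no control on how many projections you use. For every such $\pi$ one has $\deg N(g)\le\deg g\cdot\deg\bar X$, and both factors are bounded by the complexity.
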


The following proposition gives a concrete lower bound on $\epsilon_{\star}(\varphi_{F})$ using the data defining $\varphi$.
\begin{prop}
\label{prop:concrete lower bound}
Let $X,Y$ and $\varphi$ be as in  Theorem \ref{thm:main theorem for alg maps}. 
Suppose that:
\begin{enumerate}
\item $X' \subseteq \mathbb{A}_{F}^{n_{1}+m_{1}}$ is a closed (possibly singular) subvariety 
of dimension $n_1$ cut by polynomials $g_{1}=\ldots=g_{r_{1}}=0$ of degree at most $ d_{1}$, and $X \subseteq X'$ an open affine subvariety.
\item $Y \subseteq \mathbb{A}_{F}^{n_{2}+m_{2}}$ is a closed subvariety, admitting an \'etale map $\pi:Y\rightarrow\mathbb{A}_{F}^{n_{2}}$ where
$\pi_{1},\ldots,\pi_{n_{2}}$ are polynomials of degree at most $ d_{2}$
(locally it is the case, since $Y$ is smooth). 
\item We have $\varphi=\Phi|_{X}$, 
where  $\Phi:\mathbb{A}_{F}^{n_{1}+m_{1}}\rightarrow \mathbb{A}_{F}^{n_{2}+m_{2}}$ is a polynomial map of degree $d$.
\end{enumerate}
Then: 
\[
\epsilon_{\star}(\varphi_{F})\geq\frac{1}{\left((d\cdot d_{2}-1)\cdot n_{2}+(d_{1}-1)m_{1}\right)\cdot d_{1}^{m_{1}}}.
\]
\end{prop}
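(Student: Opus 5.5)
The plan is to combine Theorem~\ref{thm:main theorem}, which gives $\epsilon_{\star}(\varphi_F;x_0)\ge \lct_F(\mathcal{J}_{\varphi_F};x_0)$ at every point, with an explicit lower bound on this log-canonical threshold. Since $\epsilon_{\star}(\varphi_F)=\inf_{x_0}\epsilon_{\star}(\varphi_F;x_0)$, it suffices to show, uniformly in $x_0\in X(F)$,
\[
\lct_F(\mathcal{J}_{\varphi_F};x_0)\ \ge\ \frac{1}{e\cdot d_1^{m_1}},\qquad e:=(d d_2-1)n_2+(d_1-1)m_1 .
\]
I would get this bound in three steps: find a polynomial $h$ in the Jacobian ideal, bound its vanishing order on $X$ at $x_0$, and prove an order-of-vanishing bound for $\lct_F$.

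\emph{Step 1: an explicit element of $\mathcal{J}_{\varphi_F}$ near $x_0$.} Since $X$ is smooth of dimension $n_1$ at $x_0$, I can split the coordinates of $\mathbb{A}^{n_1+m_1}$ as $(u,v)$ with $u\in\mathbb{A}^{n_1}$ and $v\in\mathbb{A}^{m_1}$. I can also choose $g_{i_1},\dots,g_{i_{m_1}}$ so that the minor $D:=\det(\partial g_{i_k}/\partial v_l)$ does not vanish at $x_0$; then $u$ is an analytic chart near $x_0$. By the implicit function theorem,
\[
\frac{\partial v}{\partial u}=-\Bigl(\frac{\partial g}{\partial v}\Bigr)^{-1}\frac{\partial g}{\partial u}.
\]
Hence the differential of $\pi\circ\varphi$ in the chart $u$ is $A\bigl(\mathrm{Id};\,-D^{-1}\mathrm{adj}(\partial_v g)\,\partial_u g\bigr)$, where $A=d(\pi\circ\Phi)$ has entries of degree at most $dd_2-1$. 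Because $\pi$ is étale, $\mathcal{J}_{\varphi_F}$ near $x_0$ is generated by the $n_2\times n_2$ minors of this matrix. Multiplying a minor by a suitable power of the unit $D$ clears the denominators. I will pick the minor and arrange the expansion so that the resulting polynomial $h$ has degree at most $e$; this degree count is the step that needs care. The polynomial $h$ is not identically zero on $X$, since $\varphi$ is generically smooth and $X$ is irreducible near $x_0$ (after shrinking). So $(h|_X)\subseteq\mathcal{J}_{\varphi_F}$ near $x_0$, and therefore $\lct_F(\mathcal{J}_{\varphi_F};x_0)\ge\lct_F(h|_X;x_0)$.

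\emph{Step 2: bounding the vanishing order.} I will show that $\mathrm{ord}_{x_0}(h|_X)\le e\,d_1^{m_1}$. To do this, cut $X$ by a generic linear subspace through $x_0$ of complementary dimension plus one. This gives a curve $\Gamma\ni x_0$, smooth at $x_0$ and transversal to the $u$-directions, of degree at most $d_1^{m_1}$ by Bézout. By genericity $h|_\Gamma\not\equiv 0$. Bézout then bounds the intersection multiplicity of $\{h=0\}$ with $\Gamma$ at $x_0$ by $e\,d_1^{m_1}$. Since this multiplicity is at least the order of $h|_X$ at $x_0$, the bound follows.

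\emph{Step 3: $\lct_F(f;x_0)\ge 1/\mathrm{ord}_{x_0}f$ for analytic $f$ in arbitrary characteristic.} I expect this step, together with the genericity claims of Step 2 in positive characteristic, to be the main obstacle. Set $k=\mathrm{ord}_{x_0}f$. After a linear change of coordinates, $f$ is regular of order $k$ in one variable $t$, and Weierstrass preparation over $F$ writes $f=\text{unit}\cdot P$, where $P(t)=\prod_{i=1}^k(t-a_i)$ with roots $a_i$ in a finite extension $E/F$. Using the extended absolute value, for fixed remaining coordinates Hölder gives
\[
\int|P(t)|^{-s}\,dt\le\prod_i\Bigl(\int|t-a_i|_E^{-ks}\,dt\Bigr)^{1/k}.
\]
Each factor is finite with a bound uniform in the parameters when $ks<1$, since $|t-a_i|_E\ge$ the distance from $a_i$ to $F$-points. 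This yields $\lct_F\ge 1/k$. Combining the three steps gives the stated bound.
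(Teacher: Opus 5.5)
Your proposal is correct in substance. It differs from the paper mainly in how the final log-canonical-threshold estimate is obtained.

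\textbf{The paper's route.} It passes to $\widetilde\varphi=\pi\circ\varphi$ and then to the generically \'etale map $\eta=(\widetilde\varphi,x_{i'_1},\dots,x_{i'_{n_1-n_2}})$ via Lemma~\ref{lem:Jensen}. It uses the equidimensional equality $\epsilon_\star(\eta_F;x_0)=\lct_F(\operatorname{Jac}(\eta_F);x_0)$. It turns $\operatorname{Jac}(\eta)$ into a polynomial of degree at most $e$ by wedging numerator and denominator with $dg_1\wedge\cdots\wedge dg_{m_1}$. It then simply cites \cite[Theorem 1.4]{GH_lct_pos} for the lower bound $1/(e\,d_1^{m_1})$ on the threshold.

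\textbf{Your route.} Your Step~1 is the same reduction, phrased through Theorem~\ref{thm:main theorem} and the full Jacobian ideal. Your Steps~2--3 replace the citation with a self-contained argument. Step~2 is a characteristic-free B\'ezout bound $\operatorname{ord}_{x_0}(h|_X)\le e\,d_1^{m_1}$ on a curve section. Step~3 proves $\lct_F(f;x_0)\ge 1/\operatorname{ord}_{x_0}f$ via Weierstrass preparation and H\"older, which is valid in any characteristic.

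\textbf{What each buys.} The paper's proof is shorter and delegates all positive-characteristic analysis to \cite{GH_lct_pos}. Yours shows that nothing beyond the order of vanishing is needed here, and it makes the origin of the factors $e$ and $d_1^{m_1}$ transparent.

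Three points need tightening.

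First, the degree count you flag in Step~1. Clearing denominators with $D^{n_2}$ gives degree $n_2\bigl(dd_2-1+m_1(d_1-1)\bigr)$, which is too large. Multiply the minor by $D=\det(\partial_v g)$ only once. By the Schur complement identity, $D$ times the $n_2\times n_2$ minor in the $u$-columns $S$ equals the $(n_2+m_1)\times(n_2+m_1)$ determinant of the Jacobian of $(\pi\circ\Phi,g_{i_1},\dots,g_{i_{m_1}})$ in the columns $S\cup v$. That matrix has $n_2$ rows of degree $\le dd_2-1$ and $m_1$ rows of degree $\le d_1-1$, so $\deg h\le e$. This is exactly the paper's wedge trick.

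Second, in Step~3 the inequality $|t-a_i|\ge\operatorname{dist}(a_i,F)$ gives no uniform bound, since that distance can be $0$ or arbitrarily small. What you need is $|t-a_i|\ge|t-b_i|$ for all $t\in F$, where $b_i\in F$ is a closest point to $a_i$. This follows from the ultrametric inequality because $|a_i-b_i|\le|t-a_i|$. Then $\int_B|t-b_i|^{-ks}\,dt$ is bounded independently of $b_i$ whenever $ks<1$.

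Third, in Step~2, $X'$ may be cut out by $r_1>m_1$ equations, and $X'\cap L$ may have excess components. So plain B\'ezout does not directly bound the degree of the component $\Gamma_0$ through $x_0$. Use the fact that every codimension-$m_1$ component of a variety cut out by polynomials of degree $\le d_1$ is also a component of the zero locus of $m_1$ generic linear combinations of them, hence has degree $\le d_1^{m_1}$. In addition, choose the tangent direction $w$ of $\Gamma_0$ so that the initial form $h_k$ of $h|_X$ satisfies $h_k(w)\neq0$; this is possible because $F$ is infinite. That makes $h|_{\Gamma_0}\not\equiv 0$ explicit. No Bertini-type genericity is involved, so positive characteristic causes no difficulty in this step.
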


Theorems \ref{thm:main theorem} and \ref{thm:main theorem for alg maps}
work over all local fields, where the new aspect is the proof for
local fields of positive characteristic. The inequality $\epsilon_{\star}(\psi;x_{0})\ge\lct_{F}(\mathcal{J}_{\psi};x_{0})$
follows similarly to \cite[Theorem 1.1]{GHS}. The inequality $\lct_{F}(\mathcal{J}_{\psi};x_{0})\geq\epsilon_{x_{0}}$
follows from \cite{GH_lct_pos}, where new methods are required
to deal with local fields of positive characteristic. These results
complement \cite[Theorem 1.1]{GHS}, which was proven in the characteristic
zero case.

Finally, as the next example shows, we note that in the setting of Theorem \ref{thm:main theorem},
$\epsilon_{\star}(\psi)$ might not be strictly positive without an
additional assumption. 
\begin{example}
\label{exa:pathological example}
Fix a prime $p$ and set $X=Y=F=\mathbb{F}_{p}((t))$. For $k\ge1$, set $d_{k}=p^{k}+1$ and $U_{k}:=\{x\in F:\left|x-t^{-k}\right|_{F}\leq1\}$.
Then the subsets $\left\{ U_{k}\right\} _{k=1}^{\infty}$ are disjoint.
Define $\psi:X\to Y$ by
\[
\psi(x)=\begin{cases}
x & \text{if }x\notin\bigcup_{k=1}^{\infty}U_{k},\\
\left(x-t^{-k}\right)^{d_{k}} & \text{if }x\in U_{k}. 
\end{cases}
\]
Then $\psi$ is generically submersive, and by Proposition \ref{prop:reduction to local statement}
we have $\epsilon_{\star}(\psi|_{U_{k}})=\frac{1}{d_{k}-1}=p^{-k}$.
In particular, $\epsilon_{\star}(\psi)=0$. 
\end{example}

\subsection*{Acknowledgement}
I.G.~was supported by ISF grant 3422/24.

\subsection{Proof of the main theorems}
\begin{lem}
\label{lem:Jensen}Let $\psi:X\rightarrow Y$ be a submersion of $F$-analytic
manifolds. Then
\[
\epsilon_{\star}(\psi_{*}\mu)\geq\epsilon_{\star}(\mu)
\]
for every $\mu\in\mathcal{M}_{c,1}(X)$, with equality if $\psi$
is a local diffeomorphism. 
\end{lem}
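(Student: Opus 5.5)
We need to prove the statement itself: if $\psi:X\to Y$ is a submersion, then $\epsilon_\star(\psi_*\mu)\ge\epsilon_\star(\mu)$, with equality when $\psi$ is a local diffeomorphism.

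The plan is to reduce to a local model and then apply Jensen's (or Hölder's) inequality fiberwise. Both the property $\mu\in\mathcal{M}_{c,q}$ and the pushforward behave well under finite partitions of unity. So, using compactness of $\supp\mu$, I will write $\mu=\sum_j \mu|_{V_j}$ with $V_j$ compact open and finitely many $j$. I will choose the $V_j$ so that on each one $\psi$ is, in analytic charts, the coordinate projection $\mathcal{O}_F^{n}\cong\mathcal{O}_F^{m}\times\mathcal{O}_F^{n-m}\to\mathcal{O}_F^{m}$. This uses the $F$-analytic implicit function theorem, which holds in any characteristic. Since $L^q$ is a vector space, it suffices to prove $\psi_*(\mu|_{V_j})\in\mathcal{M}_{c,q}(Y)$ for every $q<1+\epsilon_\star(\mu)$. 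The charts change densities only by bounded smooth Jacobians on compact sets, so the reduction does not affect $L^q$ membership.

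In the local model let $\mu=g\,dx\,dz$ with $g\in L^q(\mathcal{O}_F^m\times\mathcal{O}_F^{n-m})$, where $x$ is the base variable and $z$ the fiber variable. The pushforward has density $h(x)=\int_{\mathcal{O}_F^{n-m}}g(x,z)\,dz$. The fiber has total mass $1$, so Jensen's inequality for $t\mapsto|t|^q$ gives $|h(x)|^q\le\int|g(x,z)|^q\,dz$. Integrating over $x$ yields $\|h\|_q\le\|g\|_q<\infty$. Hence $\psi_*\mu\in\mathcal{M}_{c,q}(Y)$ for every $q$ with $\mu\in\mathcal{M}_{c,q}(X)$, and taking the supremum gives $\epsilon_\star(\psi_*\mu)\ge\epsilon_\star(\mu)$.

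For the equality case, suppose $\psi$ is a local diffeomorphism. I will refine the cover so that each $\psi|_{V_j}$ is an analytic isomorphism onto its image. Then $\mu|_{V_j}=(\psi|_{V_j})^{-1}_*\psi_*(\mu|_{V_j})$, and since $\psi_*(\mu|_{V_j})\le\psi_*\mu$ as positive measures (restricting to $\mu\ge0$, or arguing with $|\mu|$), the density of $\mu|_{V_j}$ is dominated by the pullback of the density of $\psi_*\mu$, up to the bounded Jacobian. Therefore $\psi_*\mu\in\mathcal{M}_{c,q}$ implies $\mu|_{V_j}\in\mathcal{M}_{c,q}$ for all $j$, so $\mu\in\mathcal{M}_{c,q}$, which gives the reverse inequality.

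The only delicate point is this domination for signed or complex $\mu$, where cancellation in $\psi_*\mu$ could occur; I would handle it by working with the total variation or assuming $\mu\ge0$, as the paper intends. The rest is routine.
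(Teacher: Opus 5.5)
Your proposal is correct and follows essentially the same route as the paper: use the local submersion theorem to reduce to a coordinate projection, then apply Jensen's inequality fiberwise. The only cosmetic difference is that the paper bounds the fibers by a ball $B$ of measure $C$ and obtains a factor $C^{s}$, whereas you normalize the fiber to $\mathcal{O}_F^{n-m}$ of mass $1$. The paper simply declares the local-diffeomorphism equality case ``clear''; your domination argument (for positive measures, cutting into pieces on which $\psi$ is injective) is a valid way to fill that in.
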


\begin{proof}
It is clear that $\epsilon_{\star}(\psi_{*}\mu)=\epsilon_{\star}(\mu)$
if $\psi$ is a local diffeomorphism. Since $\mu$ is compactly supported,
by working locally using the local submersion theorem (see e.g.~\cite[III, p.85]{Ser92}),
we may assume that $\psi:F^{n}\rightarrow F^{m}$ is the projection
to the last $m$ coordinates, with $n\geq m$. For simplicity write
$x=(x_{1},\ldots,x_{n-m})$, $y=(x_{n-m+1},\ldots,x_{n})$, so that $\psi(x,y)=y$.
Write $\mu=f(x,y)\mu_{F^{n}}$ and $\psi_{*}\mu=h(y)\mu_{F^{m}}$.
Let $B\subseteq F^{n-m}$ be a ball which contains the projection
of $\mathrm{supp}(\mu)$ to the last $n-m$ coordinates $F^{n-m}$.
Let $C:=\mu_{F^{n-m}}(B)$. Then by Jensen's inequality, for every
$s>0$, we have: 
\begin{align*}
\int_{F^{m}}h(y)^{1+s}dy & =\int_{F^{m}}\left(\int_{F^{n-m}}f(x,y)dx\right)^{1+s}dy\\
&=\int_{F^{m}}\mu_{F^{n-m}}(B)^{1+s}\left(\frac{1}{\mu_{F^{n-m}}(B)}\int_{B}f(x,y)dx\right)^{1+s}dy\\
 & \leq C^{s}\int_{F^{m}}\int_{F^{n-m}}f(x,y)^{1+s}dxdy=C^{s}\int_{F^{n}}f(x,y)^{1+s}\mu_{F}^{n}.
\end{align*}
This concludes the proof. 
\end{proof}
We next reduce Theorem \ref{thm:main theorem} to Proposition \ref{prop:reduction to local statement}
below.  
Recall that a power series $f(x_{1},\ldots,x_{n}):=\sum_{I\in\Z_{\geq0}^{n}}a_{I}x^{I}\in F\langle x_{1},\ldots,x_{n}\rangle$
is called \emph{strictly convergent} if $a_{I}\underset{\left|I\right|\rightarrow\infty}{\longrightarrow}0$
(see \cite[Definition 2.1(2)]{GH_lct_pos}). 
\begin{prop}
\label{prop:reduction to local statement}Let $\psi:X\rightarrow F^{m}$
be a generically submersive $F$-analytic map, where $X\subseteq\mathcal{O}_{F}^{n}$
is an open compact neighborhood of $0$, and such that $\psi=(\psi_{1},\ldots,\psi_{m})$,
where $\psi_{i}:X\rightarrow F$ is given by strictly
convergent power series centered at $0$. Then
\[
\epsilon_{\star}(\psi;0)\ge\lct_{F}(\mathcal{J}_{\psi};0)>0,
\]
with equality if $m=n$.
\end{prop}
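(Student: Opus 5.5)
The positivity $\lct_F(\mathcal{J}_{\psi};0)>0$ is the input from \cite{GH_lct_pos}. Since $\psi$ is generically submersive, some $m\times m$ minor of $d\psi$ is not identically zero, so $\mathcal{J}_{\psi}$ is a non-zero ideal generated by strictly convergent power series. The main theorem of \cite{GH_lct_pos} then gives $\lct_F(\mathcal{J}_{\psi};0)>0$. The real work is the inequality $\epsilon_{\star}(\psi;0)\ge\lct_F(\mathcal{J}_{\psi};0)$, which I would prove following \cite[Theorem 1.1]{GHS}. I would first reduce to the square case $m=n$ and then estimate the pushforward density directly.

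\emph{Reduction to $m=n$.} Let $J_I$ denote the $m\times m$ minors of $d\psi$, indexed by column sets $I$. Off a null set, partition a neighborhood of $0$ into finitely many measurable (in fact open-compact, after refining to small balls) pieces $X_I$. On $X_I$ the minor $J_I$ realizes $\max_{I'}|J_{I'}|$, up to a constant. On $X_I$ extend $\psi$ to $\Phi_I=(\psi,x_{I^c}):X\to F^n$. Then $\det d\Phi_I=\pm J_I$ and $\psi=\mathrm{pr}\circ\Phi_I$. By \Cref{lem:Jensen} applied to the projection $\mathrm{pr}$, $\epsilon_{\star}(\psi_*(\mu|_{X_I}))\ge\epsilon_{\star}((\Phi_I)_*(\mu|_{X_I}))$. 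On $X_I$ we have $|\det d\Phi_I|^{-s}\asymp\min_{I'}|J_{I'}|^{-s}$, so it suffices to prove the following. If $\Phi:X\to F^n$ is given by strictly convergent series and $\det d\Phi\not\equiv0$, then $(\Phi)_*\mu$ has density in $L^{1+s}$ for every $s<\lct_F(\det d\Phi;0)$ and every smooth $\mu$ supported near $0$.

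\emph{Square case.} Write $\mu=f\,\mu_{F^n}$ with $f$ bounded and compactly supported. Away from the zero locus of $D:=\det d\Phi$, which is null, the map $\Phi$ is a local diffeomorphism. The pushforward therefore has density
\[
h(y)=\sum_{x\in\Phi^{-1}(y)}\frac{f(x)}{|D(x)|}.
\]
The key point is a uniform bound $N$ on the cardinality of the fibers of $\Phi$ on a small neighborhood of $0$. Given such $N$, H\"older gives $h(y)^{1+s}\le N^{s}\sum_{x\in\Phi^{-1}(y)} f(x)^{1+s}|D(x)|^{-1-s}$. The change of variables formula then yields
\[
\int h^{1+s}\,dy\le N^s\int f^{1+s}|D|^{-s}\,dx,
\]
which is finite for $s<\lct_F(D;0)$. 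For equality when $m=n$, note that $h(y)\ge f(x)/|D(x)|$ for each $x\in\Phi^{-1}(y)$. Restricting to a set on which $\Phi$ is injective and covers a positive proportion of $X$ (possible by the same fiber bound), we get $\int h^{1+s}\ge c\int f^{1+s}|D|^{-s}$. Hence $\epsilon_\star\le\lct$.

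\emph{Main obstacle.} The hard step is the uniform fiber bound $N$ in positive characteristic, where Example \ref{exa:pathology} shows that fibers and differentials behave badly. My first attempt would use the Weierstrass preparation theorem for strictly convergent power series, as in \cite{GH_lct_pos}. After a generic linear change of coordinates and shrinking, each function $\Phi_i(x)-y_i$ becomes regular in a suitable variable, uniformly in $y$ near $\Phi(0)$. Successive elimination, via resultants of the resulting Weierstrass polynomials, then bounds the number of isolated solutions by a product of Weierstrass degrees. Generic submersivity ensures that the solutions are isolated on the relevant open dense set, and the remaining fibers lie over a null set. If this uniformity fails, I would fall back on the resolution-free approach of \cite{GH_lct_pos}. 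That approach bounds the integral $\int f^{1+s}|D|^{-s}$ by a dyadic decomposition in $|D|$ and counts points on each shell through the Weierstrass degree.
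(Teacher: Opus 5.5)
Your architecture is the paper's. Positivity of $\lct_F(\mathcal{J}_\psi;0)$ is quoted from \cite{GH_lct_pos}. The submersive locus is cut into measurable pieces on which one $m\times m$ minor $M_I$ has maximal absolute value. One passes to $\psi_I=(\psi,x_J)$ and controls the coordinate projection by Jensen. In the equidimensional case one combines the change-of-variables formula for the density with a uniform bound on fiber cardinalities and H\"older. That fiber bound is the one non-formal input, and you leave it open. The paper gets it from Lipshitz's theorem \cite[Theorem 1]{Lip84}: for maps given by strictly convergent power series there is a uniform bound $L$ on the number of isolated points in the fibers. Hence $\#\psi_I^{-1}(\psi_I(x))\le L$ for $x$ in the piece where $M_I$ is maximal, and $\#\psi^{-1}(\psi(x))\le L$ on the submersive locus when $m=n$. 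This is exactly why the proposition assumes strict convergence, and both the inequality and the equality case rest on it.

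Your elimination sketch does not supply this bound. Iterated elimination bounds the entire fiber over an algebraic closure, so it is vacuous at every $y$ whose fiber has a positive-dimensional component there. This holds even if the fiber's $F$-points in the submersive locus are isolated. At such $y$ the eliminants degenerate (for $n=2$ the resultant vanishes identically in $x_1$), so they cannot be taken regular in a fixed variable uniformly in $y$. To make your route work you would need two things, which together amount to reproving Lipshitz's theorem:
\begin{itemize}
\item These $y$ form a null subset of $F^n$. You assert this without proof; local Lipschitz continuity only shows that $\Phi(\{D=0\}(F))$ is null, and the excess components may have no $F$-points.
\item The Weierstrass degrees of all later eliminants are bounded uniformly over the remaining $y$, which forces a stratification of the parameter space. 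Moreover, eliminating $x_n$ via the pairwise resultants $\mathrm{Res}_{x_n}(P_1,R_j)$ only cuts out a superset of the projection of the fiber.
\end{itemize}
Your fallback does not help either. The dyadic analysis of \cite{GH_lct_pos} bounds $\int f^{1+s}|D|^{-s}$, the lct side. It says nothing about numbers of preimages, and that count is what converts this integral into a bound on $\int h^{1+s}$.

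Two smaller points. First, on each piece you need the estimate for the restricted measure $\mu|_{X_I}$, not a statement for $s<\lct_F(\det d\Phi_I;0)$, because $\lct_F(J_I;0)$ can be smaller than $\lct_F(\mathcal{J}_\psi;0)$; your change-of-variables computation does give the restricted estimate. Second, for $\epsilon_\star\le\lct_F$ when $m=n$ no injective piece is needed. The inequality $(\sum_k a_k)^{1+s}\ge\sum_k a_k^{1+s}$ gives $\int h^{1+s}\ge\int f^{1+s}|D|^{-s}$ directly. By contrast, a piece covering a positive proportion of $X$ need not contain the region where that integral diverges.
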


\begin{proof}[Proposition \ref{prop:reduction to local statement} implies Theorem
\ref{thm:main theorem}]
Let $\psi:X\to Y$ be a generically submersive map. Note that if
$\phi_{1}:X'\underset{\simeq}{\longrightarrow}U$ and $\phi_{2}:V\underset{\simeq}{\longrightarrow}Y'$
are diffeomorphisms, for open neighborhoods $x_{0}\in U\subseteq X$
and $\psi(x_{0})\in V\subseteq Y$, then 
\[
\epsilon_{\star}(\psi;x_{0})=\epsilon_{\star}(\phi_{2}\circ\psi\circ\phi_{1};\phi_{1}^{-1}(x_{0})).
\]
Hence, by analytic change of coordinates, we may assume that $x_{0}=0$,
$X\subseteq\mathcal{O}_{F}^{n}$ is an open compact neighborhood of
$0$, and that $Y=F^{m}$, with $n\ge m$. Since $\psi$ is analytic
near $0$, by shrinking $X$, we may assume that $\psi=(\psi_{1},\ldots,\psi_{m})$,
where each $\psi_{i}:X\subseteq F^{n}\rightarrow F$ is given by a
converging power series centered at $0$. 
Let $\varpi_{F}$ be a uniformizer of $F$. 
By altering $X$ as follows, we may assume that each $\psi_{i}$ converges on $\mathcal{O}_{F}^{n}$,
and therefore each $\psi_{i}$ is strictly convergent (see e.g.~\cite[Section 5.1.4, Proposition 1]{BGR84}). First, we further shrink $X$ such that $\varpi_{F}^{-k}X\subseteq\mathcal{O}_{F}^{n}$.
Then, we may apply a change of coordinates of the form $(x_{1},\ldots,x_{n})\mapsto (\varpi_{F}^{k}x_{1},\ldots,\varpi_{F}^{k}x_{n})$
for $k\in\N$, and replace $\psi$ with $\widetilde{\psi}(x_{1},\ldots,x_{n}):=\psi(\varpi_{F}^{k}x_{1},\ldots,\varpi_{F}^{k}x_{n})$. 
Thus, we have reduced Theorem \ref{thm:main theorem} precisely to
the setting of Proposition \ref{prop:reduction to local statement}.
\end{proof}
\begin{lem}
\label{lem:reduction to lct}In the setting of Proposition \ref{prop:reduction to local statement},
with $m=n$, we have: 
\[
\epsilon_{\star}(\psi;0)=\operatorname{lct}_{F}(\operatorname{Jac}_{x}(\psi);0),
\]
where $\operatorname{Jac}_{x}(\psi):=\det(d_{x}(\psi))$ is the Jacobian
determinant at $x$. 
\end{lem}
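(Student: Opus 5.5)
We prove \Cref{lem:reduction to lct} by a change of variables on the locus where $\psi$ is a local diffeomorphism. Set $J(x):=|\operatorname{Jac}_{x}(\psi)|_{F}$. Since $\psi$ is generically submersive and $m=n$, the zero set $Z$ of $\operatorname{Jac}(\psi)$ is nowhere dense and analytic, hence of Haar measure zero. Off $Z$, the inverse function theorem shows that $\psi$ is a local diffeomorphism. Moreover $\psi$ has finite fibers on $X\smallsetminus Z$ locally (compactness of $X$ together with analyticity). For $\mu=f\mu_{F^{n}}$ with $f\in C_c^\infty(U)$, $U$ a small neighborhood of $0$, the change of variables formula gives
\[
\psi_{*}\mu=h\,\mu_{F^{n}},\qquad h(y)=\sum_{x\in\psi^{-1}(y)\cap U}\frac{f(x)}{J(x)} .
\]

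For the upper bound $\epsilon_{\star}(\psi;0)\le\lct_{F}(\operatorname{Jac}(\psi);0)$, take $f=1_{U}\geq0$. Then $h(y)^{1+s}\ge\sum_{x}f(x)^{1+s}J(x)^{-1-s}$, since all terms are nonnegative. Integrating and changing variables back gives
\[
\int h^{1+s}\,dy\ \ge\ \int_{U}f^{1+s}J^{-s}\,dx .
\]
So finiteness of $\int h^{1+s}$ forces $\int_{U}J^{-s}<\infty$, which gives the inequality.

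For the lower bound we need a converse inequality $h^{1+s}\lesssim\sum_{x}f^{1+s}J^{-1-s}$. This requires a uniform bound $N$ on the fiber cardinalities $\#(\psi^{-1}(y)\cap U)$. Given such $N$, convexity yields $h^{1+s}\le N^{s}\sum_{x}f^{1+s}J^{-1-s}$, hence
\[
\int h^{1+s}\,dy\le N^{s}\|f\|_{\infty}^{1+s}\int_{U}J^{-s}\,dx ,
\]
which is finite for $s<\lct_{F}$ after shrinking $U$.

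The main obstacle is the uniform fiber bound in positive characteristic. For strictly convergent power series with $\det d\psi\not\equiv0$, I would first try Weierstrass preparation over $\mathcal O_F$ (cf.\ \cite{BGR84}), which reduces the system $\psi(x)=y$ near $0$ to polynomial equations of bounded degree, so that Bezout/finiteness arguments bound the number of isolated solutions uniformly in $y$. Since Weierstrass preparation and the finiteness of $\mathcal O_F\langle x\rangle$ modulo a regular sequence hold in any characteristic, $N$ is finite. If that is awkward, I would instead cite the corresponding argument of \cite[Theorem 1.1]{GHS}, noting that it is characteristic-free. Positivity of $\lct_F$ then comes from \cite{GH_lct_pos}, but that is not needed for the equality itself.
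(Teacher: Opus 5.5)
Your architecture is the paper's: change of variables on the locus $\{\operatorname{Jac}_x(\psi)\neq 0\}$, superadditivity of $t\mapsto t^{1+s}$ for $\epsilon_\star(\psi;0)\le\lct_F(\operatorname{Jac}_x(\psi);0)$, and convexity plus a uniform bound on fibre cardinalities for the reverse inequality. The two-sided estimate you reach is the one the paper uses. The first half is complete as written; it needs no finiteness of fibres, since the area formula holds for countable sums of non-negative terms.

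\textbf{The gap} is the uniform bound $N$. It is the only non-formal input, and you leave it as a plan. The claim that ``Weierstrass preparation reduces $\psi(x)=y$ to polynomial equations of bounded degree'' is not an argument, for several reasons.
\begin{itemize}
\item Preparation treats one series in one distinguished variable. The coordinate change and the distinguished degree would have to be controlled uniformly in the parameter $y$.
\item Successive elimination (division, resultants) for a system of $n$ equations does not preserve isolatedness of solutions.
\item Over critical values the fibres can be positive-dimensional. For example, $\psi(x_1,x_2)=(x_1,x_1x_2)$ has the whole line $x_1=0$ over $0$. So $F\langle y\rangle\langle x\rangle/(\psi-y)$ is not finite over the parameter ring, and no rank count yields $N$.
\item ``For each $y$ the quotient is finite'' is a pointwise statement, not a uniform one.
\end{itemize}
What is actually required is a uniform bound on the number of isolated points in the fibres of a map given by strictly convergent power series. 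The paper obtains this from Lipshitz's theorem \cite[Theorem 1]{Lip84}, which holds over any complete non-Archimedean field. That is why \Cref{prop:reduction to local statement} is formulated with strictly convergent $\psi_i$.

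Your fallback, citing \cite{GHS} while ``noting it is characteristic-free,'' assumes what needs proof. That work is set in characteristic zero, and whether its finiteness input survives over $\mathbb{F}_q((t))$ is exactly the point in question. The paper avoids this by taking $L$ from \cite{Lip84} and only then following the purely measure-theoretic part of \cite[Proposition 4.1]{GHS}. A minor point: compactness plus analyticity does not by itself give finite fibres off $Z$, since nondegenerate preimages could a priori accumulate on $Z$. Lipshitz's bound covers this as well.
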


\begin{proof}
Since $\psi$ is generically submersive, there is an open dense subset
$U\subseteq X$, where $\operatorname{Jac}_{x}(\psi)\neq0$, for every
$x\in U$. By the inverse mapping theorem \cite[p.\,73]{Ser92}, $\psi|_{U}:U\rightarrow F^{m}$
is a local diffeomorphism. By \cite[Theorem 1]{Lip84}, since $\psi_{i}$
is strictly convergent for $1\leq i\leq m$, there exists $L\in\N$
such that $\#\left\{ \psi^{-1}(\psi(x))\right\} \leq L$ for every
$x\in U$. From here, the proof of the lemma is identical to the proof
of \cite[Proposition 4.1]{GHS}. In particular, for every $\mu\in\mathcal{M}_{c,\infty}(X)$,
if $\psi_{*}\mu=g(y)\cdot\mu_{F^{n}}$, we get 
\begin{equation}
\int_{X}\frac{1}{\left|\operatorname{Jac}_{x}(\psi)\right|_{F}^{s}}\mu(x)\leq\int_{Y}g(y)^{1+s}dy\leq L^{s}\int_{X}\frac{1}{\left|\operatorname{Jac}_{x}(\psi)\right|_{F}^{s}}\mu(x).\qedhere\label{eq:concrete estimate}
\end{equation}
 
\end{proof}
We can now prove Proposition \ref{prop:reduction to local statement}
and deduce Theorem \ref{thm:main theorem}. 
\begin{proof}[Proof of Proposition \ref{prop:reduction to local statement}]
Let $\psi:X\to F^{m}$ be as in Proposition \ref{prop:reduction to local statement}.
The inequality $\lct_{F}(\mathcal{J}_{\psi};0)>0$ follows from \cite[Theorem 1.2]{GH_lct_pos}. It is left to prove that $\epsilon_{\star}(\psi;0)\ge\lct_{F}(\mathcal{J}_{\psi};0)$. 

Since $\psi$ is generically submersive, $U:=\left\{ x\in X:\mathrm{rk}(d_{x}\psi)=m\right\}$
is open and dense in $X$. 
Denote by $\mathcal{A}_m$ the set of subsets $I=\{i_{1},\ldots,i_{m}\}$ of $\{1,\ldots,n\}$. For each $I \in \mathcal{A}_m$ 
let $M_{I}$ be the corresponding $m\times m$-minor of $d_{x}\psi$.
Fix $s<\lct_{F}(\mathcal{J}_{\psi};0)$. By Definition \ref{def:lct},
there exists an open compact subset $0\in U'\subseteq X$ such that
\begin{equation}
\forall\mu'\in\mathcal{M}_{c}^{\infty}(U'),\int_{X}\min\limits_{I\in \mathcal{A}_m}|M_{I}(x)|_{F}^{-s}\mu'(x)<\infty.\label{eq:definition of lct of Jac}
\end{equation}
For each $I \in \mathcal{A}_m$, set
\[
U_{I}:=\left\{ x\in U'\cap U:\max_{I'\in \mathcal{A}_m}\left|M_{I'}(x)\right|_{F}=\left|M_{I}(x)\right|_{F}\right\}.
\]
We may refine the cover $\bigcup_{I\in \mathcal{A}_m}U_{I}$ into
a disjoint cover $\bigcup_{I\in \mathcal{A}_m}V_{I}$, where $V_{I}\subseteq U_{I}$ is a measurable subset. Set $J=\{j_{1},\ldots.,j_{n-m}\}:=\{1,\ldots,n\}\backslash I$
and consider the map $\psi_{I}:V_{I}\to F^{n}$ given by $\psi_{I}(x):=(\psi(x),x_{j_{1}},\ldots,x_{j_{n-m}})$. 

Let $\mu\in\mathcal{M}_{c}^{\infty}(U')$ and denote $\mu_{I}:=1_{V_{I}}\cdot\mu$.
Since $\bigcup_{I\in \mathcal{A}_m}V_{I}$ is of full measure in
$U'$, we can write $\mu=\sum_{I}\mu_{I}$. We can further write:
\[
\psi_{*}\mu=g(y)\cdot\mu_{F^{m}},\,\,\,\psi_{*}\mu_{I}=g_{I}(y)\cdot\mu_{F^{m}}\text{\,\, and \,\,}\left(\psi_{I}\right)_{*}\mu_{I}=\widetilde{g}_{I}(z)\cdot\mu_{F^{n}}
\]
where
\begin{equation}
\widetilde{g}_{I}(z)=\sum_{x\in\psi_{I}^{-1}(z)}\left|\operatorname{Jac}_{x}(\psi_{I})\right|_{F}^{-1}=\sum_{x\in\psi_{I}^{-1}(z)}\left|M_{I}(x)\right|_{F}^{-1}.\label{eq:formula for equidimension}
\end{equation}
It is enough to show that $\int_{F^{m}}g(y)^{1+s}\mu_{F^{m}}<\infty$
for each $0<s<\lct_{F}(\mathcal{J}_{\psi};0)$ as above.

By Jensen's inequality, there exists $C_{1}(s)>0$ such that:
\begin{equation}
\int_{F^{m}}g(y)^{1+s}dy=\int_{F^{m}}\left(\sum_{I\in \mathcal{A}_m}g_{I}(y)\right)^{1+s}dy\leq C_{1}(s)\sum_{I}\int_{F^{m}}g_{I}(y)^{1+s}dy.\label{eq:0.6}
\end{equation}
Let $q :F^n \to F^m$ be the projection to the first $m$ coordinates. 
Since $\psi|_{V_{I}}=q\circ\psi_{I}$, we have: 
\[
g_{I}(y)=\int_{F^{n-m}}\widetilde{g}_{I}(y,z_{m+1},\ldots,z_{n})dz.
\]
Using Jensen's inequality as in the proof of Lemma \ref{lem:Jensen}, there exists $C_{2}(s)>0$ (depending
on $\psi(U')$) such that 
\begin{equation}
\int_{F^{m}}g_{I}(y)^{1+s}dy\leq C_{2}(s)\int_{F^{n}}\widetilde{g}_{I}(z)^{1+s}dz.\label{eq:0.7}
\end{equation}
Taking $L\in\N$ such that $\#\left\{ \psi_{I}^{-1}(\psi_{I}(x))\right\} \leq L$
for every $x\in V_{I}$ and every $I$, and using (\ref{eq:formula for equidimension}),
similarly to (\ref{eq:concrete estimate}), we get: 
\begin{equation}
\int_{F^{n}}\widetilde{g}_{I}(z)^{1+s}dz\leq L^{s}\int_{U'}\left|M_{I}(x)\right|_{F}^{-s}\mu_{I}\leq L^{s}\int_{U'}\min_{I\in \mathcal{A}_m}\left[\left|M_{I}(x)\right|_{F}^{-s}\right]\mu<\infty.\label{eq:0.8}
\end{equation}
Combining (\ref{eq:0.6}),(\ref{eq:0.7}) and (\ref{eq:0.8}), we get
\begin{align*}
\int_{F^{m}}g(y)^{1+s}dy & \leq C_{1}(s)C_{2}(s)\sum_{I}\int_{F^{n}}\widetilde{g}_{I}(z)^{1+s}dz\\
 & \leq C_{1}(s)C_{2}(s)L^{s}{n \choose m}\int_{U'}\min_{I\in \mathcal{A}_m}\left[\left|M_{I}(x)\right|_{F}^{-s}\right]\mu<\infty.\qedhere
\end{align*}
\end{proof}
\begin{proof}[Proof of Theorem \ref{thm:main theorem for alg maps}]
By \cite[Theorem 1.3]{GH_lct_pos}, there exists $\epsilon>0$
depending only on $\varphi$, such that for every $x_{0}\in X(F)$,
\[
\lct_{F}(\mathcal{J}_{\varphi_{F}};x_{0})>\epsilon.
\]
By Theorem \ref{thm:main theorem}, we get that $\epsilon_{\star}(\varphi_{F})\geq\epsilon>0$.
\end{proof}
We finish with a proof of Proposition \ref{prop:concrete lower bound}.
\begin{proof}[Proof of Proposition \ref{prop:concrete lower bound}] 
Fix $x_0 \in X(F)$. 
Since $\pi:Y\rightarrow\mathbb{A}_{F}^{n_{2}}$ is \'etale, the map
$\pi_{F}:Y(F)\rightarrow F^{n_{2}}$ is a local diffeomorphism, and
hence $\epsilon_{\star}(\varphi_{F};x_{0})=\epsilon_{\star}((\pi\circ\varphi)_{F};x_{0})$.
By our assumption, the morphism $\widetilde{\varphi}:=\pi\circ\varphi:X\rightarrow\mathbb{A}_{F}^{n_{2}}$
is a restriction of a polynomial map $(\widetilde{\varphi}_{1},\ldots,\widetilde{\varphi}_{n_{2}}):\mathbb{A}_{F}^{n_{1}+m_{1}}\rightarrow\mathbb{A}_{F}^{n_{2}}$,
where each $\widetilde{\varphi}_{i}$ is of degree $\leq d\cdot d_{2}$. 
Since $\widetilde{\varphi}$ is generically smooth, there exists $I'=\{i'_{1},\ldots,i'_{n_{1}-n_{2}}\}\subseteq\{1,\ldots,n_{1}+m_{1}\}$
such that the map $\eta:X\rightarrow\mathbb{A}_{F}^{n_{1}}$ given
by 
\[
\eta(x_{1},\ldots,x_{n_{1}+m_{1}}):=(\widetilde{\varphi}(x),x_{i'_{1}},\ldots,x_{i'_{n_{1}-n_{2}}}),
\]
is generically \'etale. 
Let $q:\mathbb{A}_{F}^{n_{1}}\rightarrow\mathbb{A}_{F}^{n_{2}}$
be the projection to the first $n_{2}$ coordinates.
Note that $\widetilde{\varphi}=q\circ\eta$.
  By Lemma \ref{lem:Jensen}
and Proposition \ref{prop:reduction to local statement}, we have,
\[
\epsilon_{\star}(\widetilde{\varphi}_{F};x_{0})\geq\epsilon_{\star}(\eta_{F};x_{0})=\operatorname{lct}_{F}(\operatorname{Jac}_{x}(\eta_{F});x_{0}).
\]
Since $X$ is a smooth open subvariety of $X' \subseteq \mathbb{A}_{F}^{n_{1}+m_{1}}$ of dimension $n_1$, and $X'$ is 
cut by $g_{1}=\ldots=g_{r_{1}}=0$, the tangent space $T_{x_{0}}X$ 
is given by $d_{x_{0}}g=0$,  where $d_{x_{0}}g$
is a matrix of size $(n_{1}+m_{1})\times r_{1}$ of (maximal) rank $m_{1}$. 
Therefore, we may choose $m_1$ polynomials out of $\{ g_1,\ldots,g_{r_1}\}$  
such that their common zero locus $\widetilde{X}\supseteq X$ is of dimension $n_1$, and where $x_0 \in \widetilde{X}(F)$ is a smooth point.  
Without loss of generality, we may take these polynomials to be $g_1,\ldots,g_{m_1}$. 
Since $\widetilde{X}$ is smooth at $x_0$, it is locally irreducible there. 
Thus, there exist $I=\{i_{1},\ldots,i_{m_{1}}\}\subseteq\{1,\ldots,n_{1}+m_{1}\}$ 
and a Zariski open set $x_0 \in U_{I} \subseteq X$ on which the $I\times \{1,\ldots,m_1 \}$-minor of $d_{x}g$ is non-vanishing, and such that $dx_{j_{1}}\wedge\ldots\wedge dx_{j_{n_{1}}}$ is a non-vanishing top form on $U_I$, where $J=\{1,\ldots,n_{1}+m_{1}\}\setminus I$.
We get that 
\begin{equation}
\operatorname{Jac}_{x}(\eta)=\frac{d\widetilde{\varphi}_{1}\wedge\ldots\wedge d\widetilde{\varphi}_{n_{2}}\wedge dx_{i'_{1}}\wedge\ldots\wedge dx_{i'_{n_{1}-n_{2}}}}{dx_{j_{1}}\wedge\ldots\wedge dx_{j_{n_{1}}}}.\label{eq:effective Jacobian}
\end{equation}
Multiplying the $n_{1}$-forms at the numerator and denominator of
(\ref{eq:effective Jacobian}) by $dg_{1}\wedge\ldots\wedge dg_{m_{1}}$,
we get:
\[
\operatorname{Jac}_{x}(\eta)=\frac{d\widetilde{\varphi}_{1}\wedge\ldots\wedge d\widetilde{\varphi}_{n_{2}}\wedge dx_{i'_{1}}\wedge\ldots\wedge dx_{i'_{n_{1}-n_{2}}}\wedge dg_{1}\wedge\ldots\wedge dg_{m_{1}}}{dx_{j_{1}}\wedge\ldots\wedge dx_{j_{n_{1}}}\wedge dg_{1}\wedge\ldots\wedge dg_{m_{1}}}.
\]
Set $\psi:U_{I}\rightarrow\mathbb{A}_{F}^{1}$ by
\[
\psi(x):=\frac{d\widetilde{\varphi}_{1}\wedge\ldots\wedge d\widetilde{\varphi}_{n_{2}}\wedge dx_{i'_{1}}\wedge\ldots\wedge dx_{i'_{n_{1}-n_{2}}}\wedge dg_{1}\wedge\ldots\wedge dg_{m_{1}}}{dx_{1}\wedge\ldots\wedge dx_{n_{1}+m_{1}}}.
\]
Since by our construction, $dx_{j_{1}}\wedge\ldots\wedge dx_{j_{n_{1}}}\wedge dg_{1}\wedge\ldots\wedge dg_{m_{1}}$
is a non-vanishing top form of $\mathbb{A}_{F}^{n_{1}+m_{1}}$ near
$x_{0}$ it follows that 
\[
\operatorname{lct}_{F}(\operatorname{Jac}_{x}(\eta_{F});x_{0})=\operatorname{lct}_{F}(\psi_{F}(x);x_{0}).
\]
Since $\psi(x_{1},...,x_{n+m})$ is a polynomial of degree $\leq(d\cdot d_{2}-1)\cdot n_{2}+(d_{1}-1)m_{1}$
it follows by \cite[Theorem 1.4]{GH_lct_pos} that: 
\[
\epsilon_{\star}(\varphi_{F};x_{0})\geq\operatorname{lct}_{F}(\operatorname{Jac}_{x}(\eta_{F});x_{0})=\operatorname{lct}_{F}(\psi_{F}(x);x_{0})\geq\frac{1}{\left((d\cdot d_{2}-1)\cdot n_{2}+(d_{1}-1)m_{1}\right)\cdot d_{1}^{m_{1}}}.\qedhere
\]
\end{proof}

\section{Explanation of the mistake in \cite{Le4}}\label{app:lem}

  The arguments in \cite{Le4} and its sequels were based on a construction of a certain submersion that replaces the Luna slice for closed orbits which are not semi-simple, see \cite[\S2.2]{Le4}. A key  property of this submersion is described in \cite[Lemma 2.3.2]{Le4}. The formulation of this Lemma is inconsistent.  Namely, a certain set (denoted there by $U_b'\cap U_c'$) is discussed in \cite[Lemma 2.3.2(2)]{Le4}. It is implicitly assumed that this set is open both in  $U_b'$ and $U_c'$ (as a function in $C_c^\infty(U_b'\cap U_c')$ is considered both as a function on $U_b'$ and $U_c'$) which is wrong in general.

  A version of \cite[Lemma 2.3.2]{Le4} with a consistent formulation is \cite[Lemma 5.4.2]{Le5}. However this lemma is false as stated.  
\section{Diagrams}\label{sec:diag}
For the convenience of the reader, we present here several diagrams of objects frequently used in the paper.
\subsection{The main varieties in the paper}
$$
\begin{tikzcd}
    \bfG'\times \bfT\arrow[swap,"\mathfrak{pr}_{\bfT}"]{ddd}\arrow[swap,bend right=30, "\mathfrak{pr}_{\bfG'}"]{dddrr}\arrow[phantom, "="]{rr}
    &
    &\tilde \bfX \drar[phantom, "\square"]\rar[""]\dar["\nu"] & \bfT\times \bfT\dar["\mu"]\drar["q\times q"] 
    \arrow[rounded corners, to path={ -- ([xshift=15ex]\tikztostart.east) -- 
    node[right]{$\mathfrak{pr}_1$}
    ([xshift=17ex]\tikztotarget.east) -- (\tikztotarget)}]{ddd}&  &
    \\      
  &{\bf \Upsilon}\rar["p'"] \dar[] \drar[phantom, "\square"] \drar[bend left=15, "\zeta"{yshift=-4}]
  &\bfX \drar[phantom, "\square"]\rar["\sigma"]\dar["\tau"] & \bfY\rar["\alpha"]\dar["\pi"] & \bfC\times \bfC  \dlar["\fp\fr_\bfC^1"] & 
  \\  
&\bfG\times_\bfC\bfG\rar[swap, "\fp\fr_\bfG^2"]&\bfG\drar[phantom, "\square"]\rar[swap, "p"] & \bfC &    & 
\\\bfT 
&&\bfG'\uar[swap,"\psi"]\rar[swap, "\varphi"] & \bfT\uar["q"] &   & 
\end{tikzcd}
$$



\subsection{Open subsets inside the varieties (mainly used in \S\S\ref{sec:basic}-\ref{sec:kap.form})}

$$
\begin{tikzcd}
& & \bfT \times \bfT \arrow[rr, "\mu"]\arrow[dr,phantom,"\square"] & & \bfY &\bfX \arrow[l,"\sigma"']\\
& (\bfT \times \bfT)^f\arrow[dr,phantom,"\square"] \arrow[ur, hook] \arrow[rr] &  & \bfY^f \arrow[ur, hook]\arrow[r, hook] & \bfY^{sm} \arrow[u, hook]\arrow[dddd, "q|_{\bfY^{sm}}"']& \bfX^0\arrow[u,hook]\arrow[dddd]\arrow[l]\arrow[ddddl,phantom, "\square"]\\
\bfT^r \times \bfT\arrow[drr,phantom,"\square"] \arrow[d, "\fp\fr_1^r"] \arrow[ur, hook] \arrow[rr, "\mu^r"]  & & \bfY^r \arrow[ur, hook] \arrow[d, "\pi^r"] & &&\\
\bfT^r \arrow[ddrrr,phantom,"\square"]\arrow[rr, bend left=12, pos=0.4,"q^r"] \arrow[r] \arrow[ddrr,hook] & \bfG^{{rss}}\arrow[ddrrr,phantom,"\quad\quad\square"] \arrow[r,"p^{rss}"'] \arrow[dr,hook] & \bfC^{{rss}} \arrow[ddrr, hook] & &&\\
 & & \bfG^r \arrow[dr, hook] & 
& &\\
& & \bfT \arrow[r] & \bfG \arrow[r, "p"] & \bfC & \bfG^{r}\arrow[l, "p|_{\bfG^{r}}"]
\end{tikzcd}
$$

\subsection{The sets $\mathcal{A}$ and $\mathcal{B}$
(mainly used in \S\S\ref{sec:kap.form},\ref{sec:Pfbnd234})}

$$
\begin{tikzcd}
    \mathcal{A}\arrow[d,hook] \arrow[dr, phantom, "\square"]\arrow[r]&\mathcal{B} \arrow[d,hook] \arrow[r]
      \arrow[dr, phantom, "\square"]
    & C \times \mathbf{C}(O_F) \arrow[d,hook] \\
    X\arrow[r,"\sigma"]\arrow[d,"\tau"] \arrow[dr, phantom, "\square"]&Y\arrow[d,"\pi"]  \arrow[r, "\alpha"] & C \times C
    \\
    G\arrow[r]&C
\end{tikzcd}
$$

\begingroup
  \let\clearpage\relax
  \let\cleardoublepage\relax 
  \printindex
\endgroup
\bibliographystyle{alpha}
\bibliography{Ramibib}

\end{document}